\tikzset{>=latex}
\tikzstyle{tight}=[font=\scriptsize, inner sep=1pt, outer sep=1pt]
\def\co{\colon\thinspace\relax}
\newtheorem{theorem}{Theorem}[section]
\newtheorem*{theorem*}{Theorem}
\newtheorem{lemma}[theorem]{Lemma}
\newtheorem{deflemma}[theorem]{Definition/Lemma}
\newtheorem{conjecture}[theorem]{Conjecture}
\newtheorem{question}[theorem]{Question}
\newtheorem{problem}[theorem]{Problem}
\newtheorem{corollary}[theorem]{Corollary}
\newtheorem{proposition}[theorem]{Proposition}
\theoremstyle{definition}
\newtheorem{definition}[theorem]{Definition}
\newtheorem{example}[theorem]{Example}
\newtheorem{observation}[theorem]{Observation}
\newtheorem{remark}[theorem]{Remark}
\newcommand*{\math@version@bold}{bold}
\DeclareMathOperator\DD{
	\textrm{%
		\usefont{T2A}{cmr}{\ifx\math@version\math@version@bold bx\else m\fi}{n}%
		\CYRD
	}%
} 
\newcounter{dummy}
\newcommand\myitem[1][]{%
	\item[\textnormal{(}#1\textnormal{)}]\refstepcounter{dummy}%
	\def\@currentlabel{\textnormal{(}#1\textnormal{)}}%
}
\renewcommand{\geq}{\geqslant}
\renewcommand{\leq}{\leqslant}
\newcommand{\fieldTwoElements}{\mathbb{F}}
\newcommand{\F}{{\mathbb{F}}}
\newcommand{\field}{{\mathbf{k}}}
\newcommand{\Z}{\mathbb{Z}}
\newcommand{\ZZ}{\mathbb{Z}/2}
\newcommand{\Q}{\mathbb{Q}} 
\newcommand{\R}{\mathbb{R}}
\newcommand{\BNAlgH}{\mathcal{B}}
\newcommand{\Ad}{\operatorname{\mathcal{A}}^\partial}
\DeclareMathOperator{\Cobl}{\Cob_{/{\mathit{l}}}}
\DeclareMathOperator{\id}{id}
\DeclareMathOperator{\Mod}{Mod}
\DeclareMathOperator{\End}{End}
\DeclareMathOperator{\Cob}{Cob}
\DeclareMathOperator{\gr}{g}
\DeclareMathOperator{\Gen}{\mathcal{G}}
\newcommand{\delV}{\delta_{\!\rotatebox[origin=c]{90}{$-$}\!}}
\newcommand{\delH}{\delta_{-}}
\DeclareMathOperator{\Spinc}{Spin^\textit{c}}
\newcommand{\Diag}{\mathcal{D}} 
\DeclareMathOperator{\HFK}{\widehat{HFK}}
\DeclareMathOperator{\HFL}{\widehat{HFL}}
\DeclareMathOperator{\HFhat}{\widehat{HF}}
\DeclareMathOperator{\CFhat}{\widehat{CF}}
\newcommand{\HF}{\operatorname{HF}}
\DeclareMathOperator{\Homology}{\mathbf{H}_\ast}
\DeclareMathOperator{\HFT}{HFT}
\DeclareMathOperator{\CFTd}{CFT^\partial}
\newcommand{\KhTl}[1]{{\llbracket #1 \rrbracket}_{/l}} 
\DeclareMathOperator{\Kh}{Kh}
\DeclareMathOperator{\CKh}{CKh}
\DeclareMathOperator{\CBN}{CBN}
\DeclareMathOperator{\BN}{BN}
\newcommand{\Khr}{\widetilde{\Kh}}
\newcommand{\CKhr}{\widetilde{\CKh}}
\newcommand{\CBNr}{\widetilde{\CBN}}
\newcommand{\BNr}{\widetilde{\BN}}
\newcommand{\s}{\mathbf{s}}
\newcommand{\ts}{\tilde{\s}}
\renewcommand{\r}{\mathbf{r}}
\newcommand{\tr}{\tilde{\r}}
\newcommand{\Rational}{\mathbf{r}}
\newcommand{\Special}{\mathbf{s}}
\newcommand{\rKh}{\mathbf{r}}
\newcommand{\sKh}{\mathbf{s}}
\DeclareMathOperator{\Slopes}{\mathcal{S}}
\DeclareMathOperator{\Curves}{\mathfrak{C}}
\DeclareMathOperator{\CurvesG}{\mathfrak{C}_\mathit{G}}
\DeclareMathOperator{\CurvesZ}{\mathfrak{C}_{\Z}}
\DeclareMathOperator{\CurvesZZ}{\mathfrak{C}_{\ZZ}}
\DeclareMathOperator{\CurvesHF}{\mathfrak{C}_{\HF}}
\DeclareMathOperator{\CurvesHFwb}{\mathfrak{C}^\mathrm{wb}_{\HF}}
\DeclareMathOperator{\CurvesKh}{\mathfrak{C}_{\Kh}}
\newcommand{\Lbullet}{\tilde{\bullet}}
\newcommand{\InfConLift}[1]{\bar{#1}}
\newcommand{\Lgamma}{\InfConLift{\gamma}}
\newcommand{\Lx}{\tilde{x}}
\newcommand{\Ly}{\tilde{y}}
\newcommand{\Lz}{\tilde{z}}
\newcommand{\LA}{\tilde{A}}
\newcommand{\LB}{\tilde{B}}
\newcommand{\LC}{\tilde{C}}
\newcommand{\LDelta}{\Delta\LA\LB\LC}
\newcommand{\paraHF}{P}
\newcommand{\paraKh}{P}
\DeclareMathOperator{\Lspace}{\mathcal{L}}
\newif\ifA
\newif\ifi
\newif\ifm
\newif\ifd
\newcommand{\thth}[1]{
	\operatorname{%
		\ifd%
			\partial%
		\fi%
		\ifA%
			\ifi%
				\mathring{\mathrm{A}}%
			\else%
				\mathrm{A}%
			\fi%
		\else%
			\ifi%
				\mathring{\Theta}%
			\else%
				\Theta%
			\fi%
		\fi%
		\ifm%
			\ifA%
				^{\!\mirror}%
			\else%
				^{\mirror}%
			\fi%
		\fi%
		_{#1}%
	}%
}
\newcommand{\HFsup}{\mathrm{HF}}
\newcommand{\Khsup}{\mathrm{Kh}}
\newcommand{\Gsup}{\mathit{G}}
\newcommand{\makemirror}[2]  {\newcommand{#1}{\mtrue#2\mfalse}}
\newcommand{\makeALink}[2]   {\newcommand{#1}{\Atrue#2\Afalse}}
\newcommand{\makeInterior}[2]{\newcommand{#1}{\itrue#2\ifalse}}
\newcommand{\makeBoundary}[2]{\newcommand{#1}{\dtrue#2\dfalse}}
\newcommand{\Thin}{\thth{}}
\newcommand{\ThinZ}{\thth{\Z}}
\newcommand{\ThinZZ}{\thth{\ZZ}}
\newcommand{\ThinG}{\thth{\Gsup}}
\newcommand{\ThinHF}{\thth{\HFsup}}
\newcommand{\ThinKh}{\thth{\Khsup}}
\makemirror{\mirrorThin}  {\Thin}
\makemirror{\mirrorThinZ} {\ThinZ} 
\makemirror{\mirrorThinZZ}{\ThinZZ} 
\makemirror{\mirrorThinG} {\ThinG} 
\makemirror{\mirrorThinHF}{\ThinHF} 
\makemirror{\mirrorThinKh}{\ThinKh} 
\makeBoundary{\BdryThin}  {\Thin}
\makeBoundary{\BdryThinZ} {\ThinZ} 
\makeBoundary{\BdryThinZZ}{\ThinZZ} 
\makeBoundary{\BdryThinG} {\ThinG} 
\makeBoundary{\BdryThinHF}{\ThinHF} 
\makeBoundary{\BdryThinKh}{\ThinKh} 
\makemirror{\mirrorBdryThin}  {\BdryThin}
\makemirror{\mirrorBdryThinZ} {\BdryThinZ} 
\makemirror{\mirrorBdryThinZZ}{\BdryThinZZ} 
\makemirror{\mirrorBdryThinG} {\BdryThinG} 
\makemirror{\mirrorBdryThinHF}{\BdryThinHF} 
\makemirror{\mirrorBdryThinKh}{\BdryThinKh} 
\makeInterior{\IntThin}  {\Thin}
\makeInterior{\IntThinZ} {\ThinZ} 
\makeInterior{\IntThinZZ}{\ThinZZ} 
\makeInterior{\IntThinG} {\ThinG} 
\makeInterior{\IntThinHF}{\ThinHF} 
\makeInterior{\IntThinKh}{\ThinKh} 
\makemirror{\mirrorIntThin}  {\IntThin} 
\makemirror{\mirrorIntThinZ} {\IntThinZ} 
\makemirror{\mirrorIntThinZZ}{\IntThinZZ} 
\makemirror{\mirrorIntThinG} {\IntThinG} 
\makemirror{\mirrorIntThinHF}{\IntThinHF} 
\makemirror{\mirrorIntThinKh}{\IntThinKh} 
\makeALink{\ALink}  {\Thin}
\makeALink{\ALinkHF}{\ThinHF} 
\makeALink{\ALinkKh}{\ThinKh} 
\makemirror{\mirrorALink}  {\ALink}
\makemirror{\mirrorALinkHF}{\ALinkHF} 
\makemirror{\mirrorALinkKh}{\ALinkKh} 
\makeBoundary{\BdryALink}  {\ALink}
\makeBoundary{\BdryALinkHF}{\ALinkHF} 
\makeBoundary{\BdryALinkKh}{\ALinkKh} 
\makemirror{\mirrorBdryALink}  {\BdryALink}
\makemirror{\mirrorBdryALinkHF}{\BdryALinkHF} 
\makemirror{\mirrorBdryALinkKh}{\BdryALinkKh} 
\makeInterior{\IntALink}  {\ALink}
\makeInterior{\IntALinkHF}{\ALinkHF} 
\makeInterior{\IntALinkKh}{\ALinkKh} 
\makemirror{\mirrorIntALink}  {\IntALink} 
\makemirror{\mirrorIntALinkHF}{\IntALinkHF} 
\makemirror{\mirrorIntALinkKh}{\IntALinkKh} 
\newcommand{\mirror}{\operatorname{m}} 
\newcommand{\FourPuncturedSphere}{S^2_4}
\newcommand{\FourPuncturedSphereKh}{S^2_{4,\ast}}
\newcommand{\PuncturedPlane}{\mathbb{R}^2\smallsetminus \Z^2}
\newcommand{\Sphere}{S^2}
\newcommand{\QPI}{\operatorname{\mathbb{Q}P}^1}
\newcommand{\RPI}{\operatorname{\mathbb{R}P}^1}
\DeclareMathOperator{\TEi}{\textnormal{\texttt{i}}}
\DeclareMathOperator{\TEj}{\textnormal{\texttt{j}}}
\DeclareMathOperator{\TEk}{\textnormal{\texttt{k}}}
\DeclareMathOperator{\TEl}{\textnormal{\texttt{l}}}
\DeclareMathOperator{\TEI}{\textnormal{\texttt{1}}}
\DeclareMathOperator{\TEII}{\textnormal{\texttt{2}}}
\DeclareMathOperator{\TEIII}{\textnormal{\texttt{3}}}
\DeclareMathOperator{\TEIV}{\textnormal{\texttt{4}}}
\newcommand{\vc}[1]{\vcenter{\hbox{#1}}}%
\newcommand{\mypic}[2]{%
  \newcommand{#2}{%
    \vc{%
      \includegraphics[page=#1]%
      {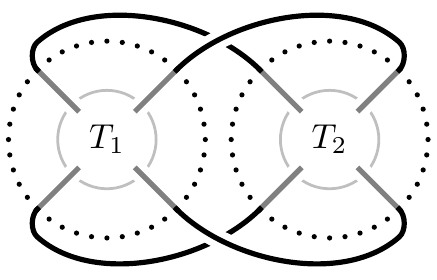}%
    }%
  }%
}%
\begin{document}
\title{Thin links and Conway spheres}

\author{Artem Kotelskiy}
\address{Mathematics Department \\ Stony Brook University}
\email{artofkot@gmail.com}

\author{Liam Watson}
\address{Department of Mathematics \\ University of British Columbia}
\email{liam@math.ubc.ca}
\thanks{AK is supported by an AMS-Simons travel grant. LW is supported by an NSERC discovery/accelerator grant and was partially supported by funding from the Simons Foundation and the Centre de Recherches Math\'ematiques, through the Simons-CRM scholar-in-residence program. CZ is supported by the Emmy Noether Programme of the DFG, Project number 412851057, and the SFB 1085 Higher Invariants in Regensburg.}

\author{Claudius Zibrowius}
\address{Faculty of Mathematics \\ University of Regensburg}
\email{claudius.zibrowius@posteo.net}

\begin{abstract}
When restricted to alternating links, both Heegaard Floer and Khovanov homology concentrate along a single diagonal $\delta$-grading. 
This leads to the broader class of thin links
that one would like to characterize without reference to the invariant in question. 
We provide a relative version of thinness for tangles and use this to characterize thinness via tangle decompositions along Conway spheres. 
These results bear a strong resemblance to the L-space gluing theorem for three-manifolds with torus boundary. Our results are based on certain immersed curve invariants for Conway tangles, namely the Heegaard Floer invariant \(\HFT\) and the Khovanov invariant \(\Khr\) that were developed by the authors in previous works. 
\end{abstract}

\maketitle

\section{Introduction}\label{sec:intro}

Fox famously asked \begin{center}{\em what is an alternating knot?} \end{center} by which he was interested in knowing if this property could be characterized without reference to knot diagrams; see Lickorish \cite[Chapter 4]{Lickorish-IntroToKnotTheory}. A satisfying answer to Fox's question was provided by Greene \cite{Greene2017} and Howie \cite{Howie2017}: both works prove that a non-split link is alternating if and only if it admits a pair of special spanning surfaces. 

Bar-Natan conjectured \cite{Bar-Natan2002} and  Lee proved \cite{Lee2005} that alternating links have thin Khovanov homology. Subsequently, Ozsv\'ath and Szab\'o proved that alternating links have thin knot Floer homology \cite{OS_alternating,OSHFL}. 
That is to say, the relevant bigraded homology theory in each case is supported along a single diagonal (taking the reduced version in the case of Khovanov's invariant). These diagonals give rise to the integer-valued $\delta$-grading in each theory, so that thinness is defined, algebraically, as follows: 
\begin{definition}
	A \(\delta\)-graded vector space is called \textbf{thin} if it is supported in at most one \(\delta\)-grading.
\end{definition}

A link is called thin if its associated invariant is thin. Bar-Natan's calculations showed that non-alternating thin links exist in Khovanov homology, suggesting a broader class of links that appears harder to pin down. 
Restricting coefficients to the rational numbers for the moment, Dowlin's spectral sequence from Khovanov homology to knot Floer homology \cite{Dowlin2018} implies that if a link is thin as measured by Khovanov homology then it must be thin as measured by knot Floer homology. 
In fact, computations suggest that these notions of thinness coincide.
Thus, the question 
\begin{center}
	{\em what is a thin link?}
\end{center} 
is a natural one. In particular, is there a characterization of thinness that does not depend on the bigraded link homology theory used?
For example, quasi-alternating links were proved to be thin by Manolescu and Ozsv\'ath~\cite{ManolescuOzsvath2008}. Interestingly, thin links that are not quasi-alternating exist~\cite{Greene2010} and indeed arise in infinite families~\cite{GreeneWatson2013}. A larger class has been proposed---two-fold quasi-alternating~\cite{ScadutoStoffregen2018}---and one might ask whether this exactly captures the property of being thin.

Beyond the homology theory in question, thinness may also depend on the coefficient system. 
Indeed, Shumakovitch found a knot whose Khovanov homology is thin when computed over \(\Q\), but not over the two-element field \(\fieldTwoElements\) \cite{shumakovitch2018torsion}; see Example~\ref{exa:thinness_depends_on_coefficients:knots} for further discussion. 
The authors are unaware of any such example for knot Floer homology. 



\subsection*{A change of perspective}
The question {\em `What is a thin link?'} may be placed in a broader context: 
Given any homology theory \(\Homology\) (of CW-complexes, manifolds, links, etc.), a basic observation is that its dimension is bounded below by the absolute value of its Euler characteristic~\(\chi\). Thus, the following is a natural problem:

\begin{problem}\label{problem}
	Characterize the objects \(Y\) for which \(\dim\Homology(Y)=|\chi\Homology(Y)|\). 
\end{problem}

Equivalently, the problem is to classify all objects whose homology is supported in gradings of the same parity. 
Even for singular homology of manifolds, this appears to be a hard question, although some basic facts can be easily established: 
For oriented two-dimensional manifolds, for example, the identity \(\dim\Homology(Y)=|\chi\Homology(Y)|\) characterizes the two-sphere. 
For unoriented two-dimensional manifolds, the situation already becomes more subtle, because the answer depends on the field of coefficients. 
For \(n\) odd, the Euler characteristic of any \(n\)-dimensional closed manifold vanishes, so there are no solutions to this identity. A na\"{i}ve guess for even integers \(n\geq4\) would be that solutions should admit a handle decomposition with no \(i\)-handles for odd \(i\). But such a characterization seems to be difficult to establish even for closed, simply-connected four-manifolds; see~\cite[Problem~4.18]{Kirby}. 

In the context of Ozsváth and Szabó's Heegaard Floer homology \(\HFhat\) for closed oriented three-manifolds,  solutions to \(\dim\HFhat(Y)=|\chi\HFhat(Y)|\) are known under the name \textbf{L-spaces}; see Subsection~\ref{sec:examples:def_Lspaces} for a detailed discussion of this definition. 
In this context, Problem~\ref{problem} relates to the question
\begin{center}
	{\em what is an L-space?}
\end{center} 
(see \cite[Question 11]{OSz-survey}), which 
continues to drive research. 
Ozsv\'ath and Szab\'o proved that L-spaces cannot carry taut foliations~\cite{OS-taut} (see also \cite{Bowden2016,KR2017}). 
At present, the conditions $Y$ not being an L-space, $\pi_1(Y)$ being left-orderable, and $Y$ admitting a taut foliation are known to be equivalent for all graph manifolds~\cite{BoyerClay,L-space_graph_mnflds,Rasmussen2017}. The equivalence of these three conditions is conjectured in general; see~\cite{L-space_conjecture} or \cite{Dunfield18} for further discussion.   

Turning now to link homology theories: 
The reduced Khovanov homology \(\Khr(L;\field)\) of an \(\ell\)-component link \(L\) in \(S^3\) categorifies the Jones polynomial \(V_L(t)\), in the sense that
\[
\chi_{gr}\Khr(L;\field)
\coloneqq
\sum (-1)^h t^{\frac{1}{2}q} \dim \Khr{}^{h,q}(L;\field)=V_L(t)
\]
where \(h\) denotes the homological grading, \(q\) the quantum grading, and $\field$ is some field. 
By setting \(t=1\), we see that the ungraded Euler characteristic with respect to the homological grading is equal to \(V_L(1)=2^{\ell-1}\). Problem~\ref{problem} in this setting was recently solved by Xie and Zhang \cite{XieZhang}, who showed that the identity \(\dim\Khr(L;\field)=|\chi_h\Khr(L;\field)|=2^{\ell-1}\) characterizes forests of unknots (at least if \(\field=\fieldTwoElements\)). 
Similarly, the knot Floer homology \(\HFK(L;\field)\) categorifies the Alexander polynomial \(\Delta_L(t)\):
\[
\chi_{gr}\HFK(L;\field)
\coloneqq
\sum (-1)^h t^{\frac{1}{2}A} \dim \HFK{}^{h,A}(L;\field)=
\Delta_L(t)\cdot(t^{1/2}-t^{-1/2})^{\ell-1}
\]
where \(h\) denotes the homological grading (often called the Maslov grading) and \(A\) the Alexander grading (or more precisely, twice the Alexander grading from~\cite{OSHFK}).
The ungraded Euler characteristic with respect to the homological grading is equal to \(0\) if \(\ell>1\) and \(\Delta_L(1)=1\) if \(\ell=1\). 
Thus, in the first case, there are no solutions to the identity \(\dim\HFK(L;\field)=|\chi_h\HFK(L;\field)|\); in the second case, Problem~\ref{problem} reduces to the question about unknot detection for \(\HFK\), which was settled by Ozsváth and Szabó~\cite{OS-taut}. 

Since both \(\Khr\) and \(\HFK\) are bigraded homology theories, one is not restricted to taking Euler characteristics with respect to the homological grading.
Another choice is the \(\delta\)-grading, which is defined by \(\delta=\frac{1}{2}q-h\) and \(\delta=\frac{1}{2}A-h\), respectively. This corresponds to setting \(t=-1\) in the polynomial invariants:
\begin{align*}
\chi_{\delta}\Khr(L;\field)
&\coloneqq
\sum (-1)^{h+\frac{1}{2}q} \dim \Khr{}^{h,q}(L;\field) = V_L(-1)\\
\chi_{\delta}\HFK(L;\field)
&\coloneqq
\sum (-1)^{h+\frac{1}{2}A} \dim \HFK{}^{h,A}(L;\field) =\pm 2^{\ell-1}\cdot\Delta_L(-1)
\end{align*}
This choice seems to be particularly natural, since
\[
|V_L(-1)|=|\Delta_L(-1)|=\det(L)
\]
where \(\det(L)\) is the determinant of \(L\), a classical link invariant. 
It leads us to consider: 
\begin{definition}
	Given a link homology theory \(\Homology\), an {\bf A-link} is a link $L$ satisfying 
	\[
	\dim\Homology(L)=|\chi_\delta\Homology(L)|
	\]
\end{definition}
In the following, \(\Homology\) will be either \(\Khr\) or \(\HFK\) with coefficients in some field \(\field\). 
Again, there is a dependence on the homology theory $\mathbf{H}_*$ as well as on $\field$, and we will be adding the relevant modifiers where needed. 
Problem~\ref{problem} relates to the question
\begin{center}
	{\em what is an A-link?} 
\end{center} 
by which we are interested in knowing if this property can be described (geometrically or topologically) without reference to a link homology theory. 
Clearly, every thin link is an A-link. 
For \(\Homology=\HFK\), the converse is false, as the family of twisted Whitehead doubles of the trefoil knot that Hedden and Ording consider in~\cite{HO2008} illustrates;  see Example~\ref{exp:whitehead}.
For \(\Homology=\Khr\), we expect that all A-links are thin. This is closely related to the question of full support:

\begin{definition}\label{def:full}
	We say that a link homology theory $\Homology$ has {\bf full support} if for all links \(L\)  and all $\delta$-gradings \(i<j<k\), 
	\[
	\Big(
	\operatorname{\mathbf{H}_\mathit{i}}(L)\neq0
	~\text{ and }~
	\operatorname{\mathbf{H}_\mathit{k}}(L)\neq0
	\Big)
	\Rightarrow
	\operatorname{\mathbf{H}_\mathit{j}}(L)\neq0.
	\] 
\end{definition}

\begin{proposition} Given a link homology theory  \(\Homology\) with full support, a link is thin if and only if it is an A-link. 
\end{proposition}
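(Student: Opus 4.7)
The forward implication is immediate: if $\Homology(L)$ is supported only in $\delta$-grading $\delta_0$, then every nonzero summand of
\[
\chi_\delta \Homology(L) = \sum_\delta (-1)^\delta \dim \mathbf{H}_\delta(L)
\]
carries the same sign $(-1)^{\delta_0}$, so $|\chi_\delta \Homology(L)| = \dim \Homology(L)$. (Recall that $\delta = \tfrac12 q - h$ for $\Khr$ and $\delta = \tfrac12 A - h$ for $\HFK$, and in either case $(-1)^{h + \tfrac12 q} = (-1)^\delta$ and $(-1)^{h + \tfrac12 A} = (-1)^\delta$, so the exponent in $\chi_\delta$ really only depends on $\delta \bmod 2$.)

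For the converse, set $a_\delta := \dim \mathbf{H}_\delta(L)$ and apply the triangle inequality:
\[
|\chi_\delta \Homology(L)| = \Big| \sum_\delta (-1)^\delta a_\delta \Big| \leq \sum_\delta a_\delta = \dim \Homology(L).
\]
Equality holds if and only if the terms $(-1)^\delta a_\delta$ all share a common sign, i.e., if and only if the set $S := \{\delta : a_\delta \neq 0\}$ is contained in a single parity class modulo $2$. Thus the A-link hypothesis forces all nonzero $\delta$-gradings of $\Homology(L)$ to have the same parity.

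Finally, I would invoke the full support hypothesis. By Definition~\ref{def:full}, the set $S$ is an interval in $\Z$: if $i, k \in S$ with $i < k$, then every $j$ with $i < j < k$ also lies in $S$. An interval of integers all of whose elements share the same parity has at most one element, so $|S| \leq 1$, which is precisely thinness.

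There is no real obstacle here: the argument is a combination of the triangle inequality (which detects the parity concentration) with the convexity of the support guaranteed by Definition~\ref{def:full}. The only point worth flagging is the elementary computation that $(-1)^{h + \tfrac12 q}$ and $(-1)^{h + \tfrac12 A}$ agree with $(-1)^\delta$, ensuring that $\chi_\delta$ is genuinely the Euler characteristic in the $\delta$-grading and the triangle-inequality step applies verbatim.
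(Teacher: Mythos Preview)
Your proof is correct and follows essentially the same approach as the paper's: both arguments reduce the A-link condition to ``support lies in a single parity class'' and then invoke full support to conclude thinness. Your write-up simply unpacks the triangle-inequality step and the parity computation that the paper leaves implicit.
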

\begin{proof}
	A link \(L\) is an A-link if and only if \(\mathbf{H}_*(L)\) is supported in gradings of the same parity. Assuming \(\Homology\) has full support, the latter is equivalent to \(\mathbf{H}_*(L)\) being supported in a single grading, ie \(L\) being thin.
\end{proof}

To the best of the authors' knowledge, there is no known example of a link violating full support for Khovanov homology---indeed that this invariant has full support appears to be a folklore conjecture.

Our shift in perspective from thin links to A-links is primarily motivated by the observation that the latter are better-behaved with respect to tangle decompositions along Conway spheres, which is the focus of this article. 
Another reason is the interplay between L-spaces and A-links in the context of two-fold branched covers: There is a spectral sequence due to Ozsváth and Szabó relating the reduced Khovanov homology of a link and the Heegaard Floer homology of the mirror of the two-fold branched cover of the link \cite{OzsvathSzabo2005}. In particular, given an A-link, the associated two-fold branched cover is an L-space. However, the converse is not true: The Poincar\'e homology sphere is an L-space that may be obtained as the two-fold branched cover of the torus knot $10_{124}$, which is not an A-knot. 
Nonetheless, there is a sense in which A-link branch sets might be characterized by sufficiently large L-space surgeries on strongly invertible knots; see the discussion in Section~\ref{sec:examples}, as well as \cite[Conjecture 30]{Watson2017} and \cite{Watson2011} for related examples. 
 
\subsection*{Thin links and Conway spheres}
For simplicity, we now restrict to coefficients in the field of two elements \(\fieldTwoElements\).
We will focus on characterizing thin links and A-links
from the perspective of Conway spheres. 
This is motivated, in part, by results characterizing L-spaces in the presence of an essential torus. Given a three-manifold with torus boundary \(M\) and a parametrization of \(\partial M\) by a meridian \(\mu\) and a longitude \(\lambda\), the space of L-space fillings of \(M\) is defined by
\[
\mathcal{L}(M)
\coloneqq
\{
	\nicefrac{p}{q}\in\QPI
	\mid
	M(\nicefrac{p}{q}) \text{ is an L-space}
\}
\]
where \(M(\nicefrac{p}{q})\) is the closed three-manifold obtained by Dehn filling along the slope \(p\mu+q\lambda\in H_1(\partial M)\). Rasmussen and Rasmussen showed \cite[Proposition~1.3 and Theorem~1.6]{Rasmussenx2}:
\begin{theorem}
	For any three-manifold with torus boundary \(M\), \(\mathcal{L}(M)\) is either empty, a single point, a closed interval or \(\QPI\) minus a single point. 
\end{theorem}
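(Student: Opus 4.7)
The plan is to encode $M$ by its bordered Heegaard Floer invariant $\CFD(M)$, and then exploit the surgery exact triangle, together with a convexity argument in the Farey tessellation of $\QPI$, to classify $\Lspace(M)$.

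First I would pass to the bordered invariant $\CFD(M)$, which by the pairing theorem determines $\HFhat(M(p/q))$ as the homology of $\CFD(M)\bt\CFA(\nu_{p/q})$, where $\nu_{p/q}$ is the solid torus with meridional filling of slope $p/q$. The L-space condition for $M(p/q)$---namely $\dim\HFhat(M(p/q))=|H_1(M(p/q);\Z)|$---thus becomes a combinatorial condition on this box tensor product. Using the structure of $\CFD(M)$ as an immersed multicurve in $\partial M\smallsetminus\{\text{pt}\}$ (via Hanselman--Rasmussen--Watson), one sees that the L-space condition depends lower-semicontinuously on $p/q$, so $\Lspace(M)$ is automatically closed in $\QPI$.

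Next, convexity would follow from the surgery exact triangle. If $\alpha,\beta\in\Lspace(M)$ satisfy $\Delta(\alpha,\beta)=1$ and $\gamma$ is their Farey mediant, the triangle
\[
\HFhat(M(\alpha))\to\HFhat(M(\beta))\to\HFhat(M(\gamma))\to
\]
gives the inequality $\dim\HFhat(M(\gamma))\leq\dim\HFhat(M(\alpha))+\dim\HFhat(M(\beta))$. Away from the rational longitude $\ell$ of $M$, the orders of first homology are exactly additive, $|H_1(M(\gamma);\Z)|=|H_1(M(\alpha);\Z)|+|H_1(M(\beta);\Z)|$. Combining the two forces equality and shows $\gamma\in\Lspace(M)$. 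Inductively traversing the Farey tessellation, any Farey arc joining two elements of $\Lspace(M)$ lies entirely in $\Lspace(M)$, establishing connectedness.

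Finally, the closed connected subsets of the circle $\QPI$ are exactly the empty set, a single point, a closed arc, and the whole circle. The last possibility is ruled out by noting that $M(\ell)$ has $b_1\geq 1$ and hence cannot be a rational homology sphere, so $\ell\notin\Lspace(M)$; excluding $\ell$ yields the remaining option $\QPI\smallsetminus\{\ell\}$. The main obstacle in this plan is the convexity step: one must carefully match the direction of the surgery-triangle inequality with the additivity of $|H_1|$ across the Farey mediant, while simultaneously controlling what happens for Farey triangles one of whose vertices is the rational longitude. This boundary behavior is precisely what forces the fourth case $\QPI\smallsetminus\{\ell\}$ to appear, and is the substance of the Rasmussen--Rasmussen argument.
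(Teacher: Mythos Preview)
The paper does not prove this theorem; it simply quotes it from Rasmussen--Rasmussen \cite{Rasmussenx2}. So there is no proof in the paper to compare against, and your sketch should be judged on its own merits as an outline of the Rasmussen--Rasmussen argument.

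Your plan has the right core idea---the Farey-mediant induction via the surgery exact triangle is indeed how one propagates the L-space property---but the logical skeleton you wrap around it is internally inconsistent. You assert that $\Lspace(M)$ is closed in $\QPI$, then classify ``closed connected subsets of the circle $\QPI$'', and then conclude that one of the possibilities is $\QPI\smallsetminus\{\ell\}$. But $\QPI\smallsetminus\{\ell\}$ is \emph{not} closed in $\QPI$, so your closedness step already rules out the very case you need. The lower-semicontinuity heuristic breaks down exactly at the rational longitude, where $|H_1|$ ceases to be finite; the correct statement is that $\Lspace(M)$ is closed in $\QPI\smallsetminus\{\ell\}$, not in $\QPI$. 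Relatedly, $\QPI$ is a totally disconnected subspace of $\RPI$, so the phrase ``closed connected subsets of the circle $\QPI$'' does not mean what you want; what one actually proves is that $\Lspace(M)$ is the set of rational points of an interval in $\RPI$.

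There is also a genuine missing ingredient: nothing in your outline forces the endpoints of the interval to be rational. Farey convexity shows that between any two L-space slopes all intermediate rationals are L-space slopes, but it does not by itself exclude, say, $\Lspace(M)=\QPI\cap[\sqrt{2},\pi]$. Establishing that the boundary is always attained (giving a \emph{closed} interval with rational endpoints) or that $\Lspace(M)$ is all of $\QPI\smallsetminus\{\ell\}$ uses the finer structure of Floer-simple manifolds and is the real content of \cite[Theorem~1.6]{Rasmussenx2}. You gesture at this in your final paragraph, but it is worth being explicit that this is a separate argument beyond convexity, not just a matter of ``carefully matching directions''.
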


Denote the interior of \(\mathcal{L}(M)\) by \(\mathring{\mathcal{L}}(M)\). Hanselman,  Rasmussen, and the second author establish the following result \cite[Theorem~13]{HRW}:

\begin{theorem}[L-space Gluing Theorem]\label{thm:L_space_gluing}
	Let \(Y = M_0 \cup_h M_1\) be a three-manifold where the \(M_i\)  are boundary incompressible manifolds
	and \(h\co \partial M_1 \rightarrow \partial M_0\) is an orientation reversing homeomorphism between the torus boundaries. Then
	\(Y\) is an L-space if and only if 
	\[
	\mathring{\mathcal{L}}(M_0) \cup h(\mathring{\mathcal{L}}(M_1)) = \QPI
	\]
\end{theorem}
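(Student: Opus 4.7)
The plan is to compute $\HFhat(Y)$ via the immersed multicurve invariants of bordered Heegaard Floer theory. To each three-manifold $M_i$ with torus boundary one associates an immersed multicurve $\HFhat(M_i)$ in the punctured torus $\Torus_\bullet$, and the bordered pairing theorem identifies $\HFhat(Y)$ with the Lagrangian intersection Floer homology of $\HFhat(M_0)$ and $h(\HFhat(M_1))$ on $\Torus_\bullet$. Thus the theorem becomes a statement about when an intersection pairing between two immersed multicurves on the torus achieves its geometric minimum.

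First, I would translate $\Lspace(M)$ into the language of curves. The Dehn filling $M(\nicefrac{p}{q})$ corresponds, on the curve side, to gluing in a solid torus, which pairs $\HFhat(M)$ with a linear simple closed curve of slope $\nicefrac{p}{q}$ on $\Torus_\bullet$. The filling is an L-space exactly when this Floer pairing realizes the minimum possible rank, equivalently when the linear curve has minimal geometric intersection with every component of $\HFhat(M)$. Combining this with the Rasmussen--Rasmussen structure theorem quoted just before, one reads off that $\Lspacei(M)$ is the set of slopes around which small perturbations still meet every component of $\HFhat(M)$ minimally.

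Second, I would decompose $\HFhat(M_i)$ into its connected components. Each component is either a \emph{rational} component homotopic to a linear curve of some slope, or else an \emph{exceptional} component whose very presence constrains $\Lspace(M_i)$ to be at most a point. Since dimension of $\HFhat(Y)$ is the sum over all pairs of components of their minimum intersection numbers (no cancellation occurring in the differential at the minimum), $Y$ is an L-space if and only if every pair $(\alpha,\beta)$, one component of $\HFhat(M_0)$ and one of $h(\HFhat(M_1))$, pairs minimally on $\Torus_\bullet$.

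Finally, I would verify the equivalence. The easier direction is: if the covering condition fails at some slope $s \in \QPI$, then $s\notin \Lspacei(M_0)$ and $s\notin h(\Lspacei(M_1))$, which forces a pair of components (rational or exceptional) whose slopes lie on the ``wrong'' side of $s$, producing an unavoidable non-minimal intersection. The reverse implication is the main obstacle: one must show that the covering condition suffices to guarantee minimal pairing of \emph{all} pairs simultaneously, even in the delicate borderline cases where a slope lands on $\partial\Lspace(M_i)$ or where an exceptional component is present. This requires a careful case analysis based on the restricted combinatorial shape of exceptional components, together with the observation that two subsets of $\QPI$ of the form described by Rasmussen--Rasmussen can only cover $\QPI$ by overlapping in a configuration that precisely aligns with the slope compatibility needed for minimal pairing.
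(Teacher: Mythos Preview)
The paper does not prove this theorem. It is quoted as a background result and attributed to Hanselman, Rasmussen, and Watson \cite[Theorem~13]{HRW}; the present paper uses it only as motivation and an analogy for its own A-link and thin gluing theorems. So there is no proof in the paper to compare your proposal against.

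That said, your sketch is in the spirit of the argument in \cite{HRW}: one does pass to the immersed multicurve invariant on the punctured torus, interpret Dehn fillings as pairings with linear curves, and read off the L-space condition from minimal intersection. Be aware, though, that your ``second step'' oversimplifies the structure of components. The components of $\HFhat(M)$ are not neatly split into ``rational'' and ``exceptional'' in the way you describe; rather, the classification in \cite{HRW} is in terms of the loose/tight dichotomy for how a component wraps near the puncture, and the analysis of $\Lspace(M)$ goes through tangent slopes of peg-board representatives (as the present paper recalls in the discussion around Figure~\ref{fig:changing_direction}). Your final paragraph also glosses over the genuinely delicate part: showing that the covering condition forces simultaneous minimal pairing of all component pairs is the heart of the proof in \cite{HRW}, and ``careful case analysis'' understates what is required there. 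If you intend to write this up, you should consult \cite{HRW} directly rather than reconstructing it.
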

A similar result holds without the assumption that \(M_i\) be boundary incompressible; see Remark~\ref{rem:boundary_compressible}.

 A Conway tangle is an embedding of two intervals and a finite (possibly empty) set of circles into a closed 3-dimensional ball, such that the preimage of the boundary sphere is equal to the four endpoints of the intervals. 
 We consider Conway tangles up to isotopy fixing the boundary sphere pointwise.
 Given a such a tangle \(T\) and a link homology theory, we make analogous definitions:  
\begin{align*}
	\mathrm{A}(T)
	&\coloneqq
	\{
	\nicefrac{p}{q}\in\QPI
	\mid	
	T(\nicefrac{p}{q}) \text{ is an A-link}
	\}
	\\
	\Theta(T)
	&\coloneqq
	\{
	\nicefrac{p}{q}\in\QPI
	\mid	
	T(\nicefrac{p}{q}) \text{ is thin}
	\}
\end{align*}
	where \(T(\nicefrac{p}{q})\) is the \(\nicefrac{p}{q}\)-rational filling of \(T\), that is, the link obtained by closing the tangle $T$ with a \(\nicefrac{-p}{q}\)-rational tangle. 
	We call these the \textbf{A-link filling space} and the \textbf{thin filling space} of the tangle \(T\), respectively. 
	Strictly speaking, we should decorate each of these with the homology theory in question; we will use subscripts to do so where needed. 
	Often, however, the four spaces $\mathrm{A}_{\Kh}(T)$, $\mathrm{A}_{\HF}(T)$, $\Theta_{\HF}(T)$, and \(\Theta_{\Kh}(T)\) coincide. 
	Moreover, the central statements in this paper hold in both the Khovanov and Heegaard Floer setting. 
	Therefore, we will not specify the homology theory in the remainder of this introduction.

\begin{theorem}[Characterization of A-link filling spaces]\label{thm:charactisation:ALink:intro}
	For any Conway tangle \(T\), \(\mathrm{A}(T)\) is either empty, a single point or an interval in \(\QPI\). 
\end{theorem}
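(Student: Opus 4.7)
The plan is to use the immersed multicurve invariants $\HFT(T)$ and $\Khr(T)$ developed by the authors for Conway tangles, both of which live on the four-punctured sphere $\FourPuncturedSphere$. The key feature is that the link homology $\Homology(T(\nicefrac{p}{q}))$ of any rational filling is computed, up to grading shifts, as a Lagrangian intersection Floer homology between the tangle's multicurve invariant and the embedded arc on $\FourPuncturedSphere$ associated with the rational tangle of slope $\nicefrac{-p}{q}$. In particular, the total dimension of $\Homology(T(\nicefrac{p}{q}))$ equals the minimal geometric intersection number of these two multicurves, while the $\delta$-graded Euler characteristic $\chi_\delta$ equals the corresponding signed count in which each intersection contributes $\pm 1$ according to the parity of its $\delta$-grading.

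The first step is to translate the A-link condition into a geometric statement: the slope $\nicefrac{p}{q}$ lies in $\mathrm{A}(T)$ if and only if every intersection point of the tangle multicurve with the $\nicefrac{p}{q}$-slope arc carries a $\delta$-grading of the same parity. This reduces the theorem to a statement about the parity structure of intersection gradings on $\FourPuncturedSphere$ as the filling slope varies.

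The second step is a component-wise analysis carried out in a suitable cover of $\FourPuncturedSphere$, the punctured plane $\PuncturedPlane$. There, arcs of rational slope $\nicefrac{p}{q}$ lift to straight line segments of that slope, and a single component of the tangle invariant lifts to a controlled family of immersed arcs. The parity of the $\delta$-grading at an intersection is governed by local geometric data (a signed turning/winding contribution), and a direct verification shows that for each component the set of slopes satisfying the parity condition is either empty, a single slope, or a closed interval in $\QPI$. The underlying reason is a convexity phenomenon: if two slopes $s_0$ and $s_1$ satisfy the parity condition, rotating from $s_0$ to $s_1$ through an intermediate slope cannot introduce an intersection of the opposite parity without first producing a tangency, which would contradict the minimality of the geometric intersection count at one of the endpoints.

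Since the multicurve invariant has finitely many components and $\mathrm{A}(T)$ is the intersection of the corresponding component-wise sets (together with mixed constraints from intersections between distinct components), the conclusion follows from the fact that a finite intersection of closed intervals and points in the cyclically ordered circle $\QPI$ is either empty, a single point, or an interval. The main obstacle will be the component-wise interval statement: one must handle rational components and components with nontrivial local systems separately, and the local model for the $\delta$-grading parity differs in detail between the Khovanov and Heegaard Floer settings, though the final geometric conclusion is the same. I expect the bulk of the work to lie in this careful case analysis in $\PuncturedPlane$, and in verifying that the cyclic-order intersection of the resulting sets does not degenerate in unexpected ways.
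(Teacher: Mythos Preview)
Your proposal has the right starting point---reducing to Lagrangian Floer homology on $\FourPuncturedSphere$ and analyzing $\delta$-grading parities---but it is missing the key structural input and contains a step that is actually false.

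The crucial ingredient you do not invoke is the \emph{geography result} (Theorems~\ref{thm:geography_of_HFT} and~\ref{thm:geography_of_Khr}): every component of $\HFT(T)$ or $\Khr(T)$ is \emph{linear}, meaning rational or special, hence homotopic to a curve whose tangent slopes all lie near a single value in $\QPI$. Without this, your ``controlled family of immersed arcs'' has no content, and your tangency/convexity heuristic cannot be made rigorous. The paper uses linearity to show that for two linear curves of different slopes, $\HF(\gamma,\gamma')$ is supported in a \emph{single} $\delta$-grading $\delta(\gamma,\gamma')$, and that this function satisfies an algebraic \emph{transitivity} law on increasing triples of slopes (Theorem~\ref{thm:deltaDifference:HFT:transitivity}). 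The interval structure then comes from this transitivity, not from a geometric tangency argument: your claim that ``rotating from $s_0$ to $s_1$ cannot introduce an intersection of the opposite parity without producing a tangency'' does not control the parity of the surviving intersections and does not account for the two possible arcs between $s_0$ and $s_1$ on the circle.

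Your final step is also incorrect as stated: a finite intersection of intervals in the \emph{circle} $\QPI$ need not be an interval---two arcs can overlap in two disjoint arcs. More importantly, the A-link condition is not the conjunction of per-component conditions: you need all intersections to share the \emph{same} parity, so even if each component individually yields an interval, the cross-component consistency constraint is not captured by intersection. The paper handles this by abstracting the entire multicurve to a single ``line set'' $C\subset\QPI\times G\times\{0,1\}$ and proving the interval characterization (Theorem~\ref{thm:charactisation:ThinG:main}) purely combinatorially from symmetry and transitivity of a grading function $\gr$; the reduction from multicurves to line sets is then a separate step (Theorems~\ref{thm:reduction:HF} and~\ref{thm:reduction:Kh}) that relies essentially on linearity.
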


\begin{theorem}[Characterization of thin filling spaces]\label{thm:charactisation:Thin:intro}
	For any Conway tangle \(T\),	\(\Thin(T)\) is either empty, a single point, two distinct points or an interval in \(\QPI\).
\end{theorem}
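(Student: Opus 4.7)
The plan is to deduce the theorem from the characterization of A-link filling spaces (Theorem~\ref{thm:charactisation:ALink:intro}) by refining the analysis from a $\delta$-parity condition to a single-$\delta$-grading condition, using the $\delta$-graded refinement of the pairing theorems for the multicurve invariants $\HFT(T)$ and $\Khr(T)$. The inclusion $\Thin(T)\subseteq \mathrm{A}(T)$ is automatic, so by Theorem~\ref{thm:charactisation:ALink:intro} the set $\mathrm{A}(T)$ is either empty, a single point, or an interval in $\QPI$. In the first two cases the conclusion is immediate, so the substance lies in the case where $\mathrm{A}(T)$ is a non-degenerate interval.

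The next step is to translate thinness into a condition on the pairing between the tangle multicurve and a filling curve. For $p/q \in \mathrm{A}(T)$, the link homology $\mathbf{H}_{\ast}(T(p/q))$ is computed as a graded Lagrangian intersection theory between the multicurve invariant of $T$ and the embedded curve representing the $-p/q$ rational tangle. Each intersection generator carries a $\delta$-grading whose value splits into a part that is determined by the component of the multicurve on which the intersection lies and a part that depends on the filling slope. The A-link condition fixes the parity of all these $\delta$-gradings, while thinness is the strictly stronger condition that they all share a common integer value.

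Within the A-link interval, the thinness condition then becomes a finite system of equations on the slope, indexed by pairs of components (and pairs of distinguished intersection points along them) of the multicurve. The central geometric step is to show that each individual such equation is either tautologically satisfied on $\mathrm{A}(T)$ or cuts out at most two points of $\mathrm{A}(T)$. Intersecting finitely many such sets inside the interval $\mathrm{A}(T)$ then yields one of exactly four shapes: the full interval, two distinct points, a single point, or the empty set. The two-point case should correspond geometrically to tangle multicurves carrying components of two different dominant slopes, each of which individually aligns with a rational filling but not simultaneously.

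The main obstacle I anticipate is the precise grading bookkeeping for the immersed curves: tracking how the $\delta$-grading of each pairing generator varies as a function of the filling slope, establishing the crucial "at most two solutions per equation" bound, and handling the contributions of the exceptional (for instance figure-eight or rational) components of $\HFT(T)$ and $\Khr(T)$. This relies on a $\delta$-graded refinement of the pairing theorems together with the structural constraints on the shapes of the multicurves that are forced by $\mathrm{A}(T)$ being a connected interval; once these ingredients are in place, the passage from the parity statement of Theorem~\ref{thm:charactisation:ALink:intro} to the integral statement reduces to the elementary combinatorial argument sketched above.
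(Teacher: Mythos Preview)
Your high-level strategy---use $\Thin(T)\subseteq\mathrm{A}(T)$, dispose of the trivial cases, and then analyze $\Thin(T)$ inside the interval $\mathrm{A}(T)$---is viable, but the mechanism you describe for the last step is not what actually happens, and this is a genuine gap.

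You frame the thinness condition inside $\mathrm{A}(T)$ as a ``finite system of equations on the slope'' where each equation ``cuts out at most two points.'' This is the wrong picture. The key fact (which the paper isolates as the \ref{eq:transitivity} property of the $\delta$-grading, see Theorem~\ref{thm:deltaDifference:HFT:transitivity} and Theorem~\ref{thm:deltaDifference:Kh:transitivity}) is that for any two components $\gamma_i,\gamma_j$ of the multicurve, the difference $\delta(\Rational(s),\gamma_i)-\delta(\Rational(s),\gamma_j)$ is \emph{constant} as $s$ varies in any arc of $\QPI$ disjoint from the supporting slopes. Since the interior of $\mathrm{A}(T)$ is such an arc (Observation~\ref{obs:BoundaryOfThinSubseteqSlopes}), each of your ``equations'' is either satisfied on the entire interior or nowhere on it---there is no equation-solving going on. Consequently, either $\Thin(T)$ contains the whole interior of $\mathrm{A}(T)$ (and is then an interval with the same endpoints, by Lemma~\ref{lem:Thin:Intervals}), or $\Thin(T)$ is contained in the two boundary points of $\mathrm{A}(T)$. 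This is where the ``at most two points'' really comes from; your anticipated obstacle of ``tracking how the $\delta$-grading varies as a function of the filling slope'' dissolves once you see it does not vary at all.

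The paper's own route is different: rather than bootstrapping from Theorem~\ref{thm:charactisation:ALink:intro}, it proves both characterizations simultaneously from an abstract combinatorial model (Theorem~\ref{thm:charactisation:ThinG:main}) for line sets graded by $G\in\{\Z,\ZZ\}$, and then shows that both $\HFT$ and $\Khr$ instantiate this model. The $\ZZ$ case gives the A-link statement and the $\Z$ case gives the thin statement; the only difference is that over $\ZZ$ the two-point case is ruled out by a parity argument. Your approach, once corrected as above, would amount to a shortcut through Proposition~\ref{prop:spaces_coincide:main} (if $\ThinZ$ is an interval then $\ThinZ=\ThinZZ$) read in reverse.
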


Theorems~\ref{thm:charactisation:ALink:intro} and~\ref{thm:charactisation:Thin:intro} illustrate the difference between A-links and thin links. 
However, while Heegaard Floer A-links need not be Heegaard Floer thin, see Example~\ref{exp:whitehead}, we do not know any tangle for which \(\Thin(T)\) consists of two distinct points, neither in Heegaard Floer nor Khovanov theory. 
If such a tangle exists in Khovanov homology, then the rational fillings of this tangle include Khovanov A-links that are not Khovanov thin. In particular, this would establish that Khovanov homology does not have full support. 
Despite the (potential) existence of such pathological examples, we know that thin and A-link filling spaces coincide generically in the following sense:

\begin{proposition}\label{prop:spaces_coincide:intro}
	If \(\Thin(T)\) is an interval then \(\Thin(T)=\mathrm{A}(T)\). 
\end{proposition}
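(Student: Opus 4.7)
The plan is to exploit the elementary inclusion $\Thin(T)\subseteq\mathrm{A}(T)$, which holds for any tangle $T$ since any link that is supported in a single $\delta$-grading automatically satisfies $\dim\mathbf{H}_\ast(L)=|\chi_\delta\mathbf{H}_\ast(L)|$. Assuming $\Thin(T)$ is an interval, this forces $\mathrm{A}(T)$ to contain an interval; by Theorem~\ref{thm:charactisation:ALink:intro}, $\mathrm{A}(T)$ is empty, a point, or an interval, so $\mathrm{A}(T)$ must itself be an interval. Thus the work reduces to proving the reverse inclusion $\mathrm{A}(T)\subseteq\Thin(T)$, i.e.\ that under the hypothesis every A-link filling of $T$ is in fact thin.

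To do this, I would route the argument through the immersed multicurve invariants $\HFT(T)$ and $\Khr(T)$ on which the characterization theorems of this paper are based. The first step is to extract from the assumption that $\Thin(T)$ is an interval a strong structural constraint on the multicurve: the components of $\HFT(T)$ (respectively $\Khr(T)$) must be of a restricted type whose slopes and $\delta$-grading offsets are tightly controlled. I expect this to be essentially the same structural input that is used in the proof of Theorem~\ref{thm:charactisation:Thin:intro} to show that $\Thin(T)$, when it is more than two points, is convex in $\QPI$: namely, that the only multicurves compatible with an interval of thin slopes are built from rational components of a single slope together with possibly some loops whose contributions line up coherently in $\delta$-grading.

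With this structural input in hand, the second step is the pairing computation for an arbitrary filling slope $\nicefrac{p}{q}\in\mathrm{A}(T)$. The dimension $\dim\mathbf{H}_\ast(T(\nicefrac{p}{q}))$ counts intersection points of the multicurve with the rational filling curve of slope $\nicefrac{-p}{q}$, and the $\delta$-grading of each intersection point is determined geometrically. Under the structural constraint from step one, any two intersection points whose $\delta$-gradings differ by $1$ would have to cancel in $\chi_\delta$, while any pair differing by more than $1$ would force, by the geometry of the multicurve, the existence of further intersection points in an intermediate $\delta$-grading. The A-link identity $\dim\mathbf{H}_\ast=|\chi_\delta\mathbf{H}_\ast|$ rules out both phenomena simultaneously, leaving only the possibility that all intersection points occur in a single $\delta$-grading — that is, $T(\nicefrac{p}{q})$ is thin.

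The main obstacle will be the first step: pinning down precisely the form of the multicurve when $\Thin(T)$ contains an interval, and in particular verifying that no exotic loop components with exceptional $\delta$-grading shifts can slip in. This is where the hypothesis "interval" versus the exceptional "two distinct points" outcome of Theorem~\ref{thm:charactisation:Thin:intro} is crucial: a two-point thin filling space corresponds exactly to the configurations where loop-type pathologies can produce A-link fillings that are not thin, whereas the presence of an interval excludes these. Once this dichotomy is established at the level of multicurves, the pairing argument above closes the proof.
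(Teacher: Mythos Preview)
Your plan is not incorrect in spirit, but it is far more complicated than what is needed and leaves the essential steps as acknowledged gaps. The paper's proof is almost immediate once the abstract line-set framework of Section~\ref{sec:main} is in place, and it does not proceed via the inclusion $\Thin(T)\subseteq\mathrm{A}(T)$ followed by a delicate reverse inclusion.

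The key observation you are missing is this: by Theorems~\ref{thm:reduction:HF} and~\ref{thm:reduction:Kh}, both $\Thin(T)$ and $\mathrm{A}(T)$ are computed from the \emph{same} line set $C=\Phi(\Gamma)$, the only difference being whether the grading function $\gr$ takes values in $\Z$ or in $\ZZ$. Now Lemma~\ref{lem:Thin:Intervals} says that $\Thin_G(C)$ is an interval with endpoints $s_n,s_1$ if and only if $\gr(c_i,c_j)=0$ for all $i<j$ (after cyclic ordering). This condition, stated over $\Z$, manifestly descends under the quotient $\Z\to\ZZ$: if it holds in $\Z$ it holds in $\ZZ$. Hence $\ThinZZ(C)$ is an interval with the \emph{same} endpoints as $\ThinZ(C)$. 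Finally, whether an endpoint is included is governed solely by the $\varepsilon$-data (rational versus special), which is independent of $G$. This is Proposition~\ref{prop:spaces_coincide:main}, and the tangle statement follows by translating back via $\Phi$.

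By contrast, your Step~1 (``pinning down precisely the form of the multicurve'') is exactly the content of Lemma~\ref{lem:Thin:Intervals}, but you have not stated it and instead gesture at a structural classification. Your Step~2 (the pairing argument that gradings differing by more than~1 would force intermediate gradings) is simply not true at the level of a single multicurve component paired with a rational curve: Lemma~\ref{deflem:delta:HF:linear_curves} shows that a rational curve paired with a linear curve of a different slope lives in a \emph{single} $\delta$-grading, so there are no intermediate generators to appeal to. The real point is not about individual pairings but about the $\gr$-function on the line set as a whole, and once you see that, the $\Z$ versus $\ZZ$ distinction collapses trivially.
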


In contrast with \(\mathcal{L}(M)\), when \(\ALink(T)\) or \(\Thin(T)\) is an interval with two distinct boundary points the interval need not necessarily be closed. 
This suggests that analogues of the L-space Gluing Theorem have to be slightly more subtle. The proofs of all results in this paper rely on the homological invariants \(\HFT(T)\) and \(\Khr(T)\), which are generalizations of Heegaard Floer and Khovanov homology of links to Conway tangles \cite{HDsForTangles,pqMod,pqSym,KWZ}; these invariants are reviewed in Sections~\ref{sec:review:HFT} and~\ref{sec:review:Kh}, respectively. Given two Conway tangles \(T_1\) and \(T_2\), let \(\Gamma_1\) denote the Heegaard Floer/Khovanov tangle invariant of \(T_1^*\), the mirror of \(T_1\), and let \(\Gamma_2\) be the corresponding invariant of \(T_2\). 
The link \(T_1\cup T_2\) is obtained by identifying the two tangles according to the prescription in Figure~\ref{fig:tanglepairing}. 

\begin{definition}\label{def:mirror_slopes}
For a subset of slopes $X \in \Q P^1$, define its \textbf{mirror} as $X^{\mirror}=\{-s~|~ s\in X\}$.
\end{definition}

\begin{figure}[bt]
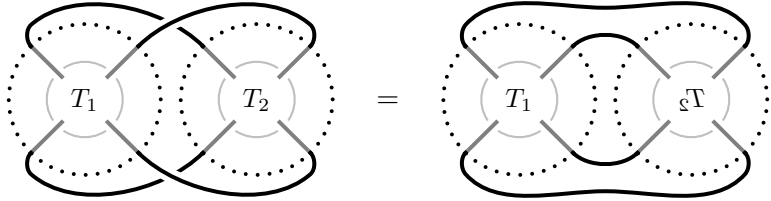

	\centering
	\(
	\tanglepairingI
	\quad = \quad
	\tanglepairingII
	\)
	\caption{Two Conway tangle decompositions defining the link \(T_1\cup T_2\). The tangle \protect\reflectbox{\(T_2\)} is the result of rotating \(T_2\) around the vertical axis. By rotating the entire link on the right-hand side around the vertical axis, we can see that \(T_1\cup T_2=T_2\cup T_1\).}
	\label{fig:tanglepairing} 
\end{figure}

\begin{theorem}[A-Link Gluing Theorem]\label{thm:gluing:ALink:intro}
	\(T_1\cup T_2\) is an A-link if and only if
	\begin{enumerate}
		\item 	\(
		\mirrorALink(T_1)
		\cup 
		\ALink(T_2)
		=
		\QPI
		\); and
		\item \label{exc:ALink:local} certain conditions indexed by 
		\(
		\mirrorBdryALink(T_1)
		\cap
		\BdryALink(T_2)
		\) 
		hold for \(\Gamma_1\) and \(\Gamma_2\). 
	\end{enumerate}
\end{theorem}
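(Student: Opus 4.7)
The plan is to leverage the immersed curve invariants $\Gamma_1$ and $\Gamma_2$ on the four-punctured sphere, together with the pairing theorems from the authors' earlier work, which identify the total dimension of $\Homology(T_1\cup T_2)$ with the minimal geometric intersection number $\langle \Gamma_1,\Gamma_2\rangle$ and the $\delta$-graded Euler characteristic with the corresponding algebraic intersection number. The A-link condition then translates exactly into the requirement that every geometric intersection point contributes coherently to the algebraic count, ie that geometric and algebraic intersection numbers agree in absolute value.

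The first step is to give a geometric reformulation of the rational filling spaces themselves: a slope $s\in\QPI$ lies in $\ALink(T)$ precisely when pairing $\Gamma_T$ with the simple curve of slope $-s$ (representing the rational closure) produces no excess of geometric over algebraic intersections. Using the classification of the components of $\Gamma$ into rational (linear) and special (figure-eight-like) pieces already developed in \cite{pqMod,pqSym,KWZ}, one shows that this compatibility condition on $s$ defines an interval (possibly empty, a point, or all of $\QPI$), recovering Theorem~\ref{thm:charactisation:ALink:intro}. Crucially, interior points of $\ALink(T)$ avoid both the slopes carried by rational components and the slopes of any special components, so the boundary $\BdryALink(T)$ records precisely the ``marginal'' slopes where some component of $\Gamma_T$ is almost (but not quite) aligned with $s$.

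For the ``if'' direction I would argue component-by-component: when $\mirrorALink(T_1)\cup\ALink(T_2)=\QPI$, every slope at infinity (ie every component's asymptotic direction) of one curve lies inside the compatibility interval of the other, and a local model calculation shows that each such pair contributes geometric equal to algebraic intersection. The only pairs for which this argument breaks are those indexed by slopes in $\mirrorBdryALink(T_1)\cap\BdryALink(T_2)$, and the extra hypothesis (\ref{exc:ALink:local}) is designed precisely to record the remaining local data (winding, special-vs-rational type, relative orientation of the matching components) needed to conclude that the local intersection contributions still add up to the Euler characteristic count. The converse is contrapositive: if the union condition fails at some $s$, then both $\Gamma_1$ and $\Gamma_2$ have components whose asymptotic behaviour interferes with $s$, forcing a pair of geometric intersections of opposite algebraic sign that cannot be removed, and similarly if a boundary condition at (\ref{exc:ALink:local}) fails.

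The main obstacle will be in formulating and verifying condition (\ref{exc:ALink:local}) uniformly. Different matching boundary slopes can encode genuinely different local configurations—a rational component of $\Gamma_1$ meeting a rational component of $\Gamma_2$ at the same slope, a rational meeting a special, or two specials meeting—and each case produces its own local intersection diagram whose collapse must be analysed separately. The bookkeeping needed to translate these local diagrams into a single clean statement (and to match it against the analogous boundary phenomena in the L-space Gluing Theorem~\ref{thm:L_space_gluing}) is the delicate part; the rest of the proof then reduces to summing local contributions once the decomposition of the $\Gamma_i$ and the pairing theorem are in hand.
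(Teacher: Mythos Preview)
Your framing via algebraic versus geometric intersection numbers misidentifies the mechanism that drives the proof. The crucial structural fact, established in the paper as Lemmas~\ref{deflem:delta:HF:linear_curves} and~\ref{deflem:delta:Kh:linear_curves}, is that for two linear curves \(\gamma,\gamma'\) of \emph{distinct} slopes, the Lagrangian Floer homology \(\HF(\gamma,\gamma')\) is supported in a \emph{single} \(\delta\)-grading \(\delta(\gamma,\gamma')\). So within any pair of components there is no sign cancellation at all---every intersection point contributes with the same parity. Your proposed contrapositive, in which failure of the union condition ``forces a pair of geometric intersections of opposite algebraic sign that cannot be removed'', therefore does not describe what actually goes wrong: the obstruction to being an A-link is not local cancellation inside a pair \((\gamma,\gamma')\), but rather that the single gradings \(\delta(\gamma,\gamma')\) differ in parity as \((\gamma,\gamma')\) ranges over \emph{different} component pairs in \(\Gamma_1\times\Gamma_2\).

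The paper handles this by observing that the function \(\delta(\cdot,\cdot)\) satisfies a symmetry identity and, more importantly, a transitivity identity \(\delta(\gamma,\gamma')+\delta(\gamma',\gamma'')=\delta(\gamma,\gamma'')\) for suitably ordered slopes (Theorems~\ref{thm:deltaDifference:HFT:transitivity} and~\ref{thm:deltaDifference:Kh:transitivity}). These two properties are then abstracted completely away from curves: Section~\ref{sec:main} works purely with finite ``line sets'' in \(\QPI\times G\times\{0,1\}\) and a function \(\gr\) satisfying symmetry, transitivity, and linearity, and proves the gluing theorem in that setting (Theorem~\ref{thm:glueing:ThinG:main}) by a cyclic-order case analysis on the supporting slopes. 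The translation back to tangles is a formal reduction (Theorems~\ref{thm:reduction:HF} and~\ref{thm:reduction:Kh}) sending each multicurve to the line set recording the slope, \(\delta\)-grading, and rational/special type of each component. Your proposal contains none of this machinery; in particular, without the transitivity property you have no tool for comparing \(\delta\)-gradings across different component pairs, which is the entire content of condition~(1). The local condition~(\ref{exc:ALink:local}) is also more specific than you suggest: it is precisely the requirement that at each shared boundary slope at least one multicurve be \(s\)-rational (plus, on the Heegaard Floer side, a complementarity condition on local systems), not a zoo of separate rational/special pairing diagrams.
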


The condition \eqref{exc:ALink:local} 
is easy to describe, once the relevant tangle invariants have been reviewed. Note that this condition is vacuously satisfied if \(
\mirrorBdryALink(T_1)
\cap
\BdryALink(T_2)
=
\varnothing
\), which is true generically. This allows us to obtain the following: 

\begin{corollary}\label{cor:one_direction:ALink:intro}
	Let \(\IntALink(T_i)\) denote the interior of \(\ALink(T_i)\) for \(i=1,2\). Then
	\[
	\mirrorIntALink(T_1)
	\cup 
	\IntALink(T_2)
	=
	\QPI
	\quad\Longrightarrow\quad
	\text{\(T_1\cup T_2\) is an A-link}
	\]
\end{corollary}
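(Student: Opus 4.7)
My plan is to derive this corollary directly from the A-Link Gluing Theorem (Theorem~\ref{thm:gluing:ALink:intro}) by verifying that the interior-covering hypothesis is strong enough to force both of its conditions. The argument should consist of two observations: one showing that condition~(1) of Theorem~\ref{thm:gluing:ALink:intro} is satisfied, and another showing that condition~\eqref{exc:ALink:local} is vacuously true.

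First, since $\IntALink(T_i)\subseteq\ALink(T_i)$ for each $i$, and mirroring is a self-homeomorphism of $\QPI$ that commutes with taking interiors, the hypothesis $\mirrorIntALink(T_1)\cup\IntALink(T_2)=\QPI$ immediately upgrades to $\mirrorALink(T_1)\cup\ALink(T_2)=\QPI$. This yields condition~(1) essentially for free.

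The key step is then to show that the intersection $\mirrorBdryALink(T_1)\cap\BdryALink(T_2)$ is empty, which would make condition~\eqref{exc:ALink:local} vacuously true. I would argue by contradiction: suppose some slope $y$ lies in both boundaries. By Theorem~\ref{thm:charactisation:ALink:intro}, each $\ALink(T_i)$ is empty, a single point, or an interval in $\QPI$. In each of these cases the topological boundary of the set is disjoint from its interior, so $y\notin\mirrorIntALink(T_1)$ and $y\notin\IntALink(T_2)$. This contradicts the hypothesis that the two interiors together cover all of $\QPI$.

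The main obstacle is merely bookkeeping: one needs to be careful that the notions of interior and boundary used for these subsets of $\QPI$ are consistent between the statement of Theorem~\ref{thm:charactisation:ALink:intro}, the definition of the A-link filling space, and the A-Link Gluing Theorem, and to verify that mirroring respects these topological notions. Once those conventions are pinned down, the corollary follows as an immediate formal consequence of Theorem~\ref{thm:gluing:ALink:intro}.
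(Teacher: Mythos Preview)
Your proof is correct and follows essentially the same approach as the paper: the hypothesis forces condition~(1) of the A-Link Gluing Theorem, and the boundary intersection is empty, making condition~\eqref{exc:ALink:local} vacuous. One small remark: you do not need Theorem~\ref{thm:charactisation:ALink:intro} to conclude that the boundary and interior of \(\ALink(T_i)\) are disjoint, since this holds for any subset of any topological space.
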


There is also an analogue of the A-link Gluing Theorem for thinness. 
However, due to the characterization results of A-link versus thin filling spaces, this analogue requires an additional hypothesis about the tangle invariants \(\HFT(T)\) and \(\Khr(T)\). For this, we introduce the notion of Heegaard Floer/Khovanov exceptionality for tangles (Definitions~\ref{def:HF:exceptional} and~\ref{def:Kh:exceptional}). 
Heegaard Floer exceptional tangles do exist, see Example~\ref{exp:whitehead}. 
We conjecture that Khovanov exceptional tangles do not exist. That such a conjecture is reasonable is supported by the following: 

\begin{proposition}\label{prop:exceptional:intro}
	If a Khovanov exceptional tangle exists then there exists a link whose Khovanov homology is supported in precisely two non-adjacent \(\delta\)-gradings.
\end{proposition}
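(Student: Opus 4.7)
The plan is to combine Theorems~\ref{thm:charactisation:ALink:intro} and~\ref{thm:charactisation:Thin:intro} to extract an A-link filling that is not thin, and then refine the choice of filling slope via the immersed multicurve invariant \(\Khr(T)\) to guarantee that the support consists of exactly two \(\delta\)-gradings.

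First I would unpack the hypothesis. By definition, a Khovanov exceptional tangle \(T\) should realise the two-point case of Theorem~\ref{thm:charactisation:Thin:intro}, so \(\ThinKh(T) = \{s_1, s_2\}\) for two distinct slopes in \(\QPI\). Since every thin link is an A-link, we have \(\ThinKh(T) \subseteq \ALinkKh(T)\); containing the two points \(s_1\) and \(s_2\), Theorem~\ref{thm:charactisation:ALink:intro} forces \(\ALinkKh(T)\) to be an interval in \(\QPI\) strictly larger than \(\ThinKh(T)\). In particular, \(\ALinkKh(T) \setminus \ThinKh(T)\) contains a whole sub-arc of rational slopes.

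Next I would choose any rational slope \(s\) in this difference. The filling \(T(s)\) is a Khovanov A-link that is not thin, so its reduced Khovanov homology is supported in \(\delta\)-gradings of a single parity and in at least two distinct such gradings. Any two same-parity gradings differ by an even integer and are therefore non-adjacent, so this already exhibits a link whose Khovanov homology occupies at least two non-adjacent \(\delta\)-gradings. To sharpen ``at least two'' to ``precisely two'', I would take \(s\) to be a Farey neighbor of the thin boundary slope \(s_1\) inside \(\ALinkKh(T)\), and work directly with the immersed multicurve invariant \(\Khr(T)\) reviewed in Section~\ref{sec:review:Kh}. The pairing of \(\Khr(T)\) with the rational tangle of slope \(s_1\) is thin and sits in a single \(\delta\)-grading \(\delta_0\); passing from \(s_1\) to a Farey neighbor \(s\) alters the intersection pattern only by a small, local perturbation near the tangential direction of slope \(s_1\). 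A local analysis of \(\Khr(T)\) at \(s_1\) should then pin the new intersection points into the single adjacent same-parity grading \(\delta_0 \pm 2\), producing the desired link.

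The main obstacle is exactly this local analysis: one must control \emph{precisely} where on the \(\delta\)-axis the new intersection points land, ruling out the possibility that they spread across several same-parity gradings. This requires leveraging the classification of \(\delta\)-graded components of \(\Khr(T)\) near an exceptional thin slope together with the explicit bigraded pairing formula for \(\Khr\), both of which the authors will have at their disposal by this point in the paper. Once this step is in place, varying over Farey approximations to \(s_1\) would quickly deliver a filling with the required two-grading support.
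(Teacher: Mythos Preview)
Your opening step misreads the definition of ``Khovanov exceptional''. Exceptionality is \emph{not} the statement that \(\ThinKh(T)\) consists of two points; it is the grading condition of Definition~\ref{def:Kh:exceptional} (via Definition~\ref{def:HF:exceptional} and ultimately Definition~\ref{def:exceptional}): the multicurve \(\Gamma=\Khr(T)\) is supported on exactly two slopes \(s,s'\), is \(s\)- and \(s'\)-consistent, and the constant value \(\delta(\gamma,\gamma')\) lies in \(\Z\smallsetminus\{0,-1\}\). From this one can deduce that \(\ThinKh(T)\subseteq\{s,s'\}\), but equality need not hold (it depends on whether \(\Gamma\) is \(s\)- or \(s'\)-special), so your appeal to Theorem~\ref{thm:charactisation:ALink:intro} via ``\(\ALinkKh(T)\) contains two points'' is not justified as written.

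More importantly, once you have the correct definition in hand, the Farey-neighbour local analysis is entirely unnecessary and the ``main obstacle'' you flag evaporates. The paper's proof (Proposition~\ref{prop:exceptional:Kh}) is a direct two-line application of transitivity: since \(|\delta(\gamma,\gamma')|>1\) or \(|\delta(\gamma',\gamma)|>1\), one chooses any slope \(t\) so that \((t,s,s')\) is increasing and applies Theorem~\ref{thm:deltaDifference:Kh:transitivity} to get \(\delta(\Rational(t),\gamma')=\delta(\Rational(t),\gamma)+\delta(\gamma,\gamma')\). The \(s\)- and \(s'\)-consistency built into the definition of exceptional guarantees that \emph{all} intersections of \(\Rational(t)\) with slope-\(s\) components land in one \(\delta\)-grading and \emph{all} intersections with slope-\(s'\) components land in the other; there are precisely two gradings by construction, and their difference exceeds \(1\). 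So the ``precisely two'' is automatic for every such \(t\), not just for carefully chosen Farey neighbours.
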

Once more, the question of full support is brought to the foreground.  

\begin{theorem}[Thin Gluing Theorem]\label{thm:gluing:Thin:intro}
Suppose at most one of \(T_1\) and \(T_2\) is exceptional. 
Then	\(T_1\cup T_2\) is thin if and only if
\begin{enumerate}
	\item \(
	\mirrorThin(T_1)
	\cup 
	\Thin(T_2)
	=
	\QPI
	\);
	and
	\item  certain conditions indexed by 
	\(
	\mirrorBdryThin(T_1)
	\cap
	\BdryThin(T_2)
	\) 
	hold for \(\Gamma_1\) and \(\Gamma_2\).
	\end{enumerate}
\end{theorem}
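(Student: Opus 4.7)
The plan is to follow the blueprint of the A-Link Gluing Theorem (Theorem~\ref{thm:gluing:ALink:intro}), leveraging the pairing theorem for the immersed-curve invariants $\Gamma_i = \mathbf{H}_*(T_i)$ (respectively $\mathbf{H}_*(T_1^*)$), which identifies $\mathbf{H}_*(T_1\cup T_2)$ with a Lagrangian Floer / morphism complex between $\Gamma_1$ and $\Gamma_2$, equipped with a $\delta$-grading determined by local geometric data at each intersection point. Under this identification, thinness of a filling $T(s)$ corresponds to the pairing of $\Gamma(T)$ against a curve of slope $-s$ producing generators concentrated in a single $\delta$-grading; this is exactly the geometric interpretation that underlies Theorem~\ref{thm:charactisation:Thin:intro}. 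Since $\Thin(T_i)\subseteq \ALink(T_i)$, the thin gluing statement should be seen as a refinement of the A-link gluing statement in which ``concentrated in a single parity of $\delta$'' is upgraded to ``concentrated in a single value of $\delta$''.

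For the forward direction, assume $T_1\cup T_2$ is thin. If some slope $s\in\QPI$ were missing from $\mirrorThin(T_1)\cup\Thin(T_2)$, then by the geometric characterisation above both $\Gamma_1$ and $\Gamma_2$ would contain components realising two distinct $\delta$-gradings when paired at slope $s$ (respectively $-s$). Combining these via the pairing theorem yields intersection points of $\Gamma_1$ with $\Gamma_2$ in two different $\delta$-gradings, contradicting thinness of $T_1\cup T_2$. This establishes condition~(1). The local condition~(2) is then derived by analysing the residual contributions along the (generically empty) boundary-slope intersection $\mirrorBdryThin(T_1)\cap\BdryThin(T_2)$, in the same spirit as in the A-link case, by locating the precise intersection points coming from boundary arcs of $\Gamma_i$ near these slopes.

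For the reverse direction, condition~(1) together with the hypothesis that at most one of $T_1,T_2$ is exceptional allows us to invoke Proposition~\ref{prop:spaces_coincide:intro} to replace $\Thin$ by $\ALink$ in the interior of the relevant intervals, so that Theorem~\ref{thm:gluing:ALink:intro} shows $T_1\cup T_2$ is an A-link. To upgrade this to thinness, we use the structure of $\Gamma_1$ and $\Gamma_2$: on the interiors of $\mirrorThin(T_1)$ and $\Thin(T_2)$ each tangle contributes a single $\delta$-grading, and condition~(2) forces these gradings to agree across the gluing, ruling out intersection points in two distinct $\delta$-values and thereby guaranteeing that the full complex is supported in a single $\delta$-grading.

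The main obstacle is the handling of exceptional tangles. When $\Thin(T_i)$ consists of two disjoint points, the geometric avatar of this phenomenon is a pair of curve components fixing two different $\delta$-gradings, and if both $T_1$ and $T_2$ were exceptional these gradings could combine so that $T_1\cup T_2$ is an A-link supported in two distinct $\delta$-gradings yet fails to be thin. The assumption that at most one of $T_1,T_2$ is exceptional is precisely what prevents this failure mode. I expect the most delicate step to be the case analysis verifying that condition~(2) indexed by $\mirrorBdryThin(T_1)\cap\BdryThin(T_2)$ captures exactly the configurations in which the two sides' $\delta$-gradings align, and that the non-exceptionality hypothesis is exactly strong enough to rule out the mismatched configurations while not overshooting.
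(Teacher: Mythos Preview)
Your proposal has a genuine gap: it does not identify the actual mechanism that makes the theorem work, and several of the sketched steps are either circular or incorrect as stated.

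\medskip

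\textbf{Forward direction.} You argue that if some slope $s$ lies outside $\mirrorThin(T_1)\cup\Thin(T_2)$, then each $\Gamma_i$ produces generators in two distinct $\delta$-gradings \emph{when paired against a rational curve of slope $s$}, and you then claim this yields two distinct $\delta$-gradings \emph{in the pairing $\HF(\Gamma_1,\Gamma_2)$}. These are different pairings, and the passage between them is the entire content of the argument. What is needed is a transitivity property of the $\delta$-grading: for linear curves with slopes forming an increasing (or decreasing) triple, $\delta(\gamma,\gamma')+\delta(\gamma',\gamma'')=\delta(\gamma,\gamma'')$. The paper isolates this as an axiom (\ref{eq:transitivity}) and builds the whole combinatorial theorem (Theorem~\ref{thm:glueing:ThinG:main}) on it via a case analysis on $|\Slopes_{\Gamma_1}\cap\Slopes_{\Gamma_2}|$. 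Your sketch does not touch this.

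\medskip

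\textbf{Reverse direction.} Your plan is to use Proposition~\ref{prop:spaces_coincide:intro} and Theorem~\ref{thm:gluing:ALink:intro} to first deduce that $T_1\cup T_2$ is an A-link, and then ``upgrade'' to thinness using condition~(2). This is circular: the upgrade from A-link to thin is exactly what is being proved, and you give no mechanism for it. Moreover, your interpretation of condition~(2) is incorrect. In the paper, condition~(2) is the requirement that at each boundary slope $s$ at least one of $\Gamma_1,\Gamma_2$ is $s$-rational (condition~\ref{local:Kh} / \ref{local:HF:s-rational}); this controls whether the Lagrangian Floer homology of two same-slope components \emph{vanishes} (a rational and a special curve of the same slope pair to zero), not whether gradings ``agree across the gluing''.

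\medskip

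\textbf{Exceptionality.} You locate the role of the non-exceptionality hypothesis in the reverse direction, as preventing an A-link-but-not-thin output. In fact it enters in the \emph{forward} direction as well: Example~\ref{exa:ExceptionalExample} exhibits exceptional line sets $C,D$ for which $(C,D)$ is thin while $\Thin(C)\cup\Thin(D)=\{0,\infty\}\neq\QPI$. In the paper's proof the hypothesis is invoked precisely in the subcase $|\Slopes_C\cap\Slopes_D|=2$ with $|\Slopes_C|=|\Slopes_D|=2$, where it guarantees that the grading equality $\gr(c',c)=\gr(d,d')$ forces one of these values to be $0$, hence one of the intervals $(s,t),(t,s)$ lies in $\Thin(C)$ and the other in $\Thin(D)$.

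\medskip

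The paper's route is to first prove a purely combinatorial version (Theorem~\ref{thm:glueing:ThinG:main}) for abstract ``line sets'' equipped with a grading function satisfying symmetry, transitivity and linearity, via a four-case analysis on $|\Slopes_C\cap\Slopes_D|$; then to construct a map $\Phi$ from multicurve invariants to line sets (Theorems~\ref{thm:reduction:HF} and~\ref{thm:reduction:Kh}) compatible with thinness, slopes, exceptionality and $s$-rationality; and finally to transport the combinatorial theorem through $\Phi$ using the pairing theorem. Your proposal gestures at the pairing theorem but skips the combinatorial core.
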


\begin{corollary}\label{cor:one_direction:Thin:intro}
	Let \(\IntThin(T_i)\) denote the interior of \(\Thin(T_i)\) for \(i=1,2\). Then
	\[
		\mirrorIntThin(T_1)
		\cup 
		\IntThin(T_2)
		=
		\QPI
	\quad\Longrightarrow\quad
	\text{\(T_1\cup T_2\) is thin}
	\]
\end{corollary}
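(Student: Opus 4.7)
My plan is to deduce this corollary from the Thin Gluing Theorem~\ref{thm:gluing:Thin:intro}, in direct parallel with how Corollary~\ref{cor:one_direction:ALink:intro} follows from Theorem~\ref{thm:gluing:ALink:intro}. The argument has three pieces: verify condition~(1) of the gluing theorem, show that condition~(2) is vacuous, and check that the exceptionality hypothesis is met.

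The first step is essentially immediate: since $\IntThin(T_i)\subseteq\Thin(T_i)$ for $i=1,2$, mirroring preserves inclusion, so $\mirrorIntThin(T_1)\cup\IntThin(T_2)=\QPI$ upgrades at once to $\mirrorThin(T_1)\cup\Thin(T_2)=\QPI$, which is condition~(1).

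For the second step, I would show that the indexing set $\mirrorBdryThin(T_1)\cap\BdryThin(T_2)$ is empty, so that condition~(2) holds vacuously. Suppose $s$ lies in $\mirrorBdryThin(T_1)=\mirrorThin(T_1)\setminus\mirrorIntThin(T_1)$. Then the covering hypothesis forces $s\in\IntThin(T_2)$, and therefore $s\notin\BdryThin(T_2)=\Thin(T_2)\setminus\IntThin(T_2)$. Hence the intersection is empty, as claimed.

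The third step is to confirm the hypothesis of Theorem~\ref{thm:gluing:Thin:intro} that at most one of $T_1,T_2$ is exceptional. The covering hypothesis forces at least one of $\IntThin(T_1),\IntThin(T_2)$ to be non-empty; by Theorem~\ref{thm:charactisation:Thin:intro} this occurs only when the corresponding $\Thin(T_i)$ is an interval, since the empty set, a single point, and a pair of distinct points all have empty interior in $\QPI$. When $\Thin(T_i)$ is an interval, Proposition~\ref{prop:spaces_coincide:intro} identifies it with $\ALink(T_i)$, and I would then invoke this equality to rule out exceptionality for the relevant tangle. The main obstacle I anticipate is precisely this last point: the definition of \emph{exceptional} lives in a later section of the paper, so one must check carefully that having $\Thin(T)=\ALink(T)$ an interval is incompatible with the tangle invariant being exceptional. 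Once that incompatibility is in hand, the hypotheses of Theorem~\ref{thm:gluing:Thin:intro} are all verified and the corollary follows.
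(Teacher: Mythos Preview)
Your first two steps match the paper's argument exactly. The difference lies in how you handle the exceptionality hypothesis.

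The paper does \emph{not} verify that at most one tangle is exceptional. Instead, it bypasses the hypothesis entirely: it goes back into the proof of the Gluing Theorem (Theorem~\ref{thm:glueing:ThinG:main}) and observes that the exceptionality assumption is used only in Case~2a, where \(|\Slopes_{C}\cap\Slopes_{D}|=2\). Under the covering hypothesis \(\IntThin(C)\cup\IntThin(D)=\QPI\), Lemma~\ref{lem:Thin:SlopeCaps_equal_ThinCaps} gives \(\Slopes_{C}\cap\Slopes_{D}=\BdryThin(C)\cap\BdryThin(D)=\varnothing\), so Case~2a never arises and the assumption is irrelevant.

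Your route also works, but the detour through Proposition~\ref{prop:spaces_coincide:intro} is unnecessary and does not by itself give what you need. The clean argument is direct: if \(T\) is exceptional, then by definition \(\Slopes_{\Khr(T)}\) (resp.\ \(\Slopes_{\HFT(T)}\)) consists of exactly two slopes \(\{s,s'\}\), and one checks from Lemma~\ref{lem:Thin:Intervals} (via the reduction in Theorems~\ref{thm:reduction:HF}/\ref{thm:reduction:Kh}) that \(\Thin(T)\subseteq\{s,s'\}\), hence \(\IntThin(T)=\varnothing\). Since the covering hypothesis forces at least one of \(\mirrorIntThin(T_1)\), \(\IntThin(T_2)\) to be non-empty, at least one tangle is non-exceptional, and Theorem~\ref{thm:gluing:Thin:intro} applies. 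So your approach is a valid black-box alternative to the paper's white-box one; it trades inspecting the proof of the Gluing Theorem for unpacking the definition of exceptional.
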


Corollary \ref{cor:one_direction:ALink:intro} and Corollary \ref{cor:one_direction:Thin:intro} provide the condition one checks in practice. Some examples are discussed in Section~\ref{sec:examples}.

 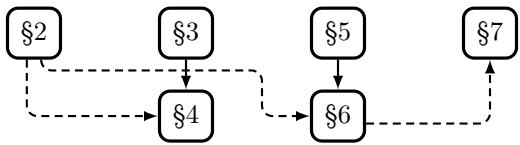
\begin{figure}[t]
 	\tikzstyle{secnode} = [rectangle,rounded corners,draw,very thick,inner sep=5pt]
 	\tikzstyle{secdpath} = [thick,densely dashed,rounded corners]
 	\begin{tikzpicture}[xscale=2,yscale=1.1]\small
 	\node (S2) at (0,1)[secnode] {\hyperref[sec:main]            {\S\ref*{sec:main}}};
 	\node (S3) at (1,1)[secnode] {\hyperref[sec:review:HFT]      {\S\ref*{sec:review:HFT}}};
 	\node (S4) at (1,0)[secnode] {\hyperref[sec:HFT:ThinFillings]{\S\ref*{sec:HFT:ThinFillings}}};
 	\node (S5) at (2,1)[secnode] {\hyperref[sec:review:Kh]       {\S\ref*{sec:review:Kh}}};
 	\node (S7) at (2,0)[secnode] {\hyperref[sec:Kh:ThinFillings] {\S\ref*{sec:Kh:ThinFillings}}};
 	\node (S8) at (3,1)[secnode] {\hyperref[sec:examples]        {\S\ref*{sec:examples}}};
 	\draw [->,thick] (S3) -- (S4);
 	\draw [->,thick] (S5) -- (S7);
 	\draw [->,secdpath] (S7.-15) -| (S8);
 	\draw [->,secdpath] (S2.-105) |- (S4);
 	\draw [->,secdpath] (S2.-75) |- (1.5,0.55) |- (S7);
 	\end{tikzpicture}
 	\caption{The paper's sections and their dependencies. Dashed arrows indicate dependencies that need only statements of results and not the machinery that arise in the proofs, so that the sections in each column may be read in isolation. }
 	\label{fig:map}
 \end{figure}
 
 \subsection*{How to read this paper} 
 The similarities between Heegaard Floer and Khovanov homology, highlighted by the main results of this paper, extend to the arguments that go into the proofs of these results. 
 In fact, the arguments are so similar that they can be presented without reference to either link homology theory. 
 This is done in Section \ref{sec:main}, which requires no specialized knowledge. 
 We then show that both the Heegaard Floer invariant \(\HFT(T)\) (Sections \ref{sec:review:HFT} and \ref{sec:HFT:ThinFillings}) and the Khovanov invariant \(\Kh(T)\) (Sections \ref{sec:review:Kh} and \ref{sec:Kh:ThinFillings})  fit into this general framework.  
 Section~\ref{sec:examples} discusses examples and applications of our main results, focussing primarily on thinness in Khovanov homology. 
 
 The sections of this paper need not be read in order, and depending on the interests of the reader certain sections can be skimmed or even skipped. A flow chart of dependencies is given in Figure \ref{fig:map}. 
 For instance, having read this introduction, the reader may wish to turn immediately to the Examples in Section \ref{sec:examples} in order to get a sense of what one observes in nature. 
 Section \ref{sec:main} is entirely combinatorial and makes no reference to any link homology theory. 
 Sections \ref{sec:review:HFT} and \ref{sec:HFT:ThinFillings} focus on knot Floer homology while Sections \ref{sec:review:Kh} and \ref{sec:Kh:ThinFillings} focus on Khovanov homology following a similar structure: In both cases, we review the relevant tangle invariant in the first section and establish our new results in the second.
\section{Abstracting the main argument}\label{sec:main}

This section lays the combinatorial foundation on which the main results of this paper rely. 
Towards characterizing thin links and A-links without reference to a given homology theory, we find it compelling that, relative to tangle decompositions, thinness is amenable to the elementary combinatorial abstraction described below.  

\subsection{Combinatorics of slopes and lines}
The space of slopes \(\QPI\subset \RPI \cong S^1\), endowed with the subspace topology, carries a natural cyclic order:
Given a finite set of slopes \(\{s_1,\dots,s_n\}\) for some \(n\geq3\), we write 
\[
s_1\leq s_2\leq \dots\leq s_n\leq s_1
\]
if the loop \([0,1]\ni t\mapsto s_1 \cdot e^{2\pi i t}\in S^1\subset\mathbb{C}\) based at \(s_1\) meets \(s_l\) not before \(s_k\) if \(k<l\); in short, we choose the counter-clockwise order, as illustrated in Figure~\ref{fig:Thin:Intervals}. 
We call a tuple \((s_1,\dots,s_n)\) that satisfies this condition \textbf{increasing}. 
Note that \(s_n\neq s_1\) for any such tuple, unless \(s_1=s_2=\dots=s_n\).
If the order is opposite, the tuple is called a \textbf{decreasing tuple}.  
For pairs of distinct slopes the interval notation \((s_1,s_2)\) denotes the set of slopes \(s\) for which \((s_1,s,s_2)\) is increasing. As usual, square and round brackets are used to indicate the inclusion and exclusion of the interval boundaries.

Let \(\Curves=\QPI\times\,G\times\{0,1\}\) where $G$ is either \(\Z\) or \(\ZZ\). When it is necessary to make the distinction between the choice of $G$, we will write \(\Curves=\CurvesG\). 
Elements \(c\in\Curves\) will be called \textbf{lines}; one might represent them geometrically as slopes together with decorations in \(G\times \{0,1\}\). (We choose the terminology \emph{line} for distinction with \emph{curve}, which will have a slightly different meaning in subsequent sections.) 
Given a triple \(c\in\Curves\), denote the first component, the slope of \(c\), by \(s(c)\); denote the second component, the grading of \(c\), by \(\gr(c)\); the third component is denoted by \(\varepsilon(c)\). A line \(c\) is \textbf{rational} if \(\varepsilon(c)=0\) and \textbf{special} if \(\varepsilon(c)=1\). 
Note that \(G\) acts on the set \(\Curves\), and we write \[n\cdot c=n\cdot (s,g,\varepsilon)=(s,g+n,\varepsilon)
\quad \text{for any }n\in G.
\]
Let \(\gr\co \Curves^2\rightarrow G\) be a function satisfying the following identities for all \(c,c',c''\in\Curves\): 
\begin{gather}
\gr(c,c')+\gr(c',c)
=
\begin{cases*}
0 & if \(s(c)=s(c')\)\\
-1 & otherwise
\end{cases*}
\tag{symmetry}\label{eq:symmetry}
\\
\gr(c,c')+\gr(c',c'')=\gr(c,c'') 
\quad\text{ if (\(s(c),s(c'),s(c''))\) is increasing}
\tag{transitivity}\label{eq:transitivity}
\\
\gr(n\cdot c,n'\cdot c')=\gr(c,c')+n'-n 
\tag{linearity}\label{eq:linearity}
\end{gather}

A finite non-empty collection of lines  \(C = \{c_1,\ldots,c_n\}\subset \Curves\) is called a \textbf{line set}. 
We call \(C\) \textbf{\(s\)-rational} if \(\varepsilon(c)=0\) for all \(\{c\in C~|~s(c)=s\}\), and \textbf{\(s\)-special} if \(\varepsilon(c)=1\) for all \(\{ c\in C ~|~ s(c)=s\}\). 

It is often useful to consider the underlying slopes realized by a given line set $C$ in the projection $\Curves\to\QPI$. 
For this purpose we define the set of \textbf{supporting slopes} as
\[\Slopes_C\coloneqq\{s(c)\mid c\in C\}\subset\QPI\]
We call a line set \(C\) \textbf{trivial} if all its lines are special and concentrated in a single slope; in other words, if \(\Slopes_C=\{s\}\) for some slope \(s\in\QPI\), and \(C\) is \(s\)-special. Otherwise, we call \(C\) \textbf{non-trivial}. 

Note that the quotient homomorphism \(\Z\rightarrow\ZZ\) induces a canonical map \(\CurvesZ\rightarrow\CurvesZZ\), which allows us to relate lines and line sets with respect to the two choices of~\(G\). Specifically, the image of a line set \(C\subset\CurvesZ\) under this map is a multi-set; after removing any duplicate elements, we obtain a line set in \(\CurvesZZ\), which by abuse of notation, we will also denote by~\(C\).

\begin{remark}\label{rem:CurveSetAsCoveringSpace}
	In Sections~\ref{sec:HFT:ThinFillings} and~\ref{sec:Kh:ThinFillings}, we will construct the function \(\gr\) with the desired properties in the Heegaard Floer and the Khovanov setting, respectively. 
	However, it is not hard to see that such a function exists and that it is essentially unique. 
	For this, it is useful to think of \(\CurvesZ\) in terms of a covering space of \(\QPI\). 
	More precisely, we can identify \(\Curves_0\coloneqq\QPI\times\Z\times\{0\}\subseteq\CurvesZ\) with the pullback of the universal cover \(p\co\R\rightarrow\RPI\) along the inclusion \(\QPI\hookrightarrow\RPI\). 
	This is done as follows: To define a map \(\eta\co\Curves_0\rightarrow\R\), fix some \(c_\ast\in\Curves_0\) as a basepoint and define \(\eta(c_\ast)\) to be some point \(x_\ast\in p^{-1}(s(c_\ast))\). 
	For each \(s\in\QPI\smallsetminus\{s(c_\ast)\}\), there is some element \(c_s\in\Curves_0\) of slope \(s\) such that \(\gr(c_\ast,c_s)=0\). 
	Let \(\gamma_s\) be an injective path from \(s(c_\ast)\) to \(s\) which goes in counter-clockwise direction. Define \(\eta(c_s)\) as the endpoint of the lift of \(\gamma_s\) starting at \(x_\ast\). 
	Then extend \(\eta\) equivariantly using the action of \(G=\Z\) on \(\Curves_0\) and the action by Deck transformations on \(\R\), where \(+1\) corresponds to a counter-clockwise loop based at \(s_\ast\). 
	Under this identification of \(\Curves_0\) with a subspace of \(\R\), the function \(\gr\) is simply the floor function of the signed distance:
	\[
	\gr(c,c')=\lfloor\eta(c')-\eta(c)\rfloor
	\quad\text{for any }c,c'\in\Curves_0.
	\]
	By taking the product with \(\{0,1\}\), one can easily extend this construction to \(\Curves\). 
	For \(\CurvesZZ\), a similar interpretation is possible---we simply replace the universal cover of \(\RPI\) by the connected two-fold cover.
\end{remark}

\begin{figure}[t]
	\centering
	\labellist \small
	\pinlabel $s_1$ at 130 0
	\pinlabel $s_2$ at 142 19
	\pinlabel $s_3$ at 148 37
	\pinlabel $s_n$ at 153 75
	\pinlabel $s$ at 122 109
	\tiny \pinlabel {\bf THIN} at 69 90
	\endlabellist
	\includegraphics[scale=0.75]{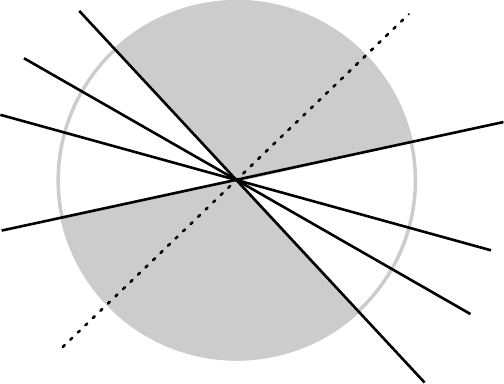}
	\caption{The thin interval relative to an increasing sequence of  slopes $(s_1,s_2,s_3,\ldots,s_n)$}\label{fig:Thin:Intervals}
\end{figure}

While the expression for the function~\(\gr\) in Remark~\ref{rem:CurveSetAsCoveringSpace} is very concise, we will not make any further use of this perspective. 
Instead, we will only use the properties of the function~\(\gr\), in particular the symmetry and transitivity property.

\begin{definition}\label{def:Consistent}Given \(s\in\QPI\), a line set \(C\) is called \textbf{\(s\)-consistent} if \(\gr(c,c')=0\) for all \(c,c'\in C\) with \(s(c)=s=s(c')\). 
\end{definition}
Notice that if $C$ contains a unique line $c$ for which $s(c)=s$ (or, indeed, if $s\notin \Slopes_C$) then it is $s$-consistent. So in particular, this is a condition that is relevant when multiple lines project to the same slope. 
In fact, if \(C\) is \(s\)-consistent there are at most two lines of slope \(s\) in \(C\), since $C\subset\Curves$. 
We will relax this point of view and allow multi-sets when discussing curves in the Heegaard Floer and Khovanov settings in later sections. 

\begin{definition}\label{def:Thin}
We call a pair \((C,D)\) of line sets \textbf{thin}, or more precisely \textbf{\(\bm{G}\)-thin}, if there exists some constant \(n\in G\) such that for all \((c,d)\in C\times D\),
\[
\begin{cases*}
(\varepsilon(c),\varepsilon(d))\in\{(0,1),(1,0)\}
& 
if \(s(c)= s(d)\)\\
\gr(c,d)=n 
& 
otherwise
\end{cases*}
\]
Note that if \((C,D)\) is thin, then so is \((D,C)\).
For any line set \(C\), we define 
\[
\Thin_G(C)
=
\{s\in\QPI\mid ((s,0,0),C)\text{ is thin}\}
\]
\end{definition}

We write \(\Thin\) in place of \(\Thin_G\) when the statements are true for both \(G=\Z\) and \(G=\ZZ\) or when this group is clear from the context. 

\begin{lemma}\label{lem:Thin:endpoints_are_not_contained}
	Given a line set \(C\), \(s(c)\not\in\Thin(C)\) for every rational \(c\in C\). 
\end{lemma}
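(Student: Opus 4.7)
The plan is to unpack the definitions and derive a direct contradiction from the thinness condition applied to a pair of rational lines sharing the same slope.

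Suppose, for contradiction, that there is a rational line $c \in C$ with $s(c) \in \Thin(C)$. Setting $s \coloneqq s(c)$, this means by definition that the pair $(\{(s,0,0)\}, C)$ is thin. The line $c_\ast \coloneqq (s,0,0)$ is itself rational, so $\varepsilon(c_\ast) = 0$. Apply the thin condition from Definition~\ref{def:Thin} to the pair $(c_\ast, c) \in \{c_\ast\} \times C$: since $s(c_\ast) = s = s(c)$, we fall into the first case of the definition, which requires $(\varepsilon(c_\ast), \varepsilon(c)) \in \{(0,1),(1,0)\}$. But $\varepsilon(c_\ast) = 0 = \varepsilon(c)$, giving the pair $(0,0)$, a contradiction.

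This argument is essentially a one-line definitional check, so I do not expect any genuine obstacle: the only thing to verify is that $(s,0,0)$ is indeed a rational line of the required slope, which it is by construction. No use of the grading function $\gr$ or its axioms \eqref{eq:symmetry}, \eqref{eq:transitivity}, \eqref{eq:linearity} is needed, since the obstruction already lives in the $\varepsilon$-coordinate. The statement can therefore be presented as a short paragraph that simply points out that the test line $(s,0,0)$ and the rational line $c \in C$ are both rational and share a slope, so the first clause of the thinness condition fails.
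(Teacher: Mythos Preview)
Your proof is correct and is exactly the unpacking of definitions that the paper has in mind; the paper simply states that this is ``an immediate consequence of the definitions'' without spelling out the contradiction you wrote down.
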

\begin{proof}
	This is an immediate consequence of the definitions.
\end{proof}

\begin{lemma}\label{lem:Thin:consistency}
	 Given a line set \(C\), suppose \(s_0\in\Thin(C)\). Then \(C\) is \(s\)-consistent for all slopes \(s\in\QPI\smallsetminus\{s_0\}\).
\end{lemma}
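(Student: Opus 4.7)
The plan is to apply the transitivity axiom for $\gr$ once, to the triple of lines $(c_0, c, c')$, where $c_0 = (s_0, 0, 0)$ and $c, c' \in C$ are any two lines (not necessarily distinct) of common slope $s \neq s_0$. First I would unpack the hypothesis $s_0 \in \Thin(C)$ via Definition~\ref{def:Thin}: there is a constant $n \in G$ such that $\gr(c_0, c) = n$ for every $c \in C$ whose slope differs from $s_0$. In particular, for the two lines $c, c'$ under consideration we have $\gr(c_0, c) = \gr(c_0, c') = n$.

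Next I would check that $(s_0, s, s)$ is an increasing tuple in the sense of the paper. Starting at $s_0$, the counter-clockwise loop meets $s$ exactly once, at some time $t_s$, and taking that single meeting time for both the second and third entries satisfies the ``meets $s_l$ not before $s_k$'' condition (with equality between the second and third meeting times). The clarifying note that $s_n \neq s_1$ in a non-constant increasing tuple is vacuous here because $s_0 \neq s$. Transitivity applied to $(c_0, c, c')$ then yields
\[
n + \gr(c, c') \;=\; \gr(c_0, c) + \gr(c, c') \;=\; \gr(c_0, c') \;=\; n,
\]
so $\gr(c, c') = 0$, which is precisely the $s$-consistency condition. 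The argument is uniform in $G = \Z$ and $G = \ZZ$, and if no line of $C$ has slope $s$ then $s$-consistency holds vacuously.

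The only potentially subtle point is accepting that the degenerate tuple $(s_0, s, s)$---whose second and third slopes coincide, even though its first and last differ---qualifies as an increasing triple, so that transitivity applies. Once this is granted, no further machinery is needed: no appeal to linearity, to symmetry, nor any case analysis on $\varepsilon(c)$ versus $\varepsilon(c')$, nor the covering-space picture of Remark~\ref{rem:CurveSetAsCoveringSpace}, is required.
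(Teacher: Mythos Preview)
Your proof is correct and follows essentially the same approach as the paper's: set $c_0=(s_0,0,0)$, use $s_0\in\Thin(C)$ to get $\gr(c_0,c)=\gr(c_0,c')$, then apply transitivity to the increasing triple $(c_0,c,c')$ to conclude $\gr(c,c')=0$. The paper's proof is terser and does not pause on the degeneracy $s(c)=s(c')$; your explicit discussion of why $(s_0,s,s)$ counts as increasing is a reasonable expansion of the same step.
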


\begin{proof}
	Let us write \(c_0=(s_0,0,0)\). Since \(s_0\in\Thin(C)\), \(\gr(c_0,c)=\gr(c_0,c')\) for any lines \(c,c'\in C\) of slopes different from \(s_0\). In particular, this holds for lines \(c,c'\) of the same slope \(s\neq s_0\). In this case, the triple $(c_0,c,c')$ is increasing, so by the~\ref{eq:transitivity} of the function \(\gr\), $\gr(c,c')=0$. 
\end{proof}

When $\Slopes_C$ is a singleton, there are four cases that arise for the set $\Thin(C)$ depending on consistency and the values of $\varepsilon$. These are recorded in the following lemma. 

\begin{lemma}\label{lem:Thin:SingleSupportingSlope}
	Given a line set \(C\), suppose \(\Slopes_C=\{s\}\) for some \(s\in\QPI\). Then 
	\[
	\Thin(C)=
	\begin{cases*}
	\{s\} & if \(C\) is not \(s\)-consistent and \(s\)-special
	\\
	\QPI & if \(C\) is \(s\)-consistent and \(s\)-special\\
	\varnothing & if \(C\) is not \(s\)-consistent and not \(s\)-special
	\\
	\QPI\smallsetminus\{s\} & if \(C\) is \(s\)-consistent and not \(s\)-special
	\end{cases*}
	\]
\end{lemma}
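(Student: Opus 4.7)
The plan is a short case analysis that unwinds Definition~\ref{def:Thin}. Fix \(s'\in\QPI\) and let \(c_0=(s',0,0)\). By definition, \(s'\in\Thin(C)\) iff there exists \(n\in G\) such that for every \(c\in C\),
\[
\begin{cases*}
\varepsilon(c)=1 & if \(s(c)=s'\),\\
\gr(c_0,c)=n & if \(s(c)\neq s'\),
\end{cases*}
\]
the first line coming from \(\varepsilon(c_0)=0\). Since \(\Slopes_C=\{s\}\), the two cases collapse according to whether \(s'=s\) or \(s'\neq s\), which I will handle separately.

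If \(s'=s\), every \(c\in C\) falls into the first bullet, so \(s\in\Thin(C)\) iff \(\varepsilon(c)=1\) for all \(c\in C\), i.e.\ iff \(C\) is \(s\)-special. If instead \(s'\neq s\), every \(c\in C\) falls into the second bullet, and I need to determine when the map \(C\ni c\mapsto \gr(c_0,c)\) is constant. For any two lines \(c,c'\in C\) the triple \((s',s,s)\) is increasing, so the \ref{eq:transitivity} property gives
\[
\gr(c_0,c')-\gr(c_0,c)=\gr(c,c').
\]
Therefore \(\gr(c_0,\cdot)\) is constant on \(C\) iff \(\gr(c,c')=0\) for all \(c,c'\in C\), which is exactly \(s\)-consistency. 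Hence for \(s'\neq s\), \(s'\in\Thin(C)\) iff \(C\) is \(s\)-consistent.

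Assembling the two bullets gives the four cases: \(\Thin(C)\) contains \(s\) iff \(C\) is \(s\)-special, and contains every \(s'\neq s\) iff \(C\) is \(s\)-consistent. This matches the stated table exactly. The only mild subtlety is the \ref{eq:transitivity} step, which requires the triple \((s',s,s)\) to be admissible as an increasing tuple; this is allowed because the condition \(s_n\neq s_1\) in the definition of increasing tuples is only imposed when the tuple is not constant, and here \(s'\neq s\) while the last two entries coincide.
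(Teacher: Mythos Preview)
Your proof is correct and takes essentially the same approach as the paper: both split into the cases \(s'=s\) and \(s'\neq s\), handle the former by directly reading off the \(\varepsilon\)-condition (the paper does this via Lemma~\ref{lem:Thin:endpoints_are_not_contained}), and handle the latter by applying \ref{eq:transitivity} to the increasing triple of slopes \((s',s,s)\) to reduce constancy of \(\gr(c_0,\cdot)\) to \(s\)-consistency. Your closing remark about the ``mild subtlety'' is slightly off---the observation \(s_n\neq s_1\) in the paper is a consequence rather than a constraint, and in any case \(s_1=s'\neq s=s_n\) here---but this does not affect the argument.
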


\begin{proof} 
	Suppose \(C\) is not \(s\)-consistent so that  there exist \(c,c'\in C\) such that \(\gr(c,c')\neq0\). Now consider some ``test'' slope \(s_0\neq s\) and let \(c_0=(s_0,0,0)\). The triple $(c_0,c,c')$ is increasing, so by \ref{eq:transitivity} $\gr(c_0,c)\ne \gr(c_0,c')$. Thus, $s_0\notin\Thin(C)$ and \(\Thin(C)\subseteq\{s\}\). 
Similarly, if \(C\) is \(s\)-consistent, \ref{eq:transitivity} implies 
\(\QPI\smallsetminus\{s\}\subseteq\Thin(T)\).
Finally, appealing to Lemma~\ref{lem:Thin:endpoints_are_not_contained}, \(s\in\Thin(C)\) if and only if all lines \(c\in C\) are special. 
\end{proof}

More generally, for a generic line set $C$ the set $\Thin(C)$ is an interval in $\QPI$, whenever it is non-empty. This behaviour can be characterized precisely as follows. 

\begin{lemma}\label{lem:Thin:Intervals}
  Given a line set \(C=\{c_1,\dots,c_n\}\) write \(s_i=s(c_i)\) and suppose \((s_1,\dots,s_n)\) is increasing with \(s_1\neq s_n\); see Figure~\ref{fig:Thin:Intervals}. Then the following conditions are equivalent:
  \begin{enumerate}
    \item \label{enu:Thin:Intervals:interior} There exists some \(s\in\Thin(C)\) with  \(s\in (s_n,s_1)\);
    \item \label{enu:Thin:Intervals:g_vanishes} \(\gr(c_i,c_j)=0\) for all \(i<j\);
    \item \label{enu:Thin:Intervals:trapped} \((s_n,s_1)\subseteq\Thin(C)\subseteq[s_n,s_1]\);
    \item \label{enu:Thin:Intervals:inclusion} \((s_n,s_1)\subseteq\Thin(C)\).
  \end{enumerate}
\end{lemma}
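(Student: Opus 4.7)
The implications \eqref{enu:Thin:Intervals:trapped}\(\Rightarrow\)\eqref{enu:Thin:Intervals:inclusion}\(\Rightarrow\)\eqref{enu:Thin:Intervals:interior} are immediate from the definitions, so the plan is to prove \eqref{enu:Thin:Intervals:interior}\(\Rightarrow\)\eqref{enu:Thin:Intervals:g_vanishes}\(\Rightarrow\)\eqref{enu:Thin:Intervals:trapped}. Both implications will be derived by applying \ref{eq:transitivity} of \(\gr\) to carefully chosen increasing triples of slopes, exploiting the hypothesis \(s_1\neq s_n\).

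For \eqref{enu:Thin:Intervals:interior}\(\Rightarrow\)\eqref{enu:Thin:Intervals:g_vanishes}, I would pick \(s\in(s_n,s_1)\cap\Thin(C)\) and set \(c_0=(s,0,0)\). Since \(s\notin\Slopes_C\), thinness of \((\{c_0\},C)\) forces \(\gr(c_0,c_i)=m\) to be a constant independent of \(i\). For any \(i<j\), the triple \((s,s_i,s_j)\) is increasing (as \(s\in(s_n,s_1)\) and \((s_1,\dots,s_n)\) is increasing), so \ref{eq:transitivity} gives \(\gr(c_i,c_j)=\gr(c_0,c_j)-\gr(c_0,c_i)=0\).

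For \eqref{enu:Thin:Intervals:g_vanishes}\(\Rightarrow\)\eqref{enu:Thin:Intervals:trapped}, I would establish the two inclusions separately. For \((s_n,s_1)\subseteq\Thin(C)\), pick any \(s\in(s_n,s_1)\) and let \(c_0=(s,0,0)\); since \(s\notin\Slopes_C\), the only condition to check for thinness of \((\{c_0\},C)\) is that \(\gr(c_0,c_i)\) is independent of \(i\). But for any \(i<j\), \((s,s_i,s_j)\) is increasing, and by \ref{eq:transitivity} combined with the hypothesis \(\gr(c_i,c_j)=0\), we get \(\gr(c_0,c_i)=\gr(c_0,c_j)\). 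For the inclusion \(\Thin(C)\subseteq[s_n,s_1]\), I would argue by contradiction: suppose \(s\in\Thin(C)\cap(s_1,s_n)\) and set \(c_0=(s,0,0)\). The slopes \(s_1\) and \(s_n\) are both different from \(s\), so thinness implies \(\gr(c_0,c_1)=m=\gr(c_0,c_n)\). The triple \((s_1,s,s_n)\) is increasing, so by \ref{eq:transitivity} and \ref{eq:symmetry}, \(\gr(c_1,c_n)=\gr(c_1,c_0)+\gr(c_0,c_n)=(-1-m)+m=-1\), contradicting hypothesis~\eqref{enu:Thin:Intervals:g_vanishes}.

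The main subtlety to watch for is the second inclusion in \eqref{enu:Thin:Intervals:g_vanishes}\(\Rightarrow\)\eqref{enu:Thin:Intervals:trapped}, where a candidate slope \(s\in(s_1,s_n)\) could a priori coincide with one of the intermediate \(s_i\); the trick is that one never needs to touch those intermediate lines because the endpoints \(c_1\) and \(c_n\) alone already produce the contradiction, and their slopes are guaranteed to differ from \(s\) by the choice of open interval and by the assumption \(s_1\neq s_n\). This last hypothesis is what makes the whole statement meaningful in the first place---without it, \((s_n,s_1)\) would fail to isolate a well-defined complementary arc.
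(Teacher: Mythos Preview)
Your proof is correct and follows essentially the same route as the paper's: the same cycle of implications, the same use of \ref{eq:transitivity} on the triple \((s,s_i,s_j)\) for \eqref{enu:Thin:Intervals:interior}\(\Rightarrow\)\eqref{enu:Thin:Intervals:g_vanishes} and the first inclusion of \eqref{enu:Thin:Intervals:g_vanishes}\(\Rightarrow\)\eqref{enu:Thin:Intervals:trapped}, and the same combination of \ref{eq:symmetry} and \ref{eq:transitivity} applied to \(c_1\) and \(c_n\) for the second inclusion. The only cosmetic difference is that the paper argues the second inclusion directly (showing \(\gr(c_0,c_1)\neq\gr(c_0,c_n)\) for any \(s'\in(s_1,s_n)\)) rather than by contradiction, but the computation is identical.
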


\begin{proof}
	The implications \(
	\eqref{enu:Thin:Intervals:trapped}\Rightarrow
	\eqref{enu:Thin:Intervals:inclusion}\Rightarrow
	\eqref{enu:Thin:Intervals:interior}\) are obvious. Moreover, the implication 
	\(
	\eqref{enu:Thin:Intervals:interior}\Rightarrow
	\eqref{enu:Thin:Intervals:g_vanishes}
	\) 
	follows from \ref{eq:transitivity} of the function \(\gr\), as in the proof of Lemma \ref{lem:Thin:consistency}. 
	So it suffices to show 
	\(
  \eqref{enu:Thin:Intervals:g_vanishes}\Rightarrow
  \eqref{enu:Thin:Intervals:trapped}.
  \)
  If \eqref{enu:Thin:Intervals:g_vanishes} holds, then, by \ref{eq:transitivity}, \(\gr((s',0,0),c_i)\) is constant for all \(s'\in(s_n,s_1)\), so \((s_n,s_1)\subseteq\Thin(C)\). 
  Moreover, since $s_1$ and $s_n$ differ \[\gr(c_n,c_1)=-1-\gr(c_1,c_n)=-1\] by \ref{eq:symmetry} of the function \(\gr\). Then for any \(s'\in(s_1,s_n)\)
  \[
  \gr((s',0,0),c_1)
  =
  \gr((s',0,0),c_n)+\gr(c_n,c_1)
  =
  \gr((s',0,0),c_n)-1
  \] 
  and hence \(\Thin(C)\cap(s_1,s_n)=\varnothing\). 
  This establishes \eqref{enu:Thin:Intervals:trapped}.
\end{proof}

Taken together, Lemmas \ref{lem:Thin:SingleSupportingSlope} and \ref{lem:Thin:Intervals} capture nearly all of the behaviour that is possible:

\begin{lemma}\label{lem:Thin:discreteSlopes}
	With the same notation as in Lemma~\ref{lem:Thin:Intervals}, suppose \(|\Slopes_C|>2\) and 
	\(\Thin(C)\subseteq\Slopes_C\). Then \(\Thin(C)\subseteq\{s_i\}\) for some \(i\). 
\end{lemma}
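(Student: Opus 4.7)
The plan is to argue by contradiction: suppose $\Thin(C)$ contains two distinct slopes $s, s'$, both of which lie in $\Slopes_C$ by hypothesis. The two slopes partition $\QPI\smallsetminus\{s,s'\}$ into two open counter-clockwise arcs $A^+=(s,s')$ and $A^-=(s',s)$, and since $|\Slopes_C|>2$, the set $T=\Slopes_C\smallsetminus\{s,s'\}$ is nonempty. Because $s, s'\in\Thin(C)$, there are constants $n_s, n_{s'}\in G$ such that $\gr((s,0,0),c)=n_s$ for all $c\in C$ with $s(c)\neq s$, and similarly for $n_{s'}$. I will then split into two cases according to the distribution of $T$ across $A^+$ and $A^-$.

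If $T$ meets both arcs, I pick $c_t\in C$ of slope $t\in T\cap A^+$ and $c_{t'}\in C$ of slope $t'\in T\cap A^-$, and evaluate $\gr((s,0,0),(s',0,0))$ in two ways. Applying \ref{eq:transitivity} to the increasing triple $((s,0,0),c_t,(s',0,0))$, together with \ref{eq:symmetry} to compute $\gr(c_t,(s',0,0))=-1-n_{s'}$, yields $\gr((s,0,0),(s',0,0))=n_s-n_{s'}-1$. Applying \ref{eq:transitivity} to the increasing triple $((s,0,0),(s',0,0),c_{t'})$ yields instead $\gr((s,0,0),(s',0,0))=n_s-n_{s'}$. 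These two values disagree, giving a contradiction.

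In the remaining case I may assume WLOG that $T\subseteq A^+$ (swap the roles of $s$ and $s'$ otherwise). I reorder $C$ into an increasing tuple $c_1,\dots,c_n$ starting with a line of slope $s$, ending with a line of slope $s'$, and with the $T$-lines in counter-clockwise order in between. My goal is to verify $\gr(c_i,c_j)=0$ for every $i<j$, at which point Lemma \ref{lem:Thin:Intervals} forces $(s_n,s_1)=A^-\subseteq\Thin(C)$; since $A^-$ is a nonempty open arc disjoint from $\Slopes_C$, this contradicts $\Thin(C)\subseteq\Slopes_C$. Pairs with $s(c_i)=s(c_j)$ are handled by $\sigma$-consistency via Lemma \ref{lem:Thin:consistency} (applied to whichever of $s,s'$ differs from $s(c_i)$), and pairs with $\{s(c_i),s(c_j)\}\neq\{s,s'\}$ admit a choice of $u\in\{s,s'\}\smallsetminus\{s(c_i),s(c_j)\}$ for which \ref{eq:transitivity} applied to the increasing triple $((u,0,0),c_i,c_j)$ immediately yields $\gr(c_i,c_j)=0$. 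The main obstacle is the single remaining pair $(c_i,c_j)$ with $s(c_i)=s$ and $s(c_j)=s'$, where no third $u$ is available: I will first bootstrap through a line $c_t$ of slope $t\in T$ by applying transitivity to $((s,0,0),c_i,c_t)$ and using the already-established equality $\gr(c_i,c_t)=0$ to obtain $\gr((s,0,0),c_i)=n_s$, and then apply transitivity to $((s,0,0),c_i,c_j)$ to conclude $\gr(c_i,c_j)=0$.
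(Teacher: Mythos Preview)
Your proof is correct and uses the same underlying tools as the paper (transitivity, symmetry, the consistency Lemma~\ref{lem:Thin:consistency}, and Lemma~\ref{lem:Thin:Intervals}), but the organization differs. The paper first reindexes so that $s_1=s$, $s_i=s'$, and $s_n$ is a third distinct slope; it then distinguishes $j<i$ versus $j=i$ (where $j$ is the first index with $s_j\neq s$). This split is equivalent to your Case~1/Case~2 dichotomy: $j<i$ forces a supporting slope in $A^+$ while the reindexing already placed $s_n\in A^-$, so $T$ meets both arcs; $j=i$ forces $T\subseteq A^-$. In the $j<i$ case the paper derives $\gr(c_j,c_n)=0=\gr(c_n,c_j)$ and invokes \ref{eq:symmetry} directly, whereas you instead compute $\gr((s,0,0),(s',0,0))$ two ways---a cosmetically different but equivalent contradiction. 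In the $j=i$ case the paper simply cites Lemma~\ref{lem:Thin:Intervals} to conclude $(s,s')\subset\Thin(C)$; your Case~2 makes the verification of condition~(2) of that lemma fully explicit, including the bootstrapping step for the pair with slopes $\{s,s'\}$, which the paper leaves to the reader. So your argument is slightly longer but more self-contained.
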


\begin{proof}	
	Suppose there exist two distinct slopes \(s,s'\in\Thin(C)\). Then by Lemma~\ref{lem:Thin:consistency}, \(C\) is \(t\)-consistent for all \(t\in\Slopes_C\). 
	Since \(|\Slopes_C|>2\), we may assume that, after potentially reindexing the lines, the slopes \(s_1=s\), \(s_i=s'\), and \(s_n\) are pairwise distinct, that \((s_1,\dots,s_n)\) is increasing, and that \(s_{i-1}\neq s_i\). Let \(j\) be minimal such that \(s_j\neq s\).
	Then, \(\gr(c_k,c_\ell)=0\) for all \(j\leq k<\ell\leq n\), since \(s\in\Thin(C)\). 
	In particular, \(\gr(c_j,c_n)=0\). 
	Since also \(s_i\in\Thin(C)\), we get in addition that \(\gr(c_n,c_k)=0\) for all \(1\leq k<i\). 
	This contradicts the \ref{eq:symmetry} of the function \(\gr\) unless \(i=j\). 
	However, if \(i=j\) then \((s,s')\subset\Thin(C)\) by the direction 
	\(
	\eqref{enu:Thin:Intervals:g_vanishes}\Rightarrow
	\eqref{enu:Thin:Intervals:inclusion}
	\) 
	of Lemma~\ref{lem:Thin:Intervals}, contradicting our initial assumption about \(\Thin(C)\). 
\end{proof}

Therefore, continuing with our observation preceding Lemma \ref{lem:Thin:discreteSlopes}, the only additional case that needs special attention is $|\Slopes_C|=2$. We can now collect all of the forgoing into a clean statement: 

\begin{theorem}[Characterization of \(G\)-thin filling spaces]\label{thm:charactisation:ThinG:main}
	Let \(C\) be a non-trivial line set. Then \(\Thin(C)\) is either empty, a single point, two distinct points, or an interval in \(\QPI\).
	For \(\ThinZZ(C)\), the third case does not arise.  
\end{theorem}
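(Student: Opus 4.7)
The plan is to perform a case analysis on the cardinality of \(\Slopes_C\), assembling the preceding lemmas which already do essentially all of the work.

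If \(|\Slopes_C|=1\), say \(\Slopes_C=\{s\}\), then non-triviality forces \(C\) to contain a rational line of slope \(s\), and Lemma~\ref{lem:Thin:SingleSupportingSlope} immediately yields \(\Thin(C)\in\{\varnothing,\QPI\smallsetminus\{s\}\}\); the latter is an interval. If \(|\Slopes_C|\geq 3\), then either \(\Thin(C)\subseteq\Slopes_C\), in which case Lemma~\ref{lem:Thin:discreteSlopes} forces \(\Thin(C)\) to contain at most one point, or \(\Thin(C)\) contains a slope lying in some open arc between consecutive supporting slopes, in which case Lemma~\ref{lem:Thin:Intervals} promotes that single witness to containment of the entire open arc, so \(\Thin(C)\) is an interval. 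Finally, if \(|\Slopes_C|=2\) with \(\Slopes_C=\{s_1,s_2\}\), the same dichotomy shows that \(\Thin(C)\) is either an interval or a subset of \(\{s_1,s_2\}\), giving at most two points. This establishes the main assertion.

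For the additional \(\ZZ\) statement I only need to exclude the case \(|\Slopes_C|=2\) with \(\ThinZZ(C)=\{s_1,s_2\}\). Suppose for contradiction this occurs. By Lemma~\ref{lem:Thin:consistency}, \(C\) is \(s_i\)-consistent for \(i=1,2\), so I may pick a single representative \(c_i\in C\) of slope \(s_i\) whose grading relations capture those of any other line of the same slope. Applying \ref{eq:transitivity} to the increasing triple \((s,s_2,s_1)\) for \(s\) in the open arc from \(s_1\) to \(s_2\) reduces the condition ``\(s\in\ThinZZ(C)\)'' to \(\gr(c_2,c_1)=0\); on the opposite arc one analogously gets \(\gr(c_1,c_2)=0\). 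However, \ref{eq:symmetry} gives \(\gr(c_1,c_2)+\gr(c_2,c_1)=-1\), which in \(\ZZ\) equals \(1\), so exactly one of the two summands vanishes. Hence \(\ThinZZ(C)\) contains at least one entire open arc, contradicting the assumption that it consists of two isolated points. The main obstacle is precisely this last step: over \(\Z\) both summands may be nonzero integers summing to \(-1\), so the argument breaks down there; it is the \(2\)-torsion in \(\ZZ\) that forces one of them to be zero and thereby drives the final case.
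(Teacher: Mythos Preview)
Your proof is correct and follows essentially the same approach as the paper's: a case split on \(|\Slopes_C|\), invoking Lemmas~\ref{lem:Thin:SingleSupportingSlope}, \ref{lem:Thin:Intervals}, and \ref{lem:Thin:discreteSlopes} for the first statement, and then using \(s\)-consistency together with the \ref{eq:symmetry} relation modulo~2 to rule out the two-point case for \(\ThinZZ\). The only cosmetic difference is that the paper cites condition~\ref{enu:Thin:Intervals:g_vanishes} of Lemma~\ref{lem:Thin:Intervals} to conclude that \(\ThinZZ(C)\) contains an open arc, whereas you unwind that implication directly via \ref{eq:transitivity}; the content is the same.
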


\begin{observation}\label{obs:BoundaryOfThinSubseteqSlopes}
	\(\BdryThin(C)\subseteq \Slopes_C\) for any line set \(C\) by Lemmas~\ref{lem:Thin:SingleSupportingSlope} and~\ref{lem:Thin:Intervals}.  Moreover, if \(C\) is non-trivial, \(\Slopes_C\) is disjoint from the interior of \(\Thin(C)\).
\end{observation}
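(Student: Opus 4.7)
The observation has two claims. For $\BdryThin(C) \subseteq \Slopes_C$, the plan is to show every $s \in \QPI \setminus \Slopes_C$ lies in the topological interior of either $\Thin(C)$ or its complement. If $|\Slopes_C|=1$, this is immediate: the four possibilities for $\Thin(C)$ listed in Lemma~\ref{lem:Thin:SingleSupportingSlope} each have boundary contained in $\Slopes_C$. If $|\Slopes_C|\geq 2$, let $(a,b)$ be the maximal open arc in $\QPI\setminus\Slopes_C$ containing $s$, with endpoints $a,b\in\Slopes_C$ distinct. Order the lines of $C$ as $(c_1,\dots,c_n)$ so that $s(c_1)=b$, $s(c_n)=a$, and $(s(c_1),\dots,s(c_n))$ is increasing. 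The equivalence \eqref{enu:Thin:Intervals:interior}$\Leftrightarrow$\eqref{enu:Thin:Intervals:inclusion} in Lemma~\ref{lem:Thin:Intervals} then forces $(a,b)$ to be either entirely contained in $\Thin(C)$ or entirely disjoint from it, so $s$ is interior to one side or the other.

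For the second claim, let $s_0\in\Slopes_C$ with $C$ non-trivial. The subcase $|\Slopes_C|=1$ is again immediate: non-triviality combined with Lemma~\ref{lem:Thin:SingleSupportingSlope} forces $\Thin(C)\in\{\varnothing,\QPI\setminus\{s_0\}\}$, neither of which contains $s_0$ in its interior. For $|\Slopes_C|\geq 2$ I plan to argue by contradiction. Assuming $s_0\in\IntThin(C)$, pick slopes $s^\pm\in\Thin(C)\setminus\Slopes_C$ close enough to $s_0$ on either side that $(s^-,s_0,s^+)$ is increasing and some auxiliary line $c'\in C$ has $s(c')$ in the complementary arc $(s^+,s^-)$, using $|\Slopes_C|\geq 2$ to produce $c'$ and the finiteness of $\Slopes_C$ to ensure $s^\pm$ avoid all other supporting slopes. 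Unpacking Definition~\ref{def:Thin} for $s^\pm$ yields constants $n^\pm\in G$ with $\gr((s^\pm,0,0),c)=n^\pm$ for every $c\in C$, since $s^\pm \notin \Slopes_C$.

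The contradiction then comes from computing $\gr((s^-,0,0),(s^+,0,0))$ two different ways. Applying \ref{eq:transitivity} to the increasing triple $((s^-,0,0),(s^+,0,0),c')$ yields $n^- - n^+$. On the other hand, picking $c_0\in C$ with $s(c_0)=s_0$ and applying \ref{eq:transitivity} to the increasing triple $((s^-,0,0),c_0,(s^+,0,0))$ produces $n^- + \gr(c_0,(s^+,0,0))$; using \ref{eq:symmetry} on the slope-disjoint pair $c_0, (s^+,0,0)$ to rewrite $\gr(c_0,(s^+,0,0)) = -1 - n^+$ gives $n^- - n^+ - 1$ instead. The resulting discrepancy of $1$ is a contradiction in both $G=\Z$ and $G=\ZZ$. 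The main obstacle is thus the careful setup of $s^\pm$ and $c'$ so that all the cyclic-order conditions used in transitivity hold simultaneously, but this is straightforward given that $s_0\in\IntThin(C)$ supplies arbitrarily close $s^\pm$ on either side and $\Slopes_C$ is finite.
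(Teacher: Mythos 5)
Your proof is correct. The first claim is handled exactly as the paper intends: Lemma~\ref{lem:Thin:SingleSupportingSlope} for $|\Slopes_C|=1$, and for $|\Slopes_C|\geq 2$ the equivalence \eqref{enu:Thin:Intervals:interior}$\Leftrightarrow$\eqref{enu:Thin:Intervals:inclusion} of Lemma~\ref{lem:Thin:Intervals} applied to each complementary arc of $\Slopes_C$, which shows every non-supporting slope is interior to $\Thin(C)$ or to its complement. For the second claim your contradiction argument (two computations of $\gr((s^-,0,0),(s^+,0,0))$ differing by the $-1$ from \ref{eq:symmetry}) is valid in both $\Z$ and $\ZZ$, and your setup of $s^\pm$ and $c'$ is sound since $\Slopes_C$ is finite and $\QPI$ has no isolated points. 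The only remark worth making is that this second half re-derives from scratch what the cited Lemma~\ref{lem:Thin:Intervals} already gives you: once $\IntThin(C)\neq\varnothing$ and $|\Slopes_C|\geq2$, a nonempty open set must meet $\QPI\smallsetminus\Slopes_C$, so condition \eqref{enu:Thin:Intervals:trapped} traps $\Thin(C)$ inside $[s_n,s_1]$ with $(s_n,s_1)\cap\Slopes_C=\varnothing$, and hence $\IntThin(C)\subseteq(s_n,s_1)$ is disjoint from $\Slopes_C$ immediately. Your direct grading computation is essentially the same $-1$-discrepancy mechanism that proves $\Thin(C)\cap(s_1,s_n)=\varnothing$ inside that lemma, so nothing is gained or lost — it is just longer than necessary given what has already been established.
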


\begin{proof}[Proof of Theorem~\ref{thm:charactisation:ThinG:main}]
	If \(|\Slopes_C|=1\), both statements follow from Lemma~\ref{lem:Thin:SingleSupportingSlope}. 
	So we can assume in the following that \(|\Slopes_C|\geq2\).
	Let us also assume that \(\Thin(C)\) contains some slope \(s\). 
	If \(s\not\in\Slopes_C\) then \(\Thin(C)\) is an interval by Lemma~\ref{lem:Thin:Intervals}. 
	If \(\Thin(C)\subseteq\Slopes_C\) and \(|\Slopes_C|>2\), the set \(\Thin(C)\) contains at most one slope by Lemma~\ref{lem:Thin:discreteSlopes}. This concludes the proof of the first statement. 
	Suppose \(|\ThinZZ(C)|=2\), say \(\ThinZZ(C)=\{s,s'\}\) for some distinct \(s,s'\in\QPI\). By Lemma~\ref{lem:Thin:discreteSlopes}, \(\Slopes_C=\{s,s'\}\).
	By Lemma~\ref{lem:Thin:consistency}, $C$ is \(s\)- and \(s'\)-consistent. 
	Then, modulo 2, either \(\gr(c,c')=0\) or \(\gr(c',c)=0\) for any two lines \(c,c'\in C\) with \(s(c)=s\) and \(s(c')=s'\).
	So the condition \eqref{enu:Thin:Intervals:g_vanishes} of Lemma~\ref{lem:Thin:Intervals} is met, and thus \(\ThinZZ(C)\) is a (closed) interval, contradicting our initial assumption.
\end{proof}

In the generic situation, the difference between \(G=\Z\) and \(G=\ZZ\) vanishes:

\begin{proposition}\label{prop:spaces_coincide:main}
	If \(\ThinZ(C)\) is an interval, \(\ThinZ(C)=\ThinZZ(C)\).
\end{proposition}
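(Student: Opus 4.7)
The plan is to prove both inclusions. The inclusion $\ThinZ(C) \subseteq \ThinZZ(C)$ is automatic, since reducing $\gr$ modulo $2$ preserves constancy and the $\varepsilon$-conditions of Definition~\ref{def:Thin} are $G$-independent.

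The reverse inclusion splits into cases. If $\ThinZ(C) = \QPI$, the containment is trivial. If $\ThinZ(C) = \QPI \smallsetminus \{s_0\}$, then Observation~\ref{obs:BoundaryOfThinSubseteqSlopes} forces $\Slopes_C = \{s_0\}$, so Lemma~\ref{lem:Thin:SingleSupportingSlope} applies; $C$ being $s_0$-consistent in $\Z$ (which Lemma~\ref{lem:Thin:SingleSupportingSlope} extracts from the $\ThinZ$ hypothesis) trivially implies $s_0$-consistency in $\ZZ$, while $s_0$-specialness is $G$-independent, so $\ThinZZ(C) = \QPI \smallsetminus \{s_0\}$ as well.

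The main case is when $\ThinZ(C)$ is a proper interval with distinct endpoints $s_n, s_1 \in \Slopes_C$ satisfying $(s_n, s_1) \subseteq \ThinZ(C) \subseteq [s_n, s_1]$. Ordering the lines of $C$ so that $(s_1, \ldots, s_n)$ is increasing, Lemma~\ref{lem:Thin:Intervals} gives $\gr(c_i, c_j) = 0$ in $\Z$ for all $i < j$; together with \ref{eq:symmetry} this forces $\gr(c_n, c_1) = -1$. For any $s' \in (s_1, s_n)$, \ref{eq:transitivity} applied to the increasing tuple $(s', s_n, s_1)$ yields $\gr((s',0,0), c_1) - \gr((s',0,0), c_n) = -1$, which is nonzero modulo $2$, so $s' \notin \ThinZZ(C)$. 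Hence $\ThinZZ(C) \subseteq [s_n, s_1]$, and since $\ThinZ(C) \supseteq (s_n, s_1)$, only the endpoints require further attention.

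At the endpoint $s_n$, the condition $s_n \in \Thin(C)$ decomposes into a $G$-independent $\varepsilon$-part (every slope-$s_n$ line of $C$ is special) and a $\gr$-part (the value $\gr((s_n, 0, 0), c)$ is constant across $c \in C$ with $s(c) \neq s_n$). The key point is that the $\gr$-part is automatic in $\Z$: fixing some $c_n \in C$ with $s(c_n) = s_n$ and applying \ref{eq:transitivity} to the increasing tuple $(s_n, s_n, s_j)$, together with $\gr(c_j, c_n) = 0$ from Lemma~\ref{lem:Thin:Intervals} and \ref{eq:symmetry}, yields $\gr((s_n, 0, 0), c_j) = \gr((s_n, 0, 0), c_n) - 1$ independently of $j$. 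An analogous argument at $s_1$ uses $\gr(c_1, c_j) = 0$ directly. Consequently $s_n \in \ThinZ(C) \Leftrightarrow s_n \in \ThinZZ(C)$ and similarly for $s_1$, completing the proof. The main subtlety is precisely this endpoint analysis: one must check that the interval hypothesis on $\ThinZ(C)$ is strong enough to force $\Z$-constancy of $\gr$ at the boundary slopes, not merely $\ZZ$-constancy.
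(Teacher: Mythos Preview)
Your proof is correct and follows essentially the same route as the paper's. The paper handles the case \(|\Slopes_C|=1\) via Lemma~\ref{lem:Thin:SingleSupportingSlope} (matching your first two subcases) and, for \(|\Slopes_C|\geq 2\), invokes Lemma~\ref{lem:Thin:Intervals} once more for \(\ZZ\) to get the same interval endpoints, then asserts without proof that endpoint membership is \(G\)-independent. Your argument spells out exactly that last assertion---showing the \(\gr\)-part at each endpoint is automatic in \(\Z\) under condition~(2) of Lemma~\ref{lem:Thin:Intervals}, so only the \(G\)-independent \(\varepsilon\)-part matters---which is a welcome clarification of what the paper leaves implicit.
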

\begin{proof}
	If \(|\Slopes_C|=1\), this follows from the observation that a line set is \(s\)-consistent with respect to \(G=\ZZ\) if it is \(s\)-consistent with respect to \(G=\Z\). 
	If \(|\Slopes_C|\geq2\) and \(\ThinZ(C)\) is an interval then by Lemma~\ref{lem:Thin:Intervals}, \(\ThinZZ(C)\) is an interval with the same endpoints. Moreover, whether an endpoint is contained in \(\Thin_G(C)\) is independent of \(G\).
\end{proof}

\subsection{Characterizing thin pairs of line sets}
We now turn to a characterization of thinness. Before stating the main theorem of this subsection, we discuss a certain exceptional class of line sets which requires special care, but which in the Heegaard Floer and Khovanov settings is ultimately a pathology that we have not observed in practice. 

\begin{definition}\label{def:exceptional}
	We call a line set \(C\) \textbf{exceptional} if  \(\Slopes_C=\{s,s'\}\) for distinct slopes \(s,s'\in\QPI\), \(C\) is \(s\)- and \(s'\)-consistent, but there are lines \(c,c'\in C\) with \(s(c)=s\) and \(s(c')=s'\) such that neither \(\gr(c,c')\) nor \(\gr(c',c)\) are equal to 0.
\end{definition}

Note that if \(G=\ZZ\), there do not exist exceptional line sets. In particular, we have the following result. 

\begin{proposition}
	If \(\ThinZ(C)=\{s,s'\}\) with \(s\neq s'\), then \(\Slopes_C=\{s,s'\}\) and \(\ThinZZ(C)=[s,s']\) or \([s',s]\).
\end{proposition}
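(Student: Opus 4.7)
The plan is to first pin down $\Slopes_C$, then extract enough about the grading function $\gr$ to describe $\ThinZZ(C)$.

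First, I would apply Observation~\ref{obs:BoundaryOfThinSubseteqSlopes}: every boundary point of $\ThinZ(C)$ lies in $\Slopes_C$, and since the two-point set $\ThinZ(C)=\{s,s'\}$ is its own boundary in $\QPI$, we obtain $\{s,s'\}\subseteq\Slopes_C$. Lemma~\ref{lem:Thin:discreteSlopes} then rules out $|\Slopes_C|>2$, since that would force $\ThinZ(C)$ to be contained in a single slope, contradicting $|\ThinZ(C)|=2$. Hence $\Slopes_C=\{s,s'\}$.

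Next, since $s,s'\in\ThinZ(C)$, Lemma~\ref{lem:Thin:consistency} makes $C$ both $s$- and $s'$-consistent, and the definition of $\ThinZ$ forces every line in $C$ to be special. Applying \ref{eq:transitivity} to the increasing triple $(s,s,s')$ together with $s$-consistency shows that $\gr(c,c')$, for $c\in C$ of slope $s$ and $c'\in C$ of slope $s'$, does not depend on the choice of $c$; a symmetric argument using $(s,s',s')$ and $s'$-consistency shows independence from $c'$. Denote this common value by $N\in\Z$.

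The identification of $\ThinZZ(C)$ then splits by the parity of $N$. Enumerate $C$ so that all slope-$s$ lines come before the slope-$s'$ lines; this yields an increasing tuple $(c_1,\ldots,c_n)$ with $s_1=s$ and $s_n=s'$. If $N\equiv 0\pmod{2}$, all reduced gradings between distinct lines of $C$ vanish modulo $2$, so condition~\eqref{enu:Thin:Intervals:g_vanishes} of Lemma~\ref{lem:Thin:Intervals} holds over $\ZZ$ and yields $(s',s)\subseteq\ThinZZ(C)\subseteq[s',s]$. If $N\equiv 1\pmod{2}$, then \ref{eq:symmetry} gives $\gr(c',c)=-1-N\equiv 0\pmod{2}$, so reversing the order (putting slope-$s'$ lines first) verifies the same condition over $\ZZ$ and yields $(s,s')\subseteq\ThinZZ(C)\subseteq[s,s']$. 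Finally, reducing the thinness constant modulo $2$ shows $\ThinZ(C)\subseteq\ThinZZ(C)$, so $\{s,s'\}\subseteq\ThinZZ(C)$ and the interval is closed.

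The main subtlety I anticipate is orientation bookkeeping: matching each parity of $N$ to the correct arc between $s$ and $s'$, and confirming that $s$- and $s'$-consistency survive the (possibly duplicate-identifying) reduction $\CurvesZ\to\CurvesZZ$ so that Lemma~\ref{lem:Thin:Intervals} can be invoked verbatim over $\ZZ$. These steps are routine once set up carefully, but they are where the argument could most easily go wrong.
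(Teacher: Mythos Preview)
Your argument is correct and follows essentially the same route as the paper: pin down $\Slopes_C=\{s,s'\}$ via Lemma~\ref{lem:Thin:discreteSlopes}, obtain $s$- and $s'$-consistency from Lemma~\ref{lem:Thin:consistency}, use parity to verify condition~\eqref{enu:Thin:Intervals:g_vanishes} of Lemma~\ref{lem:Thin:Intervals} over $\ZZ$, and conclude that $\ThinZZ(C)$ is an interval with endpoints $s,s'$. Your proof is in fact more explicit than the paper's on two points: you reach $|\Slopes_C|\geq 2$ directly via Observation~\ref{obs:BoundaryOfThinSubseteqSlopes} rather than separately excluding $|\Slopes_C|=1$, and you close the interval via the inclusion $\ThinZ(C)\subseteq\ThinZZ(C)$, whereas the paper simply refers back to the argument for Theorem~\ref{thm:charactisation:ThinG:main}.
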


\begin{proof}
	If \(|\Slopes_C|=1\), \(|\ThinZ(C)|\neq2\) by Lemma~\ref{lem:Thin:SingleSupportingSlope}. 
	For the case \(|\Slopes_C|\geq2\), the statement follows from the same arguments as the proof of the second statement of Theorem~\ref{thm:charactisation:ThinG:main}.
\end{proof}

\begin{theorem}[\(G\)-thin Gluing Theorem]\label{thm:glueing:ThinG:main}
	Let \((C,D)\) be a pair of non-trivial line sets. Suppose not both \(C\) and \(D\) are exceptional. Then \((C,D)\) is thin if and only if
	\begin{enumerate}
		\item \label{enu:thm:glueing:ThinG:main:global} \(
		\Thin(C)
		\cup
		\Thin(D)
		=\QPI
		\); and 
		\item \label{enu:thm:glueing:ThinG:main:local} for all \(s\in\BdryThin(C)\cap\BdryThin(D)\), at least one of \(C\) and \(D\) is \(s\)-rational.
	\end{enumerate}
\end{theorem}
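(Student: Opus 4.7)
The plan is to prove the biconditional in two directions, establishing (1) and (2) separately in the forward direction and constructing the constant $n \in G$ explicitly in the reverse.

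Forward direction. Assume $(C, D)$ is thin with constant $n$. Condition~(2) is the easier half: if $s \in \BdryThin(C) \cap \BdryThin(D)$, then by Observation~\ref{obs:BoundaryOfThinSubseteqSlopes}, $s \in \Slopes_C \cap \Slopes_D$, and if neither $C$ nor $D$ were $s$-rational, a pair of special slope-$s$ lines $c \in C, d \in D$ would contradict $\{\varepsilon(c), \varepsilon(d)\} = \{0, 1\}$. For (1), fix $s \in \QPI$ and argue by contradiction. Failure of $s \in \Thin(C)$ occurs in one of two ways: case~(A), $C$ contains a rational slope-$s$ line, or case~(B), the values $\gr((s, 0, 0), c)$ on $\{c \in C : s(c) \neq s\}$ are not constant. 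Similarly for $D$. The (A,A) combination immediately contradicts thinness via a rational-rational slope-$s$ pair. The mixed cases (A,B) and (B,A) are ruled out by linearity, which turns a rational slope-$s$ line $c = (s, g, 0) \in C$ into the identity $\gr((s, 0, 0), d) = \gr(c, d) + g = n + g$, constant in $d$, contradicting case~(B) on the $D$-side. The (B,B) case is the crux: choose witnesses $c_1, c_2 \in C$ and $d_1, d_2 \in D$, and use transitivity and symmetry to write $\gr(c_i, d_j) = \gr((s,0,0), d_j) - \gr((s,0,0), c_i) + \epsilon_{ij}$ with $\epsilon_{ij} \in \{0, -1\}$ determined by the cyclic order of $s, s(c_i), s(d_j)$; combined with $\gr(c_i, d_j) = n$ (whenever $s(c_i) \neq s(d_j)$), the resulting system forces $a_i := \gr((s,0,0), c_i)$ and $b_j := \gr((s,0,0), d_j)$ to collapse, contradicting case~(B). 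When a clean choice of witnesses with $s(c_i) \neq s(d_j)$ is unavailable, I invoke the hypothesis that not both $C, D$ are exceptional to rule out the remaining configuration.

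Reverse direction. Assume (1) and (2). Degenerate cases with $|\Slopes_C| = 1$ or $|\Slopes_D| = 1$ are dispatched via Lemma~\ref{lem:Thin:SingleSupportingSlope}: combined with non-triviality of $C, D$ and condition~(1), these reduce to a short enumeration. In the generic case, pick $c_0 \in C, d_0 \in D$ with $s(c_0) \neq s(d_0)$ and set $n \coloneqq \gr(c_0, d_0)$. Same-slope pairs $(c, d)$ with $s(c) = s(d) = s$ are handled as follows: $s \in \Slopes_C \cap \Slopes_D$, and Observation~\ref{obs:BoundaryOfThinSubseteqSlopes} places $s$ either in $\BdryThin(C)$ or outside $\Thin(C)$, and likewise for $D$; a short case analysis using condition~(1) forces $s \in \BdryThin(C) \cap \BdryThin(D)$ with exactly one of $s \in \Thin(C), s \in \Thin(D)$ (the case $s \in \Thin(C) \cap \Thin(D)$ is ruled out by (2), since $\Thin$-membership forces $s$-specialness on both sides while (2) demands one to be $s$-rational); condition~(2) then yields that the other set is $s$-rational while the first is $s$-special by definition of $\Thin$, giving $\{\varepsilon(c), \varepsilon(d)\} = \{0, 1\}$. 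For distinct-slope pairs $(c, d)$, I show $\gr(c, d) = n$ by using (1) to pick a test slope $s^\ast \in \Thin(C) \cup \Thin(D)$ and chaining transitivity identities through $(s^\ast, 0, 0)$, using the local constant of either $((s^\ast, 0, 0), C)$ or $((s^\ast, 0, 0), D)$ to transport the value from $\gr(c_0, d_0)$ to $\gr(c, d)$.

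Main obstacle. The principal hurdle is the propagation argument for $n$ in the reverse direction. Condition~(1) expresses $\QPI$ as the union of two sets each carrying a local grading constant, and the task is to show these glue to a single global $n$. In the generic situation where the interiors of $\Thin(C)$ and $\Thin(D)$ overlap, a test slope in the overlap pins down the matching uniformly. The delicate situation arises when $\Thin(C)$ and $\Thin(D)$ meet only at their boundary slopes: here condition~(2), the symmetry identity $\gr(c, c') + \gr(c', c) = -1$ for distinct-slope pairs, and the non-exceptionality hypothesis must combine to ensure the two local constants agree across the shared boundary slope, ruling out the pathological alignment that exceptional pairs would otherwise permit.
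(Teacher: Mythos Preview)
Your strategy of splitting into forward/reverse directions and handling same-slope versus distinct-slope pairs separately is reasonable, but both directions share an unaddressed difficulty: you never establish that the slopes of \(C\) and \(D\) do not \emph{intertwine} on \(\QPI\). This is the missing structural ingredient. In the reverse direction, suppose you pick a test slope \(s^\ast \in \IntThin(C)\) and write \(e = (s^\ast, 0, 0)\). Transitivity gives \(\gr(c, d) = \gr(c, e) + \gr(e, d)\) or \(\gr(c, d) = \gr(c, e) + \gr(e, d) + 1\), according to whether \((s(c), s^\ast, s(d))\) is increasing or not. The term \(\gr(c, e) = -1 - m_C\) is indeed constant in \(c\), but \(\gr(e, d)\) need not be constant in \(d\) (you chose \(s^\ast\) in \(\Thin(C)\), not \(\Thin(D)\)), and the \(+1\) correction depends on where \(s(d)\) sits relative to \(s(c)\) in the cyclic order. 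If some \(s(d)\) lies strictly inside the arc spanned by \(\Slopes_C\), this correction flips as \(c\) varies, and the transport fails. Your ``Main obstacle'' paragraph names the difficulty but does not resolve it; saying that condition~(2), symmetry, and non-exceptionality ``must combine'' is not an argument.

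The paper avoids this entirely by organizing the proof as a case analysis on \(|\Slopes_C \cap \Slopes_D|\). The key observation (its Case~0) is that, for disjoint slope sets, \(\gr(c, d)\) is constant if and only if, after cyclic permutation, \((s_1, \dots, s_m, t_1, \dots, t_n)\) is increasing \emph{and} \(\gr(c_i, c_j) = 0 = \gr(d_i, d_j)\) for \(i < j\); the non-intertwining clause is exactly what kills the cyclic-order ambiguity, and it is shown to be equivalent to \(\Thin(C)\) and \(\Thin(D)\) being overlapping intervals. Shared-slope cases are then handled one at a time, and the non-exceptionality hypothesis enters only in the narrow sub-case \(\Slopes_C = \Slopes_D\) with exactly two slopes. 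Your forward-direction (B,B) argument has a parallel gap: the clean-witness computation is correct, but invoking non-exceptionality to cover the remaining configurations is only appropriate in that same narrow sub-case; when \(|\Slopes_C| > 2\) or \(|\Slopes_D| > 2\), one instead needs to exploit the additional lines with slopes outside \(\Slopes_C \cap \Slopes_D\), which you do not do.
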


We first prove a technical lemma that will simplify the proof of Theorem~\ref{thm:glueing:ThinG:main}.

\begin{lemma}\label{lem:Thin:SlopeCaps_equal_ThinCaps}
	Let \((C,D)\) be a pair of non-trivial line sets. Suppose 
	\(
	\Thin(C)
	\cup
	\Thin(D)
	=\QPI
	\).
	Then \(\Slopes_C\cap\Slopes_D=\BdryThin(C)\cap\BdryThin(D)\).
\end{lemma}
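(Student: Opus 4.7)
The inclusion $\BdryThin(C) \cap \BdryThin(D) \subseteq \Slopes_C \cap \Slopes_D$ is immediate from Observation~\ref{obs:BoundaryOfThinSubseteqSlopes}, which gives $\BdryThin(X) \subseteq \Slopes_X$ for every line set $X$. So all the work lies in the reverse inclusion: fix $s \in \Slopes_C \cap \Slopes_D$; the goal is to show $s \in \BdryThin(C) \cap \BdryThin(D)$.

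I would approach this topologically, viewing $\Thin(C)$ and $\Thin(D)$ as subsets of $\QPI \subseteq S^1$. Since $C$ and $D$ are non-trivial, the second half of Observation~\ref{obs:BoundaryOfThinSubseteqSlopes} tells us that $s$ lies outside the interior of $\Thin(C)$ and outside the interior of $\Thin(D)$. Hence, to conclude $s \in \BdryThin(C) \cap \BdryThin(D)$, it suffices to verify that $s$ lies in the closure of both $\Thin(C)$ and $\Thin(D)$.

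For this, I would argue by contradiction. Suppose $s$ is not in the closure of $\Thin(C)$. Then $s$ admits an open neighbourhood $U \subseteq \QPI$ with $U \cap \Thin(C) = \varnothing$. The hypothesis $\Thin(C) \cup \Thin(D) = \QPI$ then forces $U \subseteq \Thin(D)$, placing $s$ in the interior of $\Thin(D)$. This contradicts Observation~\ref{obs:BoundaryOfThinSubseteqSlopes} applied to $D$, since $s \in \Slopes_D$ and $D$ is non-trivial. A symmetric argument shows $s$ is in the closure of $\Thin(D)$, and the lemma follows.

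I do not anticipate any serious obstacle here: once one notices that Observation~\ref{obs:BoundaryOfThinSubseteqSlopes} and the covering hypothesis together pull $s$ simultaneously into the closure and out of the interior of each of $\Thin(C)$ and $\Thin(D)$, the argument is purely topological and bypasses the case analysis from Theorem~\ref{thm:charactisation:ThinG:main} entirely. The only point that needs care is to keep track that the two uses of Observation~\ref{obs:BoundaryOfThinSubseteqSlopes} serve different roles, the first giving $\BdryThin \subseteq \Slopes$ and the second, which requires non-triviality, separating $\Slopes$ from the interior of $\Thin$.
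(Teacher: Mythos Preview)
Your argument is correct and is genuinely cleaner than the paper's. The paper establishes the inclusion \(\Slopes_C\cap\Slopes_D\subseteq\BdryThin(C)\cap\BdryThin(D)\) by a four-way case split on \(|\Slopes_C|\) and \(|\Slopes_D|\), invoking Lemma~\ref{lem:Thin:SingleSupportingSlope}, Lemma~\ref{lem:Thin:Intervals}, and Theorem~\ref{thm:charactisation:ThinG:main} separately in each case. Your approach bypasses this entirely: you use only Observation~\ref{obs:BoundaryOfThinSubseteqSlopes} (both halves) together with the elementary topological fact that if two sets cover \(\QPI\), any point outside the closure of one lies in the interior of the other. The paper's case analysis is really re-proving, in each case, exactly the content of Observation~\ref{obs:BoundaryOfThinSubseteqSlopes} that you invoke once; your version makes the underlying mechanism transparent and is strictly shorter. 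The only thing your write-up might add for completeness is a one-line remark that the interior and closure are taken in \(\QPI\) with its subspace topology from \(\RPI\), so that a finite set has empty interior and the argument that \(U\subseteq\Thin(D)\) implies \(s\in\IntThin(D)\) is unambiguous.
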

\begin{proof}
	The inclusion \(\supseteq\) follows from the first part of Observation~\ref{obs:BoundaryOfThinSubseteqSlopes}. For the inclusion \(\subseteq\), we distinguish four cases, depending on the size of \(|\Slopes_C|\) and \(|\Slopes_D|\). 
	If \(|\Slopes_C|=1=|\Slopes_D|\), either \(\Slopes_C\cap\Slopes_D=\varnothing\), so there is nothing to show, or \(\Slopes_C=\{s\}=\Slopes_D\) for some slope \(s\), in which case 
	\(
	\Thin(C)
	\cup
	\Thin(D)
	=
	\QPI\smallsetminus\{s\}
	\subsetneq
	\QPI
	\) by Lemma~\ref{lem:Thin:SingleSupportingSlope} and the non-triviality of \(C\) and~\(D\). 
	Suppose \(|\Slopes_C|>1\) and \(|\Slopes_D|=1\), say \(\Slopes_D=\{s\}\). If \(D\) is not \(s\)-consistent, the hypothesis is not satisfied by the non-triviality of~\(D\), Lemma~\ref{lem:Thin:SingleSupportingSlope}, and Theorem~\ref{thm:charactisation:ThinG:main}. If \(D\) is \(s\)-consistent, \(\Thin(D)=\QPI\smallsetminus\{s\}\) by Lemma~\ref{lem:Thin:SingleSupportingSlope}, so in particular \(s\in\BdryThin(D)=\Slopes_D\). Moreover, the hypothesis implies that \(s\in\Thin(C)\). If \(s\not\in\Slopes_C\), there is nothing to show, whereas if \(s\in\Slopes_C\), then also \(s\in\BdryThin(C)\) by Lemma~\ref{lem:Thin:Intervals}. 
	If \(|\Slopes_C|=1\) and \(|\Slopes_D|>1\), we repeat the argument with the roles of \(C\) and \(D\) reversed. So it remains to consider the case that \(|\Slopes_C|, |\Slopes_D|>1\). Combining Lemma~\ref{lem:Thin:Intervals} with the hypothesis shows that \(\Thin(C)\) and \(\Thin(D)\) are two intervals. The claim now follows from the second part of Observation~\ref{obs:BoundaryOfThinSubseteqSlopes}. 
\end{proof}

\begin{proof}[Proof of Theorem~\ref{thm:glueing:ThinG:main}]
	We start with a reformulation of condition \eqref{enu:thm:glueing:ThinG:main:local} on the right-hand side of the asserted equivalence. 
	Suppose for a moment that condition (1) in Theorem~\ref{thm:glueing:ThinG:main} holds.
	Then by non-triviality of \(C\) and \(D\) and Lemma~\ref{lem:Thin:SlopeCaps_equal_ThinCaps}, 
	\(
	\BdryThin(C)
	\cap
	\BdryThin(D)
	=
	\Slopes_{C}\cap\Slopes_{D}
	\).	
	Suppose further that \(C\) is \(s\)-rational for some slope \(s\). Then \(s\not\in\Thin(C)\) by Lemma~\ref{lem:Thin:endpoints_are_not_contained}. Therefore, \(s\in\Thin(D)\) and so by the same lemma, \(D\) is \(s\)-special. Similarly, if \(D\) is \(s\)-rational, we can apply the same argument with reversed roles of \(C\) and \(D\).
	It therefore suffices to show that \((C,D)\) is thin if and only if
	\begin{enumerate}
		\item 
		\(
		\Thin(C)
		\cup
		\Thin(D)
		=\QPI
		\); and 
		\item[(2')] for all \(s\in\Slopes_C\cap\Slopes_D\), \(C\) is \(s\)-rational and \(D\) is \(s\)-special or vice versa.
	\end{enumerate}
	Clearly, \((C,D)\) being thin implies condition~(2'). So let us assume from now on that \(C\) and \(D\) satisfy (2').
	Write \(C=\{c_1,\dots,c_m\}\) and \(D=\{d_1,\dots,d_n\}\) for some \(m,n\geq1\), and let \(s_i=s(c_i)\) for \(i=1,\dots,m\) and \(t_j=s(d_j)\) for \(j=1,\dots,n\). We order the components of \(C\) and \(D\) such that both \((s_1,\dots,s_m)\) and \((t_1,\dots,t_n)\) are increasing tuples. The proof proceeds in four cases indexed by |\(\Slopes_C\cap\Slopes_D\)|.
	
	\newcounter{casecount}
	\newcommand{\casei}[1]{%
		\pagebreak[2]\medskip\noindent%
		\textbf{%
			Case~\thecasecount: \(#1\).\stepcounter{casecount}%
		}%
		\nopagebreak\relax%
	}
	
	\casei{\Slopes_C\cap\Slopes_D=\varnothing}
	In this case \((C,D)\) is thin if and only if there exists some \(M\in G\) such that \(\gr(c,d)=M\) for all \((c,d)\in C\times D\). 
	By \ref{eq:transitivity}, this is the case if and only if after some cyclic permutation of the indices
	\[
	(s_1,\dots, s_m,t_1,\dots,t_n)
	\]
	is an increasing tuple such that \(\gr(c_i,c_j)=0\) and \(\gr(d_i,d_j)=0\) for all \(i<j\). (Otherwise, if $\Slopes_C$ and $\Slopes_D$ intertwine, in the sense that there exist  $i,j,k,\ell$ such that $(s_i,t_j,s_k,t_\ell)$ is increasing, \ref{eq:transitivity} implies $M=g(c_i,d_\ell)=g(c_i,d_j)+g(d_j,c_k)+g(c_k,d_\ell)=M+(-1-M)+M$, which is false.) By Lemmas~\ref{lem:Thin:SingleSupportingSlope} and~\ref{lem:Thin:Intervals}, the latter condition is equivalent to \(\Thin(C)\) and \(\Thin(D)\) being two overlapping intervals.
	
	\casei{\Slopes_C\cap\Slopes_D=\{s\}} 
	\begin{enumerate}[label=\textnormal{(\alph*)}]
		\item Suppose \(\Slopes_C=\Slopes_D=\{s\}\). 
		Then, because neither \(C\) nor \(D\) are trivial, \(C\) and \(D\) each contain at least one rational line of slope \(s\). So \((C,D)\) is not thin. Moreover, \(s\) is neither in \(\Thin(C)\) nor in \(\Thin(D)\), so property \eqref{enu:thm:glueing:ThinG:main:global} does not hold either. 

		\item Suppose  \(\Slopes_C=\{s\}\) and \(\Slopes_D\supsetneq\{s\}\). 
		If \(C\) is not \(s\)-consistent, condition~\eqref{enu:thm:glueing:ThinG:main:global} is false. This is because in this case, \(\Thin(C)=\varnothing\) by Lemma~\ref{lem:Thin:SingleSupportingSlope} and non-triviality of \(C\), and  \(\Thin(D)\neq\QPI\) by Lemma~\ref{lem:Thin:Intervals}. On the other hand, \(C\) not being \(s\)-consistent, in conjunction with \ref{eq:transitivity}, implies that \((C,D)\) is not thin, so the equivalence holds in this case. 
		Suppose now that \(C\) is \(s\)-consistent. Then \(\Thin(C)=\QPI\smallsetminus\{s\}\) by non-triviality of \(C\) and Lemma~\ref{lem:Thin:SingleSupportingSlope}. 
		Therefore, condition~\eqref{enu:thm:glueing:ThinG:main:global}
		is equivalent to \(s\in\Thin(D)\).
		Now observe that since \(C\) is non-trivial, it is not \(s\)-special. Since we are assuming that (2') holds, this implies that \(C\) is \(s\)-rational and \(D\) is \(s\)-special. In particular, \(C\) consists of a single rational line. Thus, by~\ref{eq:linearity}, \((C,D)\) is thin if and only if \(((s,0,0),D)\) is thin, ie \(s\in\Thin(D)\).

		\item Suppose \(\Slopes_C\supsetneq\{s\}\) and  \(\Slopes_D=\{s\}\). Same as Case~1b with reversed roles of \(C\) and \(D\). 

		\item Suppose \(|\Slopes_C|,|\Slopes_D|>1\). 
		Let us reindex the lines such that \(s_m\neq s_1=s=t_n\neq t_1\), and  \((s_1,\dots,s_m)\) and \((t_1,\dots,t_n)\) are increasing. After potentially interchanging \(C\) and \(D\), we may assume without loss of generality that \((s,s_k,t_\ell)\) is increasing for some \(k,\ell\) such that \(s_k\neq s \neq t_\ell\). 
		By \ref{eq:transitivity}, \((C,D)\) is thin if and only if 
		(i) \((s,s_m,t_1)\) is an increasing tuple, and (ii) \(\gr(s_i,s_j)=0\) and \(\gr(t_i,t_j)=0\) for \(i<j\), or equivalently, (ii') \((s,t_1)\subseteq\Thin(D)\), and \((s_m,s)\subseteq\Thin(C)\), by Lemma~\ref{lem:Thin:Intervals}.
		Conditions (ii') and (2') imply that \(s\in\Thin(D)\) or \(s\in\Thin(C)\). Together with condition (i), (1) follows. 
		Conversely, suppose conditions (1) and (2') hold. Since by Lemma~\ref{lem:Thin:Intervals}, \(\Thin(C)\) and \(\Thin(D)\) are contained in the closures of open intervals disjoint from any supporting slopes of \(C\) and \(D\), respectively, condition (1) implies (i) and (ii').  
	\end{enumerate}
	
	\casei{\Slopes_C\cap\Slopes_D=\{s,t\}}	
	\begin{enumerate}[label=\textnormal{(\alph*)}]
		\item Suppose \(|\Slopes_C|=2=|\Slopes_D|\). 
		Suppose further that \(C\) is not \(s\)-consistent. 
		Then \(\Thin(C)=\varnothing\) and hence
		\(
		\Thin(C)
		\cup
		\Thin(D)
		\neq\QPI
		\).  
		And indeed, \((C,D)\) is not thin in this case. Similarly, one can show that the theorem holds whenever \(C\) or \(D\) are not \(s\)- and \(t\)-consistent. So now let us assume that \(C\) and \(D\) are \(s\)- and \(t\)-consistent.
		By the assumptions that we have already made, we can write \(C=\{c,c'\}\) and \(D=\{d,d'\}\) where \(s(c)=s(d)=s\) and \(s(c')=s(d')=t\). 
		Then, by Lemma~\ref{lem:Thin:Intervals}, 
		\begin{align*}
		(s,t)\subseteq\Thin(C)
		&\Leftrightarrow
		\gr(c',c)=0
		&
		(t,s)\subseteq\Thin(D)
		&\Leftrightarrow
		\gr(d,d')=0
		\\
		(t,s)\subseteq\Thin(C)
		&\Leftrightarrow
		\gr(c,c')=0
		&
		(s,t)\subseteq\Thin(D)
		&\Leftrightarrow
		\gr(d',d)=0
		\end{align*}
		Now, \((C,D)\) being thin is equivalent to  \(\gr(c,d')=\gr(c',d)\). By \ref{eq:transitivity} \(\gr(c,d')=\gr(c,d)+\gr(d,d')\) and   \(\gr(c',d)=\gr(c',c)+\gr(c,d)\), and so the condition \(\gr(c,d')=\gr(c',d)\) is equivalent to \(\gr(c',c)=\gr(d,d')\). By \ref{eq:symmetry} of \(\gr\), this is equivalent to \(\gr(c,c')=\gr(d',d)\). The latter two conditions, in conjunction with the four equivalences above, are equivalent to the condition  
		\(
		\Thin(C)
		\cup
		\Thin(D)
		\supseteq\QPI\smallsetminus\{s,t\}
		\), 
		since we are assuming that not both \(C\) and \(D\) are exceptional.	
		This is equivalent to condition \eqref{enu:thm:glueing:ThinG:main:global} since by condition~(2'), either \(C\) or \(D\) is \(s\)-special and either \(C\) or \(D\) is \(t\)-special.
			
		\item Suppose \(|\Slopes_C|>2\).
		After potentially interchanging \(t\) and \(s\), we may assume without loss of generality that \((s,s_k,t)\) is increasing for some \(k\) such that \(s\neq s_k \neq t\).
		As in Case~1c, let us reindex the lines such that \(s_m\neq s_1=s=t_n\neq t_1\), and  \((s_1,\dots,s_m)\) and \((t_1,\dots,t_n)\) are increasing. 
		Then by \ref{eq:transitivity}, \((C,D)\) is thin if and only if \(s_m=t=t_1\) and \(\gr(s_i,s_j)=0\) and \(\gr(t_i,t_j)=0\) for all \(i<j\). 
		This, in turn, is equivalent to \(
		\Thin(C)
		\cup
		\Thin(D)
		\supseteq\QPI\smallsetminus\{s,t\}
		\). 
		Now conclude as in Case~2a.
		\item Suppose \(|\Slopes_D|>2\). Same as Case~2b with reversed roles of \(C\) and \(D\). 
	\end{enumerate}
	
	\casei{|\Slopes_C\cap\Slopes_D|>2}
	Say \(s,s',s''\in\Slopes_C\cap\Slopes_D\) are pairwise distinct slopes such that \((s,s',s'')\) is an increasing triple. Then there exist lines \(c,c',c''\in C\) and \(d,d',d''\in D\) such that \(s=s(c)=s(d)\), \(s'=s(c')=s(d')\), and \(s''=s(c'')=s(d'')\). We claim that in this case \((C,D)\) is not thin. Suppose \((C,D)\) were thin. Then \(\gr(c, d')=\gr(c,d'')\), so \(\gr(d',d'')=0\). Cyclically permuting the variables gives \(\gr(d'',d)=\gr(d,d')=0\). Applying \ref{eq:transitivity} and \ref{eq:symmetry} of the function $\gr$, this leads to a contradiction. 
	Now observe that \(\Thin(C)\cup\Thin(D)\neq\QPI\) according to Lemma~\ref{lem:Thin:Intervals}.
\end{proof}

Given any line set \(C\), let \(\IntThin(C)\) denote the interior of \(\Thin(C)\).

\begin{corollary}\label{cor:one_direction:main}
	 Let \((C,D)\) be a pair of non-trivial line sets for which \(\IntThin(C)\cup \IntThin(D)=\QPI\).  Then \((C,D)\) is thin.
\end{corollary}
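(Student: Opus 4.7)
The plan is to deduce this directly from Theorem~\ref{thm:glueing:ThinG:main} by verifying each of its hypotheses in turn. Since $\IntThin(C)\subseteq\Thin(C)$ and analogously for $D$, condition~\eqref{enu:thm:glueing:ThinG:main:global} of the gluing theorem, namely $\Thin(C)\cup\Thin(D)=\QPI$, is immediate from the assumption that the interiors already cover $\QPI$.

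For condition~\eqref{enu:thm:glueing:ThinG:main:local}, I will argue that $\BdryThin(C)\cap\BdryThin(D)=\varnothing$, so it holds vacuously. Indeed, any $s\in\BdryThin(C)$ lies in $\Thin(C)\smallsetminus\IntThin(C)$, and by hypothesis $s$ must then lie in $\IntThin(D)$; but a point of $\IntThin(D)$ cannot simultaneously be in $\BdryThin(D)$, so no such $s$ exists.

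The step that requires slightly more care is to rule out the ``exceptional/exceptional'' case that is excluded from the statement of Theorem~\ref{thm:glueing:ThinG:main}. I claim that any exceptional line set $E$ has $\IntThin(E)=\varnothing$. Granting this, if both $C$ and $D$ were exceptional then $\IntThin(C)\cup\IntThin(D)=\varnothing\neq\QPI$, contradicting the hypothesis, so at most one of $C,D$ is exceptional and Theorem~\ref{thm:glueing:ThinG:main} applies. To establish the claim, let $E$ be exceptional with $\Slopes_E=\{s,s'\}$ and let $c,c'\in E$ with $s(c)=s$, $s(c')=s'$ and $\gr(c,c'),\gr(c',c)\neq 0$. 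Ordering the lines of $E$ so that the lines of slope $s$ come first, Lemma~\ref{lem:Thin:Intervals}\eqref{enu:Thin:Intervals:g_vanishes} fails via the pair $(c,c')$, hence the open arc $(s',s)$ is disjoint from $\Thin(E)$; ordering the other way and using $\gr(c',c)\neq 0$ shows that $(s,s')$ is disjoint from $\Thin(E)$ as well. Therefore $\Thin(E)\subseteq\{s,s'\}$, which has empty interior in $\QPI$.

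With condition~\eqref{enu:thm:glueing:ThinG:main:global}, condition~\eqref{enu:thm:glueing:ThinG:main:local}, and the non-exceptionality hypothesis all in place, applying Theorem~\ref{thm:glueing:ThinG:main} concludes that $(C,D)$ is thin. The main (though still mild) obstacle is just the exceptional case, since the gluing theorem itself is what does the real work.
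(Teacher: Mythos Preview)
Your proof is correct and follows the same overall strategy as the paper: verify the hypotheses of Theorem~\ref{thm:glueing:ThinG:main}. The difference lies in how you dispose of the exceptional case. The paper observes that the only place in the proof of Theorem~\ref{thm:glueing:ThinG:main} where the non-exceptionality assumption is used is Case~2a, which requires \(|\Slopes_C\cap\Slopes_D|=2\); but since \(\BdryThin(C)\cap\BdryThin(D)=\varnothing\), Lemma~\ref{lem:Thin:SlopeCaps_equal_ThinCaps} forces \(\Slopes_C\cap\Slopes_D=\varnothing\), so that case cannot occur. You instead prove the standalone fact that any exceptional line set \(E\) has \(\IntThin(E)=\varnothing\), whence both \(C\) and \(D\) cannot be exceptional under the hypothesis. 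Your argument is self-contained and avoids reaching back into the case analysis of the gluing theorem; the paper's is shorter but relies on inspecting that proof.

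One minor imprecision: you write that any \(s\in\BdryThin(C)\) lies in \(\Thin(C)\smallsetminus\IntThin(C)\), but a boundary point need not lie in \(\Thin(C)\) itself (e.g.\ when \(\Thin(C)\) is an open interval). This does not affect the argument, since all you need is \(s\notin\IntThin(C)\), which is true for any boundary point.
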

\begin{proof}
	If \(\IntThin(C)\cup \IntThin(D)=\QPI\) then \(\Thin(C)\cup \Thin(D)=\QPI\) and \(\BdryThin(C)\cap\BdryThin(D)=\varnothing\).
	So under the assumption that not both \(C\) and \(D\) are exceptional, the corollary follows from Theorem~\ref{thm:glueing:ThinG:main}. However, we may drop this assumption, because the only case in the proof of Theorem~\ref{thm:glueing:ThinG:main} in which we use it is Case~2a, which supposes \(|\Slopes_{C}\cap\Slopes_{D}|=2\). Here, however, \(\Slopes_{C}\cap\Slopes_{D}=\varnothing\) by Lemma~\ref{lem:Thin:SlopeCaps_equal_ThinCaps}. 
\end{proof}

This highlights what turns out to be the generic behaviour, in practice, and gives rise to a quick certification of thinness. As the proof of Theorem \ref{thm:glueing:ThinG:main} indicates, the main work is in treating the behaviour at the boundaries of the relevant intervals. Indeed, the converse of Corollary \ref{cor:one_direction:main} is not true as the following example illustrates. 

\begin{figure}[ht]
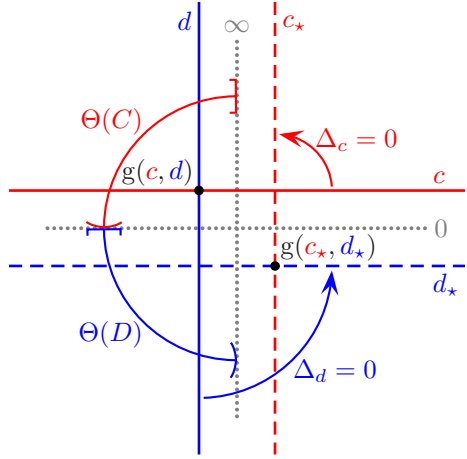

	\centering
	\(\ExceptionalExample\)
	\caption{An illustration of Example~\ref{exa:ExceptionalExample} for the case \(\Delta_c=0=\Delta_d\)}\label{fig:ExceptionalExample}
\end{figure}

\begin{example}\label{exa:ExceptionalExample}
	Let \(C=\{c,c_\star\}\) and \(D=\{d,d_\star\}\) where \(c\) and \(d\) are rational, \(c_\star\) and \(d_\star\) are special, \(s(c)=0=s(d_\star)\), and 
	\(s(c_\star)=\infty=s(d)\). Let \(\Delta_c=\gr(c,c_\star)\) and \(\Delta_d=\gr(d,d_\star)\). 
	Then, 
	\[
	\Thin(C)=
	\begin{cases*}
	[\infty,0) & if \(\Delta_c=0\)\\
	(0,\infty] & if \(\Delta_c=-1\)\\
	\{\infty\} & otherwise
	\end{cases*}
	\quad\text{and}\quad
	\Thin(D)=
	\begin{cases*}
	[0,\infty) & if \(\Delta_d=0\)\\
	(\infty,0] & if \(\Delta_d=-1\)\\
	\{0\} & otherwise
	\end{cases*}
	\]
	See Figure~\ref{fig:ExceptionalExample} for an illustration of one of those cases. 
	Clearly, the hypothesis of Corollary~\ref{cor:one_direction:main} is not satisfied for any values of \(\Delta_c\) and \(\Delta_d\). 
	Moreover, 
	\[
	\gr(c,d)-\gr(c_\star,d_\star)=
	\Delta_c+\gr(c_\star,d)-(\gr(c_\star,d)+\Delta_d)=
	\Delta_c-\Delta_d,
	\]
	so \((C,D)\) is thin if and only if \(\Delta_c=\Delta_d\). If \(\Delta_c\in\{0,-1\}\) or \(\Delta_d\in\{0,-1\}\), we can verify this independently using Theorem~\ref{thm:glueing:ThinG:main}. Otherwise, both line sets are exceptional. 
\end{example}

\section{\texorpdfstring{The tangle invariant \(\HFT\)}{The tangle invariant HFT}}\label{sec:review:HFT}

We review some properties of the immersed curve invariant \(\HFT\) of Conway tangles due to the third author \cite{pqMod}; see also \cite{HDsForTangles, pqSym}.

\subsection{\texorpdfstring{The definition of \(\HFT\)}{The definition of HFT}}\label{sec:review:HFT:definition}

Given a Conway tangle \(T\) in a three-ball \(B^3\), the invariant \(\HFT(T)\) takes the form of a multicurve on a four-punctured sphere \(\FourPuncturedSphere\), which can be naturally identified with the boundary of \(B^3\) minus the four tangle ends \(\partial T\). 
Here, a multicurve is a collection of immersed curves with local systems. 
To make this precise: 
An immersed curve in \(\FourPuncturedSphere\) is an immersion of \(S^1\), considered up to homotopy, that defines a primitive element of \(\pi_1(\FourPuncturedSphere)\), and 
each of these curves is decorated with a local system, ie an invertible matrix over \(\fieldTwoElements\) 
considered up to matrix similarity. 
Local systems can be viewed as vector bundles up to isomorphism, where either \(\fieldTwoElements\) is equipped with the discrete topology or the bundle is equipped with a flat connection. 
We will always drop local systems from our notation when they are trivial, ie if they are equal to the unique one-dimensional local system.
Immersed curves carry a \(\delta\)-grading (described in Section~\ref{sec:HFT:ThinFillings}) and multiple parallel immersed curves in the same \(\delta\)-grading are set to be equivalent to a single curve with a local system that is the direct sum of the individual local systems. 
We will always assume that parallel immersed curves are bundled up this way. 
Finally, a multicurve is a collection of \(\delta\)-graded immersed curves. 

With this terminology in place, we can sketch the construction of \(\HFT(T)\). It is defined in two steps; for details, see~\cite{pqMod}. 

First, one fixes a particular auxiliary parametrization of \(\partial B^3\smallsetminus \partial T\) by four embedded arcs connecting the tangle ends. For example, the four gray dotted arcs in Figure~\ref{fig:HFT:example:tangle} define such a parametrization for the \((2,-3)\)-pretzel tangle. 
A tangle with such a parametrization can be encoded in a Heegaard diagram \((\Sigma,\bm{\alpha},\bm{\beta})\), where \(\Sigma\) is some surface with marked points. 
From this, one defines a relatively \(\delta\)-graded curved chain complex \(\CFTd(T)\) over a certain fixed \(\fieldTwoElements\)-algebra \(\Ad\) as the multi-pointed Heegaard Floer theory of the triple \((\Sigma,\bm{\alpha},\bm{\beta})\), similar to Ozsváth and Szabó's link Floer homology~\cite{OSHFL}.  
One can show that the relatively \(\delta\)-graded chain homotopy type of \(\CFTd(T)\) is an invariant of the tangle \(T\) with the chosen parametrization \cite[Theorem~2.17]{pqMod}. 

The second step uses a classification result, which states that the chain homotopy classes of \(\delta\)-graded curved chain complexes over \(\Ad\) are in one-to-one correspondence with free homotopy classes of \(\delta\)-graded immersed multicurves on the four-punctured sphere \(\FourPuncturedSphere\) \cite[Theorem~0.4]{pqMod}. This correspondence uses a fixed parametrization of \(\FourPuncturedSphere\) by four arcs, and we will generally assume that the multicurves intersect this parametrization minimally. Roughly speaking, the intersection points of arcs with a multicurve correspond to generators of the according curved chain complexes and paths between those intersection points correspond to the differentials. Now, \(\HFT(T)\) is defined as the collection of relatively \(\delta\)-graded immersed curves on \(\FourPuncturedSphere\) corresponding to the curved complex \(\CFTd(T)\). In this definition the parametrization of \(\FourPuncturedSphere\) (needed for multicurves) is identified with the parametrization of \(\partial B^3\smallsetminus \partial T\) (needed for \(\CFTd(T)\)), and one can show that this identification is natural. Namely, if a tangle \(T'\) is obtained from \(T\) by adding twists to the tangle ends, the complex \(\CFTd(T')\) determines a new set of immersed curves \(\HFT(T')\), which agrees with the one obtained by twisting the immersed curves \(\HFT(T)\) accordingly \cite[Theorem~0.2]{pqSym}:
\begin{theorem}\label{thm:HFT:Twisting}
	For all 
	\( 
	\tau\in \Mod(\FourPuncturedSphere)
	\), 
	\(
	\HFT(\tau(T))
	=
	\tau(\HFT(T))
	\).
	In other words, the invariant \(\HFT\) commutes with the action of the mapping class group of the four-punctured sphere.
\end{theorem}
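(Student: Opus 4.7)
My plan is to reduce Theorem~\ref{thm:HFT:Twisting} to a naturality property of the construction of \(\CFTd\) together with the equivariance of the classification of curved complexes by multicurves. The essential observation is that the data of a parametrized Conway tangle consists of a tangle (up to isotopy rel boundary) together with an arc system on \(\partial B^3\smallsetminus \partial T\cong\FourPuncturedSphere\) (up to isotopy). Given any \(\tau\in\Mod(\FourPuncturedSphere)\), one may equivalently apply \(\tau\) to the tangle while fixing the standard arc system, or fix the tangle while applying \(\tau^{-1}\) to the arc system. These two points of view yield the same Heegaard diagram up to the moves preserving the chain homotopy type of \(\CFTd\), so they give the same curved chain complex over \(\Ad\).

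First, I would make the above equivalence precise. Using the construction from~\cite{pqMod}, the Heegaard diagram \((\Sigma,\bm\alpha,\bm\beta)\) for a parametrized tangle depends only on the isotopy type of the arc system on \(\partial B^3\smallsetminus\partial T\). A self-homeomorphism \(\tau\) of the pair \((\partial B^3,\partial T)\) extends to a self-homeomorphism of \(B^3\) (since \(\Mod(B^3\text{ rel }\partial T)\to\Mod(\FourPuncturedSphere)\) is surjective, this being precisely the \(4\)-braid group statement), and therefore the pairs \((\tau(T),\text{std arcs})\) and \((T,\tau^{-1}(\text{std arcs}))\) are isotopic as parametrized tangles. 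Consequently \(\CFTd(\tau(T))\) with the standard parametrization equals \(\CFTd(T)\) with the parametrization \(\tau^{-1}(\text{std arcs})\), as curved chain complexes over \(\Ad\).

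Second, I would verify the naturality of the classification from \cite[Theorem~0.4]{pqMod}: if a curved complex \(C\) corresponds to a multicurve \(\gamma\) with respect to an arc system \(P\) on \(\FourPuncturedSphere\), then with respect to \(\tau(P)\) the same complex corresponds to \(\tau(\gamma)\). This is essentially tautological from the geometric description of the correspondence: generators are intersections of \(\gamma\) with arcs of \(P\), and differentials are read off paths in the complement; both sides of this dictionary are covariantly natural under homeomorphisms of \(\FourPuncturedSphere\). Combining the two ingredients gives \(\HFT(\tau(T))=\tau(\HFT(T))\).

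The main obstacle is making the first step rigorous, since the Heegaard diagram depends on several auxiliary choices and the invariance only holds up to handleslides, isotopies, and stabilizations. A cleaner alternative would be to reduce to a finite generating set of \(\Mod(\FourPuncturedSphere)\), say Dehn twists about curves separating pairs of punctures. For each such generator, \(\tau(T)\) is obtained from \(T\) by gluing in an elementary rational twist tangle at the corresponding pair of ends, so one can work with a type DA bimodule \(\CFTd(\text{twist})\) and verify by explicit computation that pairing with it realizes the desired Dehn twist on the curve invariant; this reduces the argument to a finite check, at the expense of needing the appropriate pairing theorem for \(\CFTd\).
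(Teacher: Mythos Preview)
The paper does not prove this theorem; it is quoted from \cite[Theorem~0.2]{pqSym} as part of the review in Section~\ref{sec:review:HFT}. So there is no in-paper proof to compare against. That said, your sketch deserves a couple of comments.

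Your first approach contains a real gap. The classification theorem \cite[Theorem~0.4]{pqMod} is not a correspondence between ``curved complexes over \(\Ad\)'' and ``multicurves relative to an arbitrary arc system''; it is a correspondence between curved complexes over the \emph{fixed} algebra \(\Ad\) and multicurves relative to the \emph{fixed} standard parametrization. The algebra \(\Ad\) encodes the combinatorics of the standard arcs, and the dictionary (generators $\leftrightarrow$ intersections, differentials $\leftrightarrow$ paths) is set up only with respect to that fixed parametrization. So the sentence ``with respect to \(\tau(P)\) the same complex corresponds to \(\tau(\gamma)\)'' is not tautological: changing the arc system changes which algebra you are working over, and you would need an algebra isomorphism \(\Ad\to\Ad\) induced by \(\tau\) together with compatibility of the classification under that isomorphism. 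That compatibility is exactly the content of the theorem, not something you can read off the definitions.

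Your alternative route---reduce to Dehn-twist generators, realize each as gluing an elementary twist tangle, and compute the effect via a type~DA bimodule---is the approach actually taken in \cite{pqSym}. There the relevant bimodules are computed explicitly and identified with the bimodules implementing Dehn twists on the Fukaya side. If you want to write a self-contained proof, that is the line to pursue; the ``naturality'' shortcut does not bypass the work.
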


\begin{example}
  Figure~\ref{fig:HFT:example:Curve:Downstairs} shows the four-punctured sphere \(\FourPuncturedSphere\), drawn as the plane plus a point at infinity minus the four punctures labelled \(\TEI\), \(\TEII\), \(\TEIII\), and \(\TEIV\), together with the standard parametrization that identifies \(\FourPuncturedSphere\) with \(\partial B^3\smallsetminus \partial T\). 
  The dashed curve along with the two immersed curves winding around the punctures form the invariant \(\HFT(P_{2,-3})\) for the \((2,-3)\)-pretzel tangle \cite[Example~2.26]{pqMod}.   
\end{example}

\begin{definition}
A (parametrized) tangle is called {\bf rational} if it is obtained from the trivial tangle \(\InlineTrivialTangle\) by adding twists to the tangle ends.
\end{definition}

The name rational tangle originated with Conway, who showed that these tangles are in one-to-one correspondence with fractions \(\nicefrac{p}{q}\in\QPI\)~\cite{Conway}. 
  We denote the rational tangle corresponding to a slope \(\nicefrac{p}{q}\in\QPI\) by \(Q_{p/q}\). 
  The invariant \(\HFT(Q_{p/q})\) consists of a single embedded curve which is the boundary of a disk embedded into \(B^3\) that separates the two tangle strands of \(Q_{p/q}\) \cite[Example~2.25]{pqMod}. 
  The local system on this curve is one-dimensional.
It is known that \(\HFT\) detects rational tangles, as follows.

\begin{theorem}\cite[Theorem~6.2]{pqMod}
\label{thm:HFT:rational_tangle_detection}
A tangle \(T\) is rational if and only if \(\HFT(T)\) consists of a single embedded component carrying the unique one-dimensional local system. 
\end{theorem}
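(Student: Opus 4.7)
\medskip
\noindent\textbf{Proof proposal.}
The forward direction should follow from a direct computation plus the equivariance of \(\HFT\) under the mapping class group. First, I would verify the claim for the trivial tangle \(Q_\infty=\InlineTrivialTangle\) by writing down a standard multi-pointed Heegaard diagram for \(Q_\infty\): the resulting curved complex \(\CFTd(Q_\infty)\) has a single generator (or two, in a symmetric presentation), and under the classification \(\CFTd\leftrightarrow\HFT\) it corresponds to a single embedded curve separating the punctures \(\{\TEI,\TEII\}\) from \(\{\TEIII,\TEIV\}\) and carrying the one-dimensional local system. Every rational tangle has the form \(Q_{p/q}=\tau(Q_\infty)\) for some \(\tau\in\Mod(\FourPuncturedSphere)\), and Theorem~\ref{thm:HFT:Twisting} then gives \(\HFT(Q_{p/q})=\tau(\HFT(Q_\infty))\); embeddedness and the one-dimensionality of the local system are preserved under any mapping class, so this proves one direction.

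For the converse, suppose \(\HFT(T)\) is a single embedded primitive simple closed curve \(\gamma\) on \(\FourPuncturedSphere\) with the trivial local system. Because \(\gamma\) is primitive in \(\pi_1(\FourPuncturedSphere)\), it cannot bound a once-punctured disk and must therefore separate the four punctures into two pairs of two. Such essential curves are classified up to isotopy by their slope \(s(\gamma)\in\QPI\). I would now choose a mapping class \(\tau\in\Mod(\FourPuncturedSphere)\) taking \(\gamma\) to the standard curve \(\HFT(Q_\infty)\). Applying Theorem~\ref{thm:HFT:Twisting} to the twisted tangle \(T'\coloneqq\tau^{-1}(T)\) yields
\[
\HFT(T')=\tau^{-1}(\HFT(T))=\HFT(Q_\infty).
\]
Since rationality of \(T\) and \(T'\) are equivalent, the problem reduces to proving that \(\HFT(T')=\HFT(Q_\infty)\) forces \(T'=Q_\infty\).

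This reduction is the heart of the argument. The plan is to use the fact that \(\HFT\) controls every rational filling: for each \(\nicefrac{p}{q}\in\QPI\) the pairing of \(\HFT(T')\) with the rational curve of slope \(-\nicefrac{p}{q}\) (which will be made explicit in Sections~\ref{sec:review:HFT}--\ref{sec:HFT:ThinFillings}) computes \(\HFK\) of the filling \(T'(\nicefrac{p}{q})\), and this must agree with \(\HFK(Q_\infty(\nicefrac{p}{q}))\) for every slope. In particular \(T'(\infty)\) is a two-component unlink and \(T'(0)\) is an unknot by unknot/unlink detection for \(\HFK\) (Ozsv\'ath--Szab\'o). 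Passing to the double branched cover \(\Sigma_2(B^3,T')\), we obtain a three-manifold with torus boundary admitting an \(S^1\times S^2\) filling, so Gabai's theorem forces \(\Sigma_2(B^3,T')\cong S^1\times D^2\); by Montesinos, a Conway tangle with solid-torus branched double cover is rational. Hence \(T'\) is rational, and among rational tangles \(\HFT\) distinguishes slopes (by the forward direction), so \(T'=Q_\infty\) and therefore \(T\) is rational.

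The main obstacle is the reduction from ``same \(\HFT\) as a rational tangle'' to ``topologically rational''; it is here that one must import a genuinely three-dimensional input (Gabai's surgery theorem together with Montesinos' characterization of rational tangles via their branched double covers), since the algebraic content of the invariant alone only constrains the chain homotopy type of \(\CFTd(T)\) and not directly the tangle itself.
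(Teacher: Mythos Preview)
Your argument is essentially correct, but a few points need tightening. First, a direction slip: if \(\tau(\gamma)=\HFT(Q_\infty)\), you want \(T'\coloneqq\tau(T)\), not \(\tau^{-1}(T)\), so that \(\HFT(T')=\tau(\HFT(T))=\tau(\gamma)=\HFT(Q_\infty)\). Second, when you deduce \(\HFK(T'(s))\cong\HFK(Q_\infty(s))\) from Theorem~\ref{thm:GlueingTheorem:HFT}, the possible \(\otimes V\) factor depends on how the open strands connect; you should invoke Lemma~\ref{lem:HFT_detects_connectivity} to see that \(T'\) and \(Q_\infty\) have the same connectivity, so the factor is the same on both sides. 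Third, your appeal to Gabai is compressed: an \(S^1\times S^2\) filling alone does not force a three-manifold with torus boundary to be a solid torus. You need the \(S^3\) filling (coming from \(T'(0)=\) unknot, which you did establish) to first identify \(\Sigma_2(B^3,T')\) as a knot complement in \(S^3\); only then does Property~R apply.

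The paper itself does not reprove this theorem, but the proof of the Khovanov analogue (Theorem~\ref{thm:Kh:rational_tangle_detection}) indicates the structure of the argument in \cite{pqMod}: the only inputs required are a gluing theorem and detection of the \emph{two-component unlink} by the link invariant. Once one knows that \(T'(\infty)\) is the two-component unlink, the two components bound disjoint disks in \(S^3\); an innermost-circle argument on their intersection with the Conway sphere then yields disjoint boundary-parallelism disks for the two arcs of \(T'\) inside \(B^3\), which is precisely what it means for \(T'\) to be rational. This bypasses unknot detection, Property~R, and the Montesinos characterization entirely. Your route reaches the same conclusion but at the cost of heavier three-dimensional input; the original is more economical.
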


\begin{figure}[t]
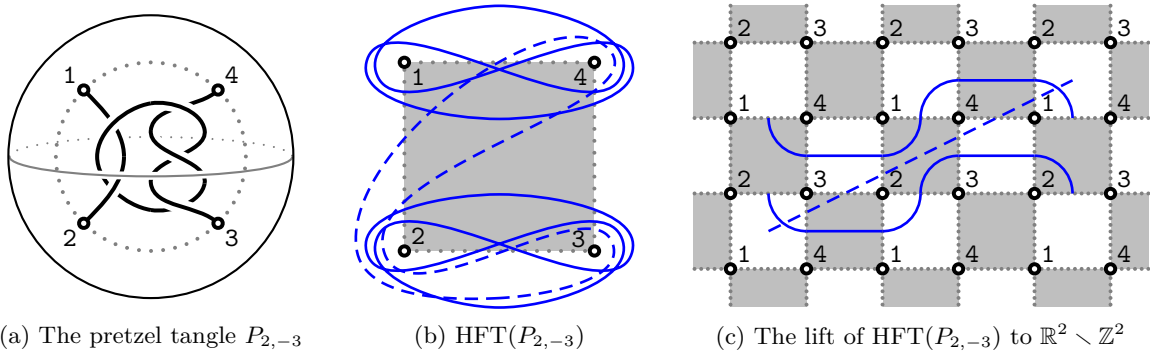

  \centering
  \begin{subfigure}{0.28\textwidth}
  	\centering
  	\(\pretzeltangle\)
  	\caption{The pretzel tangle \(P_{2,-3}\)}\label{fig:HFT:example:tangle}
  \end{subfigure}
	\begin{subfigure}{0.28\textwidth}
		\centering
		\(\pretzeltangleDownstairs\)
		\caption{\(\HFT(P_{2,-3})\)}\label{fig:HFT:example:Curve:Downstairs}
	\end{subfigure}
	\begin{subfigure}{0.4\textwidth}
		\centering
		\(\pretzeltangleUpstairs\)
		\caption{The lift of \(\HFT(P_{2,-3})\) to \(\PuncturedPlane\)}\label{fig:HFT:example:Curve:Upstairs}
	\end{subfigure}
  \caption{A simple non-rational tangle and its Heegaard Floer tangle invariant}\label{fig:HFT:example}
\end{figure} 


\subsection{\texorpdfstring{A gluing theorem for \(\HFT\)}{A gluing theorem for HFT}}\label{sec:review:HFT:gluing}
The invariant \(\HFT(T)\) can be also defined via Zarev's bordered sutured Heegaard Floer theory~\cite{Zarev}. 
In this alternate construction, the curved chain complex \(\CFTd(T)\) is replaced by an {\it a posteriori} equivalent algebraic object, namely the bordered sutured type~D structure associated with the tangle complement, which is equipped with a certain bordered sutured structure; see~\cite[Section~5]{pqSym}. 
This perspective 
gives rise to a 
gluing result \cite[Theorem~5.9]{pqMod} which relates the invariant \(\HFT\) to link Floer homology \(\HFL\) via Lagrangian Floer homology \(\HF\).  
We will always assume that tangles are glued as in Figure~\ref{fig:tanglepairing}, and when such a decomposition exists, we write \(L=T_1\cup T_2\) for a link $L$ consisting of tangles $T_1$ and $T_2$. The mirror image of the link $L$ is expressed as $L^*$; this notation extends to tangles so that the mirror of $T_i$, expressed $T_i^*$, is obtained by interchanging over- and under-crossings. So  \(L^*=T_1^*\cup T_2^*\). (Note that the mirror $T^*$ of a tangle is expressed as $\mirror T$ in other papers.) 
Let \(V\) be a two-dimensional vector space supported in a single relative \(\delta\)-grading. 

\begin{theorem}\label{thm:GlueingTheorem:HFT}
 If  \(L=T_1\cup T_2\) then
  \[
  \HFK(L)\otimes V
  \cong
  \HF(\HFT(T_1^*),\HFT(T_2))
  \]
  if the four open components of the tangles become identified to the same component and 
  \[
  \HFK(L)
  \cong
  \HF(\HFT(T_1^*),\HFT(T_2))
  \]
  otherwise. 
\end{theorem}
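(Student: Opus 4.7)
The plan is to pass to Zarev's bordered sutured Heegaard Floer theory, where the tangle invariants are reinterpreted as type~D structures and gluing is governed by a pairing theorem.
First I would invoke the dictionary from~\cite[Section~5]{pqSym} recalled just above: the multicurve \(\HFT(T_i)\) corresponds, up to homotopy equivalence, to the bordered sutured type~D structure \(\BSD(T_i)\) assigned to the tangle complement \(B^3\smallsetminus\nu(T_i)\) equipped with the standard sutured structure that records the four tangle ends and the chosen parametrization of the boundary sphere. Under this correspondence, the geometric operation of Lagrangian intersection Floer homology of two multicurves on \(\FourPuncturedSphere\) is identified with the algebraic box tensor product of the corresponding type~D structures over the associated bordered algebra.

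Next I would apply Zarev's pairing theorem to the decomposition \(S^3\smallsetminus\nu(L)=(B^3\smallsetminus\nu(T_1))\cup_{\FourPuncturedSphere}(B^3\smallsetminus\nu(T_2))\). The key is to verify that gluing the two sutured pieces along the four-punctured sphere, with the identification depicted in Figure~\ref{fig:tanglepairing} and with the mirroring convention reflected by sending \(T_1\) to \(T_1^*\) (so that one type~D structure becomes a type~A structure upon orientation reversal of the boundary), produces precisely a multi-pointed Heegaard diagram for the link complement that computes \(\HFK(L)\) (possibly up to the usual stabilization by \(V\)). Concretely, one checks that the sutures on the two pieces combine to give two basepoints on each closed component of \(L\)—one positive and one negative—which is exactly the data computing \(\HFK\).

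The case distinction would then be handled by counting how the four endpoints \(\partial T_1=\partial T_2\) are distributed among the components of \(L\). If the four open strands of \(T_1\) and \(T_2\) get identified to a single link component, that component inherits four basepoints rather than two, and the standard relation \(\HFL=\HFK\otimes V^{\otimes(k-1)}\) for \(k\) basepoints on a component forces the extra tensor factor of \(V\); in the remaining cases the basepoint count on every component is the minimal one, so no stabilization factor appears. This is essentially a bookkeeping exercise once the pairing theorem is in place.

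The main obstacle is the verification that the curve-level Lagrangian Floer homology \(\HF(\HFT(T_1^*),\HFT(T_2))\) genuinely agrees with the algebraic pairing \(\BSA(T_1^*)\boxtimes\BSD(T_2)\): one must confirm that the equivalence between the geometric and algebraic categories intertwines the two notions of pairing. This is established in~\cite[Section~5]{pqSym} using the classification of curved complexes over \(\Ad\), and the proof should cite this equivalence rather than redo it. Keeping careful track of the relative \(\delta\)-grading on both sides (so that the \(V\) factor appears in a single grading) completes the argument.
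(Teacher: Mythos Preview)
Your proposal is correct and follows essentially the same approach the paper indicates: the paper does not prove this theorem directly but cites it as \cite[Theorem~5.9]{pqMod}, explaining that it arises from reinterpreting \(\HFT\) via Zarev's bordered sutured theory (as in \cite[Section~5]{pqSym}) and then applying the pairing theorem, with the case distinction coming from the basepoint count on the glued link. One small correction: the equivalence between Lagrangian Floer pairing and the algebraic box tensor product that you cite at the end is established in \cite{pqMod} (Sections~4 and~5 there), not in \cite{pqSym}.
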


In this theorem, the knot Floer homology \(\HFK(L)\) should be understood as a \(\delta\)-graded theory. A similar gluing theorem holds in the bigraded setting and also for link Floer homology, using a multivariate Alexander grading  on the tangle invariants. 

The Lagrangian Floer homology \(\HF(\gamma,\gamma')\) of two immersed curves with local systems \(\gamma\) and \(\gamma'\) is a vector space generated by intersection points between the two curves. More precisely, one first arranges that the components are transverse and do not cobound immersed annuli; then, \(\HF(\gamma,\gamma')\) is the homology of the following chain complex: for each intersection point between \(\gamma\) and \(\gamma'\), there are \(n\cdot n'\) corresponding generators of the underlying chain module, where \(n\) and \(n'\) are the dimensions of the local systems of \(\gamma\) and \(\gamma'\), respectively. The differential is defined by counting certain bigons between these intersection points. As a consequence, the dimension of \(\HF(\gamma,\gamma')\) is equal to the minimal intersection number between the two curves times the dimensions of their local systems, provided that the curves are not parallel. If the curves are parallel, the dimension of \(\HF(\gamma,\gamma')\) may be greater than the minimal geometric intersection number for certain choices of local systems; for details, see~\cite[Sections~4.5 and~4.6, in particular Theorem~4.45]{pqMod}. For a more explicit example, suppose $\gamma$ and $\gamma'$ are parallel embedded curves of the same slope equipped with local systems of dimensions $n$ and $n'$ respectively.  Then, $\dim \HF(\gamma,\gamma')$ can realize any even number between 0 and $2 (n \cdot n')$, depending on the local systems, even though the minimal geometric intersection number between these curves is zero.  Throughout, we will always assume that \(\gamma\) and \(\gamma'\) intersect minimally without cobounding immersed annuli.   

\begin{definition}\label{def:mirror_curve}
For a curve $\gamma$ in \(\FourPuncturedSphere\), its \textbf{mirror} $\mirror (\gamma)$ is the image under the involution of \(\FourPuncturedSphere\) that fixes the punctures and arcs and interchanges the gray and white faces from Figure~\ref{fig:HFT:example:Curve:Downstairs}.
\end{definition}
This operation is important in relating 
\(\HFT(T_1^*)\) to \(\HFT(T_1)\): (see \cite[Definition~5.3 and Proposition~5.4]{pqMod})

\begin{lemma}\label{lem:mirroring:HFT}
For any Conway tangle \(T\), \(\HFT(T^*)=\mirror(\HFT(T))\).
\end{lemma}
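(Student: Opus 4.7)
The strategy is to reduce the identity to a statement about Heegaard diagrams and then transport it through the classification of curved complexes by multicurves discussed in Section~\ref{sec:review:HFT:definition}.

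First, I would produce a Heegaard diagram \(\mathcal{H}^*\) for \(T^*\), with the same parametrization of \(\partial B^3\smallsetminus \partial T\), directly from a Heegaard diagram \(\mathcal{H}=(\Sigma,\bm{\alpha},\bm{\beta})\) for \(T\). Since passing to the mirror tangle corresponds to an orientation-reversing diffeomorphism of the three-ball fixing the boundary pointwise, the natural candidate is \(\mathcal{H}^*=(-\Sigma,\bm{\beta},\bm{\alpha})\), obtained by reversing the orientation of \(\Sigma\) (equivalently, by swapping the roles of the \(\alpha\)- and \(\beta\)-curves while leaving the underlying curves on \(\Sigma\) unchanged). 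The identification of \(\partial B^3\smallsetminus\partial T\) with \(\FourPuncturedSphere\) lives on the boundary, and under the orientation reversal of the ball it factors through precisely the involution of \(\FourPuncturedSphere\) appearing in Definition~\ref{def:mirror_curve}: it fixes the parametrizing arcs and the four punctures but interchanges the two complementary disks.

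Second, I would translate this swap into an algebraic operation on \(\CFTd\). The algebra \(\Ad\) is built from the parametrizing arcs on \(\FourPuncturedSphere\), with its generators recording elementary paths through the two complementary faces; swapping the faces induces an involutive automorphism \(\iota\) of \(\Ad\) that fixes all idempotents. Reassembling \((\Sigma,\bm{\alpha},\bm{\beta})\) as \((-\Sigma,\bm{\beta},\bm{\alpha})\) then relabels contributions to the differential and curvature by \(\iota\), giving a natural identification \(\CFTd(T^*)\simeq \iota^{\ast}\CFTd(T)\) as relatively \(\delta\)-graded curved complexes over \(\Ad\).

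Finally, I would invoke the classification bijection between chain-homotopy classes of curved complexes over \(\Ad\) and free homotopy classes of decorated multicurves on \(\FourPuncturedSphere\). This bijection is natural with respect to mapping classes of \(\FourPuncturedSphere\) that preserve the parametrizing arcs setwise, so it intertwines \(\iota\) with the geometric involution \(\mirror\). Therefore \(\HFT(T^*)\) corresponds to \(\iota^{\ast}\CFTd(T)\), which in turn corresponds to \(\mirror(\HFT(T))\), as claimed.

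The main obstacle lies in the middle step: one has to pin down the algebra involution \(\iota\) precisely and check that the \(\alpha\)/\(\beta\) swap in the Heegaard diagram does induce the twist \(\iota^{\ast}\) at the level of curved complexes, rather than some other dualization. Some care is also needed with the \(\delta\)-grading, since orientation reversal flips both the homological and Alexander-type gradings, and one must verify that the combination preserves \(\delta\) up to the overall shift implicit in the relative grading convention used to define \(\HFT\).
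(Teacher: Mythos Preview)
The paper does not actually supply a proof of this lemma; it simply cites \cite[Definition~5.3 and Proposition~5.4]{pqMod} and states the result. So there is no in-paper argument to compare against.

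Your outline is the natural strategy and is presumably close in spirit to what the cited reference does: pass from a Heegaard diagram for \(T\) to one for \(T^*\) via orientation reversal, identify the induced operation on \(\CFTd\), and then invoke the multicurve classification. That said, your own caveat in the final paragraph points at the one genuinely delicate step. In bordered and bordered-sutured Heegaard Floer theory, reversing the orientation of the Heegaard surface (equivalently swapping \(\bm\alpha\) and \(\bm\beta\)) typically produces a \emph{dual} module rather than a twist by an algebra automorphism; for instance, type~D structures become type~A structures, and chain complexes get replaced by their \(\fieldTwoElements\)-linear duals with reversed differentials. Before you can say the effect on \(\CFTd\) is merely \(\iota^\ast\), you need to explain why in this particular curved-complex framework the dualization collapses to a twist---this hinges on specific features of \(\Ad\) (for example, a Frobenius-type self-duality compatible with the curvature), and is not automatic. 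Absent that, the step ``\(\CFTd(T^*)\simeq \iota^\ast\CFTd(T)\)'' is an assertion rather than an argument. The grading check you mention is comparatively routine once the algebraic identification is pinned down.
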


For example, because rational tangles satisfy $Q_s^*=Q_{-s}$ we have that \(\HFT(Q_{-s})=\mirror(\HFT(Q_s))\). 

\subsection{\texorpdfstring{The geography problem for \(\HFT\)}{The geography problem for HFT}}
\label{sec:review:HFT:geography}

Often, it is useful to lift immersed curves to a covering space of \(\FourPuncturedSphere\), namely the plane \(\mathbb{R}^2\) minus the integer lattice \(\mathbb{Z}^2\). 
We may regard \(\mathbb{R}^2\) as the universal cover of the torus, and the torus as the two-fold branched cover of the sphere \(\Sphere\) branched at four marked points; then the integer lattice \(\mathbb{Z}^2\) is the preimage of the branch set.
This covering space is illustrated in Figure~\ref{fig:HFT:example:Curve:Upstairs}, where the standard parametrization of \(\FourPuncturedSphere\) has been lifted to \(\PuncturedPlane\) and the front face and its preimage under the covering map are shaded grey. 
This picture also includes the lifts of the curves in \(\HFT(P_{2,-3})\): The lift of the embedded (dashed) curve is a straight line of slope \(\nicefrac{1}{2}\), while the lifts of the two non-embedded components of  \(\HFT(P_{2,-3})\) look more complicated. Remarkably, however, this example shows almost all the complexity of the immersed curves that can appear as components of \(\HFT(T)\) for Conway tangles \(T\). 

To understand the geography of components of \(\HFT(T)\) for general tangles \(T\), observe that the linear action on the covering space \(\PuncturedPlane\) by \(\mathit{SL}(2,\mathbb{Z})\) corresponds to Dehn twisting in \(\FourPuncturedSphere\), or equivalently, adding twists to the tangle ends; for specific conventions see \cite[Observation~3.2]{pqSym}.

\begin{definition}\label{def:HFT_rational_special}
We call a curve in \(\FourPuncturedSphere\) \textbf{rational} if its lift to \(\PuncturedPlane\) is a straight line of slope \(\nicefrac{p}{q}\). We denote such a curve by \(\Rational(\nicefrac{p}{q})\) if it has a trivial local system, and \(\Rational_{X}(\nicefrac{p}{q})\) if it has a local system $X$.

We call a curve in \(\FourPuncturedSphere\) \textbf{special} if, after some twisting, it is equal to the curve \(\Special_n(0;\TEi,\TEj)\) whose lift to \(\PuncturedPlane\) is shown in Figure~\ref{fig:intro:curves}. The lift of any special curve can be isotoped into an arbitrarily small neighbourhood of a straight line of some rational slope \(\nicefrac{p}{q}\in\QPI\) going through some punctures \(\TEi\) and \(\TEj\), in which case we denote this curve by \(\Special_n(\nicefrac{p}{q};\TEi,\TEj)\).
\end{definition}

The term \emph{rational} is chosen because for rational tangles \(\HFT(Q_{p/q})=\Rational(\nicefrac{p}{q})\). 
One can then show \cite[Theorem~0.5]{pqSym}:

\begin{figure}[t]
  \centering
  \(\rigidcurveIntro\)
  \caption{The lift of the curve \(\Special_n(0;\TEi,\TEj)\), where \(n\in\mathbb{N}\) and \((\TEi,\TEj)=(\TEIV,\TEI)\) or \((\TEII,\TEIII)\)}\label{fig:intro:curves}
\end{figure}

\begin{theorem}\label{thm:geography_of_HFT}
  For a Conway tangle \(T\) the underlying curve of each component of \(\HFT(T)\) is either rational or special. 
  Moreover, if it is special, its local system is equal to an identity matrix.
\end{theorem}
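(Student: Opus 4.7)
The plan is to combine the mapping class group equivariance of $\HFT$ (Theorem~\ref{thm:HFT:Twisting}) with the pairing formula (Theorem~\ref{thm:GlueingTheorem:HFT}) to constrain the possible geometry of the components of $\HFT(T)$. First I would reduce to analyzing a single component $\gamma$ of $\HFT(T)$ up to the action of $\Mod(\FourPuncturedSphere)$; under the identification of Dehn twists with the $\SL(2,\Z)$-action on the covering space $\PuncturedPlane$, it suffices to show that the orbit of $\gamma$ contains either a straight horizontal line (the rational case) or a figure-eight of the form $\Special_n(0;\TEi,\TEj)$ (the special case).

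The main input is then a numerical constraint extracted from pairing against rational fillings. For every slope $s\in\QPI$, Theorem~\ref{thm:GlueingTheorem:HFT} implies that $\dim\HF(\mirror\Rational(s),\gamma)$ is bounded by a fixed multiple of $\dim\HFK(T\cup Q_{-s})$, which is finite. Consequently, the minimal geometric intersection number of $\gamma$ with straight lines of every rational slope is uniformly bounded, and the refined Alexander-graded form of the pairing formula enforces further symmetry patterns on this intersection data (mirroring the symmetries of $\HFK$). The key combinatorial step, and the main obstacle, is to classify primitive immersed curves in $\FourPuncturedSphere$ whose geometric intersection numbers with all rational lines satisfy these bounds and symmetries. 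I would tackle this by passing to the covering space $\PuncturedPlane$ and showing that any lift which is neither straight nor homotopic to a small neighbourhood of a segment between two punctures must wind around some puncture in a way that forces the intersection number with a line of some rational slope to grow without bound.

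Finally, for the local system statement, suppose $\gamma$ is special with a local system $X$ not similar to the identity. Pairing $\gamma$ with a transverse rational line produces generators on which the Lagrangian Floer differentials are controlled by $X$; when $X$ has a Jordan block of size greater than one, cancellation fails and $\dim\HF(\mirror\Rational(s),\gamma)$ strictly exceeds the minimal geometric intersection count, as discussed after Theorem~\ref{thm:GlueingTheorem:HFT}. Choosing $s$ so that this excess violates the $\HFK$-rank bound from the previous step yields the contradiction. An alternative route, more algebraic in nature, is to work directly with the peculiar module $\CFTd(T)$ and show that on the loop-type subcomplex corresponding to a special component, the monodromy around the loop can always be reduced to the identity by a change of basis over $\Ad$; this uses the specific form of the algebra elements that appear along special loops and would bypass the Floer-theoretic argument above.
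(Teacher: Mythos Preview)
This theorem is not proved in the present paper; it is quoted from \cite[Theorem~0.5]{pqSym}. The proof there does not proceed by bounding intersection numbers with rational lines, but by exploiting a structural symmetry of the invariant: a certain \emph{conjugation} (or basepoint-moving) action on $\CFTd(T)$, inherited from the conjugation symmetry of link Floer homology, forces the multicurve to be invariant under a specific involution of $\FourPuncturedSphere$. Analyzing which primitive curves are compatible with this symmetry is what singles out the rational and special families, and the same symmetry rules out non-trivial local systems on special components.

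Your proposal has a genuine gap at its core. The pairing theorem gives an \emph{equality}
\[
\dim\HF(\Rational(s),\gamma)=\dim\HFK(T(-s))\quad(\text{up to stabilization}),
\]
not an a priori bound: it computes $\HFK$ from the curve, not the other way around. So there is no ``$\HFK$-rank bound'' to violate in the local-system step, and no constraint on $\gamma$ unless you feed in independent information about $\HFK$ of the fillings. You gesture at ``symmetry patterns on this intersection data'', which is in fact the right idea, but you never name the relevant symmetry or explain how it restricts curves. Moreover, the claim that intersection numbers with lines of all rational slopes are \emph{uniformly} bounded is false already for a single rational curve $\Rational(\nicefrac{p}{q})$: its intersection with $\Rational(\nicefrac{p'}{q'})$ is $|pq'-p'q|$, which is unbounded in $s'=\nicefrac{p'}{q'}$. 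What distinguishes rational and special curves from generic immersed curves is not the size of these numbers but their behaviour under the conjugation symmetry; your outline does not supply a substitute for that input.
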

For example, we can now write
\(\HFT(P_{2,-3})\) as the union of the rational curve \(\Rational(\nicefrac{1}{2})\) and the two special components \(\Special_1(0;\TEIV,\TEI)\) and \(\Special_1(0;\TEII,\TEIII)\). 
Whether rational components with non-trivial local systems appear in \(\HFT\) is currently not known. 
Special components for $n>1$ show up in the invariants of two-stranded pretzel tangles with more twists \cite[Theorem~6.9]{pqMod}. 
Special components always come in pairs according to the following result, which is a simplified version of 
\cite[Theorem~0.10]{pqSym}.

\begin{theorem}[Conjugation symmetry]\label{thm:Conjugation}
  Let \( (\TEi, \TEj , \TEk, \TEl )\) be some permutation of \( (\TEI,\TEII,\TEIII,\TEIV )\) and let \(\nicefrac{p}{q}\in\QPI\). 
  Then, for any Conway tangle \(T\), the numbers of components \(\Special_n(\nicefrac{p}{q};\TEi,\TEj)\) and \(\Special_n(\nicefrac{p}{q};\TEk,\TEl)\) in \(\HFT(T)\) in any given \(\delta\)-grading agree. 
\end{theorem}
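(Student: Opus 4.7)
The plan is to derive the conjugation symmetry for special components by pulling back the well-known conjugation symmetry from link Floer homology of closed links. The central idea is that whether a special component passes through the pair \(\{\TEi,\TEj\}\) versus the complementary pair \(\{\TEk,\TEl\}\) is detected by a refinement of the \(\delta\)-grading (a multivariate Alexander grading), and that the closed-link conjugation symmetry precisely swaps these two refinements while preserving the \(\delta\)-grading.

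First, I would refine the \(\delta\)-graded count in the statement using the multivariate Alexander grading on \(\HFT(T)\) (implicit in Theorem~\ref{thm:GlueingTheorem:HFT} and its bigraded analogue). Each special component \(\Special_n(\nicefrac{p}{q};\TEi,\TEj)\) passes through exactly the two punctures \(\TEi,\TEj\), so its contribution to \(\HF(\HFT(T^*),\HFT(Q_s))\) lives in the Alexander multi-gradings corresponding to the variables indexed by those two punctures. In particular, the three complementary pairings \(\{\TEI\TEII\vert\TEIII\TEIV\}\), \(\{\TEI\TEIII\vert\TEII\TEIV\}\), \(\{\TEI\TEIV\vert\TEII\TEIII\}\) can be read off from the coarser bigrading obtained by summing the Alexander variables in pairs.

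Next, I would invoke the conjugation symmetry of link Floer homology: for any oriented link \(L\), the symmetry \(\HFL(L,\bm{A})\cong\HFL(L,-\bm{A})\) (which shifts the Maslov grading so that \(\delta\) is preserved) holds on the nose. Applied to all rational fillings \(L=T\cup Q_{p/q}\) simultaneously, and combined with the gluing Theorem~\ref{thm:GlueingTheorem:HFT}, this forces a corresponding involution on \(\HFT(T)\) which negates the multivariate Alexander grading while preserving the \(\delta\)-grading. Since rational components \(\Rational_X(\nicefrac{p}{q})\) are symmetric under negation of Alexander gradings (they pair symmetrically with any \(\Rational(s)\)), the involution restricts on the special locus to a bijection between components through \(\{\TEi,\TEj\}\) and components through \(\{\TEk,\TEl\}\). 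Furthermore, by Theorem~\ref{thm:geography_of_HFT} the local systems on special components are identity matrices, so the bijection also respects the parameter \(n\) (which is determined by the minimal self-intersection number together with the slope and the pair of punctures).

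The main obstacle is establishing that the closed-link conjugation symmetry can genuinely be localized on the tangle invariant \(\HFT(T)\) — i.e.\ that it is not merely a symmetry of dimensions of \(\HF(\HFT(T^*),\HFT(Q_s))\) for individual slopes, but reflects a symmetry of the underlying multicurve. Granting the gluing theorem this is standard: by testing against sufficiently many rational fillings (enough to detect each special component individually via the Dehn twist action of Theorem~\ref{thm:HFT:Twisting}), we can transfer the conjugation symmetry from the closed invariants to the multicurve \(\HFT(T)\) itself. Once this transfer is carried out, the assertion about equal counts in each \(\delta\)-grading follows immediately from the observation above that conjugation preserves \(\delta\) and \(n\) but swaps the pair of punctures with its complement.
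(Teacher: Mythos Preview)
The paper does not actually prove this theorem; it is quoted as ``a simplified version of \cite[Theorem~0.10]{pqSym}'' and the proof lives entirely in that reference. So there is no in-paper argument to compare against. That said, the proof in \cite{pqSym} does not proceed by pulling back the closed-link conjugation symmetry through the gluing theorem; it works directly at the level of the curved chain complex \(\CFTd(T)\) (equivalently, the bordered sutured type~D structure), where the conjugation symmetry of Heegaard Floer theory is already visible from the Heegaard diagram, and then reads off the effect on the multicurve.

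Your strategy is conceptually reasonable, but the ``transfer'' step is a genuine gap as written. Knowing that \(\HFL(T\cup Q_s)\) enjoys conjugation symmetry for every slope \(s\) tells you that certain \emph{dimensions} match in each bigrading; it does not by itself give you an involution on the multicurve \(\HFT(T)\). To promote a family of numerical coincidences to a structural statement about components, you would need to argue that the multigraded pairing dimensions with all \(\Rational(s)\) determine the multiset of components (including the parameter \(n\) and the pair of punctures) uniquely. This is plausible given Theorem~\ref{thm:geography_of_HFT}, but it is a nontrivial inverse problem that you have not addressed: you must show, for instance, that no linear combination of special components \(\Special_n(\nicefrac{p}{q};\TEi,\TEj)\) can mimic, across all rational fillings and all bigradings, the contribution of a different combination of \(\Special_n(\nicefrac{p}{q};\TEk,\TEl)\)'s. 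Your appeal to the Dehn twist action does not obviously help here, since twisting acts the same way on both members of a conjugate pair. The direct chain-level route taken in \cite{pqSym} sidesteps this inversion problem entirely.
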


There are also restrictions on rational components. The following is \cite[Observation 6.1]{pqMod}.

\begin{lemma}\label{lem:HFT_detects_connectivity}
  Each rational component of \(\HFT(T)\) separates the four punctures into two pairs, which agrees with how the two open components of \(T\) connect the tangle ends.
\end{lemma}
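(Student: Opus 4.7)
My plan is to reduce to a model slope via the mapping class group equivariance of \(\HFT\) (Theorem~\ref{thm:HFT:Twisting}), and then to appeal to the Heegaard diagram construction of the invariant.

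First I would note that any essential simple closed curve on \(\FourPuncturedSphere\) separates the four punctures into two pairs, with three possible pairing types in total. For a rational curve \(\Rational(\nicefrac{p}{q})\), the induced pairing is determined by \((p \bmod 2, q \bmod 2) \in \{(1,0),(0,1),(1,1)\}\). These three pairings are in bijection with the three possible connectivity patterns of a Conway tangle, so the rational component \(c \subset \HFT(T)\) unambiguously defines a pairing of the four tangle ends, and the task is to show it coincides with the connectivity of \(T\).

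Next, I would use the twisting theorem to reduce to the case of slope \(\infty\). Picking \(\tau \in \Mod(\FourPuncturedSphere)\) taking the slope of \(c\) to \(\infty\), the component \(\tau(c)\) is a vertical rational component of \(\HFT(\tau(T)) = \tau(\HFT(T))\). Because \(\tau\) acts compatibly on pairings of punctures and on connectivity patterns of tangles, it suffices to prove the claim when \(c\) is vertical.

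In the model case, I would conclude by unpacking the Heegaard diagram definition of \(\CFTd(T)\). Under the classification of curved complexes over \(\Ad\) by multicurves on \(\FourPuncturedSphere\), a vertical rational component corresponds to a subcomplex of \(\CFTd(T)\) whose generators are supported in the two idempotents of \(\Ad\) associated with the parameterizing arcs crossed by a vertical curve. The generators of \(\CFTd(T)\) are constrained by the tangle's connectivity: the alpha curves of any Heegaard diagram for \(T\) encode how the strands of \(T\) connect the four tangle ends, so only generators supported in idempotents compatible with this connectivity can appear. This forces the connectivity of \(T\) to match the vertical pairing given by \(c\).

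The main obstacle is this last step, which requires unwinding how the strand structure of \(T\) manifests in the idempotent decomposition of \(\CFTd(T)\); this is where one would appeal to the specific constructions of \cite{HDsForTangles, pqMod}. An alternative route is to use the gluing theorem (Theorem~\ref{thm:GlueingTheorem:HFT}): gluing \(T\) to a rational tangle \(Q_{s'}\) of a carefully chosen slope and comparing \(\dim \HF(\HFT(T^\ast), \HFT(Q_{s'}))\) with \(\dim \HFK(T \cup Q_{s'})\) (which differs by a factor of \(2\) according to the number of components of the resulting link, and hence according to the connectivity of \(T\)) would produce a contradiction if the pairing of \(c\) disagreed with the connectivity of \(T\).
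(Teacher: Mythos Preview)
The paper does not prove this lemma; it is simply cited as \cite[Observation~6.1]{pqMod}. So there is no argument in the paper to compare yours against, and your task was really to reconstruct what that observation says.

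Your outline is sound. The reduction to a fixed slope via Theorem~\ref{thm:HFT:Twisting} is clean and correct. Your step~3 correctly locates the content: the Heegaard diagram used to define \(\CFTd(T)\) has its \(\alpha\)-arcs arranged according to how the open strands of \(T\) connect the tangle ends, and this constrains which idempotents of \(\Ad\) can support generators. That constraint is exactly what forces the separation induced by a rational component to match the connectivity. You are right that filling in this step requires going back to the explicit construction in \cite{pqMod}; there is no shortcut available from the material reviewed in the present paper.

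Your alternative via the gluing theorem is also viable and has the advantage of being self-contained relative to what is stated here. To make it work, choose a rational tangle \(Q_{s'}\) whose connectivity matches the pairing induced by the hypothetical rational component \(c\) but not (for contradiction) the connectivity of \(T\). Then \(T\cup Q_{-s'}\) is a knot, so Theorem~\ref{thm:GlueingTheorem:HFT} gives \(\dim\HFK(T\cup Q_{-s'})=\dim\HF(\HFT(T^*),\HFT(Q_{s'}))\), an odd number. On the other hand, \(\HF(\mirror(c),\HFT(Q_{s'}))\) contributes an even number (two parallel rational curves of the same slope with trivial local systems), and every other component of \(\HFT(T^*)\) also pairs with \(\Rational(s')\) in even dimension (special curves contribute \(4n\) intersections, rational curves of different slope contribute an even count since both curves lift to closed loops in the torus double cover). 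This parity contradiction finishes the argument.
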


In analogy with Section~\ref{sec:main}, given some slope \(s\in\QPI\), we will call a multicurve \textbf{\(s\)-rational} if it does not contain any special components of slope \(s\), and \textbf{\(s\)-special} if it does not contain any rational components of slope \(s\).

%
%
%

%

\section{Heegaard Floer thin fillings}\label{sec:HFT:ThinFillings}

We now turn out attention to gradings. Following \cite[Definitions~4.28 and~5.1]{pqMod}, the \(\delta\)-grading of an immersed multicurve \(\Gamma\) is a function 
	\[
	\delta\co \Gen(\Gamma)\longrightarrow \tfrac{1}{2}\mathbb{Z}
	\]
	where \(\Gen(\Gamma)\) is the set of intersection points between the four parametrizing arcs in \(\FourPuncturedSphere\) and~\(\Gamma\), assuming that this intersection is minimal. The function \(\delta\) is subject to the following compatibility condition: 
	Suppose \(x,x'\in \Gen(\Gamma)\) are two intersection points such that there is a path \(\psi\)  on \(\Gamma\) which connects \(x\) to \(x'\) without meeting any parametrizing arc (except at the endpoints). We distinguish three cases, which are illustrated in Figure~\ref{fig:domains:xx}: a path can turn left (a), can go straight across (b), or can turn right (c). Then 
	\[
	\delta(x')-\delta(x)=
	\begin{cases*}
	\tfrac{1}{2} & if the path \(\psi\) turns left,\\
	0 & if the path \(\psi\) goes straight across,\\
	-\tfrac{1}{2} & if the path \(\psi\) turns right.\\
	\end{cases*}
	\]

Given a Conway tangle \(T\), the generators of the underlying module of the invariant \(\CFTd(T)\) are in one-to-one correspondence with elements of \(\Gen(\HFT(T))\). Moreover, these generators are homogeneous with respect to the \(\delta\)-grading, so the relative \(\delta\)-grading on \(\CFTd(T)\) determines the relative \(\delta\)-grading on the corresponding multicurve \(\HFT(T)\). 

\begin{figure}[bt]
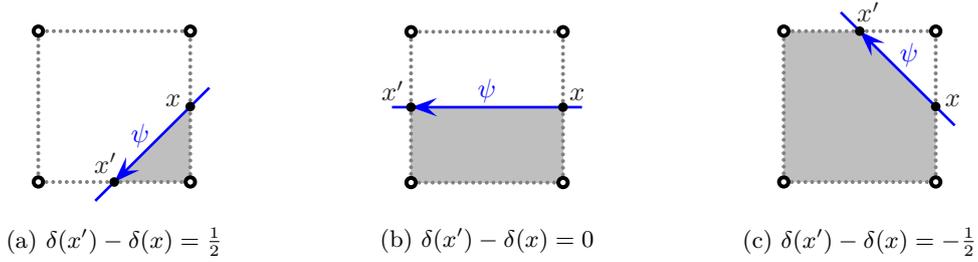

	\begin{subfigure}{0.3\textwidth}
		\centering
		\(\xI\)
		\caption{\(\delta(x')-\delta(x)=\frac 1 2 \)}\label{fig:domains:xx:1}
	\end{subfigure}
	\begin{subfigure}{0.3\textwidth}
		\centering
		\(\xII\)
		\caption{\(\delta(x')-\delta(x)=0 \)}\label{fig:domains:xx:2}
	\end{subfigure}
	\begin{subfigure}{0.3\textwidth}
		\centering
		\(\xIII\)
		\caption{\(\delta(x')-\delta(x)=-\frac 1 2 \)}\label{fig:domains:xx:3}
	\end{subfigure}
	\caption{Basic regions illustrating the definition of the \(\delta\)-grading on a single curve}\label{fig:domains:xx}
\end{figure}

Like link Floer homology, the invariant \(\HFT\) comes with a relative bigrading. In this paper we are not concerned with the Alexander grading; our focus is exclusively on the \(\delta\)-grading. However, we note that the treatment of the grading that follows runs along similar lines to that of~\cite{LMZ} used to study the Alexander grading. 

\subsection{\texorpdfstring{The \(\delta\)-grading of curves in the covering space}{The δ-grading of curves in the covering space}}\label{sec:HFT:Thin:Covering}
We now develop tools that enable us to better understand the \(\delta\)-grading in terms of the covering space \(\PuncturedPlane\) of the four-punctured sphere \(\FourPuncturedSphere\). 
In Subsection~\ref{sec:HFT:Thin:results}, this will allow us  to reduce to the situation of Section~\ref{sec:main} and to apply the theorems from that section to \(\HFT\).

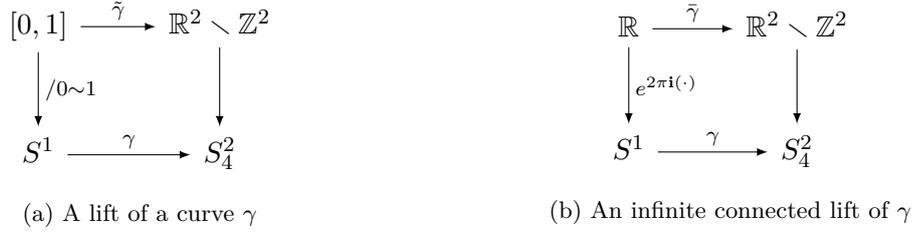
\begin{figure}[t]
  \begin{subfigure}{0.48\textwidth}
    $$
    \begin{tikzcd}[row sep=1cm, column sep=1cm]
    [0,1] 
    \arrow{r}{\tilde{\gamma}}
    \arrow{d}{/0\sim1}
    &
    \PuncturedPlane
    \arrow{d}
    \\
    S^1
    \arrow{r}{\gamma}
    &
    \FourPuncturedSphere
    \end{tikzcd}
    $$
    \caption{A lift of a curve \(\gamma\)}\label{subfig:lift}
  \end{subfigure}
  \begin{subfigure}{0.48\textwidth}
    $$
    \begin{tikzcd}[row sep=1cm, column sep=1cm]
    \R 
    \arrow{r}{\InfConLift{\gamma}}
    \arrow{d}{e^{2\pi \mathbf{i} (\cdot)}}
    &
    \PuncturedPlane
    \arrow{d}
    \\
    S^1
    \arrow{r}{\gamma}
    &
    \FourPuncturedSphere
    \end{tikzcd}
    $$
    \caption{An infinite connected lift of \(\gamma\)}\label{subfig:infconlift}
  \end{subfigure}
  \caption{Lifts versus infinite connected lifts, used for studying curves via the planar cover. In this section we mainly use infinite connected lifts; in Section~\ref{sec:examples} we use lifts nearly exclusively, but for illustration purposes the infinite connected lift is sometimes depicted as well. Note that with this nomenclature, the preimage of a curve in the cover may be called the infinite non-connected lift. }\label{}
\end{figure}

\begin{definition}
Recall that given a map  $\gamma\co S^1\to\FourPuncturedSphere$, its \textbf{lift} to \(\PuncturedPlane\) is  a map $\tilde{\gamma}\co [0,1]\to \PuncturedPlane$ such that the diagram in Figure~\ref{subfig:lift} commutes.
Given a map  $\gamma\co S^1\to\FourPuncturedSphere$, its \textbf{infinite connected lift}  to \(\PuncturedPlane\) is  a map $\InfConLift{\gamma}\co \R \to \PuncturedPlane$ such that the diagram in Figure~\ref{subfig:infconlift} commutes. 
\end{definition}

For notation, in this section any symbol decorated with a tilde~\(\sim\) on top  will denote the lift to \(\PuncturedPlane\); likewise, the symbol~\(-\) on top will denote the infinite connected lift. Infinite connected lifts are sometimes referred to as ‘‘lifts'' for simplicity, where the difference is clear from the context.
In the following, we will treat all points in the integer lattice as marked points (as opposed to punctures). Denote by \(\paraHF\) the union of the integer lattice points with the preimage of the parametrization of \(\FourPuncturedSphere\).  

\begin{definition}
	Suppose \(\Gamma=\{\Lgamma_1,\dots,\Lgamma_n\}\) is a set of curves in \(\PuncturedPlane\) avoiding the integer lattice points such that \(\paraHF\cup\Gamma=\paraHF\cup\Lgamma_1\cup\dots\cup\Lgamma_n\) is a planar graph whose vertices have all valence four. \(\paraHF\cup\Gamma\) divides the plane into polygons, which we call \textbf{regions}. A \textbf{domain} is a formal linear combination of regions. In other words, a domain is an element of \(H_2(\mathbb{R}^2,\paraHF\cup\Gamma)\). 

	Let us fix a metric on the plane such that \(\paraHF\cup\Gamma\) is geodesic and the angles at the vertices are~\(\frac{\pi}{2}\). 
	We then define the \textbf{Euler measure}  \(e(D)\) of a domain \(D\) to be  \(\frac{1}{2\pi}\)~times the
	integral over \(D\) of the curvature of the metric.
	
\end{definition}

The figures in this section follow the same conventions as in~\cite{pqMod}: we use the right-hand rule to determine the orientation of domains and the normal vector fields of \(\FourPuncturedSphere\) and \(\PuncturedPlane\) are pointing into the page. Thus, the boundary of a region of multiplicity \(+1\) is oriented clockwise. 

Note that the Euler measure is additive in the sense that \(e(D+D')=e(D)+e(D')\) for any two domains \(D\) and \(D'\). In practice, one computes the Euler measure of a domain \(D\) using the following formula, which follows from the Gauss–Bonnet theorem:
\[
e(D)=\chi(D)-\tfrac{1}{4}\{\text{acute corners of }D\}+\tfrac{1}{4}\{\text{obtuse corners of }D\}
\]

\begin{definition}\label{def:connecting_domain:same_curve}
	Given an absolutely \(\delta\)-graded curve \(\gamma\), consider two intersection points \(\Lx\) and \(\Lx'\) of the lift \(\Lgamma\) with the integer lattice graph \(\paraHF\). A \textbf{connecting domain} from \(\Lx\) to \(\Lx'\) is a domain \(\varphi\in H_2(\mathbb{R}^2,\paraHF\cup\Lgamma)\) with the property 
	\[
	\partial\Big(\partial\varphi\cap \Lgamma\Big)=\Lx-\Lx'.
	\] 
\end{definition}

\begin{remark}For readers familiar with Heegaard Floer homology, it can be helpful to think of the curve \(\Lgamma\) as playing the role of a \(\beta\)-curve and \(\paraHF\) playing the role of an \(\alpha\)-curve.\end{remark}

\begin{lemma}\label{lem:delta:HFT:Euler:one_curve}
  For any connecting domain \(\varphi\) as in Definition~\ref{def:connecting_domain:same_curve}, 
  \[
  \delta(x')-\delta(x)=2e(\varphi). 
  \]
\end{lemma}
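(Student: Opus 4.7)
The plan is to verify the formula by reducing, via additivity on both sides, to the three basic local cases of Figure \ref{fig:domains:xx}, and then confirming each by a direct Gauss--Bonnet computation.

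First I would break the boundary path into elementary segments. The curve $\tilde\gamma$ meets $\tilde P$ transversely at a finite sequence of points $\tilde x=\tilde x_0,\tilde x_1,\dots,\tilde x_k=\tilde x'$ along $\partial\varphi\cap\tilde\gamma$, and the segment between consecutive points lies in a single face of $\tilde P$. In each such face the local picture falls into exactly one of the three cases of Figure \ref{fig:domains:xx}, contributing $+\tfrac12$, $0$, or $-\tfrac12$ to $\delta(\tilde x_i)-\delta(\tilde x_{i-1})$. Telescoping gives
\[
\delta(\tilde x')-\delta(\tilde x)=\sum_{i=1}^{k}\bigl(\delta(\tilde x_i)-\delta(\tilde x_{i-1})\bigr).
\]

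Next I would construct for each segment an elementary connecting domain $\varphi_i$ from $\tilde x_{i-1}$ to $\tilde x_i$, lying in the ambient face and bounded on a prescribed side of the curve segment. Because $\tilde P$ is the integer lattice grid and $\tilde P\cup\tilde\gamma$ is a $4$-valent planar graph with all angles at its vertices equal to $\pi/2$, each $\varphi_i$ is a polygon all of whose corners are right angles, so the Gauss--Bonnet formula reduces to
\[
e(\varphi_i)=\chi(\varphi_i)-\tfrac14\,\#\{\text{corners of }\varphi_i\}.
\]
A direct case-by-case count then gives a triangle ($e=\tfrac14$) for a left turn, a quadrilateral ($e=0$) for a straight crossing, and a pentagon ($e=-\tfrac14$) for a right turn, and multiplying by $2$ matches the grading changes computed in the previous step.

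It remains to handle the ambiguity in $\varphi$: the constructed $\sum_i\varphi_i$ is itself a connecting domain from $\tilde x$ to $\tilde x'$, so $D:=\varphi-\sum_i\varphi_i$ satisfies $\partial D\cap\tilde\gamma=0$, i.e.\ $D\in H_2(\mathbb{R}^2,\tilde P)$. Any such $D$ is a $\mathbb{Z}$-linear combination of the unit square faces of the integer grid, each of which has $e=0$ (four right-angle corners with $\chi=1$), so by additivity $e(D)=0$. Hence $e(\varphi)=\sum_i e(\varphi_i)$, and the formula follows.

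The main obstacle is purely combinatorial bookkeeping: one must check that the elementary $\varphi_i$ can be consistently glued along shared $\tilde P$-edges into a genuine connecting domain and carefully count corners in each of the three local configurations so that the signs line up. Both checks are routine once orientation conventions (coming from the right-hand rule convention fixed above) are pinned down.
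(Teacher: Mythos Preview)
Your approach is essentially the same as the paper's: verify the formula on the three basic regions of Figure~\ref{fig:domains:xx}, build a model connecting domain \(\psi=\sum_i\varphi_i\) out of these, and observe that \(\varphi-\psi\) has boundary entirely in \(\paraHF\) and hence is a sum of unit squares with vanishing Euler measure.

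There is one genuine gap. You implicitly assume that \(\partial\varphi\cap\Lgamma\) is exactly the path from \(\Lx\) to \(\Lx'\) through the points \(\Lx_0,\dots,\Lx_k\). But Definition~\ref{def:connecting_domain:same_curve} only requires \(\partial(\partial\varphi\cap\Lgamma)=\Lx-\Lx'\), so this one-chain may contain additional closed cycles on \(\Lgamma\). In that case your difference \(D=\varphi-\sum_i\varphi_i\) does \emph{not} satisfy \(\partial D\cap\Lgamma=0\), and your conclusion that \(D\in H_2(\mathbb{R}^2,\paraHF)\) fails. The paper handles this in its final paragraph: each extra cycle is itself the \(\Lgamma\)-boundary of some domain, and one applies the already-established path case with \(\Lx=\Lx'\) (taking any intersection point of the cycle with \(\paraHF\)) to see that this domain has Euler measure zero. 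Adding this one sentence to your argument closes the gap.
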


\begin{proof}
	For the domains consisting of just the single regions shown in Figure~\ref{fig:domains:xx}, the lemma follows directly from the definition of the \(\delta\)-grading: 
	The Euler measure of \(\varphi\) in these three cases is \(\frac{1}{4}\), 0 and \(-\frac{1}{4}\), respectively. 
  Let us now consider a general connecting domain \(\varphi\) from \(\Lx\) to \(\Lx'\). 
  By hypothesis, \(\partial\varphi\cap\Lgamma\) is a one-chain connecting \(\Lx\) to \(\Lx'\). Let us first assume that this one-chain corresponds to a path from \(\Lx\) to \(\Lx'\). That is, either there are no cycles in the one-chain or, in the case \(\Lx=\Lx'\), this path is the only cycle. 
  In this case the path can be written as the intersection of \(\Lgamma\) with the boundary of a connecting domain \(\psi\), which is a sum of finitely many of the basic regions in Figure~\ref{fig:domains:xx} that we have just considered. The difference \(\varphi-\psi\) is a domain whose boundary lies entirely in \(\paraHF\), so it consists entirely of square regions and hence the Euler measure vanishes. 
  
  Finally, suppose the one-chain \(\partial\varphi\cap\Lgamma\) connecting \(\Lx\) to \(\Lx'\) also has cycles. Each of them is the boundary of some domain, and we claim that its Euler measure vanishes. To see this, we can apply the previous argument with \(\Lx=\Lx'\) being some intersection point of this cycle with \(\paraHF\) the connecting path being the whole cycle.
\end{proof}

\begin{definition}\label{def:connecting_domain:two_curves}
  Let \(\bullet\) be an intersection point between two absolutely \(\delta\)-graded curves \(\gamma\) and~\(\gamma'\). Consider the lifts \(\Lgamma\) and \(\Lgamma'\) of these two curves, such that they intersect at a lift \(\Lbullet\) of the intersection point~\(\bullet\). A \textbf{connecting domain} for \(\Lbullet\) from \(\Lgamma\) to \(\Lgamma'\) is a domain \(\varphi\in H_2(\mathbb{R}^2,\paraHF\cup\Lgamma\cup \Lgamma')\) with the property 
  \[
  \partial\Big(\partial\varphi\cap\Lgamma\Big)=\Lx-\Lbullet
  \quad
  \text{and}
  \quad
  \partial\Big(\partial\varphi\cap \Lgamma'\Big)=\Lbullet-\Ly
  \quad
  \text{for some \(\Lx\in\Lgamma\cap \paraHF\) and \(\Ly\in\Lgamma'\cap \paraHF\).}
  \] 
\end{definition}

Intersection points between bigraded curves can be endowed with a $\delta$-grading~\cite[Definition~4.40]{pqMod}, and this can be easily calculated according to the following result.  

\begin{lemma}\label{lem:delta:HFT:Euler:two_curves}
  With notation as in Definition~\ref{def:connecting_domain:two_curves}, the \(\delta\)-grading of \(\bullet\) satisfies
  \[
  \delta(\bullet\co\gamma\to\gamma')=\delta(y)-\delta(x)+\tfrac{1}{2}-2e(\varphi).
  \]
\end{lemma}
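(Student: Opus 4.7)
The plan is to mirror the strategy of the proof of Lemma~\ref{lem:delta:HFT:Euler:one_curve}: pick a convenient ``basic'' connecting domain at the intersection for which the formula can be verified by inspection, and then use additivity of the Euler measure together with Lemma~\ref{lem:delta:HFT:Euler:one_curve} to reduce the general case to this basic one.

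First, I would fix a canonical local connecting domain $\varphi_0$ at the intersection point $\Lbullet$. Near $\Lbullet$, the union $\Lgamma \cup \Lgamma'$ cuts a neighborhood into four quadrants; take $\varphi_0$ to be a small bigon occupying two adjacent quadrants whose closure meets $\paraHF$ exactly at one point $\Lx_0 \in \Lgamma \cap \paraHF$ and one point $\Ly_0 \in \Lgamma' \cap \paraHF$ (so that $\partial \varphi_0 \cap \Lgamma$ runs from $\Lx_0$ to $\Lbullet$ and $\partial \varphi_0 \cap \Lgamma'$ runs from $\Lbullet$ to $\Ly_0$). For such a local choice, the Euler measure is determined entirely by the corner angles at $\Lbullet$, $\Lx_0$, and $\Ly_0$ together with the basic region types from Figure~\ref{fig:domains:xx}, so the identity
\[
\delta(\bullet\co\gamma\to\gamma') = \delta(y_0) - \delta(x_0) + \tfrac{1}{2} - 2 e(\varphi_0)
\]
can be checked directly from the definition of the intersection-point grading in~\cite[Definition~4.40]{pqMod}. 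There are only finitely many such local models up to symmetry, so the verification is a finite case-check.

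Second, given an arbitrary connecting domain $\varphi$ from $\Lbullet$ to $\Lx$ and $\Ly$, I would consider the difference $\varphi - \varphi_0$. The $\Lbullet$ contributions in $\partial(\partial\varphi \cap \Lgamma)$ and $\partial(\partial\varphi \cap \Lgamma')$ cancel against those of $\varphi_0$, so $\varphi - \varphi_0$ splits (up to domains with boundary in $\paraHF$ only, which contribute zero Euler measure exactly as in the proof of Lemma~\ref{lem:delta:HFT:Euler:one_curve}) as a sum $\psi + \psi'$, where $\psi$ is a connecting domain on $\Lgamma$ from $\Lx_0$ to $\Lx$ and $\psi'$ is a connecting domain on $\Lgamma'$ from $\Ly$ to $\Ly_0$.

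Third, applying Lemma~\ref{lem:delta:HFT:Euler:one_curve} twice yields $\delta(x) - \delta(x_0) = 2 e(\psi)$ and $\delta(y_0) - \delta(y) = 2 e(\psi')$, and additivity of the Euler measure gives $e(\varphi) = e(\varphi_0) + e(\psi) + e(\psi')$. Substituting these into the basic-case identity produces exactly the claimed formula. The only real obstacle is the first step: unpacking the definition of the intersection-point $\delta$-grading from~\cite{pqMod} and checking that the orientation and angle conventions (in particular the $+\tfrac12$ term, which reflects the half-turn difference between the two outgoing strands at $\Lbullet$) are consistent with the Euler-measure normalisation used in Figure~\ref{fig:domains:xx}. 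Once the local model is pinned down, the rest of the argument is a formal consequence of additivity.
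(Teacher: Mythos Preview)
Your approach is essentially the same as the paper's: verify the formula on basic local connecting domains (the paper's Figure~\ref{fig:domains:xy}, which are single regions bounded by $\Lgamma$, $\Lgamma'$, and $\paraHF$ rather than bigons between the two curves), then use Lemma~\ref{lem:delta:HFT:Euler:one_curve} and additivity of the Euler measure to reduce the general case. Your description of $\varphi_0$ as a ``bigon'' and the directions you assign to $\psi,\psi'$ are slightly off, but these are cosmetic slips and the strategy is correct.
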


\begin{figure}[t]
  \centering
  \begin{subfigure}{0.24\textwidth}
    \centering
    \(\xyI\)
    \caption{\(\delta(\bullet)=\delta(y)-\delta(x)\)}\label{fig:domains:xy:1}
  \end{subfigure}
  \begin{subfigure}{0.24\textwidth}
    \centering
    \(\xyII\)
    \caption{\(\delta(\bullet)=\delta(y)-\delta(x)+\frac 1 2\)}\label{fig:domains:xy:2}
  \end{subfigure}
  \begin{subfigure}{0.24\textwidth}
    \centering
    \(\xyIII\)
    \caption{\(\delta(\bullet)=\delta(y)-\delta(x)+1\)}\label{fig:domains:xy:3}
  \end{subfigure}
  \begin{subfigure}{0.24\textwidth}
    \centering
    \(\xyIV\)
    \caption{\(\delta(\bullet)=\delta(y)-\delta(x)+\frac 3 2\)}\label{fig:domains:xy:4}
  \end{subfigure}
  \caption{Basic connecting domains satisfying the formula \(\delta(\bullet\co {\color{red}\gamma}\to{\color{blue}\gamma}')=\delta(y)-\delta(x)+\tfrac{1}{2}-2e(\varphi)\)}\label{fig:domains:xy}
\end{figure}

\begin{observation}\label{obs:delta:reverse_order}
  The domain \(-\varphi\) is a connecting domain for the same intersection point \(\bullet\), but regarded as a generator of \(\HF(\gamma',\gamma)\). Its \(\delta\)-grading is equal to 1 minus the original \(\delta\)-grading:
  \[
  \delta(\bullet\co\gamma\rightarrow\gamma')
  =
  1-\delta(\bullet\co\gamma'\rightarrow\gamma).
  \]
\end{observation}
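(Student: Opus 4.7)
The plan is to derive the identity by applying Lemma~\ref{lem:delta:HFT:Euler:two_curves} twice, once in each direction, using the single domain \(\varphi\) and its negative \(-\varphi\).

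First I would check that \(-\varphi\) is a legitimate connecting domain for \(\Lbullet\) when viewed as an intersection point from \(\gamma'\) to \(\gamma\) rather than from \(\gamma\) to \(\gamma'\). By hypothesis, \(\varphi\) satisfies
\[
\partial(\partial\varphi\cap\Lgamma)=\Lx-\Lbullet
\quad\text{and}\quad
\partial(\partial\varphi\cap\Lgamma')=\Lbullet-\Ly,
\]
for certain \(\Lx\in\Lgamma\cap\paraHF\) and \(\Ly\in\Lgamma'\cap\paraHF\). Taking \(-\varphi\) swaps the signs on both sides, giving
\[
\partial(\partial(-\varphi)\cap\Lgamma')=\Ly-\Lbullet
\quad\text{and}\quad
\partial(\partial(-\varphi)\cap\Lgamma)=\Lbullet-\Lx,
\]
which is precisely the boundary condition of Definition~\ref{def:connecting_domain:two_curves} for a connecting domain from \(\gamma'\) to \(\gamma\) with the roles of \(\Lx\) and \(\Ly\) interchanged. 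Thus \(-\varphi\) qualifies.

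Next I would use the fact that the Euler measure is linear in the domain, so \(e(-\varphi)=-e(\varphi)\); this is immediate from additivity \(e(D+D')=e(D)+e(D')\) applied to \(\varphi+(-\varphi)=0\). Then Lemma~\ref{lem:delta:HFT:Euler:two_curves}, applied to \(\varphi\) for the generator \(\bullet\co\gamma\to\gamma'\) and to \(-\varphi\) for the generator \(\bullet\co\gamma'\to\gamma\), yields
\[
\delta(\bullet\co\gamma\to\gamma')=\delta(y)-\delta(x)+\tfrac12-2e(\varphi),
\]
\[
\delta(\bullet\co\gamma'\to\gamma)=\delta(x)-\delta(y)+\tfrac12-2e(-\varphi)=\delta(x)-\delta(y)+\tfrac12+2e(\varphi).
\]
Adding the two equations collapses the \(\delta(x)\), \(\delta(y)\) and Euler-measure terms and leaves \(1\), giving the desired identity.

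There is essentially no obstacle here: the only point that warrants care is matching the conventions for which curve plays the role of ``source'' and which plays the role of ``target'' in the definition of a connecting domain, together with the corresponding relabelling \(\Lx\leftrightarrow\Ly\) when passing from \(\varphi\) to \(-\varphi\). Once this bookkeeping is fixed, the identity is a formal consequence of Lemma~\ref{lem:delta:HFT:Euler:two_curves} and the linearity of \(e\).
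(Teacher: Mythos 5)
Your argument is correct and is exactly the intended justification: the paper states this as an unproved Observation immediately after Lemma~\ref{lem:delta:HFT:Euler:two_curves}, and the evident reasoning is precisely yours, namely that $-\varphi$ satisfies the boundary conditions of Definition~\ref{def:connecting_domain:two_curves} with the roles of $\gamma$ and $\gamma'$ (and of $\Lx$ and $\Ly$) swapped, so applying the lemma to $\varphi$ and to $-\varphi$ and adding cancels the $\delta(x)$, $\delta(y)$, and Euler-measure terms. There is no circularity, since the paper's proof of that lemma does not invoke the Observation.
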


\begin{proof}[Proof of Lemma~\ref{lem:delta:HFT:Euler:two_curves}]
	First consider the simplest case in which the domain \(\varphi\) consists of a single region of multiplicity 1.
	Up to rotation, there are only four cases, as shown in Figure~\ref{fig:domains:xy}. 
	The lemma then follows directly from \cite[Definition~4.40]{pqMod}, since in each of those cases, the intersection point corresponds to some algebra element \(a\in\Ad\) and its \(\delta\)-grading \(\delta(a)\) is equal to \(\tfrac{1}{2}-2e(\varphi)\) \cite[Definitions~2.10 and~4.5]{pqMod}. 
  
 Now consider a general connecting domain \(\varphi\). Then near \(\Lbullet\), \(\varphi\) looks like one of the basic connecting domains \(\psi\) that we have just considered (up to adding multiples of square regions). Suppose \(\psi\) connects \(\Lx'\in\Lgamma\cap \paraHF\) to \(\Ly'\in\Lgamma'\cap \paraHF\). Then, as we have just verified,
  \[
  \delta(\bullet\co\gamma\rightarrow\gamma')
  =
  \delta(y')-\delta(x')+\tfrac{1}{2}-2e(\psi).
  \]
  Let \(\psi_x\) and \(\psi_y\) be connecting domains from \(\Lx\) to \(\Lx'\) and from \(\Ly'\) to \(\Ly\), respectively. Then, by Lemma~\ref{lem:delta:HFT:Euler:one_curve}, 
  \[
  \delta(x')-\delta(x)=2e(\psi_x)
  \quad\text{and}\quad
  \delta(y)-\delta(y')=2e(\psi_y).
  \]
  Combining all three relations, we see that 
  \[
  \delta(\bullet\co\gamma\rightarrow\gamma')
  =
  \delta(y)-\delta(x)+\tfrac{1}{2}-2e(\psi_x+\psi+\psi_y).
  \]
  By construction, \(\psi_x+\psi+\psi_y-\varphi\) is a sum of square regions and domains bounding closed components of \(\Lgamma\) and \(\Lgamma'\), so \(e(\psi_x+\psi+\psi_y)=e(\varphi)\). 
\end{proof}

\begin{definition}\label{def:symmetric_domain}
	Suppose  \(\Lgamma_i\) is an infinite connected lift to \(\PuncturedPlane\)  of some absolutely \(\delta\)-graded curve \(\gamma_i\) in \(\FourPuncturedSphere\) for \(i=1,\dots,n\), and let \(x_i\in\HF(\gamma_i,\gamma_{i+1})\) be an intersection point between \(\gamma_{i}\) and \(\gamma_{i+1}\), where we take indices modulo \(n\). 
	A \textbf{symmetric domain} for the tuples \((\Lgamma_i)_{i=1,\dots,n}\) and \((\Lx_i)_{i=1,\dots,n}\) is a domain \(\varphi\) satisfying
	\[
	\partial\Big(\partial\varphi\cap \Lgamma_{i}\Big)=\Lx_{i-1}-\Lx_i
	\] 
	where, again, indices are taken modulo \(n\).
\end{definition}

\begin{proposition}\label{prop:HFT:Euler:multiple-curves}
	For any connecting domain \(\varphi\) as in Definition~\ref{def:symmetric_domain}, 
	\[
	\sum_{i=1}^n \delta(x_i)=\tfrac{n}{2}-2 e(\varphi).
	\]
\end{proposition}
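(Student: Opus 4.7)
The plan is to reduce the $n$-curve statement to the two-curve case of Lemma~\ref{lem:delta:HFT:Euler:two_curves} by decomposing $\varphi$ into $n$ smaller connecting domains, one for each intersection point $x_i$, and then exploiting a cyclic telescoping of auxiliary base-points.

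For each $i$, I would fix an auxiliary point $\Ly_i \in \Lgamma_i \cap \paraHF$, and construct a connecting domain $\varphi_i$ for $\Lx_i$ viewed as a generator of $\HF(\gamma_i,\gamma_{i+1})$, with
\[
\partial\bigl(\partial\varphi_i \cap \Lgamma_i\bigr) = \Ly_i - \Lx_i,
\qquad
\partial\bigl(\partial\varphi_i \cap \Lgamma_{i+1}\bigr) = \Lx_i - \Ly_{i+1}.
\]
Such a $\varphi_i$ always exists: locally near $\Lx_i$ one of the four basic regions of Figure~\ref{fig:domains:xy} can be chosen, and one then extends along each curve using square regions together with the basic regions of Figure~\ref{fig:domains:xx} to terminate at $\Ly_i$ and $\Ly_{i+1}$. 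Applying Lemma~\ref{lem:delta:HFT:Euler:two_curves} to each $\varphi_i$ gives
\[
\delta(x_i) = \delta(y_{i+1}) - \delta(y_i) + \tfrac{1}{2} - 2e(\varphi_i),
\]
and summing over $i$ modulo $n$, the $\delta(y_i)$ terms telescope to $0$, yielding
\[
\sum_{i=1}^n \delta(x_i) = \tfrac{n}{2} - 2\sum_{i=1}^n e(\varphi_i).
\]

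It therefore remains to show $e(\varphi) = \sum_i e(\varphi_i)$. I would consider the difference $\psi \coloneqq \varphi - \sum_{i=1}^n \varphi_i$ and compute, for each $j$,
\[
\partial\bigl(\partial\psi \cap \Lgamma_j\bigr) = (\Lx_{j-1} - \Lx_j) - (\Lx_{j-1} - \Ly_j) - (\Ly_j - \Lx_j) = 0.
\]
Thus $\partial\psi \cap \Lgamma_j$ is a cycle on $\Lgamma_j$ for every $j$, so $\psi$ splits as a sum of a domain whose boundary lies entirely in $\paraHF$ (and therefore consists of unit square regions of Euler measure $0$) plus domains bounding closed cycles on individual $\Lgamma_j$; the latter have vanishing Euler measure by exactly the argument used at the end of the proof of Lemma~\ref{lem:delta:HFT:Euler:one_curve}. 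Hence $e(\psi) = 0$, completing the proof.

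The main obstacle is purely bookkeeping: arranging the $\varphi_i$ so that their boundary contributions on the shared curves align consistently with those of $\varphi$, and verifying that the leftover piece $\psi$ really is supported on square regions and closed-loop domains. Once this is in place, the cyclic telescoping and the additivity of the Euler measure do the rest.
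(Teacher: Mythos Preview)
Your proof is correct and follows essentially the same approach as the paper: fix auxiliary points \(\Ly_i\in\Lgamma_i\cap\paraHF\), apply Lemma~\ref{lem:delta:HFT:Euler:two_curves} to connecting domains \(\varphi_i\) for each \(\Lx_i\), and telescope. The paper is somewhat terser---it simply asserts that \(\varphi\) can be written as a sum of such \(\varphi_i\), whereas you construct the \(\varphi_i\) independently and then verify that \(e(\varphi-\sum_i\varphi_i)=0\); this extra bookkeeping is a reasonable elaboration of the same idea.
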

\begin{proof}
	For each \(i=1,\dots, n\), choose some intersection point \(\Ly_i\) of \(\Lgamma_i\) with \(P\). Then we can write \(\varphi\) as a sum of \(n\) connecting domains \(\varphi_i\) for \(\Lx_i\) from \(\Ly_i\) to \(\Ly_{i+1}\). By Lemma~\ref{lem:delta:HFT:Euler:two_curves}, 
	\[
	\delta(x_i)=\delta(y_{i+1})-\delta(y_i)+\tfrac{1}{2}-2e(\varphi_i)
	\]
	for \(i=1,\dots,n\). Taking the sum over all \(n\) equations, we obtain the desired identity.
\end{proof}

\begin{definition}\label{def:asymmetric_domain}
	Given two intersection points \(x,y\in\HF(\gamma,\gamma')\) between two curves \(\gamma\) 
	and \(\gamma'\), 
	a \textbf{domain} (or, asymmetric domain)
	from \(\Lx\) to \(\Ly\) is a domain \(
	\varphi\in H_2(\mathbb{R}^2,\InfConLift{\gamma}\cup\InfConLift{\gamma}')\) with
	the property 
	\[
	\partial\Big(\partial\varphi\cap \InfConLift{\gamma}\Big)=\Ly-\Lx 
	\] 
\end{definition}

\begin{corollary}\label{cor:HFT:true_domain}
	For any domain \(\varphi\) as in Definition~\ref{def:asymmetric_domain}, 
	\[\delta(y)-\delta(x)=2e(\varphi).\]
\end{corollary}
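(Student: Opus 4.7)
The plan is to reduce this to Proposition~\ref{prop:HFT:Euler:multiple-curves} in the case $n=2$ and then invoke Observation~\ref{obs:delta:reverse_order} to trade the symmetric formula for the asymmetric one. The key observation is that the boundary conditions for an asymmetric domain already make it (up to sign) a symmetric domain for the pair $(\InfConLift{\gamma}, \InfConLift{\gamma}')$ once we interpret one of the intersection points with its roles reversed.

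More precisely, first I would note that because $\partial(\partial\varphi)=0$ and $\partial\varphi$ decomposes as a 1-chain on $\InfConLift{\gamma}$ plus a 1-chain on $\InfConLift{\gamma}'$, the hypothesis $\partial(\partial\varphi\cap\InfConLift{\gamma})=\Ly-\Lx$ automatically forces $\partial(\partial\varphi\cap\InfConLift{\gamma}')=\Lx-\Ly$. Setting $\Lgamma_1=\InfConLift{\gamma}$, $\Lgamma_2=\InfConLift{\gamma}'$ and reinterpreting $\Lx$ as a generator $\Lx_2\in\HF(\gamma',\gamma)$ and $\Ly$ as $\Lx_1\in\HF(\gamma,\gamma')$, the domain $-\varphi$ then satisfies the definition of a symmetric domain in the sense of Definition~\ref{def:symmetric_domain}, since
\[
\partial(\partial(-\varphi)\cap\Lgamma_1)=\Lx-\Ly=\Lx_2-\Lx_1
\qquad\text{and}\qquad
\partial(\partial(-\varphi)\cap\Lgamma_2)=\Ly-\Lx=\Lx_1-\Lx_2.
\]

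Next I would apply Proposition~\ref{prop:HFT:Euler:multiple-curves} to $-\varphi$, using $e(-\varphi)=-e(\varphi)$, to obtain
\[
\delta(y\co\gamma\to\gamma')+\delta(x\co\gamma'\to\gamma)=1-2e(-\varphi)=1+2e(\varphi).
\]
Finally, Observation~\ref{obs:delta:reverse_order} gives $\delta(x\co\gamma'\to\gamma)=1-\delta(x\co\gamma\to\gamma')=1-\delta(x)$, and substituting this into the previous equation yields $\delta(y)-\delta(x)=2e(\varphi)$, as desired.

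I do not expect any real obstacle here; the only subtle point is verifying that the boundary of an asymmetric domain automatically closes up into the symmetric pattern, which is just the observation that $\partial\partial=0$ in the relative chain complex. Everything else is bookkeeping between the two conventions for orienting a bigon.
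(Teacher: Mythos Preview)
Your proof is correct and follows essentially the same approach as the paper: reduce to Proposition~\ref{prop:HFT:Euler:multiple-curves} with \(n=2\) by reinterpreting one of the two intersection points as living in \(\HF(\gamma',\gamma)\), then apply Observation~\ref{obs:delta:reverse_order}. The only cosmetic difference is that the paper reinterprets \(y\) (rather than \(x\)) and therefore uses \(\varphi\) itself as the symmetric domain instead of \(-\varphi\); both choices lead to the same one-line computation.
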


\begin{proof}
	Let us set \(n=2\), \(\gamma_1=\gamma\), \(\gamma_2=\gamma'\), and \(x_1=x\in\HF(\gamma_1,\gamma_2)\). 
	Also, let \(x_2\) be the intersection point \(y\in \HF(\gamma_1,\gamma_2)\), but regarded as an intersection point in \(\HF(\gamma_2,\gamma_1)\). 
	Then \(\varphi\) can be interpreted as a symmetric domain from \(\Lx_1\) to \(\Lx_2\). By Proposition~\ref{prop:HFT:Euler:multiple-curves}, this implies that \(\delta(x_1)+\delta(x_2)=1-2 e(\varphi)\). By Observation~\ref{obs:delta:reverse_order}, \(\delta(x_2)=1-\delta(y)\). These two identities combined prove the claim. 
\end{proof}

\begin{example}\label{exa:HFT:bigon}
	If \(\varphi\) is a bigon of multiplicity 1 as in Figure~\ref{fig:HFT:bigon}, Corollary~\ref{cor:HFT:true_domain} implies that \(\delta(y)-\delta(x)=2e(\varphi)=1\); see also~\cite[Lemma~4.41]{pqMod}.
\end{example}

\begin{figure}[t]
	\centering
	\(\HFTbigon\)
	\caption{A bigon illustrating Example~\ref{exa:HFT:bigon}; compare with Figure~\ref{fig:Kh:bigon} and~\cite[Figure~31]{pqMod}}\label{fig:HFT:bigon}
\end{figure}

\subsection{Linear curves}\label{sec:HFT:Thin:Linear}

\begin{definition}\label{def:linearity_via_derivative}
An immersed curve \(\gamma\) in \(\FourPuncturedSphere\) is called \textbf{linear} if there exists some \(\nicefrac{p}{q}\in\QPI\) such that for every open neighbourhood \(U\) of \(\nicefrac{p}{q}\) in \(\QPI\) there exists a curve $\gamma_U$ homotopic to \(\gamma\) with the property that all the slopes \(\tilde{\gamma}'_U(t)\) of the lift $\tilde{\gamma}_U$ are contained in \(U\). If there exists such a number \(\nicefrac{p}{q}\in\QPI\), it is unique, and we call it the \emph{slope} of \(\gamma\). 

If \(\Gamma\) is a collection of linear curves, we denote the set of their slopes by  \(\Slopes_{\Gamma}\). We also say that a collection of curves \(\Gamma\) is supported on a slope if it contains a curve of that slope. 
\end{definition}

By Theorem~\ref{thm:geography_of_HFT}, the invariant \(\HFT(T)\) of any Conway tangle \(T\) consists of rational and special curves, which are linear. Thus, for the remainder of this section we restrict our attention to linear curves.

The slope of the mirror of a linear curve \(\gamma\) (see Definition~\ref{def:mirror_curve}) is equal to the mirror of the slope of \(\gamma\) (see Definition~\ref{def:mirror_slopes}). So by Lemma~\ref{lem:mirroring:HFT}, mirroring operation commutes with taking the curve invariant $\HFT(-)$ and its slopes:
\[\Slopes_{\HFT(T^*)}=\Slopes_{\mirror(\HFT(T))}=\Slopes^{\mirror}_{\HFT(T)}\]

%

\begin{deflemma}\label{deflem:delta:linear_curve}
  Let \(\gamma\) be a linear curve of slope \(s\in\QPI\). Then unless \(s=0\), the images of all intersection points of \(\Lgamma\) with the horizontal lines of \(\paraHF\) have the same \(\delta\)-grading \(\delH\coloneqq\delH(\gamma)\), and unless \(s=\infty\), the images of all intersection points of \(\Lgamma\) with the vertical lines of \(\paraHF\) have the same \(\delta\)-grading \(\delV\coloneqq\delV(\gamma)\). Moreover, 
  \[
  \delV=
  \begin{cases}
  \delH-\frac{1}{2} &\text{ if \(0<s<\infty\)} \\
  \delH+\frac{1}{2} &\text{ if \(\infty<s<0\)}
  \end{cases}
  \]
\end{deflemma}
\begin{proof}
  If \(s\neq 0\), any two horizontal intersection points are connected via a rectangular domain, and so are any two vertical intersection points in the case \(s\neq \infty\). This proves the first two statements. For the third, suppose first that \(0<s<\infty\). Then we can connect any vertical intersection point to a horizontal intersection point by a triangular connecting domain of multiplicity \(+1\). The Euler measure is equal \(+\frac{1}{4}\) for any such pair of intersection points, so \(\delH-\delV=\frac{1}{2}\) by Lemma~\ref{lem:delta:HFT:Euler:one_curve}. For \(\infty<s<0\), the argument is the same except that the order of the intersection points is reversed; so in this case \(\delV-\delH=\frac{1}{2}\). 
\end{proof}

\begin{definition}\label{def:delta:HF:linear_curves}
  Given two linear curves \(\gamma\) and \(\gamma'\) of the same slope \(s\in\QPI\), we define
  \[
  \delta(\gamma,\gamma')
  =
  \begin{cases*}
  \delH(\gamma')-\delH(\gamma)
  &
  if \(s\neq0\)
  \\
  \delV(\gamma')-\delV(\gamma)
  &
  if \(s\neq\infty\)
  \end{cases*}
  \]
  This is well-defined by Lemma~\ref{deflem:delta:linear_curve}. 
  The two curves are said to have the same \(\delta\)-grading if \(\delta(\gamma,\gamma')=0\).
\end{definition}

\begin{deflemma}\label{deflem:delta:HF:linear_curves}
  Given two linear curves \(\gamma\) and \(\gamma'\) of different slopes \(s,s'\in\QPI\), the Lagrangian intersection theory \(\HF(\gamma,\gamma')\) is supported in a single \(\delta\)-grading, which is equal to 
  \[
  \delta(\gamma,\gamma')
  \coloneqq
  \begin{cases*}
  \delH(\gamma')-\delV(\gamma)+\tfrac{1}{2} & if \(s\in(\infty,s')\) for \(s'\in(0,\infty]\), or \(s\in(s',\infty)\) for \(s'\in[\infty,0)\)
  \\
  \delV(\gamma')-\delH(\gamma)+\tfrac{1}{2} &  if \(s\in(s',0)\) for \(s'\in[0,\infty)\), or \(s\in(0,s')\) for \(s'\in(\infty,0]\)
  \end{cases*}
  \]
  using the convention that \((\infty,\infty)\coloneqq\QPI\smallsetminus\{\infty\}\) and \((0,0)\coloneqq\QPI\smallsetminus\{0\}\).
\end{deflemma}

\begin{figure}[t]
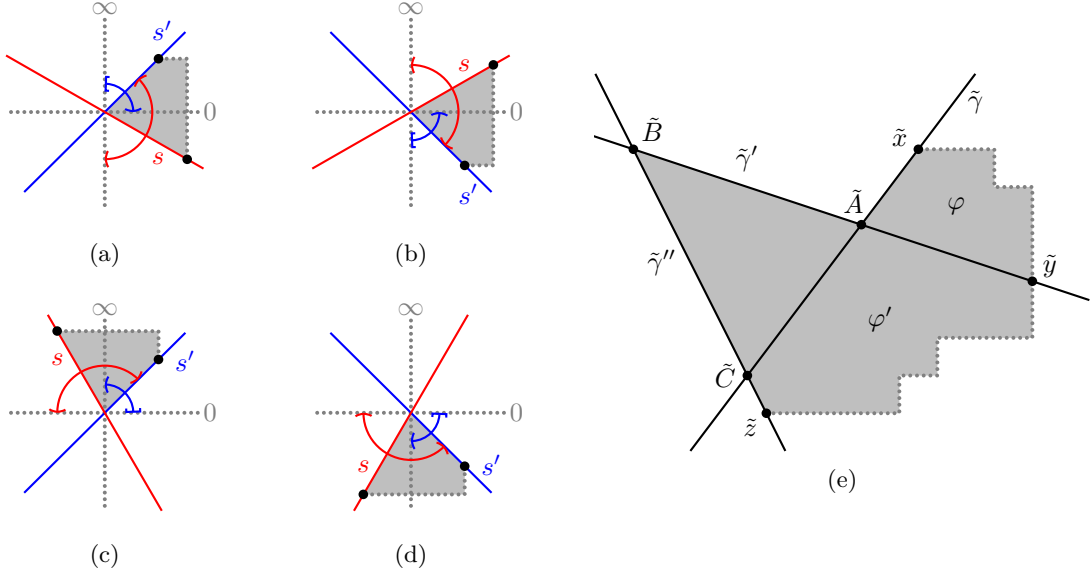

  \centering
  \begin{minipage}{0.5\textwidth}
  	\centering
	  \begin{subfigure}{0.49\textwidth}
	    \centering
	    \(\HFTintervalI\)
	    \caption{}\label{fig:LinearIsThin:1}
	  \end{subfigure}
	  \begin{subfigure}{0.49\textwidth}
	    \centering
	    \(\HFTintervalII\)
	    \caption{}\label{fig:LinearIsThin:2}
	  \end{subfigure}
  \bigskip
  \\
	  \begin{subfigure}{0.49\textwidth}
	    \centering
	    \(\HFTintervalIII\)
	    \caption{}\label{fig:LinearIsThin:3}
	  \end{subfigure}
	  \begin{subfigure}{0.49\textwidth}
	    \centering
	    \(\HFTintervalIV\)
	    \caption{}\label{fig:LinearIsThin:4}
	  \end{subfigure}
  \end{minipage}
	\begin{subfigure}{0.45\textwidth}
		\centering
		\(\HFTtransitivity\)
		\caption{}\label{fig:deltaDifference:HFT:transitivity}
	\end{subfigure}

  \caption{Some illustrations for the proofs of Lemma~\ref{deflem:delta:HF:linear_curves} (a--d) and  Theorem~\ref{thm:deltaDifference:HFT:transitivity} (e)}\label{fig:LinearIsThin}
\end{figure}

\begin{proof}
  Fix an intersection point of \(\Lgamma\) and \(\Lgamma'\). 
  In the first case, we can find a rectangular connecting domain for this intersection point between \(\Lgamma\) and \(\Lgamma'\) of multiplicity \(\pm1\) which starts at a vertical intersection point of \(\Lgamma\) and ends at a horizontal intersection point of \(\Lgamma'\); see Figures~\ref{fig:LinearIsThin:1} and~\ref{fig:LinearIsThin:2} for an illustration. We then use the formula from Lemma~\ref{lem:delta:HFT:Euler:two_curves}. We can argue similarly in the second case, which is illustrated in Figures~\ref{fig:LinearIsThin:3} and~\ref{fig:LinearIsThin:4}. One can easily check that any \((s,s')\in\QPI\times\QPI\) with \(s\neq s'\) belongs to at least one of these two cases. Finally, the formula for the \(\delta\)-grading is obviously independent of the particular intersection point that we considered. 
\end{proof}

\begin{corollary}[Symmetry of \(\delta\)]\label{cor:deltaDifference:HFT:commutativity}
  For any two linear curves \(\gamma\) and \(\gamma'\),
  \[
  \delta(\gamma,\gamma')+\delta(\gamma',\gamma)=
  \begin{cases*}
  0 & if \(s(\gamma)=s(\gamma')\)\\
  1 & if \(s(\gamma)\neq s(\gamma')\)
  \end{cases*}
  \]
\end{corollary}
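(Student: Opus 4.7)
The plan is to split the corollary into the two cases of the case distinction, and in each case reduce to results that have already been established in this section.

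For the case $s(\gamma)=s(\gamma')$, I would appeal directly to Definition~\ref{def:delta:HF:linear_curves}. By definition, $\delta(\gamma,\gamma')$ is either $\delH(\gamma')-\delH(\gamma)$ or $\delV(\gamma')-\delV(\gamma)$, and the same expression (with $\gamma$ and $\gamma'$ interchanged) computes $\delta(\gamma',\gamma)$. The sum is then visibly zero. The only subtlety is that one should check that the choice between using $\delH$ or $\delV$ is consistent between the two terms, which is automatic since the slopes are equal.

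For the case $s(\gamma)\neq s(\gamma')$, the key input I would use is Observation~\ref{obs:delta:reverse_order}, which asserts that for any individual intersection point $\bullet$ between $\Lgamma$ and $\Lgamma'$,
\[
\delta(\bullet\co\gamma\to\gamma')+\delta(\bullet\co\gamma'\to\gamma)=1.
\]
By Lemma~\ref{deflem:delta:HF:linear_curves}, the Lagrangian Floer complex $\HF(\gamma,\gamma')$ is supported in a single $\delta$-grading, namely $\delta(\gamma,\gamma')$, and similarly $\HF(\gamma',\gamma)$ is supported in the single grading $\delta(\gamma',\gamma)$. So every intersection point $\bullet$ simultaneously witnesses both $\delta$-gradings, and the identity above promotes directly to the corollary.

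I expect this to be essentially a one-paragraph proof, with no real obstacle: the work has already been done in establishing that the $\delta$-grading on $\HF(\gamma,\gamma')$ is concentrated in one degree (Lemma~\ref{deflem:delta:HF:linear_curves}) and in the per-intersection-point duality statement (Observation~\ref{obs:delta:reverse_order}). The only thing to be a little careful about is to verify that in the case $s(\gamma)\neq s(\gamma')$ the two curves actually do intersect, so that an intersection point $\bullet$ exists and the argument has content; this is clear because linear curves of distinct slopes always have nonempty intersection after minimal position, as is implicit in the rectangular connecting domains drawn in the proof of Lemma~\ref{deflem:delta:HF:linear_curves}.
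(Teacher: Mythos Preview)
Your proof is correct and follows essentially the same approach as the paper, which simply cites Observation~\ref{obs:delta:reverse_order} and Definition~\ref{def:delta:HF:linear_curves}. Your version is more explicit in spelling out the role of Lemma~\ref{deflem:delta:HF:linear_curves} as the bridge between the per-intersection-point identity and the global quantity \(\delta(\gamma,\gamma')\), but the underlying argument is the same.
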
 

\begin{proof}
  This follows immediately from Observation~\ref{obs:delta:reverse_order} and Definition~\ref{def:delta:HF:linear_curves}.
\end{proof}

In the proof of Theorem~\ref{thm:reduction:HF} below, we will relate the grading function \(\gr\) from Section~\ref{sec:main} to the negative of \(\delta\). 
This explains the difference in sign between the symmetry of \(\delta\) compared to the \ref{eq:symmetry} of \(\gr\).  
Similarly, while the transitivity of \(\gr\) holds for \emph{increasing} triples of slopes, we establish transitivity of \(\delta\) in terms of \emph{decreasing} triples:

\begin{theorem}[Transitivity of \(\delta\)]\label{thm:deltaDifference:HFT:transitivity}
  For any triple \((\gamma,\gamma',\gamma'')\) of linear curves in \(\FourPuncturedSphere\) such that \((s(\gamma),s(\gamma'),s(\gamma''))\) is decreasing,
  \[
  \delta(\gamma,\gamma')+\delta(\gamma',\gamma'')=\delta(\gamma,\gamma'').
  \]
\end{theorem}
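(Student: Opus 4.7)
The plan is to apply Proposition~\ref{prop:HFT:Euler:multiple-curves} to a triangular symmetric domain formed by infinite connected lifts of the three curves, and then use symmetry (Corollary~\ref{cor:deltaDifference:HFT:commutativity}) to rewrite the resulting identity.

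More precisely, first I would choose representatives of \(\gamma\), \(\gamma'\), \(\gamma''\) whose lifts to \(\PuncturedPlane\) are arbitrarily close to straight lines of the respective slopes; this is possible by Definition~\ref{def:linearity_via_derivative} and justifies treating the lifts as line segments for the purpose of the combinatorial argument that follows. Since \((s(\gamma),s(\gamma'),s(\gamma''))\) is decreasing in the cyclic order on \(\QPI\), I can then pick infinite connected lifts \(\InfConLift{\gamma}\), \(\InfConLift{\gamma'}\), \(\InfConLift{\gamma''}\) that pairwise meet in single points \(\Lbullet_1\in\InfConLift{\gamma}\cap\InfConLift{\gamma'}\), \(\Lbullet_2\in\InfConLift{\gamma'}\cap\InfConLift{\gamma''}\), \(\Lbullet_3\in\InfConLift{\gamma''}\cap\InfConLift{\gamma}\), and bound a triangular region \(\varphi\) in \(\mathbb{R}^2\) exactly as depicted in Figure~\ref{fig:deltaDifference:HFT:transitivity}.

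Next I would verify that \(\varphi\) is a symmetric domain in the sense of Definition~\ref{def:symmetric_domain}, which is immediate from the description of its boundary, and compute its Euler measure. The triangle is contractible, so \(\chi(\varphi)=1\); because the three slopes are distinct and the lifts are transverse, each of the three corners of \(\varphi\) is acute (interior angle \(\pi/2\) in the chosen metric), giving
\[
e(\varphi)=1-\tfrac{3}{4}=\tfrac{1}{4}.
\]
Proposition~\ref{prop:HFT:Euler:multiple-curves} with \(n=3\) then yields
\[
\delta(\bullet_1\co \gamma\to\gamma')+\delta(\bullet_2\co \gamma'\to\gamma'')+\delta(\bullet_3\co \gamma''\to\gamma)=\tfrac{3}{2}-2\cdot\tfrac{1}{4}=1.
\]
By Lemma~\ref{deflem:delta:HF:linear_curves}, the three summands are precisely \(\delta(\gamma,\gamma')\), \(\delta(\gamma',\gamma'')\), and \(\delta(\gamma'',\gamma)\). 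Applying Corollary~\ref{cor:deltaDifference:HFT:commutativity} to rewrite \(\delta(\gamma'',\gamma)=1-\delta(\gamma,\gamma'')\) and cancelling the constants finishes the proof.

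The main obstacle I anticipate is the geometric step of verifying that for a decreasing triple of slopes one can always realize a triangle with the three acute corners configured as above, rather than a region with obtuse corners or a non-embedded domain; once this is pinned down (essentially by inspecting the cyclic order of slopes and the resulting local pictures of intersections, as in the four cases in Lemma~\ref{deflem:delta:HF:linear_curves}), the Euler measure computation and the rest of the argument are routine. A minor but necessary technical point is that the lifts need not be globally straight when the curves are special, but the argument only uses the local combinatorial structure near the three intersection points and near the parametrizing arcs, so straightening in a neighbourhood of \(\varphi\) is sufficient.
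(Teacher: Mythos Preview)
Your approach is correct and uses the same geometric input as the paper—the triangle bounded by three infinite connected lifts—but organises the computation differently. The paper works directly with connecting domains and Lemma~\ref{lem:delta:HFT:Euler:two_curves}: it builds connecting domains \(\varphi,\varphi'\) for \(\LA,\LB\), observes that \(\varphi+\varphi'+\LDelta\) is a connecting domain for \(\LC\), and reads off \(\delta(\gamma,\gamma'')\) directly without ever invoking symmetry. You instead treat the triangle itself as a symmetric domain, apply Proposition~\ref{prop:HFT:Euler:multiple-curves}, and then use Corollary~\ref{cor:deltaDifference:HFT:commutativity} to flip \(\delta(\gamma'',\gamma)\) into \(\delta(\gamma,\gamma'')\). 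Since Proposition~\ref{prop:HFT:Euler:multiple-curves} is itself proved from Lemma~\ref{lem:delta:HFT:Euler:two_curves}, the two arguments unwind to the same calculation; yours is slightly more packaged, the paper's slightly more direct.

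Two points to watch. First, the orientation check you flag is real and decisive: the boundary conditions in Definition~\ref{def:symmetric_domain} force your symmetric domain to have multiplicity \(+1\) (giving \(e=\tfrac{1}{4}\)), whereas the paper's \(\LDelta\), used as an addendum to a connecting domain, has multiplicity \(-1\) (giving \(e=-\tfrac{1}{4}\)). Your domain is \(-\LDelta\), so when you point to Figure~\ref{fig:deltaDifference:HFT:transitivity} you should mean the shape only, not the sign; getting this wrong would yield \(\delta(\gamma,\gamma')+\delta(\gamma',\gamma'')=\delta(\gamma,\gamma'')+1\). Second, you have omitted the degenerate case where two of the three slopes coincide. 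The paper handles this separately via Definition~\ref{def:delta:HF:linear_curves} and Lemma~\ref{deflem:delta:HF:linear_curves}, and your triangle argument does not cover it since the triangle collapses.
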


\begin{proof}
  If \(s(\gamma)=s(\gamma')\) or \(s(\gamma')=s(\gamma'')\), or both, this follows from Definition~\ref{def:delta:HF:linear_curves} and Lemma~\ref{deflem:delta:HF:linear_curves}. 
  So let us suppose the curves have pairwise different slopes. 
  Let us consider some infinite connected lifts \(\Lgamma\), \(\Lgamma'\), and \(\Lgamma''\) of \(\gamma\), \(\gamma'\), and \(\gamma''\), respectively. These lifts intersect in three points \(\LA\in\Lgamma\cap\Lgamma'\), \(\LB\in\Lgamma'\cap\Lgamma''\), and \(\LC\in\Lgamma''\cap\Lgamma\). 
  Consider a connecting domain \(\varphi\) for \(\LA\) starting at some point \(\Lx\in \Lgamma\cap \paraHF\) and ending at some point \(\Ly\in \Lgamma'\cap \paraHF\). Then choose a connecting domain \(\varphi'\) for \(\LB\) which starts at \(\Ly\) and ends at some point \(\Lz\in \Lgamma''\cap \paraHF\). 
  Since the slopes of these curves form a decreasing triple, the triangle \(\LDelta\) has multiplicity \(-1\) when the vertices are ordered counter-clockwise, as illustrated in Figure~\ref{fig:deltaDifference:HFT:transitivity}; therefore, \(e(\LDelta)=-\frac{1}{4}\). 
  So \(\varphi+\varphi'+\LDelta\) is a connecting domain for \(\LC\).
  Hence
  \begin{align*}
  \delta(\gamma,\gamma')+\delta(\gamma',\gamma'')
  &=
  \Big(\delta(y)-\delta(x)-2e(\varphi)+\tfrac{1}{2}\Big)+
  \Big(\delta(z)-\delta(y)-2e(\varphi')+\tfrac{1}{2}\Big)
  \\
  &=
  \delta(z)-\delta(x)-2e(\varphi+\varphi'+\LDelta)+\tfrac{1}{2}
  =
  \delta(\gamma,\gamma'').\qedhere
  \end{align*}
\end{proof}

\begin{lemma}\label{lem:HFT:epsilon_criterion}
	Given two curves with local systems \(\gamma\) and \(\gamma'\) in \(\FourPuncturedSphere\) that share the same slope \(s\in\QPI\), the vector space \(\HF(\gamma,\gamma')\) is either zero or it is supported in two consecutive \(\delta\)-gradings, namely \(\delta(\gamma,\gamma')\) and \(\delta(\gamma,\gamma')+1\). 
	Moreover, if \(\gamma\) is rational and \(\gamma'\) is special, or vice versa, then \(\HF(\gamma,\gamma')=0\). 
	Finally, for two rational curves with trivial local systems \(\HF(\Rational(s),\Rational(s))\neq 0\), as well as for special curves 
	\(
	\HF(\Special_n(s;\TEi,\TEj),\Special_m(s;\TEi,\TEj))\neq 0
	\) (given any \(n,m>0\) and pair \((\TEi,\TEj)\) of tangle ends compatible with the slope \(s\)). 
\end{lemma}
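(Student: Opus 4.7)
The plan is to reduce to a single standard slope and then perform case analysis. By Theorem~\ref{thm:HFT:Twisting} the Lagrangian Floer homology \(\HF(-,-)\) commutes with the mapping class group of \(\FourPuncturedSphere\), which acts transitively on \(\QPI\). Thus I may assume the common slope is \(s = \infty\). Under this normalization rational curves lift to vertical straight lines in \(\PuncturedPlane\) strictly between lattice columns, while special curves \(\Special_n(\infty; \TEi, \TEj)\) lift to figure-eight patterns inside an arbitrarily thin tubular neighbourhood of a vertical line through two lattice punctures.

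For the consecutive-grading support, I would fix a reference intersection point \(\bullet_0 \in \HF(\gamma, \gamma')\) and, for any other intersection point \(\bullet\), apply Corollary~\ref{cor:HFT:true_domain} to a connecting domain \(\varphi\) between their infinite connected lifts: the grading difference equals \(2e(\varphi)\). Since both \(\InfConLift{\gamma}\) and \(\InfConLift{\gamma'}\) run vertically with only small horizontal excursions near figure-eight crossings, a direct corner count on the basic configurations analogous to Figure~\ref{fig:domains:xy} shows that each ``elementary'' piece of \(\varphi\) contributes Euler measure in \(\{0, \pm \tfrac{1}{2}\}\), and the total Euler measure of \(\varphi\) is pinned to \(\{0, \pm \tfrac{1}{2}\}\) because the shared vertical direction prevents any accumulation of winding. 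This constrains \(\HF(\gamma, \gamma')\) to two consecutive \(\delta\)-gradings, and the base grading is identified with \(\delta(\gamma, \gamma')\) from Definition~\ref{def:delta:HF:linear_curves} by comparing with the formula for nearby non-parallel linear curves through a continuity argument.

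For the rational--special vanishing, I would isotope the rational curve \(\gamma\) to a vertical line strictly separated from the vertical line carrying the special curve \(\gamma'\). The two curves are then disjoint in \(\FourPuncturedSphere\) and are not freely homotopic: the rational curve is an embedded simple closed curve separating the punctures into two pairs, whereas the special curve is immersed with an unremovable self-crossing pattern. Consequently they cobound no immersed annuli, so under the standing minimal-position convention \(\HF(\gamma, \gamma') = 0\).

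For the two non-vanishing statements, both curves are either freely homotopic (rational--rational case) or can be arranged to cobound annuli in a common tubular neighbourhood of a vertical lattice line (special--special case with matching tangle ends). In either case the Floer homology of parallel curves with trivial one-dimensional local systems reduces, by the parallel-curves computation from \cite{pqMod}, to \(H^\ast(S^1; \fieldTwoElements) \cong \fieldTwoElements^2\), which is nonzero; the same calculation yields the non-vanishing for special--special pairs of sizes \(n, m > 0\). The main obstacle I anticipate is the corner bookkeeping in the consecutive-grading argument: the figure-eight wiggles in special curves create many intersection points whose \(\delta\)-grading shifts must be controlled uniformly, and verifying that no cumulative Euler-measure contribution escapes the prescribed range requires a careful patchwise decomposition of connecting domains.
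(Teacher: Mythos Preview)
Your proposal has the right structure and the rational--special vanishing is fine, but there are two genuine gaps.

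\textbf{Rational--rational with local systems.} Your argument for the two-grading support fixes a reference intersection point and studies connecting domains to other intersection points. For two rational curves of the same slope this approach cannot even begin: after the reduction to \(s=\infty\), both lift to vertical straight lines and can be isotoped to be disjoint, so there are no intersection points to analyse. Yet, as recalled in Section~\ref{sec:review:HFT:gluing}, the Floer homology of parallel curves can still be nonzero because of the local systems. The paper handles this case by invoking the explicit algebraic computation of \(\HF\) for parallel curves with local systems \cite[Theorem~4.45]{pqMod}, which both establishes the support statement and, via \cite[Lemma~4.51]{pqMod}, the non-vanishing for trivial local systems. There is no geometric bigon/domain argument that substitutes for this.

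\textbf{Special--special non-vanishing for \(n\neq m\).} You claim these curves ``cobound annuli in a common tubular neighbourhood'' and then appeal to the parallel-curves computation. This is incorrect: \(\Special_n(s;\TEi,\TEj)\) and \(\Special_m(s;\TEi,\TEj)\) are not freely homotopic when \(n\neq m\) (they wind different numbers of times around the punctures), so they are not parallel and do not cobound annuli. The non-vanishing in this case follows instead from the fact that their minimal geometric intersection number is positive. The paper's argument for the grading support here is close in spirit to yours but more concrete: rather than bounding a global Euler measure, it reads off the \(\delta\)-grading of each intersection point \(\Lbullet\) from the tangent slopes \(t,t'\in(s-\varepsilon,s+\varepsilon)\) of the two lifts at \(\Lbullet\), using a single thin triangular connecting domain and Lemma~\ref{lem:delta:HFT:Euler:two_curves}. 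This replaces your ``no cumulative Euler measure'' bookkeeping with a local computation at each point.
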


\begin{proof}
	Let us consider each combination of rational and special curves separately. Clearly, the Lagrangian Floer homology of a special and a rational curve vanishes. To compute the Lagrangian Floer homology of two rational curves with local systems of the same slope, we can apply~\cite[Theorem 4.45]{pqMod} to verify the first statement in this case. If the two local systems are trivial, then their Lagrangian Floer homology does not vanishes by \cite[Lemma~4.51]{pqMod}.
	Let us now turn to the final case that \(\gamma\) and \(\gamma'\) are special. Then, if they wrap around different tangle ends, their Lagrangian Floer homology vanishes. 
	So let us consider \(\gamma=\Special_n(s;\TEi,\TEj)\) and \(\gamma'=\Special_m(s;\TEi,\TEj)\). 
	To justify the support of \(\HF(\gamma,\gamma')\) for \(n\neq m\), we can argue as follows.
	Let us consider a straight line of slope \(s\) through two integer lattice points \(\tilde{\TEi}\) and  \(\tilde{\TEj}\) corresponding to the tangle ends \(\TEi\) and \(\TEj\), respectively. 
	After some homotopy, we may assume that \(\gamma\) and \(\gamma'\) are contained in a small neighbourhood of the embedded arc that is the image of this straight line in \(\FourPuncturedSphere\). 
	Moreover, we may assume that the slopes of any lifts of \(\gamma\) and \(\gamma'\) are contained in \((s-\varepsilon,s+\varepsilon)\) for some small \(\varepsilon>0\). 
	Now given some intersection point \(\bullet\in\HF(\gamma,\gamma')\), choose lifts \(\Lgamma\) and \(\Lgamma'\) that intersect transversely in some lift \(\Lbullet\) of \(\bullet\). Let \(t\) and \(t'\) be the slopes of \(\Lgamma\) and \(\Lgamma'\) at \(\Lbullet\), respectively. Then
	\[
	\delta(\bullet)=
	\begin{cases*}
	\delta(\gamma,\gamma')
	&
	if \(s-\varepsilon<t'<t<s+\varepsilon\)
	\\
	\delta(\gamma,\gamma')+1
	&
	if \(s-\varepsilon<t<t'<s+\varepsilon\)
	\end{cases*}
	\]
	which can be seen by applying Lemma~\ref{lem:delta:HFT:Euler:two_curves} to thin triangular domains enclosed  on two sides by \(\Lgamma\) and \(\Lgamma'\) and on the third side by either only the vertical or only the horizontal arcs in \(\paraHF\) (see also Figure~\ref{fig:domains:xy:1}).
\end{proof}

There exist local systems \(X\) and \(Y\) for which \(\HF(\Rational_X(s),\Rational_Y(s))=0\). For example, take \(X\) to be a permutation matrix of order \(n\) and let \(Y\) be the companion matrix of the polynomial \(f(x)=x^n+x+1\). Then \(\ker(f(X))=\ker(X)=0\), so by~\cite[Theorem 4.45 and Lemma~4.51]{pqMod}, \(\dim\HF(\Rational_X(s),\Rational_Y(s))=2\cdot\dim\ker(f(X))=0\). But we still have some control over local systems:

\begin{definition}\label{def:inhibited}
	We say two local systems \(X\) and \(Y\) are \textbf{complementary}, if \(\HF(\Rational_X(s),\Rational_Y(s))\) vanishes, where \(s\) is some slope. We call a local system \textbf{inhibited} if it is complementary to the (trivial) one-dimensional local system. 
	Similarly, we call a rational curve inhibited if its local system is inhibited. 
	We call a collection of curves \(\Gamma\) \(s\)-inhibited if any rational component of slope \(s\) is inhibited. 
\end{definition}

\begin{lemma}\label{lem:SomeControlOverLocalSystems}
	If \(X\) and \(Y\) are complementary local systems, then one of \(X\) and \(Y\) is inhibited.
\end{lemma}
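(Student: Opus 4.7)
My plan is to argue the contrapositive: assuming neither \(X\) nor \(Y\) is inhibited, I will produce a non-zero element in \(\HF(\Rational_X(s),\Rational_Y(s))\), thus contradicting complementarity. The starting observation is that, applying \cite[Theorem~4.45]{pqMod} to the defining case of ``inhibited'' (one trivial local system, whose annihilating polynomial is \(f(x)=x+1\) over \(\fieldTwoElements\)), a local system \(Z\) is inhibited precisely when \(\ker(Z+I)=0\). Thus the hypothesis supplies non-zero vectors \(v\in V_X\) and \(w\in V_Y\) with \(Xv=v\) and \(Yw=w\).

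Next I would appeal to the general dimension formula from \cite[Theorem~4.45]{pqMod}, which (consistent with the sample calculation in the paragraph preceding the lemma) takes the shape
\[
\dim\HF(\Rational_X(s),\Rational_Y(s)) \;=\; 2\dim\ker p_Y(X),
\]
where \(p_Y\) denotes the characteristic polynomial of \(Y\). Since \(Yw=w\) with \(w\neq 0\), Cayley--Hamilton gives \(p_Y(Y)w=p_Y(1)\,w=0\), hence \(p_Y(1)=0\); so we may factor \(p_Y(x)=(x-1)q(x)\) and compute
\[
p_Y(X)\,v\;=\;(X-I)q(X)\,v\;=\;q(X)(X-I)\,v\;=\;0.
\]
Therefore \(v\in\ker p_Y(X)\) is a non-zero vector, giving \(\dim\HF(\Rational_X(s),\Rational_Y(s))\geq 2\) and so the desired contradiction.

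The step requiring real care is extracting the correct dimension formula from \cite[Theorem~4.45]{pqMod} for two parallel rational curves carrying arbitrary local systems; what the argument actually needs is only that whenever \(X\) and \(Y\) share an eigenvalue over \(\fieldTwoElements\) (in our case, the eigenvalue \(1\)), the Floer homology is non-zero. The sample calculation preceding the lemma already confirms the formula in the form used above, and if the formula were instead phrased symmetrically as \(2\dim\ker p_X(Y)\), the identical linear-algebra argument goes through with the roles of \(v\) and \(w\) swapped.
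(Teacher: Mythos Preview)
Your contrapositive strategy and the core linear-algebra observation are the same as the paper's: over \(\fieldTwoElements\), ``not inhibited'' means that \(1\) is an eigenvalue, and if both \(X\) and \(Y\) have eigenvalue \(1\) then the relevant polynomial evaluation has nontrivial kernel. The paper phrases this as \(f(1)=0=g(1)\Rightarrow f(X_g)\) not invertible, which is exactly your factorisation \(p_Y(x)=(x-1)q(x)\) argument.

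The gap in your write-up is the one you yourself flag: you invoke the formula \(\dim\HF(\Rational_X(s),\Rational_Y(s))=2\dim\ker p_Y(X)\) for \emph{arbitrary} local systems \(X,Y\), but \cite[Theorem~4.45]{pqMod} (as used both in the paragraph preceding the lemma and in the paper's own proof) gives this only when the local systems are companion matrices. The sample calculation you cite as confirmation has \(Y\) equal to the companion matrix of \(x^n+x+1\), so it does not witness the general case. For a non-cyclic \(Y\) the characteristic polynomial and the polynomial appearing in the Floer dimension formula need not coincide, so the step ``apply the dimension formula with \(p_Y\)'' is not justified as stated.

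The paper closes this gap by an explicit reduction: any local system is similar to a block sum of companion matrices, so \(\Rational_X(s)\) and \(\Rational_Y(s)\) split as disjoint unions \(\bigcup_i\Rational_{X_i}(s)\) and \(\bigcup_j\Rational_{Y_j}(s)\) with each \(X_i,Y_j\) a companion matrix. If \(X\) is not inhibited then some block \(X_i\) is not inhibited, and likewise some \(Y_j\); now the companion-matrix version of the formula applies to the pair \((X_i,Y_j)\) and your eigenvalue argument goes through verbatim. Adding this one-sentence decomposition step to your proof makes it complete and essentially identical to the paper's.
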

\begin{proof}
	We show the contrapositive: If \(\HF(\Rational_X(s),\Rational(s))\) and \(\HF(\Rational_Y(s),\Rational(s))\) are non-zero, then so is \(\HF(\Rational_X(s),\Rational_Y(s))\).
	Let us first verify this in the case that \(X\) and \(Y\) are companion matrices \(X_f\) and \(X_g\) of two polynomials \(f,g\in\fieldTwoElements[x]\), respectively. By~\cite[Theorem~4.45 and Lemma~4.51]{pqMod}, \(\HF(\Rational_{X_f}(s),\Rational_{X_g}(s))\) is zero if and only if the matrix \(f(X_g)\) is invertible. 
	Similarly \(\HF(\Rational_{X_f}(s),\Rational(s))\) is zero if and only if \(f(1)\) is invertible, ie equal to \(1\); the same is true of course for \(g\). 
	So we need to show that \(f(1)=0=g(1)\) implies that \(f(X_g)\) is not invertible. 
	This follows from two observations: 
	First, note that \(\det(X_g+1)=g(1)=0\), because \(g\) is the characteristic polynomial of \(X_g\), 
	so \((X_g+1)\) is not invertible. 
	Second, \(f(1)=0\) implies that there exists some polynomial \(\tilde{f}\in\fieldTwoElements[x]\) such that \(f(x)=(x+1)\tilde{f}(x)\). So if \((X_g+1)\) is not invertible, then neither is \(f(X_g)\). 
	
	In the general case, \(\Rational_X\) and \(\Rational_Y\) are equivalent to disjoint unions of rational curves \(\Rational_{X_i}\) and \(\Rational_{Y_j}\), respectively, whose local systems are all companion matrices. Suppose there exist \(i,j\) such that \(X_i\) and \(Y_j\) are not inhibited. Then, by the above, \(X_i\) and \(Y_j\) are not complementary, so \(X\) and \(Y\) are not complementary.
\end{proof}

\begin{remark}\label{rem:inhibited}
	We would like to show that no rational component of \(\HFT(T)\) is inhibited for any Conway tangle \(T\). 
	However, the only known restriction is that after combining the local systems of all parallel rational components, the new local system should be conjugate to its inverse \cite[Theorem~0.10]{pqSym}. There are many such local systems: For example, for any invertible matrix \(X\), the diagonal block matrix \(Y\) with blocks \(X\) and \(X^{-1}\) is conjugate to its inverse. If we choose \(X\) to be inhibited, then so is \(Y\). 
\end{remark}

\begin{definition}\label{def:HF:exceptional}
	We say a multicurve \(\Gamma\) is \textbf{exceptional} if \(\Slopes_{\Gamma}=\{s,s'\}\) for two distinct slopes \(s,s'\in\QPI\) and there exists a constant \(c\neq0,1\) such that \(\delta(\gamma,\gamma')=c\) for all components \(\gamma,\gamma'\in\Gamma\) in slopes \(s,s'\), respectively. We say that a tangle \(T\) is \textbf{Heegaard Floer exceptional} if \(\HFT(T)\) is exceptional. 
\end{definition}

A tangle that is Heegaard Floer exceptional is described in Example~\ref{exp:whitehead}.

\subsection{Heegaard Floer thin fillings}\label{sec:HFT:Thin:results}
In this subsection, \(G\) is either \(\Z\) or \(\ZZ\). Define 
\[
\CurvesHF
\coloneqq
\{\HFT(T)\mid\text{Conway tangles }T\}
\]
In the following, we will make implicit use of the following properties that the curves in \(\CurvesHF\) are known to satisfy: 
Each multicurve \(\Gamma\in\CurvesHF\) consists of linear components only (Theorem~\ref{thm:geography_of_HFT}). Moreover, special components come in conjugate pairs of curves that are supported in identical \(\delta\)-gradings (Theorem~\ref{thm:Conjugation}). Finally, \(\HF(\Gamma_1,\Gamma_2)\neq 0\) for each \(\Gamma_1,\Gamma_2\in\CurvesHF\), because of Theorem~\ref{thm:GlueingTheorem:HFT} and the fact that knot Floer homology does not vanish. In particular, if \(\Slopes_\Gamma=\{s\}\) for some \(\Gamma\in\CurvesHF\), then \(\Gamma\) contains some rational component whose local system is not inhibited. 
Let \(\CurvesHFwb\subseteq\CurvesHF\) be the subset of \textbf{well-behaved} multicurves defined by
\[
\CurvesHFwb
\coloneqq
\{\Gamma\in\CurvesHF\mid \Gamma\text{ does not contain any inhibited rational component}\}
\]

Given two multicurves \(\Gamma\) and \(\Gamma'\) and a slope \(s\in\Slopes_\Gamma\cap\Slopes_{\Gamma'}\), the following two local conditions will be relevant:
\begin{enumerate}
	\myitem[\(\mathrm{R}\)] \label{local:HF:s-rational} At least one of \(\Gamma\) and \(\Gamma'\) is \(s\)-rational, ie it only contains rational components of slope \(s\).
	\myitem[\(\mathrm{R}_\star\)] \label{local:HF:complementary} The local systems of any two rational components of \(\Gamma\) and \(\Gamma'\) of slope \(s\) are complementary. 
\end{enumerate}
These are the local conditions for Heegaard Floer theory mentioned in Theorems~\ref{thm:gluing:ALink:intro} and~\ref{thm:gluing:Thin:intro}. 
Note that \ref{local:HF:complementary} is vacuously satisfied if any two rational components of \(\Gamma\) and \(\Gamma'\) have different slopes.
For instance, this is true if \(\Gamma=\mirror(\HFT(T_1))\), \(\Gamma'=\HFT(T_2)\), and \(T_1\cup T_2\) is a knot, see Lemma~\ref{lem:HFT_detects_connectivity}.

\begin{definition} 
	A \(\delta\)-graded vector space is \textbf{\(\bm{\Z}\)-thin} if it is thin and  \textbf{\(\bm{\ZZ}\)-thin} if it is supported in at most one \(\delta\)-grading modulo 2. In particular, the 0-dimensional vector space is \(G\)-thin for both \(G=\Z\) and \(G=\ZZ\).
	Given a relatively \(\delta\)-graded multicurve \(\Gamma\), define the space of \textbf{\(\bm{G}\)-thin rational fillings} of \(\Gamma\) by
	\[
	\ThinG(\Gamma)\coloneqq\Bigl\{s\in\QPI\Bigm| \HF(\Rational(s),\Gamma)\text{ is \(G\)-thin}\Bigr\}.
	\] 
	Suppose \(T\) is a Conway tangle in a three-ball. Then, by Theorem~\ref{thm:GlueingTheorem:HFT}, writing 
	\[
	\ThinHF(T)=\ThinZ(\HFT(T))
	\quad\text{ and }\quad
	\ALinkHF(T)=\ThinZZ(\HFT(T))
	\] 
	recovers the definitions of \(\ThinHF(T)\) and \(\ALinkHF(T)\) from the introduction.
\end{definition}

\begin{remark}\label{rem:mirroring:ThinG:HFT}
	Since by Lemma~\ref{lem:mirroring:HFT} the tangle invariant \(\HFT\) behaves in a natural way under mirroring,
	\(\ThinHF(T^*)=\mirrorThinHF(T)\)
	and
	\(\ALinkHF(T^*)=\mirrorALinkHF(T)\)
	 for any Conway tangle \(T\). 
\end{remark}

The following is the main technical result which links Section~\ref{sec:main} to the present discussion about \(\HFT\), by relating elements of \(\CurvesHF\) to line sets in Section~\ref{sec:main}. 
Recall that a line set is simply a finite collection of elements of \(\Curves=\QPI\times\,G\times\{0,1\}\). 
We will write \(\mathcal{P}_\mathrm{finite}(\Curves)\) for the set of all line sets.

\begin{theorem}\label{thm:reduction:HF}
	There exist a map \(\Phi\co\CurvesHF\rightarrow\mathcal{P}_\mathrm{finite}(\Curves)\) and a map \(\gr\co \Curves^2\rightarrow G\)	satisfying the \ref{eq:symmetry}, \ref{eq:transitivity}, and \ref{eq:linearity} properties as in Section~\ref{sec:main} such that for any \(\Gamma\in\CurvesHF\), the following holds:
	\begin{enumerate}[label=(\roman*)]
		\item \label{enu:reduction:HF:slopes} \(\Slopes_{\Gamma}=\Slopes_{\Phi(\Gamma)}\)
		\item \label{enu:reduction:HF:spaces}
		\(
		\ThinG(\Gamma)=\ThinG(\Phi(\Gamma)).
		\)
		\item \label{enu:reduction:HF:trivial} \(\Phi(\Gamma)\) is non-trivial. 
		\item \label{enu:reduction:HF:exceptional} \(\Phi(\Gamma)\) is exceptional if and only if \(\Gamma\) is exceptional. 
	\end{enumerate}
	Moreover, if \(\Gamma,\Gamma'\in\CurvesHFwb\),
	\begin{enumerate}[label=(\roman*)]\addtocounter{enumi}{4}
		\item \label{enu:reduction:HF:s-rational} For any slope \(s\in\QPI\), \(\Gamma\) is \(s\)-rational if and only if \(\Phi(\Gamma)\) is \(s\)-rational. 
		\item \label{enu:reduction:HF:pairs:Thin}  \(\HF(\Gamma,\Gamma')\) is \(G\)-thin if and only if the pair \((\Phi(\Gamma), \Phi(\Gamma'))\) is \(G\)-thin. 
	\end{enumerate}
\end{theorem}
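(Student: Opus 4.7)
The plan is to encode each component of $\Gamma$ as a line in $\Curves$ whose slope and $\varepsilon$-coordinate record the underlying curve's slope and type (rational versus special), and whose $g$-coordinate records an integer shift of the $\delta$-grading. The grading function $\gr$ itself will be defined abstractly exactly as in Remark~\ref{rem:CurveSetAsCoveringSpace}, by lifting $\QPI$ to the universal cover of $\RPI$ (or its two-fold cover when $G=\ZZ$), fixing a basepoint, and declaring $\gr(c,c')$ to be the floor of the signed distance between lifts, shifted by the $g$-coordinates. The \ref{eq:symmetry}, \ref{eq:transitivity} and \ref{eq:linearity} properties then hold purely from features of the floor function and the cyclic order on $\QPI$, requiring no Heegaard Floer input.

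To define $\Phi(\Gamma)$, Theorem~\ref{thm:geography_of_HFT} says each component of $\Gamma$ is either rational or special. A special component $\Special_n(s;\TEi,\TEj)$ is sent to a single line $(s,g,1)$, where $g$ is chosen so that the abstract $\gr$-value between this line and another reproduces (the negative of) the $\delta$-grading between the corresponding curves, as computed in Definition~\ref{def:delta:HF:linear_curves} and Lemma~\ref{deflem:delta:HF:linear_curves}. A rational component $\Rational_X(s)$ with non-inhibited local system $X$ is sent to $(s,g,0)$, and an inhibited rational component is sent to $(s,g,1)$, i.e.\ treated as special. The last choice is consistent because $\HF(\Rational(s),\Rational_X(s))=0$ for inhibited $X$ (Definition~\ref{def:inhibited}), matching the $(\varepsilon(c),\varepsilon(d))\in\{(0,1),(1,0)\}$ clause of Definition~\ref{def:Thin} at slope~$s$. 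Property~(i) is immediate, (iv) is a direct comparison of Definition~\ref{def:exceptional} with Definition~\ref{def:HF:exceptional}, and (v) uses that under well-behavedness no rational component is inhibited, so the $\varepsilon$-assignment coincides with the rational/special dichotomy for curves. For (iii), I would combine Theorem~\ref{thm:GlueingTheorem:HFT} with the non-vanishing of $\HFK$ of any link to rule out the degenerate case in which $\Phi(\Gamma)$ would consist solely of special lines at a single slope.

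Properties (ii) and (vi) form the heart of the argument. For (ii), the condition $s\in\ThinG(\Gamma)$ asserts that $\HF(\Rational(s),\Gamma)$ is $G$-thin, i.e.\ all its generators live in a single $\delta$-grading (modulo~$2$ when $G=\ZZ$). By Lemma~\ref{deflem:delta:HF:linear_curves}, each different-slope summand $\HF(\Rational(s),\gamma_i)$ is concentrated in the single grading $\delta(\Rational(s),\gamma_i)$; by Lemma~\ref{lem:HFT:epsilon_criterion}, each same-slope summand either vanishes (mixed rational/special case) or is supported in two consecutive $\delta$-gradings. Translating these equalities through the chosen $g$-coordinates shows that $G$-thinness of $\HF(\Rational(s),\Gamma)$ matches thinness of the pair $(\{(s,0,0)\},\Phi(\Gamma))$ in the sense of Definition~\ref{def:Thin}. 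Property~(vi) is the same argument applied pair-by-pair to components of $\Gamma$ and $\Gamma'$, now using well-behavedness to ensure that the $\varepsilon=1$ clause corresponds only to genuine specials; the pairing with Theorem~\ref{thm:Conjugation} ensures the book-keeping of conjugate special pairs is symmetric. The main obstacle throughout will be this grading book-keeping: choosing the $g$-coordinates once on each curve so that the abstract $\gr$-values exactly match $\delta$-grading differences across all pairs, while simultaneously absorbing the orientation reversal between increasing triples (used for $\gr$) and decreasing triples (used for $\delta$ in Theorem~\ref{thm:deltaDifference:HFT:transitivity}) into the choice of basepoint in the cover, and handling the $\tfrac{1}{2}$-shift between $\delH$ and $\delV$ recorded in Lemma~\ref{deflem:delta:linear_curve}.
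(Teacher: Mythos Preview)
Your proposal is correct and identifies all the right ingredients (Lemmas~\ref{deflem:delta:HF:linear_curves} and~\ref{lem:HFT:epsilon_criterion}, the inhibited-local-system convention, the non-vanishing of $\HFK$), but it takes a more roundabout route than the paper. You define $\gr$ abstractly via the covering-space construction of Remark~\ref{rem:CurveSetAsCoveringSpace} and then face what you call the ``main obstacle'': choosing $g$-coordinates on each curve so that this abstract $\gr$ reproduces the $\delta$-grading differences. The paper instead dissolves this obstacle by defining $\gr$ directly \emph{from} $\delta$: it sets $\gr(c,c')\coloneqq -\delta(\gamma(c'),\gamma(c))$, where $\gamma(c)$ is any linear curve of slope $s(c)$ with $\delH(\gamma(c))=\gr(c)$ (or $\delV(\gamma(c))=\gr(c)-\tfrac12$ at slope~$0$). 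With this definition the matching is automatic, and the only remaining work is to check the three axioms; these follow immediately from Corollary~\ref{cor:deltaDifference:HFT:commutativity} and Theorem~\ref{thm:deltaDifference:HFT:transitivity}, with the sign and the order-reversal in the definition absorbing exactly the discrepancy you flag between increasing triples (for $\gr$) and decreasing triples (for $\delta$). Your matching step is logically equivalent to verifying these same two results, so the detour through the abstract cover buys nothing here. The paper also includes a small normalization step you do not make explicit: lifting the relative $\delta$-grading on each $\Gamma$ to an absolute one with $\delH\in\Z$ and $\delV\in\Z+\tfrac12$, which is what guarantees the $g$-coordinates actually land in $G$; this uses that $\HFK$ of any rational closure is integer-$\delta$-graded up to an overall shift, together with Lemma~\ref{deflem:delta:HF:linear_curves}.
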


\begin{proof}
Given \(c\in\Curves\), let \(\gamma(c)\) be an absolutely \(\delta\)-graded linear curve of slope \(s(c)\) such that \(\delH(\gamma(c))=\gr(c)\) if \(s(c)\neq0\) and \(\delV(\gamma(c))=\gr(c)-\tfrac{1}{2}\) if \(s(c)=0\). 
(Whether this curve is rational or special has no bearing on what follows.)
Now define \(\gr\co \Curves^2\rightarrow G\) by setting for each \(c,c' \in \Curves\)
\[
\gr(c,c')\coloneqq -\delta(\gamma(c'),\gamma(c))
\]
(Note that the order of \(c\) and \(c'\) is reversed.)
Then, by Corollary~\ref{cor:deltaDifference:HFT:commutativity}, \ref{eq:symmetry} of \(\gr\) holds, and by Theorem~\ref{thm:deltaDifference:HFT:transitivity}, so does \ref{eq:transitivity} of \(\gr\). 
Moreover, \ref{eq:linearity} of \(\gr\) follows from the definition. 

Before we define the map \(\Phi\), let us lift the \(\delta\)-grading of all curves in \(\CurvesHF\) to an absolute \(\delta\)-grading such that for each component \(\gamma\) of any element \(\Gamma=\HFT(T)\in\CurvesHF\), we have \(\delH(\gamma)\in \Z\) and \(\delV(\gamma)\in \Z+\tfrac 1 2\). 
For rational tangles \(T=Q_s\), this is clearly possible. To see that this is possible for arbitrary Conway tangles \(T\), we choose a slope \(s\not\in\Slopes_{\Gamma}\). Then \(\HF(\Rational(s),\Gamma)\) computes the (potentially once stabilized) knot Floer homology of \(Q_{-s}\cup T\), which, up to an overall grading shift, is supported in integer \(\delta\)-gradings. By our choice of slope~\(s\), \(\Rational(s)\) intersects each component of \(\Gamma\) non-trivially, so we conclude with Lemma~\ref{deflem:delta:HF:linear_curves}. 

Now, we are ready to define the map \(\Phi\). Given some absolutely \(\delta\)-graded rational or special curve \(\gamma\) of slope \(s\), let \(c=c(\gamma)\in\Curves\) be the line defined by \(s(c)=s\), \(\gr(c)=\delH(\gamma)\) if \(s\neq0\) and \(\delV(\gamma)+\tfrac{1}{2}\) if \(s(c)=0\), and \(\varepsilon(c)=1\) if \(\gamma\) is special or rational with inhibited local system and \(\varepsilon(c)=0\) otherwise.  
Then, given some \(\Gamma=\{\gamma_i\}_i\in\CurvesHF\), define \(\Phi(\Gamma)\) as the set corresponding to the multiset \(\{c(\gamma_i)\}_{i}\).

Clearly, properties \ref{enu:reduction:HF:slopes}--\ref{enu:reduction:HF:exceptional} hold by construction. 
Moreover, the only rational components that \(\Phi\) sends to special lines are those with inhibited local systems, so \ref{enu:reduction:HF:s-rational} follows. 
Suppose \(\Gamma,\Gamma'\in\CurvesHFwb\). 
Then by Lemma~\ref{lem:SomeControlOverLocalSystems}, these multicurves do not contain any rational components that are complementary to each other. 
Thus, if \(\gamma\in\Gamma\) and \(\gamma'\in\Gamma'\) are two components of the same slope, \(\HF(\gamma,\gamma')\) is \(G\)-thin if and only if \((c(\gamma),c(\gamma'))\) is \(G\)-thin by Lemma~\ref{lem:HFT:epsilon_criterion}. 
Together with Lemma~\ref{deflem:delta:HF:linear_curves}, this proves~\ref{enu:reduction:HF:pairs:Thin}.
\end{proof}

We now establish the results concerning Heegaard Floer theory from the introduction. We restate these here for clarity. If we ignore the technical issue of inhibited curves and restrict to well-behaved multicurves, Theorems~\ref{thm:gluing:ALink:intro} and~\ref{thm:gluing:Thin:intro}, specialized to the Heegaard Floer setting, follow immediately from the results of Section~\ref{sec:main} and Theorem~\ref{thm:reduction:HF}. The proof in the general case requires a more careful analysis of the arguments from Section~\ref{sec:main}. 

\begin{theorem}[Characterization of Heegaard Floer \(G\)-thin filling spaces; Theorems~\ref{thm:charactisation:ALink:intro} and~\ref{thm:charactisation:Thin:intro}]\label{thm:charactisation:Thin:HFT}
	For any Conway tangle \(T\), \(\ALinkHF(T)\) is either empty, a single point or an interval in \(\QPI\). 
	Furthermore, \(\ThinHF(T)\) is either empty, a single point, two distinct points or an interval in \(\QPI\).   
\end{theorem}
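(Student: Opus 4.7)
The plan is to reduce this to the combinatorial statement of Theorem~\ref{thm:charactisation:ThinG:main} by invoking Theorem~\ref{thm:reduction:HF}, which translates questions about the multicurve \(\HFT(T)\in\CurvesHF\) into questions about a non-trivial line set \(\Phi(\HFT(T))\in\mathcal{P}_\mathrm{finite}(\Curves)\). Since essentially all the work has already been done, the proof should be very short.

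First I would apply Theorem~\ref{thm:reduction:HF} with \(G=\Z\) and with \(G=\ZZ\) to obtain non-trivial line sets \(C_\Z\coloneqq\Phi_\Z(\HFT(T))\subset\CurvesZ\) and \(C_\ZZ\coloneqq\Phi_\ZZ(\HFT(T))\subset\CurvesZZ\) satisfying
\[
\ThinZ(\HFT(T))=\ThinZ(C_\Z)
\quad\text{and}\quad
\ThinZZ(\HFT(T))=\ThinZZ(C_\ZZ),
\]
by property~\ref{enu:reduction:HF:spaces} of that theorem. By definition of \(\ThinHF(T)\) and \(\ALinkHF(T)\) in Subsection~\ref{sec:HFT:Thin:results}, this gives \(\ThinHF(T)=\ThinZ(C_\Z)\) and \(\ALinkHF(T)=\ThinZZ(C_\ZZ)\).

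Next I would invoke Theorem~\ref{thm:charactisation:ThinG:main} applied to each of the non-trivial line sets \(C_\Z\) and \(C_\ZZ\). The first assertion of that theorem immediately yields that \(\ThinHF(T)=\ThinZ(C_\Z)\) is either empty, a single point, two distinct points, or an interval in \(\QPI\). The second assertion of Theorem~\ref{thm:charactisation:ThinG:main} tells us that for the \(\ZZ\)-version, the case of two distinct points cannot occur, so \(\ALinkHF(T)=\ThinZZ(C_\ZZ)\) is either empty, a single point, or an interval in \(\QPI\). This is exactly what we want.

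There is no real obstacle here: all the heavy lifting sits in Theorem~\ref{thm:reduction:HF} (which manufactures the grading function \(\gr\) from the \(\delta\)-grading on linear curves) and in Theorem~\ref{thm:charactisation:ThinG:main} (which is purely combinatorial). The only small point worth remarking on is that the output of Theorem~\ref{thm:reduction:HF} is automatically non-trivial by property~\ref{enu:reduction:HF:trivial}, so the non-triviality hypothesis of Theorem~\ref{thm:charactisation:ThinG:main} is satisfied for free; thus no additional case analysis is needed.
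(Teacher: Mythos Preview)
Your proposal is correct and matches the paper's own proof essentially verbatim: the paper's proof is a one-line appeal to Theorem~\ref{thm:charactisation:ThinG:main} together with parts~\ref{enu:reduction:HF:spaces} and~\ref{enu:reduction:HF:trivial} of Theorem~\ref{thm:reduction:HF}, which is exactly what you do.
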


\begin{proof}
	This follows from Theorem~\ref{thm:charactisation:ThinG:main} and parts~\ref{enu:reduction:HF:spaces} and~\ref{enu:reduction:HF:trivial} of Theorem~\ref{thm:reduction:HF}.
\end{proof}

\begin{proposition}[Proposition~\ref{prop:spaces_coincide:intro}]\label{prop:spaces_coincide:HF}
	If \(\ThinHF(T)\) is an interval, \(\ThinHF(T)=\ALinkHF(T)\).
\end{proposition}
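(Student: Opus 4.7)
The plan is to reduce this statement directly to its combinatorial counterpart, Proposition~\ref{prop:spaces_coincide:main}, via the translation provided by Theorem~\ref{thm:reduction:HF}. All substantive work has already been carried out in those two results, so the proof should be short.

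Concretely, I would set \(C \coloneqq \Phi(\HFT(T))\), where \(\Phi\) is the map constructed in Theorem~\ref{thm:reduction:HF}. Applying part~\ref{enu:reduction:HF:spaces} of that theorem twice, once with \(G = \Z\) and once with \(G = \ZZ\) (using the canonical projection \(\CurvesZ \to \CurvesZZ\) of Remark~\ref{rem:CurveSetAsCoveringSpace} to interpret the same line set in both settings), I obtain the identifications
\[
\ThinHF(T) \;=\; \ThinZ(\HFT(T)) \;=\; \ThinZ(C)
\qquad\text{and}\qquad
\ALinkHF(T) \;=\; \ThinZZ(\HFT(T)) \;=\; \ThinZZ(C).
\]

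Now I would invoke the hypothesis: if \(\ThinHF(T)\) is an interval in \(\QPI\), then so is \(\ThinZ(C)\), and Proposition~\ref{prop:spaces_coincide:main} immediately yields \(\ThinZ(C) = \ThinZZ(C)\). Combining this with the two identifications above gives \(\ThinHF(T) = \ALinkHF(T)\), as required.

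There is no genuine obstacle; the only point deserving a moment's attention is verifying that the construction of \(\Phi\) in the proof of Theorem~\ref{thm:reduction:HF} does produce a single line set that computes both \(\ThinZ\) and \(\ThinZZ\) simultaneously. This is essentially built into the construction—\(\Phi(\Gamma)\) is defined in terms of the integer-valued absolute \(\delta\)-grading on components, and reducing modulo~\(2\) corresponds precisely to passing from \(\CurvesZ\) to \(\CurvesZZ\)—so no further work is needed.
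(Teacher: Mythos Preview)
Your proposal is correct and matches the paper's own proof essentially verbatim: the paper simply states that the result follows from Theorem~\ref{thm:reduction:HF}~\ref{enu:reduction:HF:spaces} in conjunction with Proposition~\ref{prop:spaces_coincide:main}. Your additional remark about the single line set computing both \(\ThinZ\) and \(\ThinZZ\) is a reasonable observation but not something the paper spells out explicitly.
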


\begin{proof}
	This follows from Theorem~\ref{thm:reduction:HF}~\ref{enu:reduction:HF:spaces} in conjunction with Proposition~\ref{prop:spaces_coincide:main}.
\end{proof}

In the following, let \(T_1\) and \(T_2\) be two Conway tangles and write \(\Gamma_1\coloneqq\mirror(\HFT(T_1))\) as well as \(\Gamma_2\coloneqq\HFT(T_2)\). 

\begin{theorem}[A-link Gluing Theorem; Theorem~\ref{thm:gluing:ALink:intro}]\label{thm:glueing:ALink:HF}
	\(T_1\cup T_2\) is a Heegaard Floer A-link if and only if
	\begin{enumerate}
		\item 	\(
		\mirrorALinkHF(T_1)
		\cup 
		\ALinkHF(T_2)
		=
		\QPI
		\);
		and
		\item for every slope
		\( s\in 
		\mirrorBdryALinkHF(T_1)
		\cap
		\BdryALinkHF(T_2)
		\), \(\Gamma_1\) and \(\Gamma_2\) satisfy \ref{local:HF:s-rational}  and \ref{local:HF:complementary}.
	\end{enumerate}
\end{theorem}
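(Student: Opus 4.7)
The strategy is to derive this theorem from the abstract $G$-thin Gluing Theorem (Theorem~\ref{thm:glueing:ThinG:main}) with $G=\ZZ$, via the correspondence between Heegaard Floer multicurves and line sets provided by Theorem~\ref{thm:reduction:HF}. As a first step, I would use Theorem~\ref{thm:GlueingTheorem:HFT} to convert A-linkness of $T_1\cup T_2$ into $\ZZ$-thinness of $\HF(\Gamma_1,\Gamma_2)$; the possible factor of $V$ is concentrated in a single $\delta$-grading, so it does not affect $\ZZ$-thinness. By Remark~\ref{rem:mirroring:ThinG:HFT} together with Theorem~\ref{thm:reduction:HF}\ref{enu:reduction:HF:slopes}\ref{enu:reduction:HF:spaces}, the asserted global condition~(1) matches the hypothesis $\ThinZZ(\Phi(\Gamma_1))\cup\ThinZZ(\Phi(\Gamma_2))=\QPI$ of Theorem~\ref{thm:glueing:ThinG:main}.

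In the case where both $\Gamma_1,\Gamma_2\in\CurvesHFwb$, the proof is then essentially automatic. Theorem~\ref{thm:reduction:HF}\ref{enu:reduction:HF:pairs:Thin} identifies $\ZZ$-thinness of $\HF(\Gamma_1,\Gamma_2)$ with $\ZZ$-thinness of the pair $(\Phi(\Gamma_1),\Phi(\Gamma_2))$, which is non-trivial by part~\ref{enu:reduction:HF:trivial}. Since exceptional line sets do not exist over $\ZZ$, the auxiliary hypothesis of Theorem~\ref{thm:glueing:ThinG:main} is automatic. Its local condition translates via Theorem~\ref{thm:reduction:HF}\ref{enu:reduction:HF:s-rational} to condition~\ref{local:HF:s-rational}, while~\ref{local:HF:complementary} is vacuous in this case since no inhibited local systems occur.

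The main obstacle lies in the general case, where $\Gamma_1$ or $\Gamma_2$ may contain inhibited rational components. Here Theorem~\ref{thm:reduction:HF}\ref{enu:reduction:HF:s-rational} can fail, because $\Phi$ collapses inhibited rational curves into special lines, so an $s$-rational multicurve $\Gamma$ in the sense of Section~\ref{sec:review:HFT:geography} may have a non-$s$-rational image $\Phi(\Gamma)$. To handle this, I would analyse the same-slope contribution to $\HF(\Gamma_1,\Gamma_2)$ at each $s\in\Slopes_{\Gamma_1}\cap\Slopes_{\Gamma_2}$ directly, using Lemmas~\ref{lem:HFT:epsilon_criterion} and~\ref{lem:SomeControlOverLocalSystems}: $\HF$ between two rational curves of slope $s$ is supported in two consecutive $\delta$-gradings and vanishes exactly when their local systems are complementary, while $\HF$ between a rational and a special component of slope $s$ always vanishes. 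Assuming the global condition~(1), condition~\ref{local:HF:s-rational} eliminates mixed rational--special pairings at slope $s$, and~\ref{local:HF:complementary} kills the remaining rational--rational contributions precisely when they would otherwise spoil $\ZZ$-thinness; conversely, the failure of either local condition introduces a two-grading $\HF$ summand at slope $s$. Different-slope contributions are controlled by Lemma~\ref{deflem:delta:HF:linear_curves} exactly as in the well-behaved case, and assembling these ingredients yields the desired equivalence.
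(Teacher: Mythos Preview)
Your proposal matches the paper's approach: reduce to the abstract $\ZZ$-thin Gluing Theorem via Theorem~\ref{thm:reduction:HF}, handle the well-behaved case directly, and for general multicurves analyse the same-slope and different-slope contributions separately using Lemmas~\ref{lem:HFT:epsilon_criterion}, \ref{lem:SomeControlOverLocalSystems}, and~\ref{deflem:delta:HF:linear_curves}.

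Two points to tighten. First, you have the role of~\ref{local:HF:s-rational} backwards: rational--special pairings at a common slope are \emph{always} zero by Lemma~\ref{lem:HFT:epsilon_criterion}, so~\ref{local:HF:s-rational} is not needed for those. What~\ref{local:HF:s-rational} actually rules out are the special--special pairings, which (thanks to conjugation symmetry, Theorem~\ref{thm:Conjugation}) are nonzero and supported in two consecutive $\delta$-gradings whenever both multicurves have special components of slope~$s$. Second, your final sentence ``assembling these ingredients yields the desired equivalence'' hides real work. Because parts~\ref{enu:reduction:HF:s-rational} and~\ref{enu:reduction:HF:pairs:Thin} of Theorem~\ref{thm:reduction:HF} fail for non-well-behaved curves, you cannot simply invoke Theorem~\ref{thm:glueing:ThinG:main} as a black box for the different-slope part; the paper instead reformulates the local condition as the vanishing condition~(2$'$) and then reruns the case analysis of Theorem~\ref{thm:glueing:ThinG:main} (indexed by $|\Slopes_{\Gamma_1}\cap\Slopes_{\Gamma_2}|$), using Lemma~\ref{lem:SomeControlOverLocalSystems} at each boundary slope to argue that at least one of $\Gamma_1,\Gamma_2$ is $s$-inhibited, hence $s\in\ThinG(\Gamma_1)\cup\ThinG(\Gamma_2)$. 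You should expect to do the same.
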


\begin{theorem}[Thin Gluing Theorem; Theorem~\ref{thm:gluing:Thin:intro}]\label{thm:glueing:Thin:HF}
	Suppose at least one of  \(T_1\) and \(T_2\) is not Heegaard Floer exceptional. 
	Then \(T_1\cup T_2\) is Heegaard Floer thin if and only if
	\begin{enumerate}
		\item 	\(
		\mirrorThinHF(T_1)
		\cup 
		\ThinHF(T_2)
		=
		\QPI
		\);
		and
		\item for every slope 
		\(s\in
		\mirrorBdryThinHF(T_1)
		\cap
		\BdryThinHF(T_2)
		\), \(\Gamma_1\) and \(\Gamma_2\) satisfy \ref{local:HF:s-rational}  and \ref{local:HF:complementary}.
	\end{enumerate}
\end{theorem}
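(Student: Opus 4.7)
The plan is to derive Theorem~\ref{thm:glueing:Thin:HF} from the abstract Thin Gluing Theorem (Theorem~\ref{thm:glueing:ThinG:main}) applied with \(G = \Z\), using the reduction map \(\Phi\) constructed in Theorem~\ref{thm:reduction:HF}. First, I would invoke Theorem~\ref{thm:GlueingTheorem:HFT} to translate Heegaard Floer thinness of \(T_1 \cup T_2\) into \(\Z\)-thinness of \(\HF(\Gamma_1, \Gamma_2)\); the possible auxiliary tensor factor \(V\) is supported in a single \(\delta\)-grading and so has no effect on thinness. Setting \(C \coloneqq \Phi(\Gamma_1)\) and \(D \coloneqq \Phi(\Gamma_2)\), parts~\ref{enu:reduction:HF:slopes}--\ref{enu:reduction:HF:exceptional} of Theorem~\ref{thm:reduction:HF}, combined with Remark~\ref{rem:mirroring:ThinG:HFT}, translate the global condition (1) of the present theorem into the abstract condition \(\ThinZ(C) \cup \ThinZ(D) = \QPI\) and show that the non-exceptionality hypothesis transfers to the line-set setting.

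In the well-behaved case, where neither \(\Gamma_1\) nor \(\Gamma_2\) contains an inhibited rational component, Theorem~\ref{thm:reduction:HF}\ref{enu:reduction:HF:pairs:Thin} directly identifies \(\Z\)-thinness of \(\HF(\Gamma_1, \Gamma_2)\) with \(\Z\)-thinness of the pair \((C, D)\), so Theorem~\ref{thm:glueing:ThinG:main} applies verbatim. Condition~\ref{local:HF:complementary} is then vacuous, while \ref{local:HF:s-rational} matches the abstract local condition via Theorem~\ref{thm:reduction:HF}\ref{enu:reduction:HF:s-rational}.

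The main obstacle is handling inhibited rational components, since \(\Phi\) encodes these as \emph{special} lines, so the abstract framework can no longer see the rational/special dichotomy that exists on the Floer side. I would circumvent this by analyzing Floer pairings at each shared boundary slope \(s \in \mirrorBdryThinHF(T_1) \cap \BdryThinHF(T_2)\) directly. By Lemma~\ref{lem:HFT:epsilon_criterion}, rational--special cross-pairings at \(s\) vanish; condition \ref{local:HF:complementary} kills the rational--rational pairings at \(s\) by the definition of complementary local systems; and condition \ref{local:HF:s-rational} forbids simultaneous special components on both sides, so the special--special contribution at \(s\) is also empty. Consequently, the surviving contributions to \(\HF(\Gamma_1, \Gamma_2)\) all arise from pairings between components of distinct slopes, and these are governed by the line-set analysis of Section~\ref{sec:main}.

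For the converse, I would argue contrapositively. If \ref{local:HF:s-rational} fails at some \(s \in \mirrorBdryThinHF(T_1) \cap \BdryThinHF(T_2)\), then Theorem~\ref{thm:Conjugation} produces conjugate pairs of special components on both sides whose pairings, combined with the triangle argument of Case~2 in the proof of Theorem~\ref{thm:glueing:ThinG:main}, yield classes in two distinct \(\delta\)-gradings, violating \(\Z\)-thinness. If \ref{local:HF:complementary} fails, Lemma~\ref{lem:SomeControlOverLocalSystems} produces a non-vanishing rational--rational pairing at \(s\) whose grading disagrees with those coming from the neighbouring intervals, again breaking \(\Z\)-thinness. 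Book-keeping through the cases \(|\Slopes_C \cap \Slopes_D| = 0, 1, 2, {>}2\) in parallel with the proof of Theorem~\ref{thm:glueing:ThinG:main} then completes the argument.
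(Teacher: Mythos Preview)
Your proposal is correct and follows essentially the same route as the paper: reduce the well-behaved case directly to Theorem~\ref{thm:glueing:ThinG:main} via Theorem~\ref{thm:reduction:HF}, and in the general case revisit the case analysis \(|\Slopes_{\Gamma_1}\cap\Slopes_{\Gamma_2}|=0,1,2,{>}2\) from Section~\ref{sec:main}, using that conditions~\ref{local:HF:s-rational} and~\ref{local:HF:complementary} are equivalent (via Lemma~\ref{lem:HFT:epsilon_criterion}) to the vanishing of all same-slope Floer pairings. Two small cleanups: the obstruction when \ref{local:HF:s-rational} or \ref{local:HF:complementary} fails comes directly from Lemma~\ref{lem:HFT:epsilon_criterion} (a non-zero same-slope pairing occupies two consecutive \(\delta\)-gradings), not from any ``triangle argument of Case~2''; and you do not need Lemma~\ref{lem:SomeControlOverLocalSystems} for the failure of \ref{local:HF:complementary}, since non-complementarity already gives the non-vanishing pairing by definition.
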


The assumption in Theorem~\ref{thm:glueing:Thin:HF} that \(T_1\) and \(T_2\) not be both exceptional is meaningful; see Example~\ref{exp:whitehead}.

\begin{remark}
  The property \ref{local:HF:complementary} can be dropped from Theorems~\ref{thm:glueing:ALink:HF} and~\ref{thm:glueing:Thin:HF} if we restrict to well-behaved curves. Indeed, if \(T_1\cup T_2\) is Heegaard Floer \(G\)-thin, then clearly, \(\Gamma_1\) and \(\Gamma_2\) satisfy~\ref{local:HF:complementary} for any slope 
	\(s\in\QPI\). 
	Conversely, suppose \(\Gamma_1\) and \(\Gamma_2\) are well-behaved. If 	
	\(
	\ThinG(\Gamma_1)
	\cup 
	\ThinG(\Gamma_2)
	=
	\QPI
	\) 
	then for any slope \(s\in\QPI\), at least one of \(\Gamma_1\) and \(\Gamma_2\) does not contain any rational component of slope \(s\), so \(\Gamma_1\) and \(\Gamma_2\) vacuously satisfy~\ref{local:HF:complementary} for all \(s\in\QPI\). 
\end{remark}

\begin{proof}[Proofs of Theorems~\ref{thm:glueing:ALink:HF} and~\ref{thm:glueing:Thin:HF} for well-behaved multicurves.]
	Suppose \(\Gamma_1,\Gamma_2\in\CurvesHFwb\). Let \(C_i=\Phi(\Gamma_i)\) for \(i=1,2\). By the previous remark, we may ignore property~\ref{local:HF:complementary} for this proof.
	By Theorem~\ref{thm:GlueingTheorem:HFT}, \(T_1\cup T_2\) is an A-link if and only if \(\HF(\Gamma_1,\Gamma_2)\) is \(\ZZ\)-thin. 
	By Theorem~\ref{thm:reduction:HF}~\ref{enu:reduction:HF:pairs:Thin}, 
	the latter is equivalent to \((C_1,C_2)\) being \(\ZZ\)-thin. 
	Note that since \(G=\ZZ\), neither \(C_1\) nor \(C_2\) are exceptional. Therefore, by Theorems~\ref{thm:glueing:ThinG:main} and~\ref{thm:reduction:HF}~\ref{enu:reduction:HF:spaces}, \ref{enu:reduction:HF:trivial}, and~\ref{enu:reduction:HF:s-rational}, this is equivalent to \(\ThinZZ(\Gamma_1)\cup\ThinZZ(\Gamma_2)=\QPI\) and for all \(s\in\BdryThinZZ(\Gamma_1)\cap\BdryThinZZ(\Gamma_2)\), at least one of \(\Gamma_1\) and \(\Gamma_2\) is \(s\)-rational. 
	This is equivalent to the right hand side of Theorem~\ref{thm:glueing:ALink:HF}. 
	
	The proof of Theorem~\ref{thm:glueing:Thin:HF} is analogous to the above, noting that at most one of \(C_1\) and \(C_2\) are exceptional by Theorem~\ref{thm:reduction:HF}~\ref{enu:reduction:HF:exceptional} and the additional hypothesis in Theorem~\ref{thm:glueing:Thin:HF}.
\end{proof}

\begin{corollary}[Corollaries~\ref{cor:one_direction:ALink:intro} and~\ref{cor:one_direction:Thin:intro}]\label{cor:one_direction:ThinG:HF}
	For any Conway tangles \(T_1\) and \(T_2\),
	\begin{align*}
		\mirrorIntALinkHF(T_1)
		\cup 
		\IntALinkHF(T_2)
		&=
		\QPI
		\Rightarrow
		\text{\(L\) is a Heegaard Floer A-link; and}
		\\
		\mirrorIntThinHF(T_1)
		\cup 
		\IntThinHF(T_2)
		&=
		\QPI
		\Rightarrow
		\text{\(L\) is Heegaard Floer thin.}
	\end{align*}
\end{corollary}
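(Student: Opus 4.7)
The plan is to follow the template of the proofs of Theorems~\ref{thm:glueing:ALink:HF} and~\ref{thm:glueing:Thin:HF} for the well-behaved case, but to dispense with the well-behavedness hypothesis by exploiting the fact that the interior condition forces disjoint slopes.

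First I would set $\Gamma_1 \coloneqq \mirror(\HFT(T_1))$ and $\Gamma_2 \coloneqq \HFT(T_2)$, and let $C_i \coloneqq \Phi(\Gamma_i)$, where $\Phi$ is the map from Theorem~\ref{thm:reduction:HF}. By parts~\ref{enu:reduction:HF:spaces} and~\ref{enu:reduction:HF:trivial} of Theorem~\ref{thm:reduction:HF} (together with Remark~\ref{rem:mirroring:ThinG:HFT} to handle the mirror on the $T_1$ side), the two hypotheses translate respectively to $\IntThinG(C_1)\cup\IntThinG(C_2)=\QPI$ with both $C_i$ non-trivial, with $G=\ZZ$ for the A-link statement and $G=\Z$ for the thin statement. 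Corollary~\ref{cor:one_direction:main} then yields that the pair $(C_1,C_2)$ is $G$-thin.

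The main obstacle is that Theorem~\ref{thm:reduction:HF}\ref{enu:reduction:HF:pairs:Thin}, which would translate pair $G$-thinness back into $G$-thinness of $\HF(\Gamma_1,\Gamma_2)$, assumes both multicurves are well-behaved in the sense of $\CurvesHFwb$. I would sidestep this as follows. Just as in the proof of Corollary~\ref{cor:one_direction:main}, the interior hypothesis implies $\BdryThin(C_1)\cap\BdryThin(C_2)=\varnothing$; Lemma~\ref{lem:Thin:SlopeCaps_equal_ThinCaps} then gives $\Slopes_{C_1}\cap\Slopes_{C_2}=\varnothing$, and Theorem~\ref{thm:reduction:HF}\ref{enu:reduction:HF:slopes} upgrades this to $\Slopes_{\Gamma_1}\cap\Slopes_{\Gamma_2}=\varnothing$.

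Finally, with no component of $\Gamma_1$ sharing a slope with any component of $\Gamma_2$, Lemma~\ref{deflem:delta:HF:linear_curves} shows that $\HF(\gamma,\gamma')$ is supported in the single $\delta$-grading $\delta(\gamma,\gamma')$ for every pair of components $\gamma\in\Gamma_1$ and $\gamma'\in\Gamma_2$, independently of their local systems. This is precisely the input needed to replay the ``different slope'' half of the proof of Theorem~\ref{thm:reduction:HF}\ref{enu:reduction:HF:pairs:Thin} without invoking Lemma~\ref{lem:HFT:epsilon_criterion} (the part that required well-behavedness). Thus $(C_1,C_2)$ being $G$-thin directly implies that $\HF(\Gamma_1,\Gamma_2)$ is $G$-thin, and the conclusion follows from Theorem~\ref{thm:GlueingTheorem:HFT}.
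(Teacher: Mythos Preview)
Your proof is correct. It differs from the paper's general-case argument in that the paper simply appeals to the already-established general versions of Theorems~\ref{thm:glueing:ALink:HF} and~\ref{thm:glueing:Thin:HF}: the interior hypothesis makes condition~(2) vacuous (empty boundary intersection) and, as in Corollary~\ref{cor:one_direction:main}, the exceptionality assumption of Theorem~\ref{thm:glueing:Thin:HF} is only needed in Case~2(a), which does not arise since \(\Slopes_{\Gamma_1}\cap\Slopes_{\Gamma_2}=\varnothing\). You instead stay closer to the well-behaved template and bypass well-behavedness directly, by first deducing \(\Slopes_{\Gamma_1}\cap\Slopes_{\Gamma_2}=\varnothing\) via Lemma~\ref{lem:Thin:SlopeCaps_equal_ThinCaps} and Theorem~\ref{thm:reduction:HF}\ref{enu:reduction:HF:slopes}, and then observing that for pairs of components with distinct slopes, Lemma~\ref{deflem:delta:HF:linear_curves} determines the \(\delta\)-support of \(\HF(\gamma,\gamma')\) purely from the line data, independent of local systems. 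This makes explicit exactly why well-behavedness is irrelevant here (it only enters through the same-slope analysis via Lemmas~\ref{lem:HFT:epsilon_criterion} and~\ref{lem:SomeControlOverLocalSystems}), whereas the paper's route is shorter because it reuses the case analysis already carried out for the general glueing theorems.
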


\begin{proof}[Proof of Corollary~\ref{cor:one_direction:ThinG:HF} for well-behaved multicurves.]
	Suppose that \(\Gamma_1,\Gamma_2\in\CurvesHFwb\). Let \(C_i=\Phi(\Gamma_i)\) for \(i=1,2\) as before. By Theorem~\ref{thm:reduction:HF}~\ref{enu:reduction:HF:trivial}, \(C_1\) and \(C_2\) are non-trivial. 
	By Theorem~\ref{thm:reduction:HF}~\ref{enu:reduction:HF:spaces}, the hypotheses imply that 
	\(
	\IntThinG(C_1)
	\cup 
	\IntThinG(C_2)
	=
	\QPI
	\), 
	and so by Corollary~\ref{cor:one_direction:main}, the pair \((C_1,C_2)\) is \(G\)-thin. 
	So by Theorem~\ref{thm:reduction:HF}~\ref{enu:reduction:HF:pairs:Thin}, \(\HF(\Gamma_1,\Gamma_2)\) is \(G\)-thin. Now conclude with Theorem~\ref{thm:GlueingTheorem:HFT}.
\end{proof}

\setcounter{casecount}{0}
\newcommand{\casei}[1]{%
	\pagebreak[2]\medskip\noindent%
	\textbf{%
		Case~\thecasecount: \(#1\).\stepcounter{casecount}%
	}%
	\nopagebreak\relax%
}

Before proving Theorems~\ref{thm:glueing:ALink:HF} and~\ref{thm:glueing:Thin:HF} and Corollary~\ref{cor:one_direction:ThinG:HF} in general, let us translate Lemma~\ref{lem:Thin:SlopeCaps_equal_ThinCaps} into the present setting.

\begin{lemma}\label{lem:HF:SlopeCaps_equal_ThinCaps}
	For any \(\Gamma,\Gamma'\in\CurvesHF\),
	\(
	\ThinG(\Gamma)
	\cup
	\ThinG(\Gamma')
	=\QPI
	\)
	implies \(
	\BdryThin(\Gamma)\cap\BdryThin(\Gamma')
	=
	\Slopes_\Gamma\cap\Slopes_{\Gamma'}.
	\)
\end{lemma}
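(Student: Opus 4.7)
The plan is to deduce this lemma from its combinatorial counterpart, Lemma~\ref{lem:Thin:SlopeCaps_equal_ThinCaps}, via the reduction furnished by Theorem~\ref{thm:reduction:HF}. Concretely, I will set $C=\Phi(\Gamma)$ and $C'=\Phi(\Gamma')$, where $\Phi\co\CurvesHF\to\mathcal{P}_\mathrm{finite}(\Curves)$ is the map from Theorem~\ref{thm:reduction:HF}, and then transport the hypothesis and conclusion across $\Phi$.

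First I would observe that Theorem~\ref{thm:reduction:HF}~\ref{enu:reduction:HF:spaces} gives $\ThinG(\Gamma)=\ThinG(C)$ and $\ThinG(\Gamma')=\ThinG(C')$ as subsets of $\QPI$, so their topological boundaries also agree:
\[
\BdryThin(\Gamma)=\BdryThin(C)
\quad\text{and}\quad
\BdryThin(\Gamma')=\BdryThin(C').
\]
Similarly, part~\ref{enu:reduction:HF:slopes} of Theorem~\ref{thm:reduction:HF} gives $\Slopes_\Gamma=\Slopes_C$ and $\Slopes_{\Gamma'}=\Slopes_{C'}$. In particular, the hypothesis $\ThinG(\Gamma)\cup\ThinG(\Gamma')=\QPI$ translates to $\ThinG(C)\cup\ThinG(C')=\QPI$.

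Next, by Theorem~\ref{thm:reduction:HF}~\ref{enu:reduction:HF:trivial}, both $C$ and $C'$ are non-trivial line sets, so they satisfy the hypotheses of Lemma~\ref{lem:Thin:SlopeCaps_equal_ThinCaps}. Applying that lemma yields
\[
\BdryThin(C)\cap\BdryThin(C')=\Slopes_C\cap\Slopes_{C'}.
\]
Transporting back along $\Phi$ with the four identifications above gives the desired equality
\[
\BdryThin(\Gamma)\cap\BdryThin(\Gamma')=\Slopes_\Gamma\cap\Slopes_{\Gamma'}.
\]

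There is no real obstacle here: the lemma is a pure transfer of a combinatorial fact through the dictionary established in Theorem~\ref{thm:reduction:HF}. The only point that requires a moment's thought is that Lemma~\ref{lem:Thin:SlopeCaps_equal_ThinCaps} requires non-triviality of both line sets, but this is precisely what Theorem~\ref{thm:reduction:HF}~\ref{enu:reduction:HF:trivial} provides for any $\Gamma,\Gamma'\in\CurvesHF$; no well-behavedness assumption is needed, since properties~\ref{enu:reduction:HF:slopes}--\ref{enu:reduction:HF:trivial} of Theorem~\ref{thm:reduction:HF} hold for all elements of $\CurvesHF$.
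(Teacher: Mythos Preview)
Your proof is correct and follows essentially the same route as the paper's: set $C=\Phi(\Gamma)$, $C'=\Phi(\Gamma')$, use parts~\ref{enu:reduction:HF:slopes}, \ref{enu:reduction:HF:spaces}, \ref{enu:reduction:HF:trivial} of Theorem~\ref{thm:reduction:HF} to transfer everything to line sets, and then invoke Lemma~\ref{lem:Thin:SlopeCaps_equal_ThinCaps}.
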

\begin{proof}
	Let \(C=\Phi(\Gamma)\) and \(C'=\Phi(\Gamma')\).
	By Theorem~\ref{thm:reduction:HF}~\ref{enu:reduction:HF:trivial}, \(C\) and \(C'\) are non-trivial. 
	By Theorem~\ref{thm:reduction:HF}~\ref{enu:reduction:HF:spaces},
	\(\Thin(C)\cup\Thin(C')=\QPI\) and
	\(
	\BdryThinG(\Gamma)
	\cap
	\BdryThinG(\Gamma')
	=
	\BdryThin(C)
	\cap
	\BdryThin(C')
	\).
	By Lemma~\ref{lem:Thin:SlopeCaps_equal_ThinCaps}, the latter is equal to 
	\(
	\Slopes_{C}\cap\Slopes_{C'}
	\)
	which by Theorem~\ref{thm:reduction:HF}~\ref{enu:reduction:HF:slopes} equals
	\(
	\Slopes_{\Gamma}\cap\Slopes_{\Gamma'}
	\).
\end{proof}

\begin{proof}[Proof of Theorems~\ref{thm:glueing:ALink:HF} and~\ref{thm:glueing:Thin:HF} for general multicurves in \(\CurvesHF\).]
	By Lemma~\ref{lem:HF:SlopeCaps_equal_ThinCaps}, 
	\(
	\BdryThinG(\Gamma_1)
	\cap
	\BdryThinG(\Gamma_2)
	=
	\Slopes_{\Gamma_1}\cap\Slopes_{\Gamma_2}
	\)
	provided conditions (1) in Theorems~\ref{thm:glueing:ALink:HF} and~\ref{thm:glueing:Thin:HF} hold. 
	Moreover, \(\Gamma_1\) and \(\Gamma_2\) satisfy \ref{local:HF:s-rational}  and \ref{local:HF:complementary} for some slope~\(s\) if and only if \(\HF(\gamma_1,\gamma_2)=0\) for all \(\gamma_1\in\Gamma_1\) and \(\gamma_2\in\Gamma_2\) of slope~\(s\) by Lemma~\ref{lem:HFT:epsilon_criterion}. 
	So by Theorem~\ref{thm:GlueingTheorem:HFT}, it suffices to show that \(\HF(\Gamma_1,\Gamma_2)\) is \(G\)-thin if and only if 
	\begin{enumerate}
		\item 	\(
		\ThinG(\Gamma_1)
		\cup 
		\ThinG(\Gamma_2)
		=
		\QPI
		\);
		and
		\item[(2')] for every slope
		\( s\in 
		\Slopes_{\Gamma_1}
		\cap
		\Slopes_{\Gamma_2}
		\), \(\HF(\gamma_1,\gamma_2)=0\) for all \(\gamma_1\in\Gamma_1\) and \(\gamma_2\in\Gamma_2\) of slope \(s\).
	\end{enumerate}
	We now go through the same case-by-case analysis depending on \(\Slopes_{\Gamma_1}\) and \(\Slopes_{\Gamma_2}\) as in the proof of Theorem~\ref{thm:glueing:ThinG:main}. \(\Gamma_1\) will play the role of \(C\) and \(\Gamma_2\) the role of \(D\). 
	Clearly, if \(\HF(\Gamma_1,\Gamma_2)\) is \(G\)-thin, then condition~(2') holds. So let us assume this condition from now on. It plays the same role as condition~(2') in the proof of Theorem~\ref{thm:glueing:ThinG:main}. 
	
	\casei{\Slopes_{\Gamma_1}\cap\Slopes_{\Gamma_2}=\varnothing} The original proof goes through unchanged.
	
	\casei{\Slopes_{\Gamma_1}\cap\Slopes_{\Gamma_2}=\{s\}}
		\begin{enumerate}[label=\textnormal{(\alph*)}]
			\item The case \(\Slopes_{\Gamma_1}=\Slopes_{\Gamma_2}=\{s\}\) goes through unchanged, because both curves contain some non-inhibited rational component. 
			\item Suppose \(\Slopes_{\Gamma_1}=\{s\}\) and \(\Slopes_{\Gamma_2}\supsetneq\{s\}\). The argument from the original proof can be adapted as follows: 
			If \(\Gamma_1\) contains curves of different \(\delta\)-gradings, the statements on either side of the asserted equivalence are wrong. 
			If all components of \(\Gamma_1\) have the same \(\delta\)-grading, \(\ThinG(\Gamma_1)=\QPI\smallsetminus\{s\}\), so condition (1) is equivalent to \(s\in\ThinG(\Gamma_2)\). 
			Since \(\Gamma_1\) is non-trivial, it contains some non-inhibited rational component of slope \(s\). By Lemma~\ref{lem:SomeControlOverLocalSystems} and assumption~(2'), \(\Gamma_2\) is \(s\)-inhibited. So \(s\in\ThinG(\Gamma_2)\) is equivalent to \(\delta(\Rational(s),\gamma)\in G\) being constant for all components \(\gamma\in\Gamma_2\) of slope different from \(s\). Still assuming~(2'), this is equivalent to 
			\(\HF(\Gamma_1,\Gamma_2)\) being thin. 
			
			\item Suppose \(\Slopes_{\Gamma_1}\supsetneq\{s\}\) and  \(\Slopes_{\Gamma_2}=\{s\}\). Same as Case~1(b) with reversed roles of \(\Gamma_1\) and \(\Gamma_2\). 
			
			\item Suppose \(|\Slopes_{\Gamma_i}|>1\) for \(i=1,2\). The proof in this case goes through unchanged, noting that condition~(2') implies that if \((s,t_1)\subseteq\ThinG(\Gamma_2)\) and \((s_m,s)\subseteq\ThinG(\Gamma_1)\), then also \(s\in\ThinG(\Gamma_1)\cup \ThinG(\Gamma_2)\), since at least one of \(\Gamma_1\) and \(\Gamma_2\) is \(s\)-inhibited. 
		\end{enumerate}
	\casei{\Slopes_{\Gamma_1}\cap\Slopes_{\Gamma_2}=\{s,t\}}
	
		\begin{enumerate}[label=\textnormal{(\alph*)}]
		\item Suppose that \(|\Slopes_{\Gamma_i}|=2\) for \(i=1,2\). 
					Most of the proof in this case goes through unchanged. 
					It only remains to see that 
					\(
					\ThinG(\Gamma_1)
					\cup
					\ThinG(\Gamma_2)
					\supseteq\QPI\smallsetminus\{s,t\}
					\) 
					implies that also \(s,t\in\ThinG(\Gamma_1)\cup \ThinG(\Gamma_2)\), under the assumption that condition~(2') holds. 
					Indeed, the inclusion implies that \(\ThinG(\Gamma_1)\) and \(\ThinG(\Gamma_2)\) are intervals with endpoints equal to \(s\) and \(t\). 
					Now suppose for contradiction that
					\(s\not\in\ThinG(\Gamma_1)\cup \ThinG(\Gamma_2)\). 
					Then there exist non-inhibited rational components \(\gamma_1\in\Gamma_1\) and \(\gamma_2\in\Gamma_2\) of slope \(s\). But this violates condition~(2') according to Lemma~\ref{lem:SomeControlOverLocalSystems}. 
					The same argument works if we replace \(s\) by \(t\). 
		\item Suppose \(|\Slopes_{\Gamma_1}|>2\).	The original argument goes through unchanged, using the same observation as in Case~2(a).
		\item Suppose \(|\Slopes_{\Gamma_2}|>2\). Same as Case~2(b) with reversed roles of \(\Gamma_1\) and \(\Gamma_2\). 
	\end{enumerate}

	\casei{|\Slopes_{\Gamma_1}\cap\Slopes_{\Gamma_2}|>2} This case goes through unchanged.
\end{proof}

\begin{proof}[Proof of Corollary~\ref{cor:one_direction:ThinG:HF} for general multicurves in \(\CurvesHF\).]
	As in the proof of Corollary~\ref{cor:one_direction:main}, observe that the assumption in Theorem~\ref{thm:glueing:Thin:HF} about \(\Gamma_1\) and \(\Gamma_2\) not both being exceptional is only used in Case~2(a). So the same arguments as in the proof of Corollary~\ref{cor:one_direction:main} apply.
\end{proof}

\section{\texorpdfstring{The tangle invariant \(\Khr\)}{The tangle invariant Khr}}\label{sec:review:Kh}

In this section, we review some properties of the immersed curve invariant \(\Khr\) of Conway tangles from~\cite{KWZ}. 
We work exclusively over the field \(\fieldTwoElements\) of two elements, with remarks about other coefficient systems when appropriate.

\subsection{\texorpdfstring{The definition of \(\Khr\)}{The definition of Khr}}\label{sec:review:Kh:definition}
 
Let \(T\) be an oriented \textbf{pointed} Conway tangle, that is a Conway tangle \(T\) in the three-ball \(B^3\) with a choice of distinguished tangle end, which we usually mark by~\(\ast\). With such a tangle, we associate an invariant \(\Khr(T)\), which takes the form of a collection of immersed curves with local systems on the boundary of \(B^3\) minus the four tangle ends. Like \(\HFT(T)\), these immersed curves with local systems are defined in two steps, which we sketch below. 

First, one fixes a diagram \(\Diag_T\) of the pointed tangle $T$. Bar-Natan associates with such a diagram a bigraded chain complex \(\KhTl{\Diag_T}\) over a certain cobordism category \(\Cobl\), whose objects are crossingless tangle diagrams \cite{BarNatanKhT}. 
This complex is a tangle invariant up to bigraded chain homotopy, and thus is frequently denoted by \(\KhTl{T}\). 
Thanks to a process Bar-Natan calls \textit{delooping}~\cite[Observation~4.18]{KWZ}, any chain complex over \(\Cobl\) can be written as a chain complex over the full subcategory \(\End_{\Cobl}(\Lo\oplus\Li)\) of \(\Cobl\) generated by the crossingless tangles without closed components. 
This subcategory is isomorphic to the following quiver algebra \cite[Theorem~1.1]{KWZ}:
\begin{equation}\label{eq:B_quiver}
\BNAlgH =
\fieldTwoElements\Big[
\begin{tikzcd}[row sep=2cm, column sep=1.5cm]
\DotB
\arrow[leftarrow,in=145, out=-145,looseness=5]{rl}[description]{{}_{\bullet}D_{\bullet}}
\arrow[leftarrow,bend left]{r}[description]{{}_{\bullet}S_{\circ}}
&
\DotC
\arrow[leftarrow,bend left]{l}[description]{{}_{\circ}S_{\bullet}}
\arrow[leftarrow,in=35, out=-35,looseness=5]{rl}[description]{{}_{\circ}D_{\circ}}
\end{tikzcd}
\Big]\Big/\Big(
\parbox[c]{120pt}{\footnotesize\centering
${}_{\bullet}D_{\bullet} \cdot {}_{\bullet}S_{\circ}=0={}_{\bullet}S_{\circ}\cdot  {}_{\circ}D_{\circ}$\\
${}_{\circ}D_{\circ}\cdot  {}_{\circ}S_{\bullet}=0={}_{\circ}S_{\bullet}\cdot  {}_{\bullet}D_{\bullet}$
}\Big)
\end{equation}
The objects \(\Lo\) and \(\Li\) correspond to \(\DotB\) and \(\DotC\), respectively. We denote the idempotent constant paths on \(\DotB\) and \(\DotC\) by \(\iota_{\bullet}\) and \(\iota_{\circ}\), respectively. We will sometimes abuse notation by using $S$  for either ${}_{\circ}S_{\bullet}$  or ${}_{\bullet}S_{\circ}$ and using $D$ for either ${}_{\circ}D_{\circ}$  or ${}_{\bullet}D_{\bullet}$.
In addition, the subscript $\star\in\{\DotC,\DotB\}$ on the left or right of an algebra element $a$ will always indicate that multiplying by $\iota_\star$ from the left or right respectively preserves the element $a$. This allows shorthand notation like for instance $S_{\circ}={}_{\bullet}S_{\circ}$ and $S^3_{\bullet}=  {}_{\circ}S_{\bullet}   \cdot {}_{\bullet}S_{\circ} \cdot  {}_{\circ}S_{\bullet}$.
The algebra \(\BNAlgH\) carries a bigrading: The quantum grading \(q\) and the delta grading  \(\delta \) are determined by 
\[
\text{gr}(D_{\bullet}) = \text{gr}(D_{\circ}) = q^{-2}\delta^{-1}
\qquad 
\text{and}
\qquad
\text{gr}(S_{\bullet}) = \text{gr}(S_{\circ}) = q^{-1} \delta^{-\frac{1}{2}} 
\] 
Differentials of bigraded chain complexes over \(\BNAlgH\) are defined to preserve quantum grading and decrease \(\delta\)-grading by 1. 
The isomorphism \(\End_{\Cobl}(\Lo\oplus\Li) \cong \BNAlgH\) allows us to translate the delooped chain complex \(\KhTl{\Diag_T}\) into a bigraded chain complex \(\DD(\Diag_T)^\BNAlgH\)~\cite[Definition~1.2]{KWZ}. ({Chain complexes over ordinary algebras} have also appeared in the literature under the name of {type~D structures}~\cite[Definition~2.2.23]{LOTBimodules}, and we will therefore use the two terms interchangeably; see~\cite[Proposition 2.13]{KWZ} for more on the equivalence of these objects.)  
By construction, the bigraded chain homotopy type of \(\DD(\Diag_T)^\BNAlgH\) is an invariant of the tangle~\(T\), and thus we will sometimes write  \(\DD(T)^\BNAlgH\) for  \(\DD(\Diag_T)^\BNAlgH\). 
Moreover, using the central element 
\[
H\coloneqq D+S^2 = D_{\bullet} + D_{\circ} + S_{\circ}S_{\bullet} + S_{\bullet}S_{\circ} ~ \in ~ \BNAlgH 
\]   
we define a bigraded chain complex \(\DD_1(\Diag_T)\) as the mapping cone 
\[\DD_1(\Diag_T)\coloneqq [q^{-1}\delta^{\frac 1 2}\DD(\Diag_T)\xrightarrow{H\cdot \id} q^{1}\delta^{\frac 1 2}\DD(\Diag_T)]\]
where  $H\cdot \id (x) = x\otimes H$ for every generator $x$ in $\DD(\Diag_T)$.
The bigraded chain homotopy type of \(\DD_1(\Diag_T)\)  is also a tangle invariant, and we will write  \(\DD_1(T)\) for \(\DD_1(\Diag_T)\). 

\begin{figure}[t]
	\centering
	\begin{subfigure}[t]{0.48\textwidth}
		\centering
		$\PairingTrefoilArcINTRO$
		\caption{The curve associated with $\textcolor{blue}{
				[
				\protect\begin{tikzcd}[nodes={inner sep=2pt}, column sep=13pt,ampersand replacement = \&]
				\protect\DotCblue
				\protect\arrow{r}{S}
				\protect\&
				\protect\DotBblue
				\protect\arrow{r}{D}
				\protect\&
				\protect\DotBblue
				\protect\arrow{r}{S^2}
				\protect\&
				\protect\DotBblue
				\protect\end{tikzcd}
				]}$}\label{fig:exa:classification:curves:arc}
	\end{subfigure}
	\
	\begin{subfigure}[t]{0.48\textwidth}
		\centering
		$\PairingTrefoilLoopINTRO$
		\caption{The curve associated with $\textcolor{red}{
				[
				\protect\begin{tikzcd}[nodes={inner sep=2pt}, column sep=23pt,ampersand replacement = \&]
				\protect\DotCred
				\protect\arrow{r}{D+S^2}
				\protect\&
				\protect\DotCred
				\protect\end{tikzcd}
				]}$}\label{fig:exa:classification:curves:loop}
	\end{subfigure}
	\caption{The geometric interpretation of some chain complexes over the algebra $\BNAlgH$ illustrating the classification theorem in the second part of the construction of \(\BNr(T)\) and \(\Khr(T)\)
	}\label{fig:exa:classification:curves}
\end{figure}
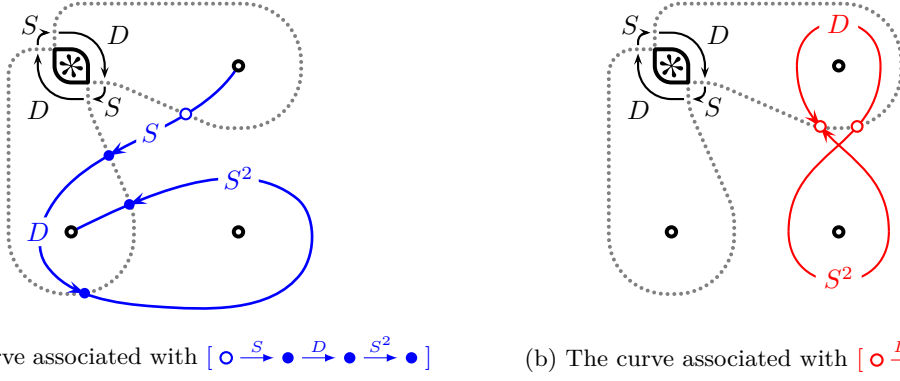

The second step in the definition of \(\Khr(T)\) relies on a classification result, similar to the one used in the definition of~\(\HFT(T)\). 
This classification result says that the chain homotopy classes of bigraded chain complexes over \(\BNAlgH\) are in one-to-one correspondence with free homotopy classes of bigraded immersed multicurves with local systems on the four-punctured sphere \(\FourPuncturedSphereKh\), where the latter has one \textbf{special} puncture distinguished by~\(\ast\) \cite[Theorem~1.5]{KWZ}.
In contrast to the \(\HFT(T)\) setting, here ``immersed curves'' also include non-compact curves, that is non-null-homotopic immersions of intervals into the four-punctured sphere, with ends on the three non-special punctures of \(\FourPuncturedSphereKh\); see \cite[Definition~1.4]{KWZ}. As in the \(\HFT(T)\) setting, the correspondence between chain complexes and immersed multicurves uses a parametrization of \(\FourPuncturedSphereKh\). 
This time, the parametrization is given by the two dotted arcs shown in Figure~\ref{fig:Kh:example:Curve:Downstairs}.
We will generally assume that the multicurves intersect these arcs minimally. Then, roughly speaking, the intersection points correspond to generators of the according chain complexes and paths between those intersection points correspond to the differentials. 
This is illustrated in Figure~\ref{fig:exa:classification:curves}, cf~\cite[Example~1.6]{KWZ}. 

Finally, the multicurve invariant \(\Khr(T)\) is defined as the collection of bigraded immersed curves on \(\FourPuncturedSphereKh\) that corresponds to \(\DD_1(T)\). 
Within the equivalence class of type~D structures that are chain homotopy equivalent to \(\DD_1(T)\), there exist certain distinguished representatives from which the multicurve \(\Khr(T)\) can be read off directly, as in Figure~\ref{fig:exa:classification:curves}. 
We denote such representatives by $\DD_1^c(T)$. 
Similarly, the type~D structure \(\DD(T)\) corresponds to a multicurve \(\BNr(T)\) and we write $\DD^c(T)$ for a type~D structure from which \(\BNr(T)\) can be read off directly.

While \(\Khr(T)\) only consists of compact curves, ie immersed circles, \(\BNr(T)\) also contains \(2^{|T|}\) non-compact components, where \(|T|\) is the number of closed components of \(T\). 
\begin{remark}[Coefficients]
In general, the construction of the tangle invariants \(\DD(T)^\BNAlgH\) and \(\DD_1(T)^\BNAlgH\) can be done over $\Z$. 
The classification result works over arbitrary fields, and hence so does the construction of the immersed curve invariants. However, in this paper, \(\BNr(T)\) and \(\Khr(T)\) will always denote the curves over $\fieldTwoElements$, unless stated otherwise. 
\end{remark}
 
One can identify \(\FourPuncturedSphereKh\) with \(\partial B^3\smallsetminus \partial T\) using the parametrization of the latter shown in Figure~\ref{fig:Kh:example:tangle}. 
This identification is natural in the same sense as for \(\HFT(T)\), see Theorem~\ref{thm:HFT:Twisting}, provided we work over \(\fieldTwoElements\) \cite[Theorem~1.13]{KWZ}. We expect the same to hold over arbitrary fields.	

\begin{figure}[t]
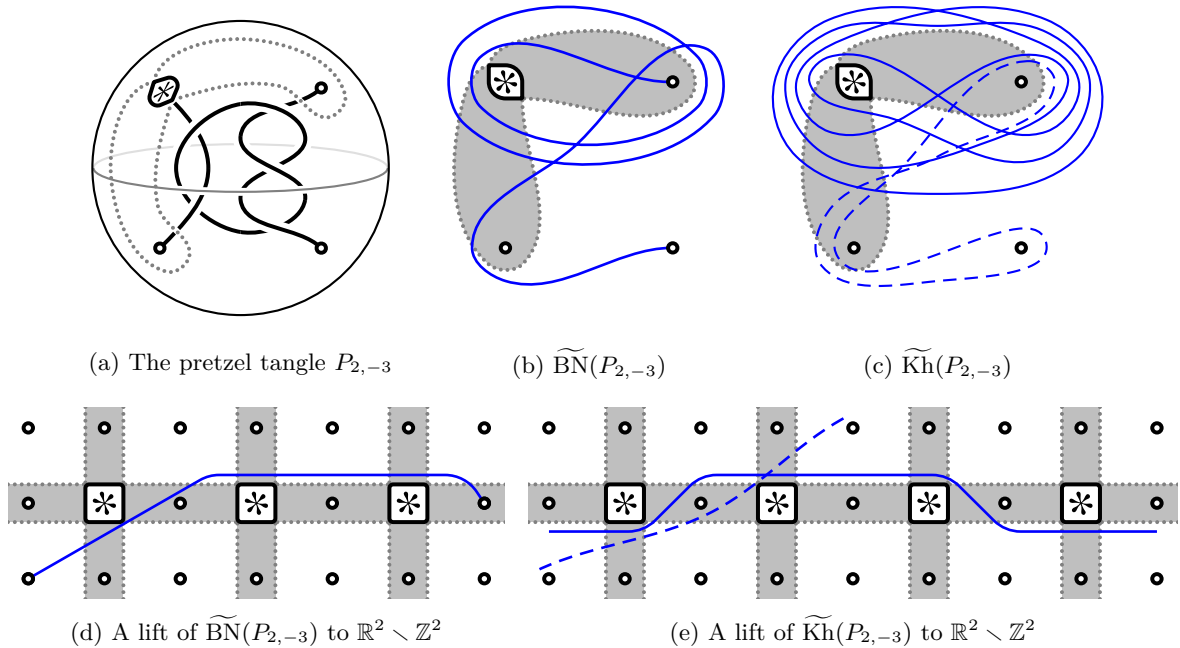

	\centering
	\begin{subfigure}{0.28\textwidth}
		\centering
		\(\pretzeltangleKh\)
		\caption{The pretzel tangle \(P_{2,-3}\)}\label{fig:Kh:example:tangle}
	\end{subfigure}
	\begin{subfigure}{0.28\textwidth}
		\centering
		\(\pretzeltangleDownstairsBNr\)
		\caption{\(\BNr(P_{2,-3})\)}\label{fig:BN:example:Curve:Downstairs}
	\end{subfigure}
	\begin{subfigure}{0.28\textwidth}
	\centering
	\(\pretzeltangleDownstairsKhr\)
	\caption{\(\Khr(P_{2,-3})\)}\label{fig:Kh:example:Curve:Downstairs}
	\end{subfigure}
	\bigskip
	\\
	\begin{subfigure}{0.42\textwidth}
		\centering
		\(\pretzeltangleUpstairsBNr\)
		\caption{A lift of \(\BNr(P_{2,-3})\) to \(\PuncturedPlane\) }\label{fig:BN:example:Curve:Upstairs}
	\end{subfigure}
	\begin{subfigure}{0.55\textwidth}
		\centering
		\(\pretzeltangleUpstairsKhr\)
		\caption{A lift of \(\Khr(P_{2,-3})\) to \(\PuncturedPlane\) }\label{fig:Kh:example:Curve:Upstairs}
	\end{subfigure}
	\caption{The Khovanov and Bar-Natan invariant of the pretzel tangle from Figure~\ref{fig:HFT:example} }\label{fig:Kh:example}
\end{figure}

\begin{theorem}\label{thm:Kh:Twisting}
	For all 
	\( 
	\tau\in \Mod(\FourPuncturedSphere)
	\), 
	\(
	\Khr(\tau(T))
	=
	\tau(\Khr(T))
	\)
	and 
	\(
	\BNr(\tau(T))
	=
	\tau(\BNr(T))
	\).
\end{theorem}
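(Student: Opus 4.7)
The plan is to reduce to a finite generating set of $\Mod(\FourPuncturedSphere)$, verify naturality there, and then use the classification result for complexes over $\BNAlgH$ to extend to all mapping classes. The group $\Mod(\FourPuncturedSphere)$ is generated by finitely many half-twists about pairs of punctures; identifying $\FourPuncturedSphereKh$ with $\partial B^3\smallsetminus \partial T$ as in Figure~\ref{fig:Kh:example:tangle}, each such generator $\tau$ is realised by pre-composing $T$ with an elementary two-strand twist tangle at a pair of tangle ends. Since Dehn twists on multicurves are also generated this way, the problem reduces to verifying both naturality statements for each such $\tau$.

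For a fixed generator $\tau$, Bar-Natan's formalism yields a gluing identity at the level of chain complexes over $\Cobl$, namely $\KhTl{\tau(T)} \simeq \KhTl{T}\otimes \KhTl{\tau}$. After delooping, this translates into an identification of type~D structures
\[
\DD(\tau(T))^{\BNAlgH} \simeq \DD(T)^{\BNAlgH} \boxtimes M_\tau,
\]
where $M_\tau$ is the $(\BNAlgH,\BNAlgH)$-bimodule extracted from $\KhTl{\tau}$. Because $H\in \BNAlgH$ is central and the mapping cone $[\,\cdot \xrightarrow{H\cdot \id} \cdot\,]$ commutes with box tensor products in the twisted variable, the identity passes to the mapping cone, giving $\DD_1(\tau(T)) \simeq \DD_1(T)\boxtimes M_\tau$.

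The remaining step is to show that under the classification \cite[Theorem~1.5]{KWZ}, box tensoring with $M_\tau$ corresponds to the geometric Dehn twist $\tau$ acting on multicurves. One can verify this on a generating class of complexes, namely the elementary arc and loop complexes exemplified in Figure~\ref{fig:exa:classification:curves}: for each test complex, one computes both sides explicitly—the algebraic box product with $M_\tau$ and the geometric image under $\tau$—and observes agreement after reducing to a $\DD^c$-representative. For loops carrying non-trivial local systems, an additional check is required that the bimodule action preserves the isomorphism class of the local system; over $\fieldTwoElements$ this reduces to a direct matrix computation, which is why the theorem is stated over this field.

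The main obstacle is this last step: identifying $M_\tau$-tensoring with the geometric Dehn twist. Once this correspondence is established, combining the algebraic gluing identity above with the bijectivity half of the classification theorem immediately yields both $\Khr(\tau(T)) = \tau(\Khr(T))$ and $\BNr(\tau(T)) = \tau(\BNr(T))$.
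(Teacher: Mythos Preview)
The paper does not prove this theorem: it is quoted verbatim as \cite[Theorem~1.13]{KWZ}, so there is no in-paper argument to compare against. Your outline is in fact the strategy carried out in that reference---reduce to twist generators, realise each as box-tensoring $\DD(T)$ with an explicit $\BNAlgH$-bimodule $M_\tau$, and identify this algebraic operation with the geometric Dehn twist via the classification---so in that sense your approach is the expected one.

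Two points worth tightening. First, your ``verify on elementary arc and loop complexes'' step needs justification: it suffices because the classification expresses every complex as a direct sum of such indecomposables and both operations (tensoring with $M_\tau$ and applying $\tau$ to multicurves) respect direct sums; the local-system check over $\fieldTwoElements$ is exactly where the coefficient restriction enters in \cite{KWZ}. Second, be careful with generators: the invariants $\Khr$ and $\BNr$ depend on the special puncture $\ast$, so the relevant mapping classes must fix $\ast$. Half-twists about arbitrary pairs of punctures permute them; you want the twists that either fix all punctures or at most permute the three non-special ones. Your passage from $\DD$ to $\DD_1$ via centrality of $H$ is correct and is how naturality for $\Khr$ is deduced from that for $\BNr$.
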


\begin{remark}
	When working over $\fieldTwoElements$, the distinguished tangle end \(\ast\) only plays a role in the second step of the construction of \(\BNr(T)\) and \(\Khr(T)\). 
	If one works away from characteristic 2, it also plays a subtle role in the first step: In this case, there are four different isomorphisms between \(\End_{\Cobl}(\Lo\oplus\Li)\) and \(\BNAlgH\), which only differ by the signs on the basic morphisms \(D_{\bullet}\) and \( D_{\circ}\). Each of these isomorphisms corresponds to a choice of distinguished tangle end; see~\cite[Theorem~4.21, Observation~4.24]{KWZ} for details. 
\end{remark}

\begin{example}\label{exa:Khr:2m3pt}
	We usually draw the four-punctured sphere \(\FourPuncturedSphereKh\) as the plane plus a point at infinity minus the four punctures and indicate its standard parametrization that identifies \(\FourPuncturedSphereKh\) with \(\partial B^3\smallsetminus \partial T\) by two dotted arcs as in  
	Figures~\ref{fig:BN:example:Curve:Downstairs} and~\ref{fig:Kh:example:Curve:Downstairs}. 
	The blue curves in these figures show \(\BNr(P_{2,-3})\) and \(\Khr(P_{2,-3})\), respectively, where \(P_{2,-3}\) is the \((2,-3)\)-pretzel tangle from Figure~\ref{fig:Kh:example:tangle}, cf~\cite[Example~6.7]{KWZ}.
	All components of these curves carry the (unique) one-dimensional local system over $\fieldTwoElements$.
\end{example}

\begin{example}
	For any slope \(s\in\QPI\), \(\BNr(Q_s)\) consists of a single arc which is obtained by pushing the tangle strand that does not end on the distinguished tangle end \(\ast\) onto \(\FourPuncturedSphereKh\). \(\Khr(Q_s)\) is equal to a figure-eight curve that lies in a small neighbourhood of \(\BNr(Q_s)\) and encloses the two tangle ends on either side, see \cite[Example~6.6]{KWZ}. 
	The local system on this curve is one-dimensional.
	We expect that over arbitrary fields, the underlying curve for \(\Khr(Q_s)\) is the same as over \(\fieldTwoElements\), and that the local systems on these curves are equal to \((-1)\).
\end{example}

In particular, \(\Khr(Q_s)\) is not embedded, unlike \(\HFT(Q_s)\). In fact, we have the following \cite[Proposition~6.18]{KWZ}, over any field: 

\begin{proposition}\label{prop:Khr_not_embedded}
	For any pointed Conway tangle \(T\), no component of \(\Khr(T)\) is embedded.
\end{proposition}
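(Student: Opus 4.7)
The plan is to translate the statement into an algebraic assertion via the classification of multicurves as bigraded type~D structures over $\BNAlgH$ recalled in Section~\ref{sec:review:Kh:definition}. Since the components of $\Khr(T)$ correspond to the indecomposable direct summands of the canonical form $\DD_1^c(T)$, it suffices to show that no such summand can arise from an embedded essential simple closed curve on $\FourPuncturedSphereKh$.

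The algebraic input I would use is that $\DD_1(T)$ is, by construction, the mapping cone
\[
\DD_1(T) = \bigl[\, q^{-1}\delta^{1/2}\,\DD(T) \xrightarrow{H \cdot \id} q\,\delta^{1/2}\,\DD(T) \,\bigr]
\]
of multiplication by the central element $H = D + S^2 \in \BNAlgH$. For any complex $N$ over $\BNAlgH$, the endomorphism $H \cdot \id$ of $\mathrm{Cone}(H \cdot \id_N)$ admits a canonical chain null-homotopy, and this property passes to direct summands. Consequently, multiplication by $H$ is chain null-homotopic on the summand of $\DD_1^c(T)$ associated with any individual component of $\Khr(T)$.

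The geometric input will be a reduction to finitely many standard models. By Theorem~\ref{thm:Kh:Twisting}, the mapping class group of $\FourPuncturedSphereKh$ acts on immersed curves compatibly with a twisting action on type~D structures that preserves the centre of $\BNAlgH$ up to isomorphism, and in particular preserves the property of $H \cdot \id$ being null-homotopic. Every embedded essential simple closed curve on $\FourPuncturedSphereKh$ separates the four punctures into two pairs of two, so up to this mapping class group action there are only finitely many isotopy types. For each standard model I would read off the associated type~D structure (with an arbitrary local system) from the dictionary described in Section~\ref{sec:review:Kh:definition}, write down $H \cdot \id$ explicitly in terms of the idempotents $\iota_\circ,\iota_\bullet$ and the monomials in $S$ and $D$ that appear as differentials, and verify that no null-homotopy exists.

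The main obstacle I expect is this last explicit verification: identifying the type~D structure of a standard embedded closed curve together with its local system and exhibiting a concrete algebraic obstruction to a null-homotopy of $H \cdot \id$. Everything else is essentially formal, so once this direct computation is carried out for the finite list of standard models, the two steps combine to produce a contradiction and hence the proposition.
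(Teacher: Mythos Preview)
The paper does not prove this proposition; it cites it as \cite[Proposition~6.18]{KWZ}. Your overall strategy is sound: since $\DD_1(T)$ is by definition the mapping cone of $H\cdot\id$ on $\DD(T)$, the endomorphism $H\cdot\id$ is canonically null-homotopic on $\DD_1(T)$ and hence on every direct summand, so it remains only to show that the type~D structure of an embedded curve (with arbitrary local system) never has this property.

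There is one gap in your reduction step. You invoke Theorem~\ref{thm:Kh:Twisting} to claim that the mapping class group action preserves the property ``$H\cdot\id$ is null-homotopic,'' but that theorem only asserts naturality of the \emph{tangle} invariants $\Khr$ and $\BNr$ under twisting; it says nothing about how the induced action on arbitrary type~D structures interacts with the specific central element~$H$. The correct justification is that the mapping class group acts by tensoring with certain $\BNAlgH$-bimodules, and since $H$ is central in $\BNAlgH$, left and right multiplication by $H$ agree on any bimodule; hence the endofunctor $\mathrm{Cone}(H\colon-\to-)$ commutes with the twist functors and the null-homotopy property is preserved. Once this is in place, the mapping class group fixing the special puncture acts transitively on slopes, so there is a single standard embedded curve to check (with an arbitrary local system), and the remaining verification is finite and routine as you say.
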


Like \(\HFT\), the tangle invariants in Khovanov theory detect rational tangles. 

\begin{theorem}
	\label{thm:Kh:rational_tangle_detection}
	A tangle \(T\) is rational if and only if \(\Khr(T)\) consists of a single figure-eight curve carrying the unique one-dimensional local system. 
\end{theorem}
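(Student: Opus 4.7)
The forward direction is immediate from the example preceding the theorem: for every slope $s\in\QPI$, the invariant $\Khr(Q_s)$ is a figure-eight curve carrying the unique one-dimensional local system.

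For the converse, my first step is to use the mapping class group equivariance of $\Khr$ (Theorem~\ref{thm:Kh:Twisting}) to reduce to a standard case. The homotopy class of a figure-eight curve on $\FourPuncturedSphereKh$ is determined by its slope and the pair of punctures it encloses; choosing $\tau\in\Mod(\FourPuncturedSphere)$ that sends this homotopy class to that of $\Khr(Q_\infty)$ and replacing $T$ by $\tau^{-1}(T)$, I may assume $\Khr(T)=\Khr(Q_\infty)$ with trivial local system. The theorem then reduces to showing that any tangle with this invariant is rational.

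My plan is to detect rationality via the two-fold branched cover. Pairing $T$ with every rational tangle $Q_r$, the Khovanov pairing formula (the analogue of Theorem~\ref{thm:GlueingTheorem:HFT} developed in Sections~\ref{sec:review:Kh} and~\ref{sec:Kh:ThinFillings}) will imply that $\Kh(T\cup Q_r)\cong\Kh(Q_\infty\cup Q_r)$ for every $r$; since the latter is always a two-bridge link (possibly the unknot or unlink), $T\cup Q_r$ is Khovanov thin for every $r\in\QPI$. Feeding this into Ozsv\'ath--Szab\'o's spectral sequence~\cite{OzsvathSzabo2005} from reduced Khovanov homology to the Heegaard Floer homology of the mirror of the two-fold branched cover, every rational Dehn filling of $\Sigma(T)$ other than the one producing the two-component unlink cover is an L-space. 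Hence $\mathcal{L}(\Sigma(T))$ is all of $\QPI$ minus at most one point, and the Rasmussen--Rasmussen classification (stated before Theorem~\ref{thm:L_space_gluing}) together with standard 3-manifold topology forces $\Sigma(T)$ to be a solid torus. Montesinos' theorem then identifies $T$ as rational.

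The hard part will be the pairing step, since it relies on the full Khovanov pairing theorem (Lagrangian-Floer-style intersection count between $\mirror(\Khr(T))$ and $\Khr(Q_r)$) which has not yet been developed at this point in the paper. This is also where the trivial local system hypothesis is essential: any nontrivial local system would inflate intersection counts and destroy the matching with two-bridge links. A further subtlety is that the final argument routes through Rasmussen--Rasmussen's classification and Montesinos' theorem rather than yielding a direct algebraic recognition of $T=Q_\infty$ from the immersed curves themselves, in contrast to the more self-contained proof of Theorem~\ref{thm:HFT:rational_tangle_detection} in the Heegaard Floer setting.
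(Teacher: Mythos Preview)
Your approach has a genuine gap at the key step. You claim that once \(\mathcal{L}(\boldsymbol\Sigma_T)\) is all of \(\QPI\) minus at most one point, ``the Rasmussen--Rasmussen classification together with standard 3-manifold topology forces \(\boldsymbol\Sigma_T\) to be a solid torus.'' This implication is false: the Rasmussen--Rasmussen result only constrains the \emph{shape} of \(\mathcal{L}(M)\), it does not say that \(\mathcal{L}(M)=\QPI\smallsetminus\{*\}\) characterizes the solid torus. A concrete counterexample appears in this very paper: the twisted \(I\)-bundle over the Klein bottle---which is \(\boldsymbol\Sigma_{P_{2,-2}}\)---has \(\mathcal{L}=\QPI\smallsetminus\{\infty\}\) but is certainly not a solid torus. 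So the detour through L-space fillings, as written, does not close.

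The paper's argument is shorter and avoids this issue entirely. It observes that at the slope where your figure-eight curve sits, the rational filling \(T(r)\) satisfies \(\dim\Khr(T(r))=2\) (this is already implicit in your identity \(\Khr(T\cup Q_r)\cong\Khr(Q_\infty\cup Q_r)\)). Hedden--Ni proved that reduced Khovanov homology detects the two-component unlink, so \(T(r)\) \emph{is} the two-component unlink, and a tangle with an unlink closure is rational by an elementary argument. You had exactly the right data in hand---dimension two at one filling---but then discarded it in favor of the coarser L-space information on all the \emph{other} fillings. Your route could perhaps be rescued by feeding the exceptional filling back in (e.g.\ arguing \(\dim\HFhat(\boldsymbol\Sigma_T(s_0))\leq 2\) with \(b_1\geq1\) forces \(S^1\times S^2\)), but that requires further Heegaard Floer input rather than ``standard 3-manifold topology,'' and at that point you are essentially reproving the detection result you could have quoted directly.
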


\begin{proof}
	This follows from essentially the same arguments as \cite[Theorem~6.2]{pqMod}. More generally, any tangle invariant detects rational tangles, as long as the tangle invariant satisfies a gluing theorem to a link invariant that detects the two-component unlink. 
	The gluing theorem for \(\Khr\) is Theorem~\ref{thm:GlueingTheorem:Kh} below.  The requisite detection result was proven by Hedden and Ni~\cite{KhDetectsTwoComponentUnlink}, based on Kronheimer and Mrowka's unknot detection of Khovanov homology~\cite{KhDetectsUnknot}.
\end{proof}

\subsection{\texorpdfstring{A gluing theorem for \(\Khr\)}{A gluing theorem for Khr}}\label{sec:review:Kh:gluing}

Denote the two-dimensional vector space supported in \(\delta\)-grading \(+\tfrac{1}{2}\) and quantum gradings \(\pm1\) by
\[V\coloneqq \delta^{\frac 1 2} q^1\F \oplus \delta^{\frac 1 2} q^{-1} \F\]

\begin{theorem}[\text{\cite[Theorem~1.9]{KWZ}}]\label{thm:GlueingTheorem:Kh}
	Let \(L=T_1\cup T_2\) be the result of gluing two oriented pointed Conway tangles as in Figure~\ref{fig:tanglepairing} such that the orientations match. Let \(T^*_1\) be the mirror image of \(T_1\) with the orientation of all components reversed. Then  
	\[
	\begin{aligned}
	\Khr(L)\otimes V
	&\cong
	\HF\left(\Khr(T^*_1),\Khr(T_2)\right) \\
	\Khr(L)&\cong \HF\left(\Khr(T_1^*),\BNr(T_2)\right)
	\end{aligned}
	\]
	as relatively bigraded $\F$-vector spaces.
\end{theorem}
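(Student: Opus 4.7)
The plan is to prove Theorem~\ref{thm:GlueingTheorem:Kh} in two stages, closely paralleling the proof of the Heegaard Floer gluing theorem~\ref{thm:GlueingTheorem:HFT}. First I would establish an algebraic pairing theorem at the level of type~D structures over $\BNAlgH$, and second I would translate this algebraic pairing into the desired geometric statement about Lagrangian Floer homology using the classification theorem~\cite[Theorem~1.5]{KWZ}.

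For the algebraic stage, the essential input is that Bar-Natan's chain complex $\KhTl{L}$ of the glued link $L = T_1 \cup T_2$ is naturally obtained by horizontally composing the tangle complexes $\KhTl{T_1}$ and $\KhTl{T_2}$ in the cobordism category $\Cobl$. After delooping, this horizontal composition translates into a box tensor product of type~D structures over $\BNAlgH$, with the mirror operation $T_1 \mapsto T_1^*$ implementing the standard duality that converts one of the type~D structures into a type~A structure so that the pairing is defined. I would verify that this yields
\[
\CBN(L) \simeq \DD(T_1^*) \boxtimes \DD(T_2)
\]
at the algebraic level. Replacing $\DD$ by the mapping cone $\DD_1 = [\DD \xrightarrow{H \cdot \id} \DD]$ on one or both sides (with appropriate grading shifts) yields analogous identifications for $\CKhr(L)$; the factor of $V$ in the first isomorphism of the theorem statement arises because $\DD_1 \cong V \otimes \DD$ as bigraded $\BNAlgH$-modules, so using $\DD_1$ on both sides introduces a two-fold redundancy compared to using $\DD_1$ on only one side.

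For the geometric stage, the classification theorem sets up a bijection between homotopy equivalence classes of type~D structures over $\BNAlgH$ and free homotopy classes of immersed multicurves with local systems on $\FourPuncturedSphereKh$, sending $\DD(T) \mapsto \BNr(T)$ and $\DD_1(T) \mapsto \Khr(T)$. The key Floer pairing result I would need states that this bijection intertwines the box tensor product with Lagrangian Floer homology: for any two tangles $T$ and $T'$,
\[
\DD(T^*) \boxtimes \DD(T') \cong \HF(\BNr(T^*), \BNr(T')),
\]
with analogous formulas when $\DD$ is replaced by $\DD_1$ on either side. Combining this with the algebraic gluing from Stage~1 produces the two isomorphisms in the theorem.

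The hardest step will be the Floer pairing in Stage~2. When the type~D structures are put into the reduced form $\DD^c$ or $\DD_1^c$ from which the immersed curves can be read off directly (as in Figure~\ref{fig:exa:classification:curves}), the generators of the box tensor product correspond to intersection points of the curves in minimal position, and the algebraic differentials should translate into counts of immersed bigons bounded by the two curves. Making this correspondence precise---while accommodating non-trivial local systems, the non-compact components of $\BNr(T)$ arising from closed components of $T$, and the subtle role of the distinguished tangle end~$\ast$---is technically demanding. An additional layer of care is required to track the quantum and delta bigradings through the mirror operation, the type~D/type~A duality, and the mapping cone shifts defining $\DD_1$, in order to obtain a grading-preserving isomorphism.
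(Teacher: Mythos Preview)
This theorem is not proved in the present paper at all: it is simply quoted as \cite[Theorem~1.9]{KWZ}, with no proof or proof sketch given here. So there is no ``paper's own proof'' to compare your proposal against.

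That said, your outline is broadly the shape of the argument in \cite{KWZ}: an algebraic pairing at the level of type~D structures over $\BNAlgH$ (coming from Bar-Natan's horizontal composition after delooping), followed by the translation to Lagrangian Floer homology via the classification theorem. One point to be careful about: your claim that ``$\DD_1 \cong V \otimes \DD$ as bigraded $\BNAlgH$-modules'' is not correct as stated, since $\DD_1$ is a mapping cone with a nontrivial differential $H\cdot\id$, not simply two shifted copies of $\DD$. The factor $V$ in the first isomorphism instead comes from the interaction of the two mapping cones under the box tensor product; this requires a short homological-algebra argument rather than a module identification. Otherwise your sketch identifies the right ingredients and the right technical difficulties.
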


\(\Khr(T^*_1)\) can be easily computed from  \(\Khr(T_1)\) \cite[Proposition~7.1]{KWZ}. For this, let \(\mirror\) denote the mirror operation, ie the involution of the four-punctured sphere that fixes the four punctures pointwise, fixes the parametrizing arcs setwise, and interchanges the front and back, as in Section~\ref{sec:review:HFT}. 

\begin{lemma}\label{lem:mirroring:Khr}
	For any pointed Conway tangle \(T\), \(\Khr(T^*)=\mirror(\Khr(T))\) up to an appropriate bigrading shift.
\end{lemma}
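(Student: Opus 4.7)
The plan is to trace the effect of the mirror operation through the two-step construction of \(\Khr(T)\) described in Section~\ref{sec:review:Kh:definition}. First, I would analyze how mirroring affects the Bar-Natan complex \(\KhTl{\Diag_T}\): if \(\Diag_T\) is a diagram of \(T\), then \(\Diag_{T^*}\) is obtained by switching every crossing, which in the cube-of-resolutions picture amounts to reversing the direction of every edge. Algebraically, this corresponds to passing to the opposite complex (transposing the differential) and reversing the homological grading, with a compensating quantum grading shift determined by the writhe change. This is classical for the link case; the tangle version follows verbatim since the cube construction is local at crossings.

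Next, I would transport this operation to the algebra \(\BNAlgH\) of \eqref{eq:B_quiver}. The key observation is that the mirror of a crossingless tangle differs from the original only by the front/back identification, while \(\End_{\Cobl}(\Lo\oplus\Li)\) is invariant under mirroring of cobordisms. Under the isomorphism \(\End_{\Cobl}(\Lo\oplus\Li)\cong\BNAlgH\), mirroring of cobordisms induces an anti-involution \(\sigma\) on \(\BNAlgH\) that fixes the idempotents \(\iota_\bullet,\iota_\circ\) and the dot generators \(D_\bullet, D_\circ\), and swaps \({}_\circ S_\bullet\leftrightarrow{}_\bullet S_\circ\) (up to sign, which is irrelevant over \(\fieldTwoElements\)). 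Applying this to the delooped complex \(\DD(\Diag_T)^\BNAlgH\) and its mapping cone \(\DD_1(\Diag_T)^\BNAlgH\) yields a type~D structure representing \(\Khr(T^*)\) that is obtained from a representative of \(\Khr(T)\) by transposing differentials and applying \(\sigma\).

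Finally, I would identify this algebraic operation with the geometric mirror involution on \(\FourPuncturedSphereKh\) via the classification theorem. Under the dictionary sketched in Figure~\ref{fig:exa:classification:curves}, intersection points of a curve with the two parametrizing arcs correspond to idempotent-homogeneous generators, and the labels \(S, D\) on successive arrows encode how the curve turns around or passes between punctures. The mirror involution of \(\FourPuncturedSphereKh\) fixes the punctures and arcs but reflects the curve through them, which is precisely the geometric effect of reversing all differentials and swapping the roles of \({}_\circ S_\bullet\) and \({}_\bullet S_\circ\). Consequently, the multicurve read off from \(\sigma(\DD_1^c(T))\) is \(\mirror(\Khr(T))\), and the bigrading shift arises from keeping track of the reversal of the homological grading together with the writhe conventions.

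The main obstacle is the last step: making the correspondence between the algebraic anti-involution \(\sigma\) and the geometric mirror rigorous. One has to check that \(\sigma\) preserves the class of reduced/loop-type type~D structures used to extract curves, and verify on generating examples (arcs and figure-eight-type loops with local systems) that the curve assigned to \(\sigma(\DD)\) coincides with the geometric mirror of the curve assigned to \(\DD\)---including for nontrivial local systems, where one must check that the similarity class of the local system is preserved by the involution. Once this is settled on the level of generators, naturality of the classification theorem propagates the result to arbitrary tangles, and the bigrading shift follows from a direct bookkeeping on a rational tangle.
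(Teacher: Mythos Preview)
The paper does not actually prove this lemma: immediately before the statement it cites \cite[Proposition~7.1]{KWZ} and imports the result from there. Your outline is a correct sketch of how such a statement is established and is presumably in line with the argument in the cited reference; in particular, the identification of mirroring with the anti-involution on \(\BNAlgH\) that swaps \({}_\circ S_\bullet\leftrightarrow{}_\bullet S_\circ\) and the geometric interpretation via the classification theorem are the right ingredients.
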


\begin{remark}
	Shumakovitch showed that over \(\fieldTwoElements\) the unreduced Khovanov homology of a link splits into two copies of reduced Khovanov homology, whose \(\delta\)-gradings differ by one \cite{Shum_torsion}. 
	Theorem~\ref{thm:GlueingTheorem:Kh} does \emph{not} compute the unreduced Khovanov homology; instead we have that the \(\delta\)-gradings of the two copies of reduced Khovanov homology are identical. 
	To obtain unreduced Khovanov homology, one can use yet another immersed curve invariant, namely \(\Kh(T)\), which we introduced in~\cite{KWZ} and which satisfies analogous gluing theorems. 
\end{remark}

\subsection{\texorpdfstring{The geography problem for \(\Khr\)}{The geography problem for Khr}}
\label{sec:review:Khr:geography}

Just as for \(\HFT\), it is often useful to consider the multicurves \(\Khr\) and \(\BNr\) in the covering space \(\PuncturedPlane\) of the (now pointed) four-punctured sphere \(\FourPuncturedSphereKh\).
This covering space is illustrated in Figures~\ref{fig:BN:example:Curve:Upstairs} and~\ref{fig:Kh:example:Curve:Upstairs}, where the parametrization of \(\FourPuncturedSphereKh\) has been lifted to \(\PuncturedPlane\) and the two non-adjacent faces and their preimages under the covering map are shaded grey. 
The two figures also include the lifts of the components of \(\BNr(P_{2,-3})\) and \(\Khr(P_{2,-3})\), respectively. 
Note that for \(\Khr(P_{2,-3})\), the lift of each component can be isotoped into an arbitrarily small neighbourhood of a straight line of some rational slope \(\nicefrac{p}{q}\in\QPI\) going through some punctures. In fact, this is true in general for the invariant \(\Khr(T)\).

To make this statement precise, let us consider the following two classes of curves. 
For \(n\in\mathbb N\), let \(\rKh_{n}(0)\) and \(\sKh_{2n}(0)\) be the immersed curves in \(\FourPuncturedSphereKh\) that respectively admit lifts to the curves $\tr_{n}(0)$ and $\ts_{2n}(0)$ in Figure~\ref{fig:geography:Upstairs}; curves for $n=1,2,3$ are illustrated in Figures~\ref{fig:geography:Downstairs1}--\ref{fig:geography:Downstairs3}.
We will refer to the subscripts \(n\), respectively \(2n\), as the \textit{lengths} of those curves. 
For every slope \(\nicefrac{p}{q}\in\QPI\), we respectively define the curves $\rKh_n(\nicefrac{p}{q})$ and $\sKh_{2n}(\nicefrac{p}{q})$ as the images of \(\rKh_{n}(0)\) and \(\sKh_{2n}(0)\) under the action of the matrix
\[
\begin{bmatrix*}[c]
q & r \\
p & s
\end{bmatrix*}, \qquad qs-pr=1,
\]
considered as an element the mapping class group $\operatorname{Mod}(\FourPuncturedSphereKh) \cong \mathit{PSL}(2,\Z)$ consisting of mapping classes fixing the special puncture $\ast$. (This transformation maps straight lines of slope 0 to straight lines of slope \(\nicefrac{p}{q}\). The isomorphism $\operatorname{Mod}(\FourPuncturedSphereKh) \cong \mathit{PSL}(2,\Z)$ is induced by the two-fold cover $\mathbb T^2 \to \FourPuncturedSphereKh$ and the isomorphism $\operatorname{Mod}(\mathbb T^2) \cong \mathit{SL}(2,\Z)$.)
The local systems on all these curves are defined to be trivial.

\begin{definition}\label{def:RationalVsSpecialKht}
	We call the curves \(\rKh_{n}(\nicefrac{p}{q})\) \textbf{rational} and the curves \(\sKh_{2n}(\nicefrac{p}{q})\) \textbf{special}. 
	As in the Heegaard Floer setting, given some slope \(s\in\QPI\), we will call a multicurve \textbf{\(s\)-rational} if it does not contain any special component of slope \(s\), and \textbf{\(s\)-special} if it does not contain any rational component of slope \(s\). 
\end{definition}

\begin{example}
	For all slopes \(\nicefrac{p}{q}\in\QPI\), 
	\(\Khr(Q_{p/q})=\rKh_1(\nicefrac{p}{q})\). 
	Moreover, \(\Khr(P_{2,-3})\) from Figure~\ref{fig:Kh:example:Curve:Upstairs} consists of a rational curve of slope \(\nicefrac{1}{2}\) and a special curve of slope \(0\). 
\end{example}

The following classification result, which is \cite[Theorem~1.2]{KWZ-KhMutation}, establishes a similar dichotomy between rational and special components of \(\Khr(T)\) as Theorem~\ref{thm:geography_of_HFT} does for \(\HFT(T)\).

\begin{theorem}\label{thm:geography_of_Khr}
	Each component of the invariant \(\Khr(T)\) is either rational or special.
\end{theorem}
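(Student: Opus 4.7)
The plan is to combine the classification theorem \cite[Theorem~1.5]{KWZ}, which puts chain homotopy classes of type~D structures over $\BNAlgH$ in bijection with immersed multicurves with local systems, with the mapping class group equivariance of Theorem~\ref{thm:Kh:Twisting} in order to reduce the geography problem to the classification of indecomposable summands of $\DD_1^c(T)$ of a single fixed slope. First I would fix an arbitrary component $\gamma$ of $\Khr(T)$ and argue that it admits a well-defined slope in $\QPI$, in the sense that its lift to $\PuncturedPlane$ can be isotoped into a small neighbourhood of a straight line of some rational slope $\nicefrac{p}{q}$. Once this is established, one can apply a mapping class $\tau\in\PSL(2,\Z)$ normalizing the slope to $0$; by Theorem~\ref{thm:Kh:Twisting}, this replaces the problem with classifying slope~$0$ components of $\Khr(\tau(T))$.

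The second step is to analyze what an indecomposable type~D summand corresponding to a slope~$0$ component looks like. Under the dictionary of \cite{KWZ}, such a summand is a zigzag of generators at the two idempotents $\iota_\circ$ and $\iota_\bullet$, connected by differentials labelled by powers of $S$ and $D$. The rational curves $\rKh_n(0)$ correspond to zigzags that move monotonically between adjacent integer lattice points along a horizontal, using $D$ and $S^2$ morphisms, while the special curves $\sKh_{2n}(0)$ correspond to symmetric wrapping patterns around a single pair of punctures. I would enumerate the zigzag shapes compatible with the geometric requirement that the associated curve lifts to a small neighbourhood of a horizontal line and identify these with $\rKh_n(0)$ and $\sKh_{2n}(0)$.

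The main obstacle lies in excluding exotic zigzag patterns that \emph{a priori} could correspond to more complicated immersed curves. Here the key structural input is that $\DD_1(T)$ is the mapping cone $[\DD(T)\xrightarrow{H\cdot\id}\DD(T)]$ with $H=D+S^2$ central, which imposes a periodicity on summands and constrains how the $D$ and $S$ arrows can be arranged. One route to completing the argument is to combine this with the gluing theorem~\ref{thm:GlueingTheorem:Kh}: an exotic slope~$0$ component would, when paired with rational tangle fillings, produce a Khovanov homology violating known constraints coming from Kronheimer–Mrowka unknot detection \cite{KhDetectsUnknot} or Hedden–Ni two-component unlink detection \cite{KhDetectsTwoComponentUnlink}. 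This exclusion step is where the bulk of the technical work resides; in practice, the cleanest route, as pursued in \cite{KWZ-KhMutation}, is to first establish mutation invariance of $\Khr$ and use it to rule out the non-rational, non-special shapes directly.
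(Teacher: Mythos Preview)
The paper does not prove this theorem; it is imported verbatim as \cite[Theorem~1.2]{KWZ-KhMutation}. So there is no in-paper argument to compare against, only what can be inferred from the surrounding discussion and the cited source.

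That said, your sketch has two genuine problems. First, your opening step is circular: you propose to ``argue that $\gamma$ admits a well-defined slope in $\QPI$'' and then use the mapping class group to normalize that slope to~$0$. But the existence of a well-defined slope is precisely the content of the theorem---a priori a component of $\Khr(T)$ is an arbitrary primitive immersed loop in $\FourPuncturedSphereKh$, and nothing in the classification result \cite[Theorem~1.5]{KWZ} or in Theorem~\ref{thm:Kh:Twisting} forces linearity. You give no mechanism for this step, and without it the normalization move is unavailable.

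Second, your suggestion that mutation invariance is the tool that ``rules out the non-rational, non-special shapes'' inverts the logical order of \cite{KWZ-KhMutation}: there, the geography theorem is proved first (their Theorem~1.2) and then used as an ingredient in establishing mutation invariance, not the other way around. The actual argument in that paper works directly with the algebraic structure of $\DD_1(T)$ as the mapping cone of $H\cdot\id$ on $\DD(T)$ and exploits a structural constraint on $\BNr(T)$---namely that its curves cannot wrap around the special puncture $\ast$ (see \cite[Theorem~3.9]{KWZ-KhMutation}, alluded to just after the statement of the present theorem). This no-wrapping result, combined with the mapping cone description, is what pins down the indecomposable summands; your detection-theorem route via \cite{KhDetectsUnknot,KhDetectsTwoComponentUnlink} is not how the exclusion is carried out, and you have not indicated concretely how pairing an exotic curve with rational fillings would contradict those detection results.
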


\begin{figure}[t]
	\centering
	\begin{subfigure}{0.3\textwidth}
		\centering
		\labellist 
		\footnotesize \color{blue}
		\pinlabel $\sKh_{2}(0)$ at 65 115
		\pinlabel $\rKh_{1}(0)$ at 65 28
		\endlabellist
		\includegraphics[scale=1]{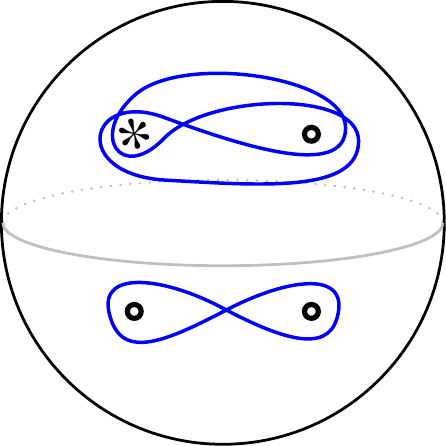}
		\caption{\(n=1\)}\label{fig:geography:Downstairs1}
	\end{subfigure}
	\begin{subfigure}{0.3\textwidth}
		\centering
		\labellist 
		\footnotesize \color{blue}
		\pinlabel $\sKh_{4}(0)$ at 65 118
		\pinlabel $\rKh_{2}(0)$ at 65 13
		\endlabellist
		\includegraphics[scale=1]{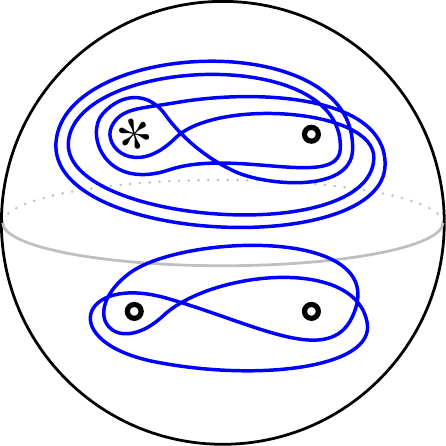}
		\caption{\(n=2\)}\label{fig:geography:Downstairs2}
	\end{subfigure}
	\begin{subfigure}{0.3\textwidth}
		\centering
		\labellist 
		\footnotesize \color{blue}
		\pinlabel $\sKh_{6}(0)$ at 65 121
		\pinlabel $\rKh_{3}(0)$ at 65 10
		\endlabellist
		\includegraphics[scale=1]{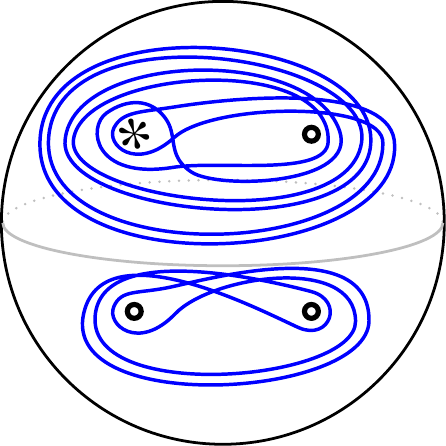}
		\caption{\(n=3\)}\label{fig:geography:Downstairs3}
	\end{subfigure}
	\\
	\begin{subfigure}{0.9\textwidth}
		\centering
		\(\GeographyCovering\)
		\caption{}\label{fig:geography:Upstairs}
	\end{subfigure}
	\caption{The curves \(\rKh_n(0)\) and \(\sKh_{2n}(0)\) (a--c) and their lifts to \(\PuncturedPlane\) (d). }\label{fig:geography}
\end{figure}

%

The components of the invariant $\BNr(T)$ can be much more intricate than components of \(\Khr(T)\), 
as the curve \(\BNr(P_{2,-3})\) from Figure~\ref{fig:BN:example:Curve:Upstairs} illustrates. 
Even compact components can be more complicated, see for example \cite[Figure~15]{KWZ-KhMutation}. 
However, the proof of Theorem~\ref{thm:geography_of_Khr} still gives some restrictions for $\BNr(T)$, too: 
For instance, the curve \(\BNr(P_{2,-3})\) ``wraps'' around the upper right tangle end, which is a non-special tangle end; 
in general, this kind of wrapping cannot occur around the special tangle end \(\ast\) \cite[Theorem~3.9]{KWZ-KhMutation}. 


\section{Khovanov thin fillings}\label{sec:Kh:ThinFillings}

\subsection{\texorpdfstring{The \(\delta\)-grading of curves in the covering space for \(\Khr\) and \(\BNr\)}{The δ-grading of curves in the covering space for Khr and BNr}}\label{sec:Kh:Thin:Covering}

The parametrization of the four-punctured sphere that we use for the definition of the invariants \(\BNr\) and \(\Khr\) is different from the one used for \(\HFT\): There are only two parametrizing arcs instead of four and one puncture is treated different from the other three. This also has consequences for how we think of the covering space \(\mathbb{R}^2\smallsetminus\mathbb{Z}^2\) of the four-punctured sphere. In this covering space, we will think of the special puncture in terms of marked squares and of the non-special punctures as marked points. The arcs end at the vertices of the squares, as is drawn in Figure~\ref{fig:BN:example:Curve:Upstairs}.

To stay consistent with the conventions in~\cite{KWZ}, the normal vector field of all surfaces determined by the right-hand rule points out of the page. Thus, the boundary of a region of multiplicity 1 in the plane is oriented counter-clockwise. This is opposite to the conventions used in Section~\ref{sec:HFT:ThinFillings}; in particular, this results in the appearance of minus signs.

\begin{lemma}\label{lem:delta:Kh:Euler:one_curve}
  For any connecting domain \(\varphi\) from \(x\) to \(x'\) (see Definition~\ref{def:connecting_domain:same_curve}), 
  \[
  \delta(x')-\delta(x)=-2e(\varphi). 
  \]
\end{lemma}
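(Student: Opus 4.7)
The plan is to mirror the proof of Lemma~\ref{lem:delta:HFT:Euler:one_curve} essentially verbatim, with the key observation that the reversed orientation convention (right-hand rule pointing out of the page in the Khovanov setting versus into the page in the Heegaard Floer setting) flips the sign of the Euler measure of every basic region relative to the \(\delta\)-grading change it induces. This means the relation picks up an overall minus sign.

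First I would establish the formula on basic regions. The definition of the \(\delta\)-grading on a single immersed curve in \(\FourPuncturedSphereKh\) is still dictated by the three local configurations of Figure~\ref{fig:domains:xx} (the definition of the \(\delta\)-grading on a curve is the same in both settings, up to the sign conventions for normal vector fields used in the figures). With the Khovanov orientation convention, however, a connecting domain \(\varphi\) of multiplicity \(+1\) corresponding to a left turn now has Euler measure \(-\tfrac{1}{4}\) (rather than \(+\tfrac{1}{4}\)), while a right-turn domain has Euler measure \(+\tfrac{1}{4}\). A square region still has Euler measure \(0\). In all three basic cases one checks directly that \(\delta(x')-\delta(x) = -2e(\varphi)\), matching the asserted formula.

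Next I would handle a general connecting domain \(\varphi\) from \(\Lx\) to \(\Lx'\) whose boundary intersection \(\partial\varphi\cap\Lgamma\) is a genuine path (no extra cycles). Such a path can be expressed as the \(\Lgamma\)-boundary of a domain \(\psi\) built as a sum of the basic regions just considered, and then \(\varphi-\psi\) has boundary entirely inside \(\paraHF\), so it is a linear combination of square regions and hence has zero Euler measure. Additivity of the Euler measure and additivity of the \(\delta\)-grading differences along the path give the formula for \(\varphi\).

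Finally I would dispense with the cycle case exactly as in the Heegaard Floer proof: any additional cycle in \(\partial\varphi\cap\Lgamma\) bounds a subdomain which, by reapplying the previous step to a point \(\Lx=\Lx'\) on the cycle, contributes Euler measure zero. I do not anticipate any serious obstacle beyond bookkeeping: the only thing to be careful about is that the sign convention is applied consistently, which is already signalled in the paragraph preceding the lemma. Hence the only substantive difference with Lemma~\ref{lem:delta:HFT:Euler:one_curve} is the global sign, and the proof structure carries over unchanged.
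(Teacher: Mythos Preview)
Your overall reduction strategy (verify on basic regions, then extend by additivity and handle cycles) is exactly the paper's. The gap is in the base case. You assume the three left/straight/right configurations of Figure~\ref{fig:domains:xx} still constitute the basic regions and that the \(\delta\)-grading on curves in the Khovanov setting is defined by the same turn rule. Neither is true: the Khovanov parametrization uses only two arcs (not four), so the faces in the cover are not the triangular regions of the \(\HFT\) setting but the larger \(S\)- and \(D\)-faces (see the discussion at the start of Section~\ref{sec:Kh:Thin:Covering} and Figure~\ref{fig:Kh:one_curve}); and the \(\delta\)-grading on generators is dictated algebraically by the grading on \(\BNAlgH\) together with the convention that differentials decrease \(\delta\) by \(1\), not by a geometric turn rule.

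Concretely, the paper's base case considers, for each \(i\geq 1\), a curve segment \(\psi_i\) confined to a single face, corresponding to an algebra label \(S^i\) (in an \(S\)-face) or \(D^{i/2}\) (in a \(D\)-face). The associated connecting domain \(\varphi_i\) is a disc with \(i+2\) convex corners, so \(e(\varphi_i)=\tfrac{1}{2}-\tfrac{i}{4}\); on the other hand \(\delta(x'_i)-\delta(x_i)=-1-\delta(a_i)=-1+\tfrac{i}{2}=-2e(\varphi_i)\). This is where the actual sign computation lives, and it is not simply ``flip the orientation in Figure~\ref{fig:domains:xx}''. Once this infinite family of base cases is in hand, your steps two and three (difference with \(\psi\) is a sum of zero-Euler-measure pieces; cycles contribute zero) go through verbatim, as the paper also notes.
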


\begin{lemma}\label{lem:delta:Kh:Euler:two_curves}
	With notation as in Definition~\ref{def:connecting_domain:two_curves}, the \(\delta\)-grading of \(\bullet\) is equal to 
	\[
	\delta(y)-\delta(x)-\tfrac{1}{2}+2e(\varphi).
	\]
\end{lemma}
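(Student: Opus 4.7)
The plan is to mimic the structure of the proof of Lemma~\ref{lem:delta:HFT:Euler:two_curves}, with the sign changes dictated by the orientation convention switch (counter-clockwise boundary for multiplicity-one regions, as noted at the start of Subsection~\ref{sec:Kh:Thin:Covering}).

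First, I would enumerate the basic connecting domains, namely those consisting of a single region of multiplicity $+1$ meeting the parametrizing graph in a neighbourhood of $\tilde{\bullet}$. Up to rotation and reflection there are only finitely many such local pictures, and because the parametrization in the Khovanov setting involves marked squares (for the special puncture) and marked points (for the other three punctures), the list is slightly richer than in the Heegaard Floer setting. For each basic domain I would verify the formula directly. The intersection point $\bullet$ corresponds to an algebra element of $\BNAlgH$, and its $\delta$-grading is determined by $\operatorname{gr}(D_{\bullet}) = \operatorname{gr}(D_{\circ}) = q^{-2}\delta^{-1}$ and $\operatorname{gr}(S_{\bullet}) = \operatorname{gr}(S_{\circ}) = q^{-1}\delta^{-1/2}$. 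One checks case by case that in each basic configuration, $\delta(\bullet: \gamma \to \gamma') = \delta(y)-\delta(x)-\tfrac{1}{2}+2e(\varphi)$ (with the sign on $e(\varphi)$ now opposite to Lemma~\ref{lem:delta:HFT:Euler:two_curves} because of the orientation convention).

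Next, for a general connecting domain $\varphi$, I would observe that near $\tilde{\bullet}$ the domain $\varphi$ agrees, up to adding square regions (which have vanishing Euler measure) and domains bounding closed components of $\tilde{\gamma}$, $\tilde{\gamma}'$, with some basic connecting domain $\psi$ connecting some $\tilde{x}'\in \tilde{\gamma}\cap \paraKh$ to some $\tilde{y}'\in \tilde{\gamma}'\cap \paraKh$. By the basic case,
\[
\delta(\bullet: \gamma\to\gamma') = \delta(y')-\delta(x')-\tfrac{1}{2}+2e(\psi).
\]
Choosing connecting domains $\psi_x$ from $\tilde{x}$ to $\tilde{x}'$ on $\tilde{\gamma}$ and $\psi_y$ from $\tilde{y}'$ to $\tilde{y}$ on $\tilde{\gamma}'$, Lemma~\ref{lem:delta:Kh:Euler:one_curve} gives $\delta(x')-\delta(x) = -2e(\psi_x)$ and $\delta(y)-\delta(y') = -2e(\psi_y)$. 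Combining these three identities and using $e(\psi_x + \psi + \psi_y) = e(\varphi)$, the desired formula follows.

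The main obstacle is the initial verification of the basic cases: one needs to set up the local pictures carefully in the covering space $\PuncturedPlane$, being attentive to whether the corner at $\tilde{\bullet}$ sits at a marked point, at a marked square, or in the interior of a face of $\paraKh$, and to match each configuration with the correct algebra element in $\BNAlgH$ so that the $\delta$-grading computation agrees with $-\tfrac{1}{2} + 2e(\varphi)$. Once those cases are tabulated the reduction step is a straightforward adaptation of the Heegaard Floer argument, aided by the additivity of the Euler measure.
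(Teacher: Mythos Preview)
Your proposal is essentially the same as the paper's proof: verify the formula on basic connecting domains confined to a single face by matching the intersection point to an algebra element of \(\BNAlgH\), then reduce the general case via Lemma~\ref{lem:delta:Kh:Euler:one_curve} exactly as in the Heegaard Floer argument. One small correction: in the Khovanov setting the basic domains do not form a finite list but rather an infinite family indexed by an integer \(i\) (the algebra label is \(S^i\) or \(D^{i/2}\), the domain has \(i+3\) convex corners), and the paper handles them all by a single uniform computation; otherwise your outline matches the paper's argument.
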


\begin{proposition}\label{prop:Kh:Euler:multiple-curves}
	For any tuple \((x_i)_{i=1,\dots,n}\) of intersection points, a domain \(\varphi\) as in Definition~\ref{def:symmetric_domain} satisfies
	\[
	\sum_{i=1}^n \delta(x_i)=2 e(\varphi)-\tfrac{n}{2}.
	\]
\end{proposition}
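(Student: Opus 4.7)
The plan is to mirror the proof of Proposition~\ref{prop:HFT:Euler:multiple-curves} from the Heegaard Floer setting, using the Khovanov-side grading formula from Lemma~\ref{lem:delta:Kh:Euler:two_curves} in place of its Heegaard Floer counterpart. The overall strategy is to decompose the symmetric domain \(\varphi\) into \(n\) ``connecting'' pieces, one for each intersection point \(x_i\), and then sum their grading contributions; because each auxiliary reference point appears twice with opposite signs, the telescoping collapses and only the Euler measure survives together with the universal constant.

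More precisely, for each \(i=1,\dots,n\), first choose an intersection point \(\Ly_i\) of the infinite connected lift \(\Lgamma_i\) with the integer lattice graph \(\paraHF\). Then write \(\varphi\) as a sum \(\varphi=\varphi_1+\dots+\varphi_n\), where \(\varphi_i\) is a connecting domain for the intersection point \(\Lx_i\in\Lgamma_i\cap\Lgamma_{i+1}\), starting at \(\Ly_i\in\Lgamma_i\cap\paraHF\) and ending at \(\Ly_{i+1}\in\Lgamma_{i+1}\cap\paraHF\), with indices taken modulo \(n\). That such a decomposition exists follows in the same way as in the Heegaard Floer proof: the difference \(\varphi-(\varphi_1+\dots+\varphi_n)\) has boundary supported entirely in \(\paraHF\) together with closed components of the curves \(\Lgamma_i\), so it consists of square regions and closed-curve regions which contribute zero Euler measure.

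Applying Lemma~\ref{lem:delta:Kh:Euler:two_curves} to each piece \(\varphi_i\) yields
\[
\delta(x_i)=\delta(y_{i+1})-\delta(y_i)-\tfrac{1}{2}+2e(\varphi_i).
\]
Summing these \(n\) identities and observing that the telescoping sum \(\sum_{i=1}^n\big(\delta(y_{i+1})-\delta(y_i)\big)\) vanishes (again with cyclic indexing), and that \(\sum_{i=1}^n e(\varphi_i)=e(\varphi)\) by additivity of the Euler measure, produces the desired identity
\[
\sum_{i=1}^n\delta(x_i)=2e(\varphi)-\tfrac{n}{2}.
\]

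The sign flip relative to Proposition~\ref{prop:HFT:Euler:multiple-curves} is what one expects from the opposite orientation convention in the Khovanov setting, already reflected in Lemmas~\ref{lem:delta:Kh:Euler:one_curve} and~\ref{lem:delta:Kh:Euler:two_curves}. The only point that requires any care is the decomposition step, since one must ensure that the \(\varphi_i\) can be chosen as genuine connecting domains for the correct pair of reference points; this is handled exactly as in the Heegaard Floer proof, where any residual contribution after the decomposition is a combination of square regions and regions bounded by closed components of the curves, all of which have vanishing Euler measure. I do not anticipate a genuine obstacle here—this proposition is the straightforward Khovanov transcription of Proposition~\ref{prop:HFT:Euler:multiple-curves}.
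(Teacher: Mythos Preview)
Your proposal is correct and matches the paper's approach exactly: the paper states that the proof is identical to that of Proposition~\ref{prop:HFT:Euler:multiple-curves}, using Lemma~\ref{lem:delta:Kh:Euler:two_curves} in place of Lemma~\ref{lem:delta:HFT:Euler:two_curves}, which is precisely what you do. If anything, you supply more detail on the decomposition step than the paper does.
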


\begin{corollary}\label{cor:Kh:true_domain}
	For any domain \(\varphi\) from \(x\) to \(y\) (see Definition~\ref{def:asymmetric_domain}), 
	\[\delta(y)-\delta(x)=-2e(\varphi).\]
\end{corollary}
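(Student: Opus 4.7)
The strategy is a direct adaptation of the proof of Corollary~\ref{cor:HFT:true_domain}, with Euler-measure contributions carrying the opposite sign due to the reversed orientation convention adopted for the Khovanov setting (stated just above Lemma~\ref{lem:delta:Kh:Euler:one_curve}). The plan is to reinterpret the asymmetric domain $\varphi$ from $\tilde x$ to $\tilde y$ as a symmetric domain in the sense of Definition~\ref{def:symmetric_domain} with $n=2$, and then reduce immediately to Proposition~\ref{prop:Kh:Euler:multiple-curves}.

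First, I would set $\gamma_1 = \gamma$, $\gamma_2 = \gamma'$, take $x_1 = x$ viewed as a generator of $\HF(\gamma_1,\gamma_2)$, and take $x_2$ to be the same intersection point as $y$ but regarded as a generator of $\HF(\gamma_2,\gamma_1)$. The asymmetric boundary condition $\partial(\partial\varphi \cap \tilde\gamma) = \tilde y - \tilde x$, combined with $\partial^2\varphi = 0$, automatically forces $\partial(\partial\varphi \cap \tilde\gamma') = \tilde x - \tilde y$. These two conditions are precisely the boundary requirements for $\varphi$ to be a symmetric domain from $\tilde x_1$ to $\tilde x_2$ under the cyclic indexing mod $2$. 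Applying Proposition~\ref{prop:Kh:Euler:multiple-curves} then yields
\[
\delta(x_1) + \delta(x_2) = 2e(\varphi) - \tfrac{1}{2}\cdot 2 = 2e(\varphi) - 1.
\]

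Next, I would record the Khovanov analog of Observation~\ref{obs:delta:reverse_order}: for any intersection point $\bullet$ of two curves $\gamma, \gamma'$,
\[
\delta(\bullet : \gamma \to \gamma') + \delta(\bullet : \gamma' \to \gamma) = -1.
\]
This is a direct consequence of applying Lemma~\ref{lem:delta:Kh:Euler:two_curves} in both orderings: a connecting domain for $\bullet$ in the opposite ordering is $-\varphi$, the endpoints $x$ and $y$ swap roles, and $e(-\varphi) = -e(\varphi)$, so the two formulas sum to $-1$ after cancellation. Applied to $\bullet = y$, this gives $\delta(x_2) = -1 - \delta(y)$.

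Combining the two identities yields $\delta(x) + (-1 - \delta(y)) = 2e(\varphi) - 1$, which simplifies to the desired $\delta(y) - \delta(x) = -2e(\varphi)$. The only delicate point is the bookkeeping of signs in the analog of Observation~\ref{obs:delta:reverse_order}, but since Proposition~\ref{prop:Kh:Euler:multiple-curves} already carries the correct Khovanov sign, the derivation is internally consistent. I expect no genuine obstacle here, as the entire subsection has been arranged to parallel Section~\ref{sec:HFT:ThinFillings} with uniform sign reversals.
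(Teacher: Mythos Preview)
Your proposal is correct and follows essentially the same approach as the paper, which simply points to the proof of Corollary~\ref{cor:HFT:true_domain} with Lemma~\ref{lem:delta:Kh:Euler:two_curves} substituted for Lemma~\ref{lem:delta:HFT:Euler:two_curves}. You have made explicit the Khovanov analog of Observation~\ref{obs:delta:reverse_order} (that the two orderings sum to $-1$ rather than $+1$), which the paper leaves implicit in its blanket ``identical proof'' remark.
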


\begin{example}\label{exa:Kh:bigon}
	If \(\varphi\) is a bigon of multiplicity 1 as in Figure~\ref{fig:Kh:bigon}, Corollary~\ref{cor:Kh:true_domain} implies that \(\delta(y)-\delta(x)=-2e(\varphi)=-1\); see also~\cite[Lemma~5.21]{KWZ}.
\end{example}

\begin{figure}[b]
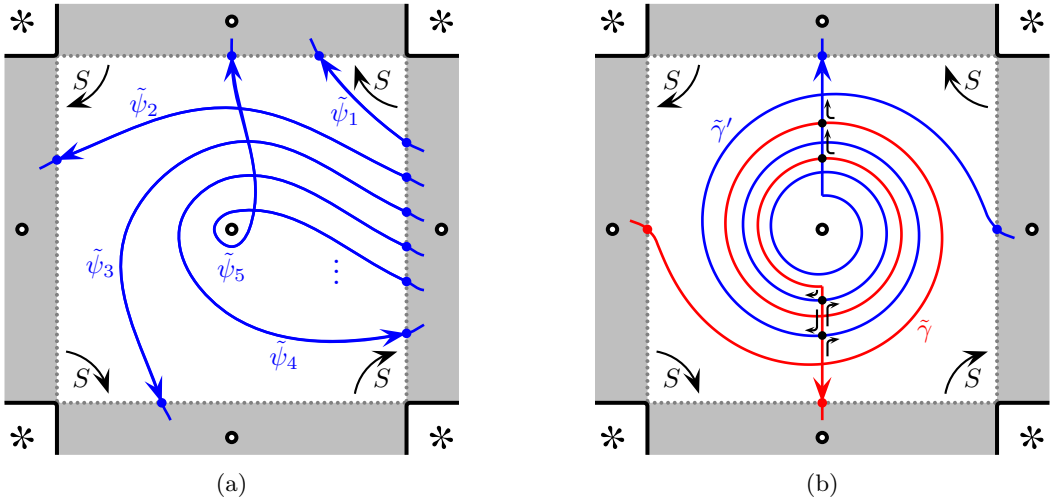

	\begin{subfigure}{0.48\textwidth}
		\centering
		\(\KhOneCurve\)
		\caption{}\label{fig:Kh:one_curve}
	\end{subfigure}
	\begin{subfigure}{0.48\textwidth}
		\centering
		\(\KhTwoCurves\)
		\caption{}\label{fig:Kh:two_curves}
	\end{subfigure}
	\caption{Some basic curve segments (a) and their pairings (b) that illustrate the proofs of Lemmas~\ref{lem:delta:Kh:Euler:one_curve} and~\ref{lem:delta:Kh:Euler:two_curves}}\label{fig:delta:Kh}
\end{figure}

\begin{figure}[t]
	\centering
	\(\Khbigon\)
	\caption{A bigon illustrating Example~\ref{exa:Kh:bigon}; compare with Figure~\ref{fig:HFT:bigon} and~\cite[Figure~17]{KWZ}}\label{fig:Kh:bigon}
\end{figure}

\begin{proof}[Proofs of results~\ref{lem:delta:Kh:Euler:one_curve}--\ref{cor:Kh:true_domain}]
  The proofs of the two lemmas are very similar to those of Lemma~\ref{lem:delta:HFT:Euler:one_curve} and~\ref{lem:delta:HFT:Euler:two_curves}, respectively, since we were careful not to assume linearity of the curves in those proofs.
  Note that the square punctures can always be filled in if necessary, since they contribute 0 to the Euler measure. 
  
  For the first lemma, first consider the basic curve segments \(\psi_i\), \(i=1,2,3,\dots\), that are confined to a single face. 
  In the case of an \(S\)-face, this is illustrated in Figure~\ref{fig:Kh:one_curve}; for the \(D\)-faces, only the curve segments \(\psi_i\) for even \(i\) are relevant. If \(\partial\tilde{\psi}_i=\Lx'_i-\Lx_i\), then for each integer \(i\), there is a unique connecting domain \(\varphi_i\) from \(\Lx_i\) to \(\Lx'_i\) avoiding the puncture of the face. The number of convex corners is equal to \(i+2\), so \(e(\varphi_i)=\frac{1}{2}-\frac{i}{4}\). Moreover, \(\psi_i\) corresponds to a component of the differential labelled by some algebra element \(a_i\), which is equal to \(S^i\) for \(S\)-faces and equal to \(D^{i/2}\) for \(D\)-faces. So in both cases \(\delta(a_i)=-\frac{i}{2}\). Following the conventions from~\cite{KWZ}, the \(\delta\)-grading decreases along the differential by 1, so 
  \[
  \delta(x'_i)-\delta(x_i)=-1-\delta(a_i)=-1+\tfrac{i}{2}=-2 e(\varphi_i)
  \]
  The argument for general domains is identical to the one for \(\HFT\).
  
  Similarly, for Lemma~\ref{lem:delta:Kh:Euler:two_curves}, once we know the identity for regions confined to a single rectangle, the general case follows as in the proof of Lemma~\ref{lem:delta:HFT:Euler:two_curves}, using Lemma~\ref{lem:delta:Kh:Euler:one_curve} in place of Lemma~\ref{lem:delta:HFT:Euler:one_curve}. 
  So let us consider those basic domains, which in the case of an \(S\)-face are illustrated in Figure~\ref{fig:Kh:two_curves}. 
  The differential corresponding to an intersection point can be easily read off by considering the two generators \(\Lx\) and \(\Ly\) on the first and second curve, respectively, that are connected by a path that turns right at the intersection point. The retraction of this path to the boundary of the face determines the label of the corresponding differential, namely \(S^i\) or \(D^{i/2}\), where \(i\) is the number of corners. So the grading of the intersection point is
  \[
  \delta(y)-\delta(x)-\tfrac{i}{2}
  \]
  The connecting domain is a disc with \(i+3\) convex corners and multiplicity \(+1\), so its Euler measure is equal to \(\tfrac{1-i}{4}\).
  
  Finally, the proofs of Proposition~\ref{prop:Kh:Euler:multiple-curves} and Corollary~\ref{cor:Kh:true_domain} are identical to the proofs of Proposition~\ref{prop:HFT:Euler:multiple-curves} and Corollary~\ref{cor:HFT:true_domain}, respectively, except that we use
  Lemma~\ref{lem:delta:Kh:Euler:two_curves} in place of Lemma~\ref{lem:delta:HFT:Euler:two_curves}.
\end{proof}

\subsection{Linear curves}\label{sec:Kh:Thin:Linear}
Restricting to linear curves (Definition~\ref{def:linearity_via_derivative}), we obtain results very similar to those in Section~\ref{sec:HFT:Thin:Linear}.

\begin{deflemma}\label{deflem:delta:Kh:linear_curve}
  Let \(\gamma\) be a linear curve of slope \(s\in\QPI\). Then unless \(s=0\), all intersection points with the horizontal lines of \(\paraKh\) have the same \(\delta\)-grading \(\delH\coloneqq\delH(\gamma)\), and unless \(s=\infty\), all intersection points with the vertical lines of \(\paraKh\) have the same \(\delta\)-grading \(\delV\coloneqq\delV(\gamma)\). Moreover, 
  \[
  \delV=
  \begin{cases}
  \delH-\frac{1}{2} &\text{ if \(0<s<\infty\)} \\
  \delH+\frac{1}{2} &\text{ if \(\infty<s<0\)}
  \end{cases}
  \]
\end{deflemma}

\begin{deflemma}\label{deflem:delta:Kh:linear_curves}
  Given two linear curves \(\gamma\) and \(\gamma'\) of different slopes \(s,s'\in\QPI\), the Lagrangian intersection theory \(\HF(\gamma,\gamma')\) is supported in a single \(\delta\)-grading, which is equal to 
  \[
  \delta(\gamma,\gamma')
  \coloneqq
  \begin{cases*}
  \delH(\gamma')-\delV(\gamma)-\tfrac{1}{2} & if \(s\in(\infty,s')\) for \(s'\in(0,\infty]\), or \(s\in(s',\infty)\) for \(s'\in[\infty,0)\)
  \\
  \delV(\gamma')-\delH(\gamma)-\tfrac{1}{2} &  if \(s\in(s',0)\) for \(s'\in[0,\infty)\), or \(s\in(0,s')\) for \(s'\in(\infty,0]\)
  \end{cases*}
  \]
\end{deflemma}

\begin{corollary}\label{cor:deltaDifference:Kh:commutativity}
  For any two linear curves \(\gamma\) and \(\gamma'\),
  \[
  \delta(\gamma,\gamma')+\delta(\gamma',\gamma)=
  \begin{cases*}
  0 & if \(s(\gamma)=s(\gamma')\)\\
  -1 & if \(s(\gamma)\neq s(\gamma')\)
  \end{cases*}
  \]
\end{corollary}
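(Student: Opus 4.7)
The plan is to mirror the proof of Corollary~\ref{cor:deltaDifference:HFT:commutativity} in the Heegaard Floer setting, tracking the sign changes carefully since the Khovanov conventions reverse the orientation of regions and hence flip certain signs in the basic formulas.

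First I would dispose of the case \(s(\gamma)=s(\gamma')\). Here we implicitly use the Khovanov analog of Definition~\ref{def:delta:HF:linear_curves}, namely: for two linear curves of the same slope \(s\), set \(\delta(\gamma,\gamma')\) equal to \(\delH(\gamma')-\delH(\gamma)\) when \(s\neq 0\) and to \(\delV(\gamma')-\delV(\gamma)\) when \(s\neq\infty\) (this is well-defined by Definition/Lemma~\ref{deflem:delta:Kh:linear_curve}, and the two expressions agree whenever both are defined, for the same reason as in the Heegaard Floer case). Both expressions are manifestly antisymmetric under swapping \(\gamma\) and \(\gamma'\), so \(\delta(\gamma,\gamma')+\delta(\gamma',\gamma)=0\).

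For the case \(s(\gamma)\neq s(\gamma')\), I would first establish the Khovanov analog of Observation~\ref{obs:delta:reverse_order}: for any intersection point \(\bullet\in\HF(\gamma,\gamma')\cap\HF(\gamma',\gamma)\),
\[
\delta(\bullet\co\gamma\to\gamma')+\delta(\bullet\co\gamma'\to\gamma)=-1.
\]
This follows from Lemma~\ref{lem:delta:Kh:Euler:two_curves} applied to any connecting domain \(\varphi\) for \(\bullet\) viewed from \(\gamma\) to \(\gamma'\), since \(-\varphi\) is then a valid connecting domain from \(\gamma'\) to \(\gamma\) with \(e(-\varphi)=-e(\varphi)\): adding the two formulas the \(\delta(x)\), \(\delta(y)\), and \(\pm 2e(\varphi)\) terms cancel and only \(-\tfrac{1}{2}-\tfrac{1}{2}=-1\) remains. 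Combining this with Definition/Lemma~\ref{deflem:delta:Kh:linear_curves}, which asserts that \(\HF(\gamma,\gamma')\) and \(\HF(\gamma',\gamma)\) are each supported in a single \(\delta\)-grading (equal to \(\delta(\gamma,\gamma')\) and \(\delta(\gamma',\gamma)\) respectively), we conclude \(\delta(\gamma,\gamma')+\delta(\gamma',\gamma)=-1\).

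There is no real obstacle: the statement is a direct formal consequence of the grading formulas already established, and the only subtlety is bookkeeping the sign conventions. The main thing to watch is the asymmetry between the Heegaard Floer and Khovanov conventions for the orientation of regions (normal field into versus out of the page), which is precisely what flips the \(+1\) in Corollary~\ref{cor:deltaDifference:HFT:commutativity} to a \(-1\) here. Once this is checked carefully in Lemma~\ref{lem:delta:Kh:Euler:two_curves}, the rest of the argument transplants verbatim from the proof of Corollary~\ref{cor:deltaDifference:HFT:commutativity}.
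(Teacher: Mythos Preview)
Your proposal is correct and follows essentially the same approach as the paper. The paper's proof is extremely terse—it simply says the corollary ``follows immediately from Lemma~\ref{lem:delta:Kh:Euler:two_curves}, similar to Corollary~\ref{cor:deltaDifference:HFT:commutativity}''—and you have correctly unpacked what this means: derive the Khovanov analog of Observation~\ref{obs:delta:reverse_order} from Lemma~\ref{lem:delta:Kh:Euler:two_curves} (with the sign flip coming from the \(-\tfrac{1}{2}\) in place of \(+\tfrac{1}{2}\)), and handle the equal-slope case via the implicit same-slope definition of \(\delta(\gamma,\gamma')\).
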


\begin{theorem}\label{thm:deltaDifference:Kh:transitivity}
  For any increasing triple \((\gamma,\gamma',\gamma'')\) of linear curves, 
  \[
  \delta(\gamma,\gamma')+\delta(\gamma',\gamma'')=\delta(\gamma,\gamma'').
  \]
\end{theorem}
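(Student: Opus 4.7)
The plan is to follow the same strategy used for Theorem~\ref{thm:deltaDifference:HFT:transitivity}, but carefully tracking the sign changes that arise from the opposite orientation convention in the Khovanov setting (normal vector out of the page rather than into the page) and from the fact that the triple is now \emph{increasing} rather than decreasing. These two inversions ought to cancel, so the upshot is that we should still obtain the same critical Euler measure of \(-\tfrac14\) for the auxiliary triangle.

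First, I would dispense with the degenerate cases. If \(s(\gamma)=s(\gamma')\) or \(s(\gamma')=s(\gamma'')\), the identity follows immediately from Definition~\ref{def:delta:HF:linear_curves} (suitably adapted to the Khovanov setting using Lemma~\ref{deflem:delta:Kh:linear_curve}) together with Lemma~\ref{deflem:delta:Kh:linear_curves}, since in that case one of the summands measures a shift between two same-sloped curves and the other reduces directly to \(\delta(\gamma,\gamma'')\). So assume from now on that the three slopes are pairwise distinct.

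Next I would choose infinite connected lifts \(\Lgamma,\Lgamma',\Lgamma''\) of \(\gamma,\gamma',\gamma''\) that pairwise intersect in three points \(\LA\in\Lgamma\cap\Lgamma'\), \(\LB\in\Lgamma'\cap\Lgamma''\), and \(\LC\in\Lgamma''\cap\Lgamma\); this is possible for any triple of distinct slopes. Pick a connecting domain \(\varphi\) for \(\LA\) running from some point \(\Lx\in\Lgamma\cap\paraKh\) to some point \(\Ly\in\Lgamma'\cap\paraKh\), and likewise a connecting domain \(\varphi'\) for \(\LB\) from \(\Ly\) to some \(\Lz\in\Lgamma''\cap\paraKh\). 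Then \(\varphi+\varphi'+\LDelta\) is a connecting domain for \(\LC\) from \(\Lx\) to \(\Lz\), where \(\LDelta=\Delta\LA\LB\LC\) is the triangle bounded by the three lifts. Applying Lemma~\ref{lem:delta:Kh:Euler:two_curves} twice and summing yields
\[
\delta(\gamma,\gamma')+\delta(\gamma',\gamma'')
=
\bigl(\delta(y)-\delta(x)\bigr)+\bigl(\delta(z)-\delta(y)\bigr)-1+2e(\varphi)+2e(\varphi'),
\]
while applying the same lemma to \(\LC\) with the connecting domain \(\varphi+\varphi'+\LDelta\) gives
\[
\delta(\gamma,\gamma'')
=
\delta(z)-\delta(x)-\tfrac{1}{2}+2e(\varphi+\varphi'+\LDelta).
\]
Comparing, the identity reduces to the single geometric assertion \(e(\LDelta)=-\tfrac14\).

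The main (and only nontrivial) obstacle is verifying this last Euler-measure computation. The triangle \(\LDelta\) has three acute corners and is topologically a disk, so \(e(\LDelta)=\pm\tfrac14\) with the sign determined by the multiplicity of \(\LDelta\) as a region in \(H_2(\R^2,\paraKh\cup\Lgamma\cup\Lgamma'\cup\Lgamma'')\). Here I would carefully work through the sign: with the Khovanov orientation convention (boundary of a multiplicity \(+1\) region oriented counter-clockwise), together with the fact that \((s(\gamma),s(\gamma'),s(\gamma''))\) is increasing, the boundary of \(\LDelta\), when traversed in the order \(\LA\to\LB\to\LC\) dictated by the connecting domain bookkeeping set up above, runs \emph{clockwise} around the triangle, so \(\LDelta\) carries multiplicity \(-1\) and hence \(e(\LDelta)=-\tfrac14\). (Compared with the HFT setting, the orientation flip and the increasing/decreasing flip each contribute a sign, and their product gives the same answer.) Plugging this into the two displays above yields the desired equality.
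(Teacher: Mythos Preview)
Your proof is correct, and it takes a slightly different route from the paper's own argument.

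The paper reverses the triple to \((\gamma'',\gamma',\gamma)\), which is decreasing, and then reruns the proof of Theorem~\ref{thm:deltaDifference:HFT:transitivity} verbatim on that reversed triple. With the Khovanov orientation convention (normal vector out of the page), the triangle now has \(e(\LDelta)=+\tfrac14\), and one obtains
\[
\delta(\gamma'',\gamma')+\delta(\gamma',\gamma)=\delta(\gamma'',\gamma)-1.
\]
The paper then adds \(2\) to both sides and applies the symmetry relation (Corollary~\ref{cor:deltaDifference:Kh:commutativity}) three times to flip each \(\delta\) and arrive at the stated identity.

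You instead keep the increasing triple and observe that the two sign inversions (orientation convention and increasing/decreasing) cancel, so that \(e(\LDelta)=-\tfrac14\) exactly as in the Heegaard Floer setting, and the identity drops out directly with no appeal to symmetry. This is marginally cleaner; the paper's version has the small expository advantage of literally reusing the geometric picture from the HFT proof (Figure~\ref{fig:deltaDifference:HFT:transitivity}) and only tracking the single orientation-induced sign change, at the cost of the extra symmetry step at the end. Either way works.
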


\begin{proof}[Proof of results~\ref{deflem:delta:Kh:linear_curve}--\ref{thm:deltaDifference:Kh:transitivity}]
	The proof of Lemma~\ref{deflem:delta:Kh:linear_curve} is identical to the proof of Lemma~\ref{deflem:delta:linear_curve}, except that the multiplicities of the triangles change sign and we use 
	Lemma~\ref{lem:delta:Kh:Euler:one_curve} in place of Lemma~\ref{lem:delta:HFT:Euler:one_curve}.
	Similarly, Lemma~\ref{deflem:delta:Kh:linear_curves} follows from the same arguments as Lemma~\ref{deflem:delta:HF:linear_curves} using Lemma~\ref{lem:delta:Kh:Euler:two_curves} in place of Lemma~\ref{lem:delta:HFT:Euler:two_curves}; note the opposite signs of the summands \(\tfrac{1}{2}\).
	Corollary~\ref{cor:deltaDifference:Kh:commutativity} follows immediately from Lemma~\ref{lem:delta:Kh:Euler:two_curves}, similar to Corollary~\ref{cor:deltaDifference:HFT:commutativity}.
	For Theorem~\ref{thm:deltaDifference:Kh:transitivity}, we can adapt the proof of Theorem~\ref{thm:deltaDifference:HFT:transitivity} as follows: First, note that the triple \((\gamma'',\gamma',\gamma)\) is decreasing, so let us swap the roles of \(\gamma''\) and \(\gamma\) in the proof of Theorem~\ref{thm:deltaDifference:HFT:transitivity}. Then the domains \(\varphi\) and \(\varphi'\) only contain regions of multiplicity \(-1\) and \(e(\LDelta)=+\tfrac{1}{4}\). Using Lemma~\ref{lem:delta:Kh:Euler:two_curves} in place of Lemma~\ref{lem:delta:HFT:Euler:two_curves}, this implies that
	\begin{align*}
	\delta(\gamma'',\gamma')+\delta(\gamma',\gamma)
	&=
	\Big(\delta(y)-\delta(x)-\tfrac{1}{2}+2e(\varphi)\Big)+
	\Big(\delta(z)-\delta(y)-\tfrac{1}{2}+2e(\varphi')\Big)
	\\
	&=
	\delta(z)-\delta(x)-\tfrac{3}{2}+2e(\varphi+\varphi'+\LDelta)
	=
	\delta(\gamma'',\gamma)-1.
	\end{align*}
  We add 2 on both sides, apply Corollary~\ref{cor:deltaDifference:Kh:commutativity}, multiply both sides by \(-1\), and obtain 
  \begin{equation*}
  \delta(\gamma',\gamma'')+\delta(\gamma,\gamma')
  =
  \delta(\gamma,\gamma'').\qedhere
  \end{equation*}
\end{proof}

\begin{lemma}\label{lem:Kh:epsilon_criterion}
	Suppose \(\gamma\) and \(\gamma'\) are two linear curves with local systems in \(\FourPuncturedSphereKh\) that share the same slope \(s\in\QPI\). 
	Then, if \(\gamma\) is rational and \(\gamma'\) is special (or vice versa), the vector space \(\HF(\gamma,\gamma')\) is zero. 
	Otherwise, \(\HF(\gamma,\gamma')\) is non-zero and supported in two consecutive \(\delta\)-gradings, namely \(\delta(\gamma,\gamma')\) and \(\delta(\gamma,\gamma')-1\). 
\end{lemma}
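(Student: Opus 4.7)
The plan is to adapt the structure of the proof of Lemma~\ref{lem:HFT:epsilon_criterion} to the Khovanov setting, exploiting the sign-reversed Euler-measure conventions established in Section~\ref{sec:Kh:Thin:Covering}. Using Theorem~\ref{thm:Kh:Twisting}, I would first reduce to the case \(s=0\), since the action of \(\Mod(\FourPuncturedSphereKh)\) preserves whether a curve is rational or special and shifts \(\delta(\gamma,\gamma')\) in a controlled way.

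For the vanishing statement, when \(\gamma\) is rational and \(\gamma'\) is special (or vice versa), I would use the explicit geometry from Figure~\ref{fig:geography}: after homotopy, the rational curve \(\rKh_n(0)\) lies in an arbitrarily small neighbourhood of the straight horizontal segment through the two non-special punctures on the \(x\)-axis, whereas the special curve \(\sKh_{2n}(0)\) lies in a small neighbourhood of a half-segment approaching one non-special puncture from a single side of the \(x\)-axis. Choosing these neighbourhoods narrow enough, the two curves become disjoint in \(\FourPuncturedSphereKh\), so \(\HF(\gamma,\gamma') = 0\) regardless of the local systems.

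For the non-vanishing statement when \(\gamma\) and \(\gamma'\) are both rational or both special, I would show that in the covering space there must remain essential intersections after any isotopy. Confining both curves to a small tubular neighbourhood of the relevant straight-line configuration and lifting to \(\PuncturedPlane\), the geometry of \(\tr_n(0)\) and \(\ts_{2n}(0)\) shown in Figure~\ref{fig:geography:Upstairs} forces crossings between the ``wrapping'' arcs near each puncture; these crossings cannot be removed since neither \(\rKh_n\) nor \(\sKh_{2n}\) is embedded (using Proposition~\ref{prop:Khr_not_embedded} for rational components and the analogous geometric observation for special ones). The number of transverse intersection points contributes at least one generator to \(\HF(\gamma,\gamma')\), and nontrivial local systems only enlarge the chain module over \(\fieldTwoElements\), preserving nonvanishing.

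For the support statement, I would mimic the final paragraph of the proof of Lemma~\ref{lem:HFT:epsilon_criterion}. Fix a transverse intersection point \(\bullet\) and lift to \(\Lbullet\) where \(\Lgamma\) and \(\Lgamma'\) have slopes \(t\) and \(t'\) both contained in \((s-\varepsilon, s+\varepsilon)\). Depending on whether \(t<t'\) or \(t>t'\), one constructs a thin triangular connecting domain bounded on its third side by only vertical, respectively only horizontal, arcs of \(\paraKh\), and computes via Lemma~\ref{lem:delta:Kh:Euler:two_curves} that
\[
\delta(\bullet) \in \bigl\{\delta(\gamma,\gamma'),\ \delta(\gamma,\gamma')-1\bigr\},
\]
where the shift \(-1\) (rather than \(+1\) as in the Heegaard Floer case) is dictated by the opposite sign of \(e(\varphi)\) in Lemma~\ref{lem:delta:Kh:Euler:two_curves} compared to Lemma~\ref{lem:delta:HFT:Euler:two_curves}. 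The main obstacle will be verifying that these thin triangles are genuine connecting domains in the sense of Definition~\ref{def:connecting_domain:two_curves}, in particular that they can be chosen to avoid the square punctures corresponding to the special puncture \(\ast\) and the dot punctures corresponding to the non-special tangle ends; this is handled by taking \(\varepsilon\) small and invoking the linearity of \(\gamma\) and \(\gamma'\) (Definition~\ref{def:linearity_via_derivative}) to guarantee that both lifts stay within a puncture-free strip.
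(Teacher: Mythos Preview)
Your proposal is correct and follows essentially the same approach as the paper, which gives a very terse proof that (i) dismisses the rational--special case as clear, (ii) asserts non-trivial intersection in the rational--rational and special--special cases, and (iii) refers back to the thin-triangle computation in the proof of Lemma~\ref{lem:HFT:epsilon_criterion} for the support statement. Your worry about avoiding the square punctures is unnecessary, since (as noted in the proofs of Lemmas~\ref{lem:delta:Kh:Euler:one_curve}--\ref{cor:Kh:true_domain}) they contribute zero Euler measure and can simply be filled in.
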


\begin{proof}
	Clearly, the Lagrangian Floer homology of a special and a rational curve vanishes. 
	Moreover, any two curves of the same slope that are either both rational or both special intersect non-trivially, so \(\HF(\gamma,\gamma')\) does not vanish. The support of this vector space can be computed in the same way as in the proof of Lemma~\ref{lem:HFT:epsilon_criterion} for the case of two special curves.
\end{proof}

\begin{definition}\label{def:Kh:exceptional}
	In analogy to Definition~\ref{def:HF:exceptional}, we say that a tangle \(T\) is \textbf{Khovanov exceptional} if the multicurve \(\Kh(T)\) is exceptional.
\end{definition}

\begin{conjecture}\label{conj:non-adjacent:Kh}
	There exists no link \(L\) for which \(\Khr(L)\) is supported in precisely two non-adjacent \(\delta\)-gradings.  
\end{conjecture}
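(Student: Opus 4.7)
My plan would be to argue by contradiction: assume there exists a link \(L\) with \(\Khr(L)\) supported in exactly two \(\delta\)-gradings \(a < b\) where \(b - a \geq 2\), and try to derive a contradiction from the structure of the tangle invariants. The natural starting point is the gluing theorem (Theorem~\ref{thm:GlueingTheorem:Kh}): for any Conway tangle decomposition \(L = T_1 \cup T_2\),
\[
\Khr(L) \cong \HF\bigl(\Khr(T_1^*),\BNr(T_2)\bigr),
\]
so the support of \(\Khr(L)\) is controlled by the pairing of immersed multicurves. The strategy is to show that no such pairing can produce a gap of width \(\geq 2\) in the \(\delta\)-grading.

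First I would exploit the geography theorem (Theorem~\ref{thm:geography_of_Khr}): every component of \(\Khr(T)\) is linear, and by Lemmas~\ref{deflem:delta:Kh:linear_curves} and~\ref{lem:Kh:epsilon_criterion}, the Floer homology between any two linear curves is supported in at most two \emph{consecutive} \(\delta\)-gradings. Thus each pairwise contribution to \(\Khr(L)\) lives in one or two adjacent gradings, and a gap of width \(\geq 2\) in the total forces a rigid partition: the pairwise contributions must split into two groups, one concentrated around grading \(a\) and one around grading \(b\), with everything in between cancelling. I would try to reformulate this rigidity, via the grading function analysis of Section~\ref{sec:main} adapted to Khovanov (as in Theorem~\ref{thm:reduction:HF} of the Heegaard Floer setting), as the statement that at least one of \(\Khr(T_i^*)\) must itself be exceptional in the sense of Definition~\ref{def:Kh:exceptional}. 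In favourable cases this turns the desired statement into the contrapositive of Proposition~\ref{prop:exceptional:intro}.

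The main obstacle, and the reason this remains a conjecture, is that the pairing framework of this paper does not by itself rule out exceptional \(\Khr(T)\): non-compact components of \(\BNr(T)\) need not be linear, and local systems of arbitrary dimension interact subtly with the \(\delta\)-grading (as already seen through Lemma~\ref{lem:SomeControlOverLocalSystems} and Remark~\ref{rem:inhibited} on the Heegaard Floer side). To make progress one likely needs external input specific to Khovanov theory. Two avenues I would pursue are: (i) using the Bar-Natan and Lee spectral sequences, whose differentials change \(\delta\) by controlled amounts and whose \(E_\infty\)-pages are known, to force an intermediate \(\delta\)-grading to be populated; and (ii) using the mirror symmetry of Lemma~\ref{lem:mirroring:Khr} together with detection-type results to exclude the rigid extremal configuration. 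I expect this second step to be where the real difficulty lies, since the conjecture is essentially a weak form of the folklore full-support conjecture for \(\Khr\), and a genuinely new structural input about components of \(\Khr(T)\) will be required.
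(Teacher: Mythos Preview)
The statement is labelled a \emph{Conjecture} in the paper and the paper offers no proof of it.  What the paper does prove is Proposition~\ref{prop:exceptional:Kh} (Proposition~\ref{prop:exceptional:intro} in the introduction), which goes in the \emph{opposite} direction: the existence of a Khovanov exceptional tangle would produce a counterexample to the conjecture.  You correctly recognise that the statement is open and present a research strategy rather than a proof, so in that sense there is nothing to compare against.

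That said, there is a logical slip in your outline worth flagging.  You write that in favourable cases the problem reduces to ``the contrapositive of Proposition~\ref{prop:exceptional:intro}''.  The contrapositive of that proposition is ``if the conjecture holds then no Khovanov exceptional tangle exists'', which is of no use for proving the conjecture.  What your argument actually needs is the \emph{converse}: that a link whose reduced Khovanov homology is supported in two non-adjacent \(\delta\)-gradings forces, for some Conway decomposition, one of the two tangle pieces to be Khovanov exceptional.  This converse is not established anywhere in the paper, and it is not clear it follows from the combinatorics of Section~\ref{sec:main}: a gap in the total pairing does not immediately force the line set of either factor to be exceptional in the sense of Definition~\ref{def:exceptional}, since the two-grading support could in principle arise from more than two supporting slopes or from the non-linear components of \(\BNr\).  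Your references to Lemma~\ref{lem:SomeControlOverLocalSystems} and Remark~\ref{rem:inhibited} are also Heegaard Floer specific; the Khovanov analogue (Theorem~\ref{thm:reduction:Kh}) does not involve inhibited local systems at all, so those particular obstructions do not transfer.  The honest summary is that your second paragraph identifies the right kind of structural rigidity one would want, but the reduction you sketch does not go through as stated, and the conjecture remains genuinely open.
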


\begin{proposition}[Proposition~\ref{prop:exceptional:intro}]\label{prop:exceptional:Kh}
	If there exists a Khovanov exceptional tangle, then Conjecture~\ref{conj:non-adjacent:Kh} is false. 
\end{proposition}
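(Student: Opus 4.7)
The strategy is to build the required link by gluing the exceptional tangle $T$ with a rational tangle of carefully chosen slope and then reading off the $\delta$-grading support from Theorem~\ref{thm:GlueingTheorem:Kh}.

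Suppose $T$ is Khovanov exceptional, so that $\Slopes_{\Khr(T)} = \{s, s'\}$ for distinct $s, s' \in \QPI$ and there is a constant $c$ with $\delta(\gamma, \gamma') = c$ for every pair of components $\gamma, \gamma' \in \Khr(T)$ of slopes $s$ and $s'$. The exceptionality excludes exactly those values of $c$ that would force the multicurve to be thin via Corollary~\ref{cor:deltaDifference:Kh:commutativity}; one verifies that for all allowed $c$, the maximum $\max(|c|, |c+1|)$ is at least $2$. Moreover, Theorem~\ref{thm:deltaDifference:Kh:transitivity} applied to triples of components on a common slope forces all slope-$s$ (resp.\ slope-$s'$) components to share a common $\delta$-offset, so $\Khr(T)$ itself occupies precisely two $\delta$-gradings separated by $c$.

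Pick any $t \in \QPI \smallsetminus \{s, s'\}$ and form the link $L = Q_{-t} \cup T$. Theorem~\ref{thm:GlueingTheorem:Kh} then gives
\[\Khr(L) \otimes V \;\cong\; \HF\bigl(\Khr(Q_t), \Khr(T)\bigr) \;=\; \HF\bigl(\rKh_1(t), \Khr(T)\bigr).\]
By Lemma~\ref{deflem:delta:Kh:linear_curves}, for each component $\gamma$ of $\Khr(T)$ the summand $\HF(\rKh_1(t), \gamma)$ is supported in the single $\delta$-grading $\delta(\rKh_1(t), \gamma)$, and it is non-zero because $\rKh_1(t)$ and $\gamma$ are essential linear curves of distinct slopes on $\FourPuncturedSphereKh$. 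A case analysis of the three cyclic orderings of $\{s, s', t\}$, using Theorem~\ref{thm:deltaDifference:Kh:transitivity} together with Corollary~\ref{cor:deltaDifference:Kh:commutativity}, shows that the two supporting $\delta$-gradings of $\HF(\rKh_1(t), \Khr(T))$ differ by $c$ or $c+1$ up to sign, depending on where $t$ lies cyclically relative to $s$ and $s'$.

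Choosing $t$ to realize $\max(|c|, |c+1|)$, the resulting difference has absolute value at least $2$, so $\HF(\rKh_1(t), \Khr(T))$ is supported in precisely two non-adjacent $\delta$-gradings. Since $V$ is concentrated in a single $\delta$-grading, tensoring preserves both the count and the separation, and hence $\Khr(L)$ itself is supported in precisely two non-adjacent $\delta$-gradings, contradicting Conjecture~\ref{conj:non-adjacent:Kh}. The main obstacle I anticipate is the cyclic-order case analysis together with the careful bookkeeping of the Kh sign conventions in the exceptionality condition, needed to conclude $\max(|c|, |c+1|) \geq 2$ from the excluded values of $c$.
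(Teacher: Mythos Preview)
Your proof is correct and follows essentially the same approach as the paper: both arguments glue $T$ to a rational tangle $Q_{-t}$ with $t\notin\{s,s'\}$, invoke the gluing theorem and transitivity of $\delta$ to show $\HF(\rKh_1(t),\Khr(T))$ sits in two $\delta$-gradings differing by $c$ or $-1-c$, and then choose $t$ so that this difference has absolute value at least~$2$. The paper streamlines your cyclic case analysis by assuming without loss of generality that $|\delta(\gamma,\gamma')|>1$ and picking $t$ with $(t,s,s')$ increasing, but the content is the same.
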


\begin{proof}
	Suppose there exists a Khovanov exceptional tangle \(T\). Let us write \(\Gamma=\Khr(T)\) and \(\Slopes_{\Gamma}=\{s,s'\}\). 
	By assumption, \(T\) is exceptional, so either \(|\delta(\gamma,\gamma')|>1\) or \(|\delta(\gamma',\gamma)|>1\) for all components \(\gamma,\gamma'\in\Gamma\) with \(s(\gamma)=s\) and \(s(\gamma')=s'\). Without loss of generality, let us assume the former. Then, if we pick a slope \(t\neq s\) such that \((t,s,s')\) is increasing,  \(\delta(\Rational(t),\gamma')=\delta(\Rational(t),\gamma)+\delta(\gamma,\gamma')\) for all pairs \((\gamma,\gamma')\) as above. Since \(\Gamma\) is assumed to be \(s\)- and \(s'\)-consistent, \(\HF(\Rational(t),\Gamma)\) is supported in precisely two non-adjacent \(\delta\)-gradings. So by Theorem~\ref{thm:GlueingTheorem:Kh}, the link \(Q_{-t}\cup T\) is a counterexample to Conjecture~\ref{conj:non-adjacent:Kh}. 
\end{proof}

\subsection{Khovanov thin fillings}
In this subsection, \(G\) is either \(\Z\) or \(\ZZ\). Define 
\[
\CurvesKh
\coloneqq
\Bigl\{\Khr(T;\fieldTwoElements)\Bigm|\text{Conway tangles }T\Bigr\}
\]
In the following, we will make implicit use of the following properties that \(\CurvesKh\) is known to satisfy: 
Each multicurve \(\Gamma\in\CurvesKh\) consists of linear components only (Theorem~\ref{thm:geography_of_Khr}); and \(\HF(\Gamma_1,\Gamma_2)\neq 0\) for each \(\Gamma_1,\Gamma_2\in\CurvesKh\), because of Theorem~\ref{thm:GlueingTheorem:Kh} and the fact that reduced Khovanov homology does not vanish. 

Given two multicurves \(\Gamma\) and \(\Gamma'\) and a slope \(s\in\Slopes_\Gamma\cap\Slopes_{\Gamma'}\), the following condition will be relevant:
\begin{enumerate}
	\myitem[\(\mathrm{R}\)] \label{local:Kh} At least one of \(\Gamma\) and \(\Gamma'\) is \(s\)-rational, ie it only contains rational components of slope \(s\).
\end{enumerate}
This is the condition for reduced Khovanov theory mentioned in Theorems~\ref{thm:gluing:ALink:intro} and~\ref{thm:gluing:Thin:intro}. 

\begin{definition}
	Given a relatively \(\delta\)-graded multicurve \(\Gamma\), let
	\[
	\ThinG(\Gamma)\coloneqq\Bigl\{s\in\QPI\Bigm| \HF(\Rational(s),\Gamma)\text{ is \(G\)-thin}\Bigr\}
	\]
	be the spaces of \textbf{\(\bm{G}\)-thin rational fillings} of \(\Gamma\).  
	If \(T\) is a Conway tangle in a three-ball, writing 
	\[
	\ThinKh(T)=\ThinZ(\Khr(T))
	\quad\text{ and }\quad
	\ALinkKh(T)=\ThinZZ(\Khr(T))
	\] 
	recovers the definition from the introduction, which follows from Theorem~\ref{thm:GlueingTheorem:Kh}. 
\end{definition}
 
\begin{remark}\label{rem:mirroring:ThinG:Kh}
	Since by Lemma~\ref{lem:mirroring:Khr} the tangle invariant \(\Khr\) behaves in a natural way under mirroring,
	\(\ThinKh(T^*)=\mirrorThinKh(T)\)
	and
	\(\ALinkKh(T^*)=\mirrorALinkKh(T)\)
	for any Conway tangle \(T\). 
\end{remark}

The following result plays the same role for \(\Khr(T)\) as Theorem~\ref{thm:reduction:HF} does for \(\HFT(T)\).
Again, we denote the set of all line sets in the sense of Section~\ref{sec:main} by \(\mathcal{P}_\mathrm{finite}(\Curves)\).

\begin{theorem}\label{thm:reduction:Kh}
	There exist a map \(\Phi\co\CurvesKh\rightarrow\mathcal{P}_\mathrm{finite}(\Curves)\) and a map \(\gr\co \Curves^2\rightarrow G\)	satisfying the \ref{eq:symmetry}, \ref{eq:transitivity}, and \ref{eq:linearity} properties as in Section~\ref{sec:main} such that for any \(\Gamma,\Gamma'\in\CurvesKh\), the following holds:
	\begin{enumerate}[label=(\roman*)] 
		\item \label{enu:reduction:Kh:slopes} \(\Slopes_{\Gamma}=\Slopes_{\Phi(\Gamma)}\).
		\item \label{enu:reduction:Kh:spaces}
		\(
		\ThinG(\Gamma)=\ThinG(\Phi(\Gamma)).
		\)
		\item \label{enu:reduction:Kh:trivial} \(\Phi(\Gamma)\) is non-trivial. 
		\item \label{enu:reduction:Kh:exceptional} \(\Phi(\Gamma)\) is exceptional if \(\Gamma\) is exceptional. 
		\item \label{enu:reduction:Kh:s-rational} for any slope \(s\in\QPI\), \(\Gamma\) is \(s\)-rational if and only if \(\Phi(\Gamma)\) is \(s\)-rational. 
		\item \label{enu:reduction:Kh:pairs:Thin}  \(\HF(\Gamma,\Gamma')\) is \(G\)-thin if and only if the pair \((\Phi(\Gamma), \Phi(\Gamma'))\) is \(G\)-thin.
	\end{enumerate}
\end{theorem}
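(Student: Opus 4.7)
The proof closely follows that of Theorem~\ref{thm:reduction:HF} with two simplifications specific to the Khovanov setting. First, Theorem~\ref{thm:geography_of_Khr} guarantees that every component of any \(\Gamma\in\CurvesKh\) is either rational or special with trivial local system, so there is no Khovanov analog of the ``inhibited'' pathology that arises in the Heegaard Floer case, and in particular no well-behavedness restriction is needed. Second, the sign and directionality conventions of Corollary~\ref{cor:deltaDifference:Kh:commutativity} and Theorem~\ref{thm:deltaDifference:Kh:transitivity} already match the axioms \ref{eq:symmetry} and \ref{eq:transitivity} for \(\gr\) without any order reversal, unlike in the Heegaard Floer case.

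To construct the data, I would first assign to each \(c\in\Curves\) an absolutely \(\delta\)-graded linear curve \(\gamma(c)\) of slope \(s(c)\) whose horizontal (or, when \(s(c)=0\), vertical) intersection points carry \(\delta\)-grading \(\gr(c)\) (respectively \(\gr(c)-\tfrac12\)); these conditions are compatible by Lemma~\ref{deflem:delta:Kh:linear_curve}. Setting \(\gr(c,c')\coloneqq\delta(\gamma(c),\gamma(c'))\), the properties \ref{eq:symmetry}, \ref{eq:transitivity}, and \ref{eq:linearity} follow immediately from Corollary~\ref{cor:deltaDifference:Kh:commutativity}, Theorem~\ref{thm:deltaDifference:Kh:transitivity}, and Lemma~\ref{deflem:delta:Kh:linear_curves}. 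Next, I would lift the relative \(\delta\)-gradings of each \(\Gamma\in\CurvesKh\) to absolute ones so that \(\delH(\gamma)\in\Z\) and \(\delV(\gamma)\in\Z+\tfrac12\) for every component \(\gamma\). As in the Heegaard Floer case this is automatic for rational tangles, and in general one picks a slope \(s\not\in\Slopes_\Gamma\) and invokes Theorem~\ref{thm:GlueingTheorem:Kh}: up to an overall shift, \(\HF(\rKh_1(-s),\Gamma)\) computes the reduced Khovanov homology of a link, whose \(\delta\)-grading is appropriately quantized, and since the rational filling meets every component non-trivially Lemma~\ref{deflem:delta:Kh:linear_curves} pins down the grading of each component. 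Finally, define \(\Phi(\Gamma)\) as the line set corresponding to the multiset \(\{c(\gamma)\}_{\gamma\in\Gamma}\), where \(s(c(\gamma))=s(\gamma)\), \(\gr(c(\gamma))=\delH(\gamma)\) (or \(\delV(\gamma)+\tfrac12\) if \(s(\gamma)=0\)), and \(\varepsilon(c(\gamma))=1\) if and only if \(\gamma\) is special.

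Properties \ref{enu:reduction:Kh:slopes}, \ref{enu:reduction:Kh:exceptional}, and \ref{enu:reduction:Kh:s-rational} are then immediate from the construction. For non-triviality~\ref{enu:reduction:Kh:trivial}, suppose \(\Phi(\Gamma)\) were trivial; then every component of \(\Gamma\) would be special of some common slope \(s\), and Lemma~\ref{lem:Kh:epsilon_criterion} would force \(\HF(\rKh_1(s),\Gamma)=0\), contradicting the non-vanishing of reduced Khovanov homology of the resulting link via Theorem~\ref{thm:GlueingTheorem:Kh}. For properties \ref{enu:reduction:Kh:spaces} and \ref{enu:reduction:Kh:pairs:Thin}, I would decompose \(\HF(\Gamma,\Gamma')\) as a direct sum of \(\HF(\gamma,\gamma')\) over pairs of components and analyse the summands: distinct-slope pairs contribute a single \(\delta\)-grading equal to \(\delta(\gamma,\gamma')=\gr(c(\gamma),c(\gamma'))\) by Lemma~\ref{deflem:delta:Kh:linear_curves}, while same-slope pairs either vanish (when one is rational and the other special) or are supported in two consecutive \(\delta\)-gradings (otherwise) by Lemma~\ref{lem:Kh:epsilon_criterion}.

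The main obstacle will be property~\ref{enu:reduction:Kh:pairs:Thin}: one has to verify that the combinatorial thin-pair condition of Definition~\ref{def:Thin}, which for same-slope lines requires distinct \(\varepsilon\)-decorations, precisely matches the vanishing-versus-two-grading dichotomy furnished by Lemma~\ref{lem:Kh:epsilon_criterion}, uniformly for both \(G=\Z\) and \(G=\ZZ\). Because the absence of inhibited local systems ensures that a same-slope pair of curves of the same type always has non-zero \(\HF\) supported in two consecutive \(\delta\)-gradings (which is \(G\)-thin for neither choice of \(G\)), this alignment is direct; consequently both directions of~\ref{enu:reduction:Kh:pairs:Thin} hold for \emph{all} multicurves in \(\CurvesKh\), without the Heegaard Floer restriction to well-behaved multicurves.
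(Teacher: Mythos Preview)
Your proposal is correct and follows essentially the same approach as the paper's proof: define \(\gr\) via the pairing \(\delta\) between linear curves, normalize the absolute \(\delta\)-gradings using a rational test filling, and read off the line set componentwise. Two cosmetic remarks: your slope-\(0\) normalization (\(\delV=\gr(c)-\tfrac12\), \(\gr(c(\gamma))=\delV(\gamma)+\tfrac12\)) differs in sign from the paper's (\(\delV=\gr(c)+\tfrac12\), \(\gr(c(\gamma))=\delV(\gamma)-\tfrac12\)), but either convention is internally consistent and yields the required axioms; and in your grading-lift argument you should pair with \(\rKh_1(s)\) rather than \(\rKh_1(-s)\) (or equivalently require \(-s\notin\Slopes_\Gamma\)), since it is the slope of the test curve, not its negative, that must avoid \(\Slopes_\Gamma\) to guarantee non-trivial intersection with every component.
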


\begin{proof}
	Analogously to the proof of Theorem~\ref{thm:reduction:HF}, given \(c\in\Curves\), let \(\gamma(c)\) be an absolutely \(\delta\)-graded linear curve of slope \(s(c)\) such that \(\delH(\gamma(c))=\gr(c)\) if \(s(c)\neq0\) and \(\delV(\gamma(c))=\gr(c)+\tfrac{1}{2}\) if \(s(c)=0\). 
	Then define \(\gr\co \Curves^2\rightarrow G\) by setting for each \(c,c' \in \Curves\)
	\[
	\gr(c,c')\coloneqq \delta(\gamma(c),\gamma(c'))
	\]
	By Corollary~\ref{cor:deltaDifference:Kh:commutativity}, \ref{eq:symmetry} of \(\gr\) holds, and by Theorem~\ref{thm:deltaDifference:Kh:transitivity}, so does \ref{eq:transitivity} of \(\gr\). 
	Moreover, \ref{eq:linearity} of \(\gr\) follows from the definition. 
	
	We can lift the \(\delta\)-grading of all curves in \(\CurvesKh\) to an absolute \(\delta\)-grading such that for each component \(\gamma\) of any element in \(\CurvesKh\) we have \(\delH(\gamma)\in \Z, ~ \delV(\gamma)\in \Z+\tfrac 1 2\). 
	To see that this is possible, we can apply the same arguments as in the proof of Theorem~\ref{thm:reduction:HF}, now using Lemma~\ref{deflem:delta:Kh:linear_curves} in place of Lemma~\ref{deflem:delta:HF:linear_curves} and the fact that reduced Khovanov homology is supported in integer \(\delta\)-gradings up to an overall shift, like knot Floer homology. 
	
	Now, given some absolutely \(\delta\)-graded rational or special curve \(\gamma\) of slope \(s\), let \(c=c(\gamma)\in\Curves\) be the line defined by \(s(c)=s\), \(\varepsilon(c)=0\) if \(\gamma\) is rational and 1 if \(\gamma\) is special, and \(\gr(c)=\delH(\gamma)\) if \(s\neq0\) and \(\delV(\gamma)-\tfrac{1}{2}\) if \(s(c)=0\).
	Then, given some \(\Gamma=\{\gamma_i\}_i\in\CurvesKh\), define \(C(\Gamma)\) as the set corresponding to the multiset \(\{c(\gamma_i)\}_{i}\).
	
	Properties~\ref{enu:reduction:Kh:slopes}--\ref{enu:reduction:Kh:s-rational} follow immediately from the definition of the map~\(\Phi\).
	The property~\ref{enu:reduction:Kh:pairs:Thin} follows from Lemma~\ref{deflem:delta:Kh:linear_curves} and Lemma~\ref{lem:Kh:epsilon_criterion}.
\end{proof}

We can now show those parts of all theorems from the introduction concerning reduced Khovanov homology. Again, we are restating them here for clarity. 

\begin{theorem}[Characterization of Khovanov \(G\)-thin filling spaces; Theorems~\ref{thm:charactisation:ALink:intro} and~\ref{thm:charactisation:Thin:intro}]\label{thm:charactisation:Thin:Kh}
	For any Conway tangle \(T\), \(\ALinkKh(T)\) is either empty, a single point or an interval in \(\QPI\). 
	Furthermore, \(\ThinKh(T)\) is either empty, a single point, two distinct points or an interval in \(\QPI\). 
\end{theorem}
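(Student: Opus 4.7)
The plan is to mirror the proof of Theorem~\ref{thm:charactisation:Thin:HFT} in the Khovanov setting, reducing the geography question for $\Khr$-based filling spaces to the purely combinatorial classification of thin filling spaces for line sets established in Section~\ref{sec:main}. The two ingredients needed are already in place: Theorem~\ref{thm:charactisation:ThinG:main}, which classifies $\Thin_G(C)$ for a non-trivial line set $C$, and Theorem~\ref{thm:reduction:Kh}, which bridges the Khovanov invariants to the combinatorial world of line sets.

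Concretely, given a Conway tangle $T$, set $\Gamma = \Khr(T) \in \CurvesKh$ and consider the associated line set $C = \Phi(\Gamma) \in \mathcal{P}_\mathrm{finite}(\Curves)$, where $\Phi$ is the map provided by Theorem~\ref{thm:reduction:Kh}. By part~\ref{enu:reduction:Kh:trivial} of that theorem, $C$ is non-trivial, so Theorem~\ref{thm:charactisation:ThinG:main} applies to $C$. In particular, $\Thin_\Z(C)$ is empty, a single point, two distinct points, or an interval in $\QPI$, while $\Thin_{\ZZ}(C)$ is empty, a single point, or an interval in $\QPI$.

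To translate these conclusions back to the tangle, I invoke part~\ref{enu:reduction:Kh:spaces} of Theorem~\ref{thm:reduction:Kh}, which asserts $\Thin_G(\Gamma) = \Thin_G(\Phi(\Gamma))$ for both $G = \Z$ and $G = \ZZ$. Combined with the definitions $\ThinKh(T) = \Thin_\Z(\Khr(T))$ and $\ALinkKh(T) = \Thin_{\ZZ}(\Khr(T))$, this yields the two claims of the theorem at once. There is no serious obstacle in this argument: the entire technical content has been isolated in Theorem~\ref{thm:reduction:Kh} (whose verification of the symmetry, transitivity, and linearity properties of $\gr$ relies on Corollary~\ref{cor:deltaDifference:Kh:commutativity} and Theorem~\ref{thm:deltaDifference:Kh:transitivity}) and in the combinatorial result Theorem~\ref{thm:charactisation:ThinG:main}, both already proved. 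The proof is therefore essentially a one-line citation, completely parallel to the Heegaard Floer version.
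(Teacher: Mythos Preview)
Your proof is correct and follows exactly the same approach as the paper: apply Theorem~\ref{thm:reduction:Kh} parts~\ref{enu:reduction:Kh:spaces} and~\ref{enu:reduction:Kh:trivial} to reduce to a non-trivial line set, then invoke Theorem~\ref{thm:charactisation:ThinG:main}.
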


\begin{proof}
	This follows from Theorems~\ref{thm:charactisation:ThinG:main} and \ref{thm:reduction:Kh}~\ref{enu:reduction:Kh:spaces} and~\ref{enu:reduction:Kh:trivial}.
\end{proof}

\begin{proposition}[Proposition~\ref{prop:spaces_coincide:intro}]\label{prop:spaces_coincide:Kh}
	If \(\ThinKh(T)\) is an interval, \(\ThinKh(T)=\ALinkKh(T)\).
\end{proposition}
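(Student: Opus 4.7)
The plan is to mirror the proof of Proposition~\ref{prop:spaces_coincide:HF} in the Khovanov setting, reducing the statement to the purely combinatorial analogue Proposition~\ref{prop:spaces_coincide:main} via the reduction theorem for \(\Khr\).

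Concretely, I would first unfold the definitions: by construction \(\ThinKh(T)=\ThinZ(\Khr(T))\) and \(\ALinkKh(T)=\ThinZZ(\Khr(T))\). Then I would apply Theorem~\ref{thm:reduction:Kh}~\ref{enu:reduction:Kh:spaces} to the line set \(C\coloneqq \Phi(\Khr(T))\) to translate both thin filling spaces into the combinatorial setting, obtaining \(\ThinZ(\Khr(T))=\ThinZ(C)\) and \(\ThinZZ(\Khr(T))=\ThinZZ(C)\). Note that this part of Theorem~\ref{thm:reduction:Kh} does not require the well-behavedness hypothesis relevant in the \(\HFT\) setting, so no additional subtlety enters here.

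Given the hypothesis that \(\ThinKh(T)\) is an interval, the identification above shows that \(\ThinZ(C)\) is an interval. Proposition~\ref{prop:spaces_coincide:main} then yields \(\ThinZ(C)=\ThinZZ(C)\), and translating back through Theorem~\ref{thm:reduction:Kh}~\ref{enu:reduction:Kh:spaces} gives the desired equality \(\ThinKh(T)=\ALinkKh(T)\).

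There is essentially no obstacle here, since all the work has been done in Section~\ref{sec:main} and in Theorem~\ref{thm:reduction:Kh}: the present proposition is a formal consequence of these two inputs, exactly as in the Heegaard Floer case.
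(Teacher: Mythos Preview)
Your proposal is correct and matches the paper's own proof exactly: the paper simply states that the result follows from Theorem~\ref{thm:reduction:Kh}~\ref{enu:reduction:Kh:spaces} in conjunction with Proposition~\ref{prop:spaces_coincide:main}, and you have spelled out precisely this two-step reduction. Your remark that no well-behavedness hypothesis is needed in the Khovanov setting is also accurate and worth noting.
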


\begin{proof}
	This follows from Theorem~\ref{thm:reduction:Kh}~\ref{enu:reduction:Kh:spaces} in conjunction with Proposition~\ref{prop:spaces_coincide:main}.
\end{proof}

In the following, let \(T_1\) and \(T_2\) be two Conway tangles and write \(\Gamma_1\coloneqq\mirror(\Khr(T_1))\) and \(\Gamma_2\coloneqq\Khr(T_2)\). 

\begin{theorem}[A-link Gluing Theorem; Theorem~\ref{thm:gluing:ALink:intro}]\label{thm:glueing:ALink:Kh}
	Let \(T_1\) and \(T_2\) be two Conway tangles. 
	Then \(T_1\cup T_2\) is a Khovanov A-link if and only if
	\begin{enumerate}
		\item 	\(
		\mirrorALinkKh(T_1)
		\cup 
		\ALinkKh(T_2)
		=
		\QPI
		\);
		and
		\item for every slope
		\( s\in 
		\mirrorBdryALinkKh(T_1)
		\cap
		\BdryALinkKh(T_2)
		\), the multicurves \(\Gamma_1\) and \(\Gamma_2\) satisfy~\ref{local:Kh}.
	\end{enumerate}
\end{theorem}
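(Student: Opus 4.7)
The plan is to reduce everything to Theorem~\ref{thm:glueing:ThinG:main} with \(G=\ZZ\), in essentially the same way as the well-behaved case of Theorem~\ref{thm:glueing:ALink:HF}, taking advantage of the fact that in Khovanov homology over \(\fieldTwoElements\) the well-behavedness hypothesis is automatic and the exceptional case vanishes.

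First I would set \(C_i=\Phi(\Gamma_i)\) for \(i=1,2\), where \(\Phi\) is the map from Theorem~\ref{thm:reduction:Kh}. By Lemma~\ref{lem:mirroring:Khr}, \(\Gamma_1=\mirror(\Khr(T_1))=\Khr(T_1^*)\) (up to a bigrading shift which is irrelevant to relative \(\delta\)-thinness), so Theorem~\ref{thm:GlueingTheorem:Kh} gives
\[
\Khr(T_1\cup T_2)\otimes V
\cong
\HF(\Gamma_1,\Gamma_2).
\]
Since \(V\) is supported in a single \(\delta\)-grading, \(T_1\cup T_2\) is a Khovanov A-link (i.e.\ \(\Khr(T_1\cup T_2)\) is \(\ZZ\)-thin) if and only if \(\HF(\Gamma_1,\Gamma_2)\) is \(\ZZ\)-thin. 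By Theorem~\ref{thm:reduction:Kh}\ref{enu:reduction:Kh:pairs:Thin}, this is equivalent to the pair \((C_1,C_2)\) being \(\ZZ\)-thin.

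Next, observe that with \(G=\ZZ\) no line set is exceptional: by \ref{eq:symmetry}, for any two lines \(c,c'\) of different slopes one of \(\gr(c,c'),\gr(c',c)\) must be \(0\bmod 2\). Therefore the hypothesis of Theorem~\ref{thm:glueing:ThinG:main} is vacuous, and that theorem (together with Theorem~\ref{thm:reduction:Kh}\ref{enu:reduction:Kh:trivial}) characterizes \(\ZZ\)-thinness of \((C_1,C_2)\) as the conjunction of
\[
\ThinZZ(C_1)\cup\ThinZZ(C_2)=\QPI
\qquad\text{and}\qquad
\text{for all }s\in\BdryThinZZ(C_1)\cap\BdryThinZZ(C_2),\text{ at least one of }C_i\text{ is \(s\)-rational.}
\]

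The final step is to translate these conditions back to the language of multicurves. By Theorem~\ref{thm:reduction:Kh}\ref{enu:reduction:Kh:spaces}, \(\ThinZZ(C_i)=\ThinZZ(\Gamma_i)=\ALinkKh(T_i)\) for \(i=2\) and \(=\mirrorALinkKh(T_1)\) for \(i=1\) (using Remark~\ref{rem:mirroring:ThinG:Kh}), so condition~(1) of the theorem is exactly the first equality above. Similarly boundaries correspond to boundaries. For the local condition, Theorem~\ref{thm:reduction:Kh}\ref{enu:reduction:Kh:s-rational} says that \(C_i\) is \(s\)-rational if and only if \(\Gamma_i\) is \(s\)-rational, which is precisely \ref{local:Kh}. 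This yields the equivalence claimed by the theorem.

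The main obstacle, if any, is a bookkeeping one: one must verify that the reduction goes through cleanly without any analogue of the ``well-behaved'' hypothesis which was needed in the Heegaard Floer setting. This is where Theorem~\ref{thm:reduction:Kh} is stronger than Theorem~\ref{thm:reduction:HF} (since \(\CurvesKh\) plays the role of \(\CurvesHFwb\), with no inhibited-local-system subtleties), so no delicate case analysis like that following Lemma~\ref{lem:HF:SlopeCaps_equal_ThinCaps} is required here; the proof is a direct application of Theorem~\ref{thm:glueing:ThinG:main}.
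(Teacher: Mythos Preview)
Your proposal is correct and follows essentially the same approach as the paper's proof: reduce to \(\HF(\Gamma_1,\Gamma_2)\) via Theorem~\ref{thm:GlueingTheorem:Kh}, pass to line sets via Theorem~\ref{thm:reduction:Kh}\ref{enu:reduction:Kh:pairs:Thin}, apply Theorem~\ref{thm:glueing:ThinG:main} (noting exceptionality vanishes for \(G=\ZZ\)), and translate back using parts~\ref{enu:reduction:Kh:spaces} and~\ref{enu:reduction:Kh:s-rational}. Your write-up is slightly more explicit in places (e.g.\ justifying why \(V\) does not affect \(\ZZ\)-thinness and why no \(\ZZ\)-line set is exceptional), but the logical skeleton is identical to the paper's.
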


\begin{proof}[Proof of Theorem~\ref{thm:glueing:ALink:Kh}]
	By Theorem~\ref{thm:GlueingTheorem:Kh}, \(T_1\cup T_2\) is a A-link if and only if \(\HF(\Gamma_1,\Gamma_2)\) is \(\ZZ\)-thin. 
	By Theorem~\ref{thm:reduction:Kh}~\ref{enu:reduction:Kh:pairs:Thin}, 
	the latter is equivalent to \((C_1,C_2)=\Phi(\Gamma_1,\Gamma_2)\) being \(\ZZ\)-thin. 
	By Theorem~\ref{thm:glueing:ThinG:main}, this is equivalent to \(\ThinZZ(C_1)\cup\ThinZZ(C_2)=\QPI\) and for all \( s\in 
	\mirrorBdryALinkKh(T_1)
	\cap
	\BdryALinkKh(T_2)
	\), at least one of \(C_1\) and \(C_2\) is \(s\)-rational. 
	By Theorem~\ref{thm:reduction:Kh}~\ref{enu:reduction:Kh:s-rational}, the latter is equivalent to \(\Gamma_1\) and \(\Gamma_2\) satisfying \ref{local:Kh} for all \( s\in 
	\mirrorBdryALinkKh(T_1)
	\cap
	\BdryALinkKh(T_2)
	\). 
	Now use Theorem~\ref{thm:reduction:Kh}~\ref{enu:reduction:Kh:spaces} to conclude.  
\end{proof}

\begin{theorem}[Thin Gluing Theorem; Theorem~\ref{thm:gluing:Thin:intro}]\label{thm:glueing:Thin:Kh}
	Suppose \(T_1\) and \(T_2\) are two Conway tangle and at least one of them is not Khovanov exceptional. 
	Then \(T_1\cup T_2\) is Khovanov thin if and only if
	\begin{enumerate}
		\item 	\(
		\mirrorThinKh(T_1)
		\cup 
		\ThinKh(T_2)
		=
		\QPI
		\);
		and
		\item for every slope 
		\(s\in
		\mirrorBdryThinKh(T_1)
		\cap
		\BdryThinKh(T_2)
		\), the multicurves \(\Gamma_1\) and \(\Gamma_2\) satisfy~\ref{local:Kh}.
	\end{enumerate}
\end{theorem}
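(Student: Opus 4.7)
The plan is to mimic the proof of Theorem~\ref{thm:glueing:ALink:Kh} which precedes this statement, now working with \(G=\Z\) in place of \(G=\ZZ\). The shift from \(\ZZ\) to \(\Z\) opens the door to exceptional line sets, and it is precisely to rule out pathologies arising from a pair of such sets that the extra hypothesis on Khovanov exceptionality is imposed.

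First, by Theorem~\ref{thm:GlueingTheorem:Kh} we have \(\Khr(T_1\cup T_2)\otimes V\cong\HF(\Gamma_1,\Gamma_2)\) as relatively \(\delta\)-graded \(\F\)-vector spaces, with \(\Gamma_1=\mirror(\Khr(T_1))\) and \(\Gamma_2=\Khr(T_2)\). Since \(V\) is supported in a single \(\delta\)-grading, \(T_1\cup T_2\) is Khovanov thin if and only if \(\HF(\Gamma_1,\Gamma_2)\) is \(\Z\)-thin. Setting \(C_i\coloneqq\Phi(\Gamma_i)\) for \(i=1,2\), Theorem~\ref{thm:reduction:Kh}~\ref{enu:reduction:Kh:pairs:Thin} translates this into \((C_1,C_2)\) being \(\Z\)-thin.

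Next, we apply Theorem~\ref{thm:glueing:ThinG:main} with \(G=\Z\) to the pair \((C_1,C_2)\). Its hypotheses are met: by Theorem~\ref{thm:reduction:Kh}~\ref{enu:reduction:Kh:trivial} both \(C_i\) are non-trivial, and the assumption that at least one of \(T_1,T_2\) is not Khovanov exceptional translates into the statement that at least one of \(C_1,C_2\) is not exceptional. This translation requires the converse of Theorem~\ref{thm:reduction:Kh}~\ref{enu:reduction:Kh:exceptional}, namely that exceptionality of \(\Phi(\Gamma)\) forces exceptionality of \(\Gamma\); this follows directly from the construction of \(\Phi\) in the proof of Theorem~\ref{thm:reduction:Kh}, because \(\Phi\) faithfully records slopes, rational/special character, and relative \(\delta\)-gradings, and exceptionality is defined purely in those terms (Definitions~\ref{def:exceptional} and~\ref{def:Kh:exceptional}). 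Theorem~\ref{thm:glueing:ThinG:main} then yields that \((C_1,C_2)\) is \(\Z\)-thin if and only if \(\Thin(C_1)\cup\Thin(C_2)=\QPI\) and for every \(s\in\BdryThin(C_1)\cap\BdryThin(C_2)\), at least one of \(C_1, C_2\) is \(s\)-rational.

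Finally, we translate these two conditions back to the curve world. Parts~\ref{enu:reduction:Kh:slopes}, \ref{enu:reduction:Kh:spaces}, and~\ref{enu:reduction:Kh:s-rational} of Theorem~\ref{thm:reduction:Kh} give \(\Thin(C_i)=\ThinZ(\Gamma_i)\), identify the supporting slopes (and hence the boundary intersection loci), and identify \(s\)-rationality of \(C_i\) with \(s\)-rationality of \(\Gamma_i\), which is the local condition~\ref{local:Kh}. Invoking Remark~\ref{rem:mirroring:ThinG:Kh} to rewrite \(\ThinZ(\Gamma_1)=\mirrorThinKh(T_1)\) and \(\ThinZ(\Gamma_2)=\ThinKh(T_2)\), and likewise for the boundaries, yields precisely conditions~(1) and~(2) of the theorem. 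The only delicate step is the bookkeeping for exceptionality in the second paragraph; everything else is a direct \(\Z\)-analogue of the argument for Theorem~\ref{thm:glueing:ALink:Kh}.
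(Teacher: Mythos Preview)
Your proof is correct and follows the same approach as the paper, which simply remarks that the argument is identical to that of Theorem~\ref{thm:glueing:ALink:Kh} with the exceptionality hypothesis now needed when invoking Theorem~\ref{thm:glueing:ThinG:main}. Your observation that one technically needs the converse direction of Theorem~\ref{thm:reduction:Kh}~\ref{enu:reduction:Kh:exceptional} (namely, that exceptionality of \(\Phi(\Gamma)\) implies exceptionality of \(\Gamma\)) is a valid point of care that the paper glosses over; your justification via the construction of \(\Phi\) is sound.
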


\begin{proof}
	The proof is identical to that of Theorem~\ref{thm:glueing:ALink:Kh}, noting that the additional condition about exceptional multicurves is now needed when applying Theorem~\ref{thm:glueing:ThinG:main}.
\end{proof}

Unlike the Heegaard Floer setting, we do not know whether the exceptionality assumption in Theorem~\ref{thm:glueing:Thin:HF} is required, since Khovanov exceptional tangles have not been observed.

\begin{corollary}[Corollaries~\ref{cor:one_direction:ALink:intro} and~\ref{cor:one_direction:Thin:intro}]\label{cor:one_direction:ThinG:Kh}
	For any Conway tangles \(T_1\) and \(T_2\),
	\begin{align*}
	\mirrorIntALinkKh(T_1)
	\cup 
	\IntALinkKh(T_2)
	=
	\QPI
	&\Rightarrow
	\text{\(L\) is a Khovanov A-link; and}
	\\
	\mirrorIntThinKh(T_1)
	\cup 
	\IntThinKh(T_2)
	=
	\QPI
	&\Rightarrow
	\text{\(L\) is Khovanov thin.}
	\end{align*}
\end{corollary}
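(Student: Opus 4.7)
The plan is to follow the same scheme as the proof of Corollary~\ref{cor:one_direction:ThinG:HF}, with the advantage that the Khovanov setting is actually cleaner: Theorem~\ref{thm:reduction:Kh} applies uniformly to all multicurves in \(\CurvesKh\), so no case split analogous to the well-behaved versus general distinction in the Heegaard Floer setting will be needed.

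First I would set \(\Gamma_1\coloneqq\mirror(\Khr(T_1))=\Khr(T_1^*)\) (using Lemma~\ref{lem:mirroring:Khr}) and \(\Gamma_2\coloneqq\Khr(T_2)\), and then define the line sets \(C_i\coloneqq\Phi(\Gamma_i)\) via the map from Theorem~\ref{thm:reduction:Kh}. By part~\ref{enu:reduction:Kh:trivial} of that theorem, both \(C_1\) and \(C_2\) are non-trivial, so the combinatorial framework of Section~\ref{sec:main} applies. Using part~\ref{enu:reduction:Kh:spaces} together with Remark~\ref{rem:mirroring:ThinG:Kh}, and the fact that taking interiors commutes with the set equalities produced by \(\Phi\) and with mirroring, the hypothesis
\(\mirrorIntThinKh(T_1)\cup\IntThinKh(T_2)=\QPI\)
(respectively the \(\ALinkKh\) analogue) translates into
\(\IntThinG(C_1)\cup\IntThinG(C_2)=\QPI\)
with \(G=\Z\) (respectively \(G=\ZZ\)).

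Next I would apply Corollary~\ref{cor:one_direction:main} to conclude that the pair \((C_1,C_2)\) is \(G\)-thin, and then translate this back across \(\Phi\) using part~\ref{enu:reduction:Kh:pairs:Thin} of Theorem~\ref{thm:reduction:Kh} to deduce that \(\HF(\Gamma_1,\Gamma_2)\) is \(G\)-thin. Finally I would invoke the gluing formula \(\Khr(L)\otimes V\cong \HF(\Gamma_1,\Gamma_2)\) of Theorem~\ref{thm:GlueingTheorem:Kh}; since \(V\) is supported in a single \(\delta\)-grading, tensoring by \(V\) preserves \(G\)-thinness in either sense, so \(\Khr(L)\) itself is \(G\)-thin, which is exactly the conclusion that \(L\) is Khovanov thin (for \(G=\Z\)) or a Khovanov A-link (for \(G=\ZZ\)).

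The proof is essentially bookkeeping, so the only potential obstacle is of a notational nature, namely verifying carefully that mirroring on tangles commutes with all the operations involved: the passage from \(\Khr(T)\) to the associated filling space via \(\Phi\), the interior operation on subsets of \(\QPI\), and the comparison \(\mirror\IntThinKh(T)=\IntThinKh(T^*)\). These are all immediate once one unwinds the definitions, and crucially no hypothesis about Khovanov exceptionality is required because Corollary~\ref{cor:one_direction:main} is proved without such an assumption.
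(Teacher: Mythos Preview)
Your proposal is correct and follows essentially the same approach as the paper's proof: apply \(\Phi\) from Theorem~\ref{thm:reduction:Kh} to translate the hypothesis into the combinatorial condition \(\IntThinG(C_1)\cup\IntThinG(C_2)=\QPI\), invoke Corollary~\ref{cor:one_direction:main}, and translate back via part~\ref{enu:reduction:Kh:pairs:Thin} and Theorem~\ref{thm:GlueingTheorem:Kh}. Your version is in fact slightly more explicit than the paper's (you spell out non-triviality via~\ref{enu:reduction:Kh:trivial} and the mirroring identifications), but the argument is the same.
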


\begin{proof}
	Let \(C_i=\Phi(\Gamma_i)\) for \(i=1,2\). By Theorem~\ref{thm:reduction:Kh}~\ref{enu:reduction:Kh:spaces}, 
	\(
	\mirrorIntALinkKh(T_1)
	\cup 
	\IntALinkKh(T_2)
	=
	\QPI
	\) implies that \(\IntThinZZ(C_1)\cup\IntThinZZ(C_2)=\QPI\). 
	By Corollary~\ref{cor:one_direction:main}, \((C_1,C_2)\) is \(\ZZ\)-thin, so by Theorem~\ref{thm:reduction:Kh}~\ref{enu:reduction:Kh:pairs:Thin}, this implies that \(\HF(\Gamma_1,\Gamma_2)\) is \(\ZZ\)-thin. Now conclude with Theorem~\ref{thm:GlueingTheorem:Kh}. 
	
	The second part follows from the same line of reasoning.
\end{proof}


\section{Examples}\label{sec:examples}
In order to place the theorems in this paper in context, we conclude with a collection of examples. Of note is the fact that the behaviour one encounters in practice is relatively tame, by comparison with the delicate casework seen in the proofs. In particular, if one chooses to focus on the invariants that are encountered in nature---for instance in the examples computed in \cite{KWZ-strong}---most of the forgoing material simplifies considerably. We will attempt to highlight this here, and with the reader who has skipped directly to this section from the introduction in mind, our aim is to present this material in a vaguely self-contained way. 

For simplicity, we will focus on Khovanov homology throughout this section; the analogous statements hold for link Floer homology as well. 
In fact, for many examples, the notions of thinness are independent of the homology theory and the field of coefficients. 
Using the programs \cite{JavaKh} and \cite{HFKcalc}, we checked that through 14-crossing knots the invariants $\Khr(K;\F_2)$, $\Khr(K;\Q)$, $\HFK(K;\F_2)$, and $\HFK(K;\F_3)$ are either simultaneously thin or simultaneously not thin.
However, Shumakovitch gave the following cautionary example \cite{shumakovitch2018torsion}; 
we thank Lukas Lewark for pointing it out to us. 

\begin{example}\label{exa:thinness_depends_on_coefficients:knots}
	The Khovanov homology of the knot \(16^n_{197566}\) in the knotscape knot table \cite{knotscape} is thin when computed with rational coefficients, but not over \(\F_2\). 
	Shumakovitch used unreduced Khovanov homology, but this statement is also true for reduced Khovanov homology; see the example \texttt{K\_16n197566} in \cite{tangle-atlas}. 
	Interestingly, knot Floer homology is thin over \(\F\); we checked this using Szabó's program \cite{HFKcalc}.
	We expand on these calculations in Example~\ref{exa:thinness_depends_on_coefficients:tangles} below.
\end{example}

Despite this example, the following question remains open:

\begin{question}\label{qst:main} Does the notion of thinness agree for Khovanov and Heegaard Floer theories when working with coefficients in $\Q$?\end{question}

\labellist \tiny
\pinlabel $\sKh_4(\infty)$ at 98 213  \pinlabel $\rKh_1(-2)$ at 122 73 
\pinlabel $\sKh_4(1)$ at 422 213  \pinlabel $\rKh_1(2)$ at 298 73 
	  		\endlabellist
\begin{figure}[ht]
\includegraphics[scale=0.75]{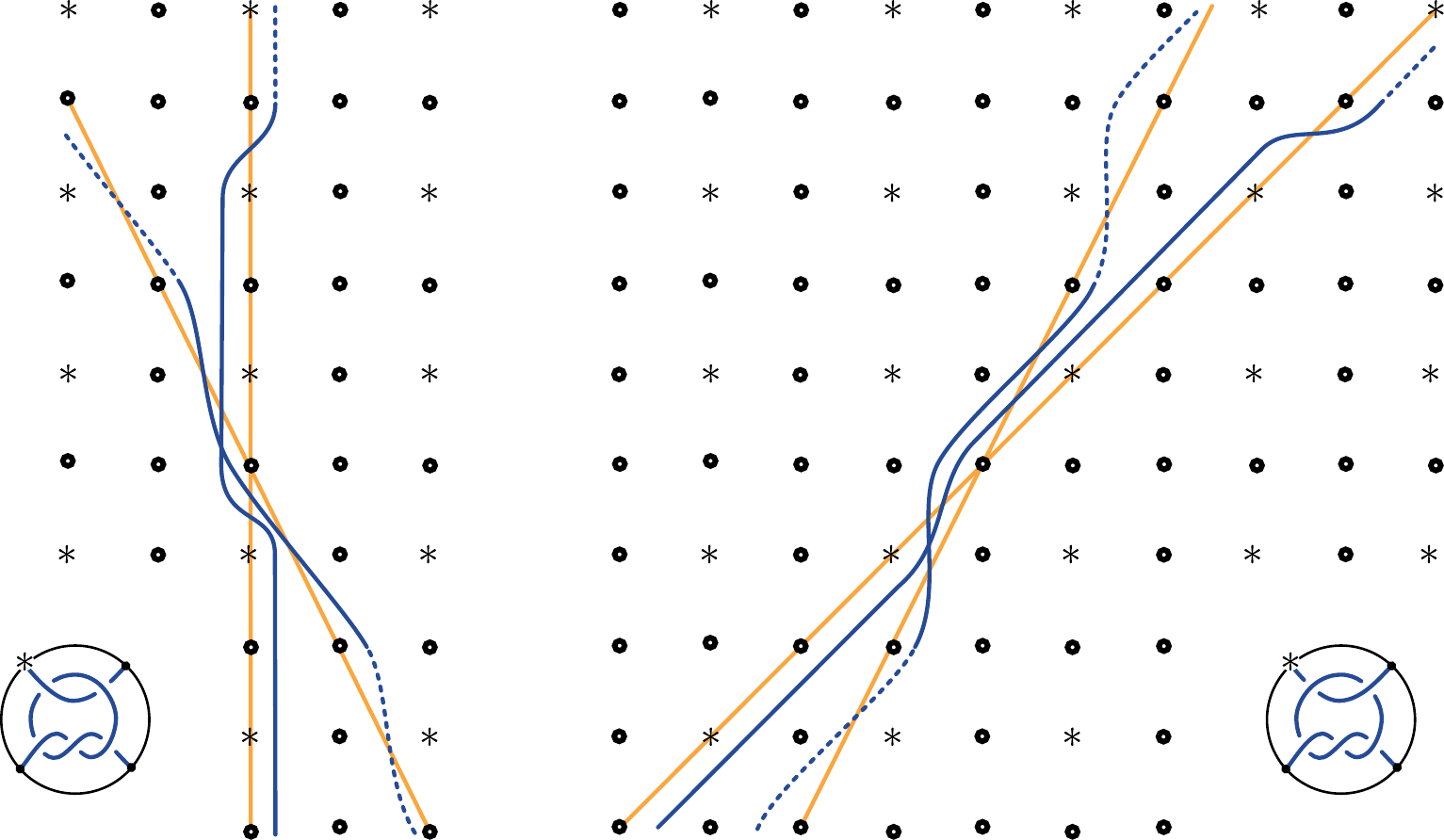}
\caption{The Khovanov invariant of a tangle as curves lifted to the cover $\R^2\smallsetminus\Z^2$. Notice that the tangles $P_{2,-3}$ and $P_{-2,-3}$ are related by twisting the lower endpoints; this is reflected in the plane shear taking one invariant to the other. As expected, both special and rational components (in the sense of Definition \ref{def:RationalVsSpecialKht}) appear.}
\label{fig:prelim}
\end{figure}

It is convenient to describe the Khovanov invariants of tangles in the planar cover $\R^2\smallsetminus\Z^2$ of the tangle boundary minus the tangle endpoints. One reason for this is the somewhat surprising fact, stated in Theorem \ref{thm:geography_of_Khr}, that for any Conway tangle $T$, all of the components are linear. This is illustrated in Figure \ref{fig:prelim}, which revisits Example \ref{exa:Khr:2m3pt}. This particular $(2,-3)$-pretzel tangle serves as a running example through this section.
Note that Theorem \ref{thm:Kh:Twisting} says that $\Khr$ commutes with the action of the mapping class group; this group is generated by a pair of plane shears on $\R^2\smallsetminus\Z^2$. 
As a result, the bottom braid move relating the pretzel tangles $P_{2,-3}$ and $P_{-2,-3}$ lifts to a linear transformation of the planar cover. For the class of tangles admitting an unknot closure, there is a sense in which the behaviour one sees is not more complicated than that observed in this single example; see \cite{KWZ-strong} for more. This is an ungraded statement, however---the grading information is subtle and important. 
 
\subsection{Rational tangles and two-bridge knots: conventions}\label{sub:rat} We begin by providing a cheat sheet of sorts in order to fix our conventions. The left-hand trefoil, expressed as the closure of the three-crossing rational tangle $Q_3$ by the trivial tangle $Q_0$, is shown in Figure~\ref{fig:prelude}. With this example we mean to highlight that there is a strong interplay between the Khovanov and Bar-Natan invariants of a given tangle. 
Indeed, while we have been working almost exclusively with $\Khr(T_1\cup T_2)\otimes V$ to this point (see Section~\ref{sec:review:Kh:gluing}), recall that $\Khr(T_1\cup T_2)$ can also be recovered by considering $\HF(\Khr(T_1^*),\BNr(T_2))$. To compute the Floer homology in the planar cover, it is sufficient to consider the \emph{preimage} of one multicurve, the \emph{lift} of the other multicurve, and then count intersections after pulling tight.  This strategy is used on the left of Figure~\ref{fig:prelude}: the preimage of $\Khr(Q_0)$ are the lines of slope $0$, and the lift of the invariant $\Khr(Q_3)$ is the line of slope $3$ missing the punctures, twice longer than the other line of slope 3 shown. The latter line, which passes through punctures, coincides with the lift of the Bar-Natan invariant $\BNr(Q_3)$. 
The fact that the trefoil is thin is well known; through the lens of our results, this is the fact that a line of slope 3 in the plane intersects a line of slope 0 once.  

\begin{figure}[t]
\includegraphics[scale=0.75]{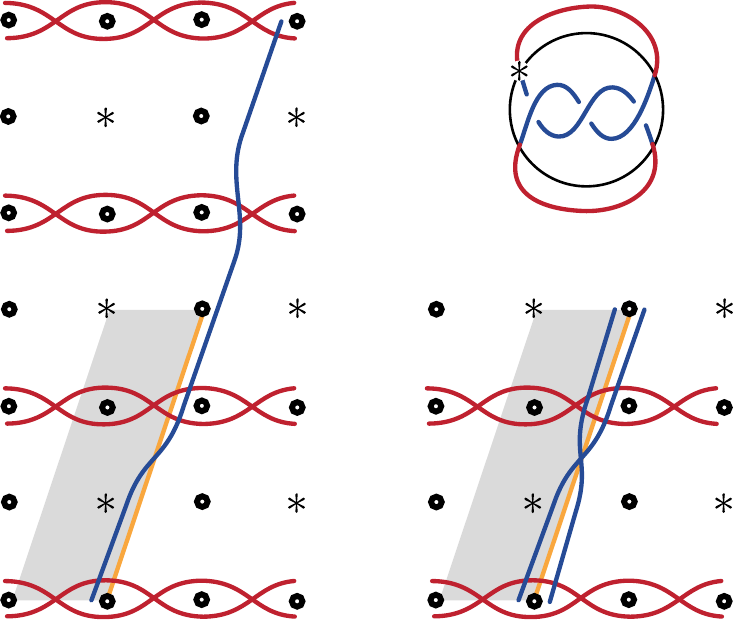}
\caption{A decomposition of the trefoil knot into the three-crossing rational tangle $Q_3$ and the trivial tangle \(Q_0\) (top right) and the corresponding computation of the reduced Khovanov homology of the trefoil knot in terms of Lagrangian Floer homology in the covering space (left):
$\Khr(Q_0\cup Q_3)\otimes V 
 \cong \HF(\Khr(Q_0^*),\Khr(Q_3)) = \F^6$ and $\Khr(Q_0^* \cup Q_3) \cong  \HF(\Khr(Q_0^*),\BNr(Q_3)) =\F^3$. 
A shorthand for this calculation is depicted on the bottom-right.
}
\label{fig:prelude}
\end{figure}

More generally, a central observation in this work is that the invariant of a rational tangle corresponds to/is controlled by a line of the appropriate rational slope. While this has come up repeatedly already, we review this basic fact here in order to make some conventions concrete and transparent; see Figure \ref{fig:rational-closures}. Rational fillings of the trivial tangle $Q_0$ are non-split two-bridge links, with the exception of the slope 0 rational filling, which is the two-component unlink. This unlink is not an A-link. However, non-split two-bridge links are alternating and hence thin by~\cite[Theorem~3.12]{Lee2005}, see also \cite[Theorem~1]{ManolescuOzsvath2008}. So we know that \(\Thin(Q_0)=\ALink(Q_0)=\QPI\smallsetminus\{0\}\).

\labellist 
\small
\pinlabel $\beta$ at 414 85
\tiny
\pinlabel $[2]$ at 22 2  \pinlabel $[3]$ at 22 82
	\pinlabel $[1,1,1]$ at 269 2  \pinlabel $[1,1,2]$ at 269 82
	  		\endlabellist
\begin{figure}[t]
\includegraphics[scale=0.75]{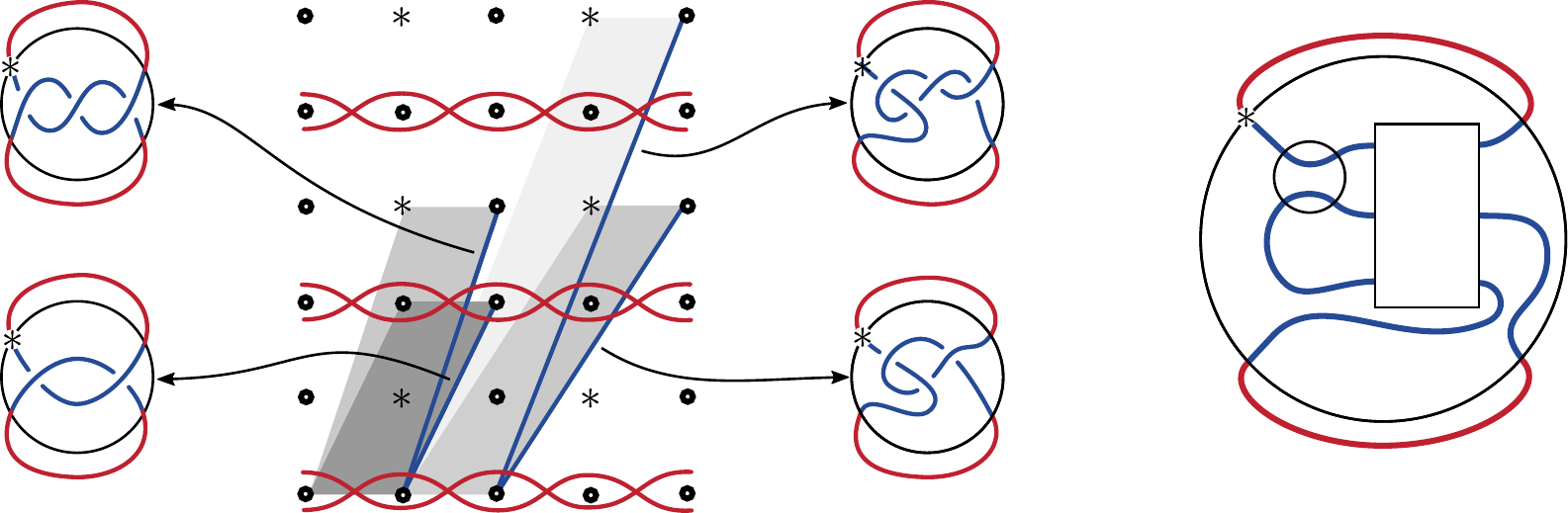}
\caption{Some closures of rational tangles associated with odd-length positive continued fractions, together with their Khovanov homology groups calculated via intersection in the plane according to the shorthand explained in Figure \ref{fig:prelude}. Note that $[3]=\nicefrac{3}{1}$ is the left-hand trefoil while $[1,1,1]=\nicefrac{3}{2}$ is the right-hand trefoil.  We recover the Khovanov homology for the Hopf link (dimension 2), both trefoils (dimension 3), and the figure-eight knot (dimension 5). In general, the alternating three-braid $\beta=\sigma_2^{-a_0}\sigma_1^{a_1}\sigma_2^{-a_2}\sigma_1^{a_3}\cdots\sigma_2^{-a_n}$, inserted into the tangle as indicated on the right, gives rise to the two-bridge link associated with the odd-length continued fraction $[a_0,a_1,\ldots,a_{n}]$.}
\label{fig:rational-closures}
\end{figure}

We now consider this in more detail, making the choice to vary the rational tangle and to fix the particular closure---the numerator closure $Q_0$ as in Figure \ref{fig:rational-closures}. 
Given a positive, reduced rational number $\nicefrac{p}{q}\ge1$, there is a unique non-split two-bridge link associated with it. To construct it, one chooses an odd-length positive continued fraction expansion
$$
\nicefrac{p}{q}
=
[a_0,a_1,\ldots,a_n]
=
a_n+\frac{1}{a_{n-1}+\frac{1}{\dots+\frac{1}{a_0}}}
$$
where $a_i>0$ and $n>0$ is even. Since $[a_0,\ldots,a_{n}] = [1,a_0-1,\ldots,a_{n}]$, such a continued fraction expansion always exists.  With this choice in hand, Figure~\ref{fig:rational-closures} illustrates some examples of two-bridge knots obtained as the numerator closures of rational tangles. 
Each rational number is associated with a slope in the plane, and the intersection of the corresponding line with the preimage of $\Khr(Q_0)$ in the plane calculates the Khovanov homology of the associated two-bridge knot. We have shown the slopes $\nicefrac{3}{2}<2<\nicefrac{5}{2}<3$ in the plane to illustrate these thin fillings.  The fact that the numerator $p$ calculates the determinant {\it and} the dimension of the reduced Khovanov homology is a helpful check for these examples. It can be instructive to consider the base-length 1 parallelograms determined by the Khovanov invariants in each case; the added twists dictated by the continued fraction correspond in a natural way to the plane shears moving between any two parallelograms. Moreover, with the above conventions in place, the area of the parallelogram agrees with the determinant of the link.

\subsection{An aside on alternating fillings} In these first examples, thinness was deduced from the stronger statement that all tangle fillings in question were alternating. In general, we can say a little more. 
For terminology, we say a tangle diagram is alternating if the crossings alternate between under and over crossings as one travels along the tangle, regardless of where one starts. We call a tangle diagram connected if the underlying planar graph is connected. 

\begin{proposition}\label{prp:alternating}
	For any tangle \(T\) admitting a connected alternating diagram, the space of thin fillings (relative to a choice of alternating tangle diagram) contains either \([\infty,0]\) or \([0,\infty]\). Moreover, these thin fillings are in fact alternating fillings. 
\end{proposition}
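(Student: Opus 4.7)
The plan is to directly exhibit a continuous family of alternating fillings parametrized by a closed arc of slopes, and then appeal to the classical thinness results for alternating links. This is really a combinatorial observation about diagrams, rather than an argument requiring the tangle invariant machinery of the preceding sections.

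First, I would fix a connected alternating diagram $D$ for $T$, which induces a checkerboard coloring of $\partial B^3 \smallsetminus \partial T$. Near each of the four tangle endpoints, the strand meets the boundary sphere with a prescribed over/under behavior at its closest crossing in $D$; collectively, these four local patterns determine a canonical ``sign'' at the boundary indicating which side of each boundary point can receive an extra crossing that continues the alternating pattern. After possibly replacing $D$ by its mirror (which swaps the roles of the two arcs $[0,\infty]$ and $[\infty,0]$ in $\QPI$), I may assume without loss of generality that this preferred sign is ``positive'', so that inserting a right- or bottom-twist of the appropriate handedness into $T$ yields a larger connected alternating diagram.

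Second, for every slope $s \in [0,\infty] \cap \QPI$ (traversed counter-clockwise, so including both endpoints), I would build the rational tangle $Q_s$ in its standard alternating form using a positive continued-fraction expansion. The two endpoints $s=0$ and $s=\infty$ correspond to the two trivial crossingless rational tangles and extend $D$ vacuously. For $s \in (0,\infty)$, one writes $s = [a_0, a_1, \dots, a_n]$ with every $a_i$ a positive integer and realizes $Q_s$ as an iterated sequence of positive right- and bottom-twists starting from a trivial tangle; by construction every crossing in $Q_s$ has the same sign. Gluing this $Q_s$ into $\partial B^3$ according to Figure~\ref{fig:tanglepairing} produces a diagram $D(s)$ of $T(s)$ whose new crossings lie on the ``positive'' side of the boundary in the sense of the previous paragraph, so $D(s)$ is connected and alternating.

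Third, by Lee's theorem~\cite{Lee2005} for Khovanov homology and by Ozsv\'ath-Szab\'o~\cite{OS_alternating,OSHFL} for knot Floer homology, any alternating link is thin. Consequently every $T(s)$ with $s \in [0,\infty]$ is thin, so $[0,\infty] \subseteq \Thin(T)$; the ``moreover'' statement that these fillings are alternating is built into the construction. The only delicate point is the continued fraction bookkeeping in the second step, namely checking that positive continued fractions genuinely sweep out all slopes in the closed arc $[0,\infty]$ and that each resulting tangle, when glued to $D$, really does extend the alternating pattern rather than breaking it. This reduces to the standard fact that the canonical alternating form of a rational tangle with positive slope uses only positive crossings, together with the invariance of the ``positive boundary sign'' of $D$ under the two generators of the mapping class group action on $\FourPuncturedSphere$ restricted to the arc $[0,\infty]$.
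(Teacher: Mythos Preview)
Your argument is correct in spirit and reaches the same conclusion, but it takes a somewhat different route from the paper's proof, so a brief comparison is worthwhile.

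The paper's proof verifies only three fillings directly: it observes that both \(T(0)\) and \(T(\infty)\) are connected alternating diagrams, and that exactly one of \(T(+1)\), \(T(-1)\) is as well. Having three thin slopes in hand, the paper then invokes the Characterization Theorem~\ref{thm:charactisation:Thin:intro} to conclude that \(\Thin(T)\) is an interval containing one of \([0,\infty]\), \([\infty,0]\). Only afterwards, for the ``moreover'' clause, does it sketch the direct continued-fraction argument you give. Your approach bypasses Theorem~\ref{thm:charactisation:Thin:intro} entirely and goes straight to the continued-fraction construction for every slope in the arc; this is more elementary and self-contained, and in fact suffices for both assertions at once. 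The paper's route, on the other hand, illustrates how the structural theorems of Section~\ref{sec:main} can shortcut casework.

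Two small points of care in your write-up. First, you should say explicitly that a connected alternating diagram represents a \emph{non-split} link (the paper cites \cite[Theorem~4.2]{Lickorish-IntroToKnotTheory}); this is needed because Lee's and Ozsv\'ath--Szab\'o's thinness results are stated for non-split alternating links, and split links can fail to be thin. Second, your ``replace \(D\) by its mirror'' reduction is a bit imprecise: mirroring changes the tangle, not just the parametrization. It is cleaner to phrase the dichotomy as the paper does---either adding a positive or a negative crossing to the closure preserves alternation---which yields \([0,\infty]\) or \([\infty,0]\) respectively without altering \(T\).
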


It is interesting to compare Proposition~\ref{prp:alternating} to a result of Bar-Natan and Burgos-Soto \cite[Theorem~1]{B-N-B-S2014}. 
When restricted to Conway tangles, their result says that the vertical (horizontal) intersection points of \(\Khr(T)\) have the same \(\delta\)-grading \(\delV\) (\(\delH\)), and that \(\delV\) and \(\delH\) differ by \(\pm\tfrac{1}{2}\). 
The fact that both \(\delV\) and \(\delH\) are constant implies that \(\Khr(T)\) neither contains any special component of slope \(0\) nor any special component of slope  \(\infty\).
Indeed, observe that special components of slope \(0\) contain two pairs of generators whose \(\delta\)-gradings are equal to \(\delV-\tfrac{1}{2}\) and  \(\delV+\tfrac{1}{2}\), respectively. 
Similarly, any special component of slope \(\infty\) contains four generators whose \(\delta\)-gradings are equal to \(\delH-\tfrac{1}{2}\) and  \(\delH+\tfrac{1}{2}\).
In fact, \(\ThinKh(T)\) does not contain any rational component of slope \(0\) or \(\infty\) either, since \(0,\infty\in\ThinKh(T)\). 
This implies the following strengthening of Proposition~\ref{prp:alternating}:

\begin{corollary}
	For any tangle \(T\) admitting a connected alternating diagram, \(\ThinKh(T)\) contains an open interval containing both \(\infty\) and \(0\).
	\qed
\end{corollary}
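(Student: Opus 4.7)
The plan is to take the closed half-interval provided by Proposition~\ref{prp:alternating} and enlarge it slightly past the endpoints $0$ and $\infty$, using the characterization results from Section~\ref{sec:main} together with the Bar-Natan--Burgos-Soto restriction on $\Khr(T)$ for connected alternating diagrams. First I would invoke Proposition~\ref{prp:alternating} to place $\{0,\infty\}$ inside $\ThinKh(T)$ together with a connected sub-arc of $\ThinKh(T)$ joining these two slopes.

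Next, I would argue that $\Khr(T)$ has no component---rational or special---of slope $0$ or $\infty$. As observed in the paragraph preceding the corollary, \cite[Theorem~1]{B-N-B-S2014} forces all vertical intersection points of $\Khr(T)$ to lie in a single $\delta$-grading $\delta_V$ and all horizontal intersection points to lie in a single $\delta$-grading $\delta_H$ differing from $\delta_V$ by $\tfrac{1}{2}$. Since any special component of slope $0$ carries two pairs of generators in $\delta$-gradings $\delta_V\pm\tfrac{1}{2}$, and any special component of slope $\infty$ carries generators in $\delta_H\pm\tfrac{1}{2}$, no such components can appear in $\Khr(T)$. The absence of rational components of slope $0$ or $\infty$ then follows from Lemma~\ref{lem:Thin:endpoints_are_not_contained}, transferred to the multicurve setting via Theorem~\ref{thm:reduction:Kh}\ref{enu:reduction:Kh:slopes}, \ref{enu:reduction:Kh:spaces} and~\ref{enu:reduction:Kh:s-rational}, applied to the slopes $0$ and $\infty$, which already lie in $\ThinKh(T)$ by the first step.

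Finally, since $\{0,\infty\}$ is disjoint from $\Slopes_{\Khr(T)}$, Observation~\ref{obs:BoundaryOfThinSubseteqSlopes} (via the same translation) implies that $\{0,\infty\}$ is disjoint from $\partial\ThinKh(T)$, so both $0$ and $\infty$ lie in the interior of $\ThinKh(T)$. Combined with the connected arc from the first step, this yields an open interval in $\ThinKh(T)$ containing both $0$ and $\infty$, as required. The only substantive obstacle in this chain of reasoning is the prerequisite Proposition~\ref{prp:alternating} itself, whose proof requires a combinatorial construction of alternating rational fillings and an appeal to Lee's thinness theorem; granting that proposition, the remainder of the argument is a routine application of the combinatorial framework of Section~\ref{sec:main} together with the grading restriction of Bar-Natan and Burgos-Soto.
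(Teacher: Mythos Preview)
Your proposal is correct and follows essentially the same route as the paper: use Proposition~\ref{prp:alternating} to place $0$ and $\infty$ in $\ThinKh(T)$, invoke Bar-Natan--Burgos-Soto to rule out special components of slope $0$ or $\infty$, then use the fact that $0,\infty\in\ThinKh(T)$ to rule out rational components of those slopes, and conclude via Observation~\ref{obs:BoundaryOfThinSubseteqSlopes} that $0$ and $\infty$ are interior points. One small remark: your citation of Theorem~\ref{thm:reduction:Kh}\ref{enu:reduction:Kh:s-rational} is slightly misdirected, since that property concerns \emph{special} components (being $s$-rational means having no special components of slope $s$); the passage from ``$\Phi(\Gamma)$ has no rational line of slope $s$'' to ``$\Gamma$ has no rational curve of slope $s$'' follows instead directly from the construction of $\Phi$ in the proof of Theorem~\ref{thm:reduction:Kh}, or alternatively from Lemma~\ref{lem:Kh:epsilon_criterion} applied directly.
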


For \(\HFT\), a similar result seems plausible. 
In fact, the corresponding statement about the horizontal and vertical \(\delta\)-grading also holds for \(\HFT(T)\), which follows from the Generalised Clock Theorem \cite{PrizeEssay}. 
However, \(\HFT\) may contain rational components of slope \(0\) or \(\infty\) that carry inhibited local systems, see Definition~\ref{def:inhibited} and Remark~\ref{rem:inhibited}. 

\begin{proof}[Proof of Proposition~\ref{prp:alternating}]
	Consider a connected alternating diagram \(D\) of the tangle \(T\). 
	The two closures of \(D\) representing the links \(T(0)\) and \(T(\infty)\) are alternating diagrams and, since they are non-split, the links \(T(0)\) and \(T(\infty)\) are non-split \cite[Theorem~4.2]{Lickorish-IntroToKnotTheory}. 
	Similarly, either the \(+1\)- or \(-1\)-closure of \(D\) is an alternating diagram, so at least one of \(T(+1)\) and \(T(-1)\) is an alternating non-split link. 
	Any alternating non-split link has thin Khovanov homology. 
	So \(\Thin(T)\) contains \(0\), \(\infty\) and either \(+1\) or \(-1\). By Theorem~\ref{thm:charactisation:Thin:intro}, it is therefore an interval containing either \([0,\infty]\) or \([\infty,0]\). 

One can now check directly that \(T(s)\) is alternating either for all positive or for all negative \(s\in\QPI\). Indeed, without loss of generality, suppose that $T(+1)$ is alternating. Then according to our conventions $T(n)$ is an alternating diagram for all $n\ge 0$. More generally, we simply observe that choosing an odd-length continued fraction representing a positive rational number $s$ (compare Figure \ref{fig:rational-closures}), the closure $T(s)$ is an alternating diagram. 
\end{proof}

One can easily check the proposition on the class of two-bridge links, for example, by starting from a rational tangle diagram with one crossing.

\subsection{A more instructive example.} 
Perhaps the simplest non-rational tangle without closed components is the $(2,-3)$-pretzel tangle \(P_{2,-3}\). The Khovanov invariant associated with this tangle is given in Figure \ref{fig:prelim} and revisited in Figure \ref{fig:first-pretzel}. It consists of a special component (the curve of slope \(\infty\)) and a rational component (the curve of slope \(-2\)). 

\labellist \tiny
	\pinlabel $\simeq$ at 190 27  
	\pinlabel $\simeq$ at 190 112
	\pinlabel $\simeq$ at 190 125
	\pinlabel $\simeq$ at 190 212
	 		\endlabellist
\begin{figure}[t]
\includegraphics[scale=0.75]{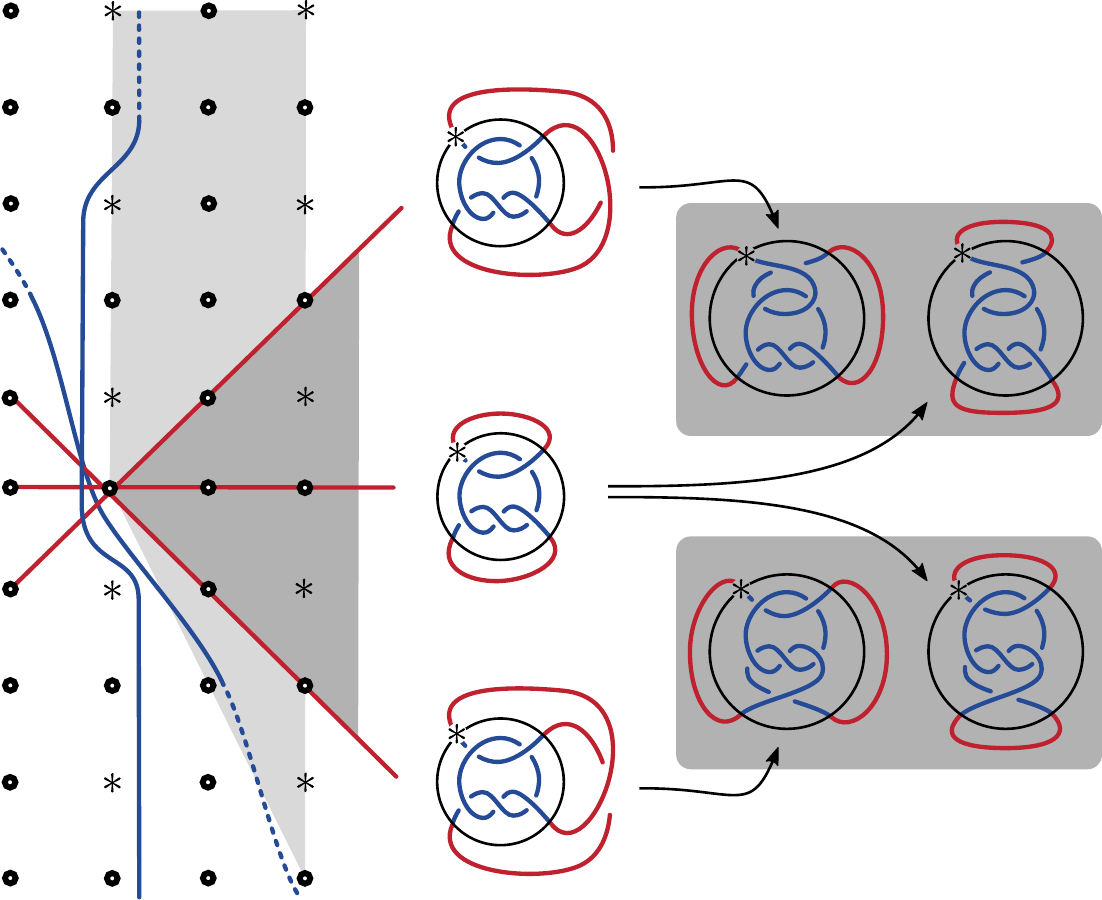}
\caption{The invariant for $P_{2,-3}$ illustrating that $\ThinKh(P_{2,-3})=\protect\ALinkKh(P_{2,-3})=(-2,\infty]$. The fillings $-1$, $0$, and $+1$ have been indicated, each of which is an alternating link. Notice that, after an appropriate isotopy fixing the tangle boundary on each of the links in the shaded boxes, the closures we have identified are realized as closures of alternating tangles. As a result, $[-1,1]\subset (-2,\infty]$ gives a subset of alternating fillings according to Proposition \ref{prp:alternating}. 
}
\label{fig:first-pretzel}
\end{figure}

To compute the spaces of thin and A-link fillings of this tangle, observe that the 0-rational filling \(P_{2,-3}(0)\) is a connected sum of the trefoil knot and the Hopf link. 
So this filling is thin and an A-link.
(Alternatively, this follows from the fact that the horizontal \(\delta\)-gradings \(\delH\) of the two components of \(\Khr(P_{2,-3})\) agree; see Table~\ref{tab:prime_tangles} on page~\pageref{tab:prime_tangles}.) 
Having found one thin filling whose slope does not agree with one of the supporting slopes of \(\Khr(P_{2,-3})\), we now know that \(\ThinKh(P_{2,-3})\) and \(\ALinkKh(P_{2,-3})\) are intervals containing \(0\) with endpoints \(-2\) and \(\infty\) and we know that those intervals agree. 
Since there is a rational component of \(\Khr(P_{2,-3})\) of slope \(-2\), the endpoint \(-2\) is not contained in this interval; for the opposite reason, \(\infty\) \emph{is} contained in the interval. 
In summary, \(\ThinKh(P_{2,-3})=\ALinkKh(P_{2,-3})=(-2,\infty]\); see Figure \ref{fig:first-pretzel}. 
As a check, one might consider the knot $P_{2,-3}(-3)$: This pretzel knot is the knot $8_{19}$ in the Rolfsen knot table, which is the first non-thin knot encountered in enumerated examples. 

The Heegaard Floer invariant \(\HFT(P_{2,-3})\) consists of a single rational component (with trivial local system) of slope \(-2\) and a conjugate pair of special components of slope \(\infty\); see Table~\ref{tab:prime_tangles}. 
Repeating the same arguments as above, we see that \(\ThinHF(P_{2,-3})=\ALinkHF(P_{2,-3})=(-2,\infty]\).

We can now revisit the observations made about alternating fillings in this setting: As indicated in Figure \ref{fig:first-pretzel}, there is a sequence of three alternating tangle fillings given by $-1$, $0$, and $+1$. So, a transformation of the plane taking either of $\{1,0\}$ or $\{-1,0\}$ to $\{\infty, 0\}$ (compare Figure \ref{fig:prelim}) together with an application of Proposition \ref{prp:alternating} gives two infinite collections of alternating fillings. Expressed in the framing shown, there is a subset of alternating fillings $[-1,1]\subset (-2,\infty]=\Thin(P_{2,-3})$. More generally, we remark that  the  subset $[-1,\infty]\subset (-2,\infty]$  gives rise to an infinite family of quasi-alternating fillings (this is established in \cite{Watson2011}). Of course, adding a single positive twist to the top of the \((2,-3)\)-pretzel tangle yields the \((-2,-3)\)-pretzel tangle \(P_{-2,-3}\). The invariant \(\Khr(P_{-2,-3})\) is obtained from a plane shear as shown in Figure \ref{fig:prelim}, so that \(\Thin(P_{-2,-3})=(2,1]\subset\QPI\). 


\subsection{Bar-Natan curves} In the context of Khovanov invariants, thinness can also be defined in terms of Bar-Natan homology, a generalization of Khovanov homology taking the form of a bigraded \(\field[H]\)-module. Recall that for a (pointed) link $L$ with \(|L|\) components, we have that $\BNr(L)\cong \field[H]^{2^{|L|-1}}\oplus H$-torsion. If \(L\) is a knot,  the quantum grading of the term $\field[H]\subset \BNr(K)$ agrees with Rasmussen's $s$-invariant over \(\field\). In this subsection we make some general observations that hold over any field $\field$. 

\begin{definition}
	We call a \(\delta\)-graded \(\field[H]\)-module \(M\) thin 
	if the \(H\)-torsion part of \(M\) and a homogeneous generating set of the free part of \(M\) are supported in a single \(\delta\)-grading.
\end{definition}
It suffices to focus on reduced Khovanov homology, according to the following observation. 
\begin{proposition}\label{prop:BNr_thin_equals_Khr_thin}
	For any pointed link \(L\) and field \(\field\), \(\BNr(L;\field)\) is thin iff \(\Khr(L;\field)\) is thin. 
\end{proposition}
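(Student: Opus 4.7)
The plan is to express $\Khr(L;\field)$ explicitly in terms of the $\field[H]$-module structure of $\BNr(L;\field)$, and then read off the equivalence of thinness from the structure theorem for finitely generated $\field[H]$-modules.

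First, I would establish from the mapping cone definition
\[
\DD_1(T)=\bigl[q^{-1}\delta^{1/2}\DD(T)\xrightarrow{H\cdot\id}q^1\delta^{1/2}\DD(T)\bigr]
\]
of Section~\ref{sec:review:Kh:definition}, applied to the link $L$ (viewed either as a closed tangle, or via a tangle decomposition together with the gluing theorem), a long exact sequence relating $\BNr(L;\field)$ and $\Khr(L;\field)$. Since $\field$ is a field, the sequence splits, giving the relatively $\delta$-graded isomorphism
\[
\Khr(L;\field)\;\cong\; \delta^{1/2}\bigl(\BNr(L;\field)/H\cdot\BNr(L;\field)\bigr)\;\oplus\;\delta^{1/2}\bigl(\ker H\bigr),
\]
where $H$ acts on $\BNr(L;\field)$ by multiplication.

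Next, I would apply the structure theorem for finitely generated $\field[H]$-modules to write
\[
\BNr(L;\field)\;\cong\; \bigoplus_{j=1}^{2^{|L|-1}} \delta^{a_j}\field[H]\;\oplus\;\bigoplus_{i}\delta^{b_i}\bigl(\field[H]/H^{n_i}\bigr)
\]
for some $a_j, b_i\in\tfrac{1}{2}\Z$ and $n_i\geq 1$. Directly from the definition, $\BNr(L;\field)$ is thin if and only if there is a single value $\delta_0$ with $a_j=\delta_0$ for all $j$, $b_i=\delta_0$ for all $i$, and $n_i=1$ for all $i$: the last condition is forced because the basis element $H^k\in\delta^{b_i}(\field[H]/H^{n_i})$ sits in $\delta$-grading $b_i-k$, so the $H$-torsion part is supported in a single grading only when $n_i=1$.

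Then I would compute kernel and cokernel of $H$ summand by summand: a free summand $\delta^{a_j}\field[H]$ contributes $0$ to $\ker H$ and a single copy of $\field$ in $\delta$-grading $a_j$ to the cokernel; a torsion summand $\delta^{b_i}(\field[H]/H^{n_i})$ contributes $\field$ in $\delta$-grading $b_i-n_i+1$ to the kernel and $\field$ in $\delta$-grading $b_i$ to the cokernel. Feeding these into the splitting above, the $\delta$-support of $\Khr(L;\field)$ equals
\[
\bigl\{a_j+\tfrac{1}{2}\bigr\}_j\;\cup\;\bigl\{b_i+\tfrac{1}{2}\bigr\}_i\;\cup\;\bigl\{b_i-n_i+\tfrac{3}{2}\bigr\}_i.
\]
If $\BNr(L;\field)$ is thin in grading $\delta_0$ with all $n_i=1$, this set collapses to $\{\delta_0+\tfrac{1}{2}\}$, so $\Khr(L;\field)$ is thin. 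Conversely, if this set is $\{d_0\}$, then $a_j=b_i=d_0-\tfrac{1}{2}$ and $b_i-n_i+\tfrac{3}{2}=d_0$ force $n_i=1$, whence $\BNr(L;\field)$ is thin with $\delta_0=d_0-\tfrac{1}{2}$.

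The main technical hurdle is the first step: justifying the splitting $\Khr(L;\field)\cong \delta^{1/2}(\coker H)\oplus \delta^{1/2}(\ker H)$ as a relatively $\delta$-graded isomorphism of link-level invariants, since the mapping cone identity is originally stated at the tangle level. The remainder of the argument is elementary bookkeeping with the structure theorem.
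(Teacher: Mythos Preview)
Your proof is correct. The forward direction is essentially the paper's, though you make the kernel/cokernel computation explicit via the structure theorem for $\field[H]$-modules where the paper simply asserts that $\Khr$ is the homology of the two-term complex $\BNr\xrightarrow{H}\BNr$. For the converse, the paper takes a genuinely different route: rather than decomposing $\BNr$ and reading off the $\delta$-support of $\Khr$ from $\ker H$ and $\coker H$, it invokes the fact that $\Khr(L;\field)$ can be promoted to a type~D structure over $\field[H]$ whose differentials are labelled by powers of $H$; thinness of $\Khr$ then forces every label to be $H^1$ on grading grounds, and the identity $\CBNr\simeq\Khr^{\field[H]}\boxtimes{}_{\field[H]}\field[H]_{\field[H]}$ returns a thin $\BNr$. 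Your argument is more elementary and self-contained, needing only the PID structure theorem; the paper's argument fits its bordered/type-D framework but relies on an external reference. The hurdle you flag is not one: the paper's own proof begins by asserting the mapping cone formula $\CKhr(L;\field)\simeq[\CBNr(L;\field)\xrightarrow{H}\CBNr(L;\field)]$ directly at the link level.
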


\begin{proof}
	On the level of chain complexes \(\CBNr(L;\field)\) determines \(\CKhr(L;\field)\) via a mapping cone formula:
	\[
	\CKhr(L;\field)\simeq
	\Big[\begin{tikzcd}
	q^{-2}h^{-1}\delta^0\CBNr(L;\field) 
	\arrow{r}{H}
	&
	q^{0}h^{0}\delta^0\CBNr(L;\field)
	\end{tikzcd}\Big]
	\]
	Consequently, \(\Khr(L;\field)\) is the homology of some map
	\[
	\begin{tikzcd}
	q^{-2}h^{-1}\delta^0\BNr(L;\field) 
	\arrow{r}{}
	&
	q^{0}h^{0}\delta^0\BNr(L;\field)
	\end{tikzcd}
	\]
	sending each generator of a free summand to itself times \(H\). Therefore, if \(\BNr(L;\field)\) is thin, so is \(\Khr(L;\field)\). 
	
	Conversely, suppose \(\Khr(L;\field)\) is thin. Recall that \(\Khr(L;\field)\) can be promoted to a type~D structure \(\Khr(L;\field)^{\field[H]}\) by connecting pairs of generators by differentials labelled by some powers of \(H\) \cite[Sections~3.2 and~3.3, in particular Proposition~3.6]{KWZ}. Since \(\Khr(L;\field)\) is assumed to be thin, the only possible labels are \(H\). Together with 
	\[
	\CBNr(L;\field)_{\field[H]}
	\simeq
	\Khr(L;\field)^{\field[H]}\boxtimes \prescript{}{\field[H]}{\field[H]}_{\field[H]}
	\] 
	establishes the result.
\end{proof}

We can extract the following from the final steps of the proof:  

\begin{corollary} If $L$ is a thin link then the torsion part of the $\field[H]$-module \(\BNr(L;\field)\) agrees with $\ker(H)$. In particular, $2\operatorname{rk}(\ker H)+ 2^{|L|-1} = \det(L)$. 
	\qed
\end{corollary}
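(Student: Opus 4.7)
The plan is to extract both claims directly from the argument already used in the proof of Proposition~\ref{prop:BNr_thin_equals_Khr_thin}. Recall the key observation in the last paragraph of that proof: if $\Khr(L;\field)$ is thin, then the type~D structure $\Khr(L;\field)^{\field[H]}$ can be taken to have differentials labelled only by $H$ (not by higher powers $H^k$), since the source and target of any such arrow must differ in $\delta$-grading by exactly $\tfrac12$. Tensoring with the $\field[H]$-$\field[H]$-bimodule $\field[H]$ then recovers $\BNr(L;\field)$ as an $\field[H]$-module whose torsion summands are exclusively of the form $\field[H]/H$.

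From here the first claim is a one-line verification: for a summand $\field[H]/H$, multiplication by $H$ is the zero map, so the entire summand lies in $\ker(H)$, whereas multiplication by $H$ on any summand $\field[H]$ is injective. Therefore the torsion submodule of $\BNr(L;\field)$ coincides exactly with $\ker(H)$. Write $t \coloneqq \operatorname{rk}(\ker H)$ for the number of such torsion summands, so that as an $\field[H]$-module $\BNr(L;\field) \cong \field[H]^{2^{|L|-1}} \oplus (\field[H]/H)^{t}$.

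For the counting formula, I would use the mapping cone relation
\[
\CKhr(L;\field) \simeq [\, q^{-2}h^{-1}\CBNr(L;\field) \xrightarrow{H} \CBNr(L;\field)\,]
\]
recalled in the proof of Proposition~\ref{prop:BNr_thin_equals_Khr_thin}. Passing to homology gives a short exact sequence
\[
0 \to \coker\bigl(H\co \BNr\to\BNr\bigr) \to \Khr(L;\field) \to \ker\bigl(H\co\BNr\to\BNr\bigr) \to 0
\]
of $\field$-vector spaces (with an appropriate grading shift on the kernel term). Plugging in the description of $\BNr(L;\field)$ above yields $\dim_\field \coker(H) = 2^{|L|-1}+t$ and $\dim_\field\ker(H)=t$, and hence $\dim_\field \Khr(L;\field) = 2^{|L|-1} + 2t$. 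Finally, thinness forces $\dim_\field \Khr(L;\field) = |\chi_\delta \Khr(L;\field)| = \det(L)$, giving the asserted identity $2\operatorname{rk}(\ker H) + 2^{|L|-1} = \det(L)$.

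There is no real obstacle here: both statements are essentially bookkeeping on top of the structural fact, already established in the proof of Proposition~\ref{prop:BNr_thin_equals_Khr_thin}, that thinness forces all $H$-labelled arrows in $\Khr(L;\field)^{\field[H]}$ to have exponent one. The only mild subtlety is being careful with the $\delta$-grading shift coming from the mapping cone when writing the short exact sequence; this just needs the observation that the maps are homogeneous of known degree, which ensures the dimension count above does not depend on grading bookkeeping.
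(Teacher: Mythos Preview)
Your proposal is correct and matches the paper's approach: the paper simply states that the corollary is ``extracted from the final steps of the proof'' of Proposition~\ref{prop:BNr_thin_equals_Khr_thin} and gives no further argument, so you have filled in exactly the details the paper leaves implicit. One small imprecision: you write that the source and target of an $H^k$-arrow ``must differ in $\delta$-grading by exactly $\tfrac12$'', but the actual constraint (and what the paper uses) is that thinness puts all generators in a single $\delta$-grading, which forces $k=1$; this does not affect the validity of your argument.
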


As a result, it is possible to define A-links in terms of Bar-Natan homology. 

\begin{definition} Let $N$ be the dimension of the torsion part of the $\field[H]$-module \(\BNr(L;\field)\) as a $\field$-vector space. Then $L$ is an A-link whenever $2N+2^{|L|-1}=\det(L)$.
\end{definition}

Of course, assuming full support in the sense of Definition \ref{def:full}, we have that (Bar-Natan) A-links are (Bar-Natan) thin links. One can also check that this definition of A-link agrees with the definition given in the introduction asking that the total dimension of the reduced Khovanov homology agree with the determinant of the link.

The reduced Bar-Natan homology \(\BNr(T)\) of a Conway tangle \(T\) satisfies a gluing theorem similar to the one for \(\Khr\) \cite[Theorem~7.2]{KWZ}:
\[
\BNr(T_1\cup T_2)
\cong
\HF\left(\BNr(T_1^*),\BNr(T_2)\right)
\]
Here, the right-hand side denotes the \emph{wrapped} Lagrangian Floer homology of the two tangle invariants. 
As Example~\ref{exa:Khr:2m3pt} illustrates, and as we reiterate here, the components of the multicurve \(\BNr(T)\) need not be linear. 
If the multicurve consists of just a single component, this allows us to compute the space of thin fillings very easily from the space of tangent slopes. 

We illustrate how this is done in the example of the curve \(\BNr(P_{2,-3})\) for the \((2,-3)\)-pretzel tangle \(P_{2,-3}\) from Figure~\ref{fig:Kh:example}. 
A lift \(\tilde{\gamma}\) of this curve to \(\PuncturedPlane\) is redrawn in Figure~\ref{fig:BN-Kh}. 
Consider the family of ``\(\varepsilon\)-peg-board representatives'' \(\tilde{\gamma}_\varepsilon\) of \(\tilde{\gamma}\), ie representatives of the homotopy class of \(\tilde{\gamma}\) which have minimal length among all representatives of distance \(\varepsilon\in(0,\nicefrac{1}{2})\) to all punctures in \(\PuncturedPlane\) except the two punctures at the ends of \(\tilde{\gamma}\). 
Following \cite{HRW}, the intuition behind this definition is to think of the punctures of \(\PuncturedPlane\) as pegs of radii \(\varepsilon\) and to imagine pulling the curve \(\tilde{\gamma}\) ``tight'', like a rubber band. 
If \(\tau_\varepsilon\) denotes the set of rational tangent slopes of the curve \(\tilde{\gamma}_\varepsilon\) then the interior of \(\ALinkKh(P_{2,-3})\) is equal to the complement of~\,\(\bigcap\tau_{\varepsilon}\).
Here, the obstruction to being an A-link is the existence of bigons near the points where the limit curve of \(\tilde{\gamma}_\varepsilon\) as \(\varepsilon\rightarrow0\) (the ``singular peg-board representative'') changes its slope; this is illustrated in Figure~\ref{fig:changing_direction}.

\labellist \small
	\pinlabel $\F$ at 252 27 \pinlabel $\F$ at 252 40 \pinlabel $\F$ at 252 53 \pinlabel $\F$ at 252 66 \pinlabel $\F$ at 252 92
	 \pinlabel $\F$ at 457 40 \pinlabel $\F$ at 457 66 \pinlabel $\F[H]$ at 460 92
	 \pinlabel $\delta$ at 274 19  \pinlabel $q$ at 242 113
	 \pinlabel $\delta$ at 476 19 \pinlabel $q$ at 444 113.5
		\endlabellist
\begin{figure}[t]
\includegraphics[scale=0.75]{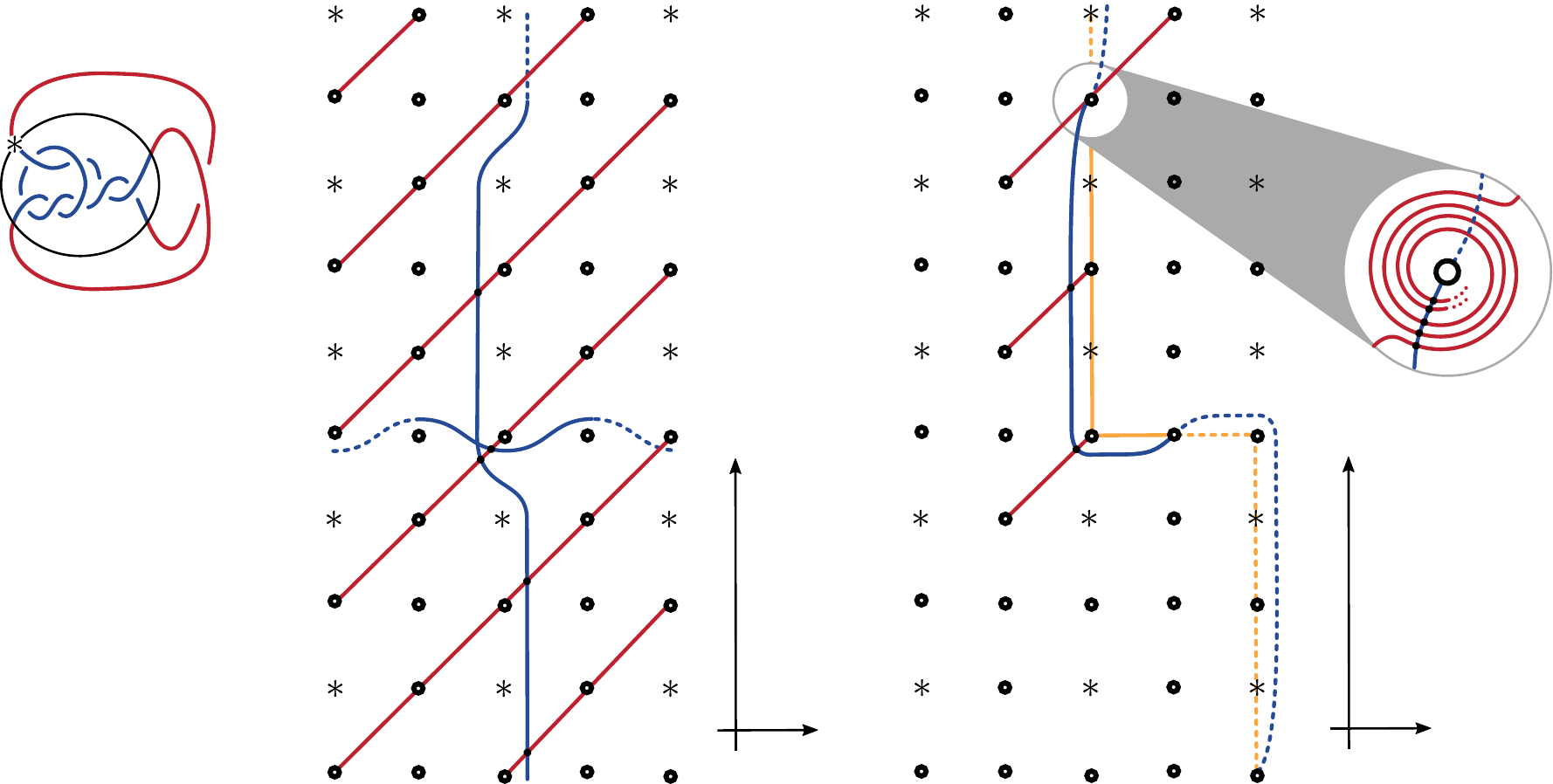}
\caption{The reduced Khovanov (left) and Bar-Natan (right) invariants associated with the cinqfoil, a thin knot, obtained as the closure of the (reframed) $(2,-3)$-pretzel tangle. Note that the framing given here is such that the thin filling interval is $(0,\infty]$, as determined by the pulled-tight curve (in yellow) shown for the Bar-Natan invariant on the right.}
\label{fig:BN-Kh}
\end{figure}

This bears a strong resemblance to how the space of L-space fillings \(\Lspace(M)\) of a three-manifold \(M\) with torus boundary is characterized via the immersed curve invariant \(\HFhat(M)\) due to Hanselman, Rasmussen, and the second author. There, it is shown that the interior of \(\Lspace(M)\) is equal to the complement of the space of rational tangent slopes of the singular peg-board representative of \(\HFhat(M)\) \cite[Theorem~54]{HRW}. 

\begin{figure}
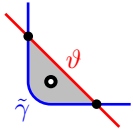

	\centering
	\(\ThinnessObstructionBNr\)
	\caption{The rational filling of \(\textcolor{blue}{\tilde{\gamma}}\) along the slope of the straight line \(\textcolor{red}{\vartheta}\) is not thin, since the \(\delta\)-gradings of the two intersection points that are connected by the shaded bigon differ by 1.}\label{fig:changing_direction}
\end{figure}

\subsection{A-links and L-spaces} \label{sec:examples:def_Lspaces}
Given a link $L$, let $\boldsymbol{\Sigma}_L$ denote the two-fold branched cover of $S^3$ with branch set $L$. (Similarly, we will use $\boldsymbol{\Sigma}_T$ to denote the two-fold branched cover of $B^3$ with branch set the tangle arcs of $T$.)
Owing to the fact that $\Khr(L^*)$ arises as the $E_2$-page of a spectral sequence computing $\HFhat(\boldsymbol{\Sigma}_L)$ \cite{OzsvathSzabo2005}, one might naturally wonder about the relationship between L-spaces and A-links. In particular, one expects an interplay between $\ALinkKh(T)$ and $\Lspace(\boldsymbol{\Sigma}_T)$. Before exploring this relationship further, we make some general comments about the definition of L-spaces.  In Section~\ref{sec:intro}, L-spaces were  introduced as solutions of the identity
\[
\dim\HFhat(Y) = \chi\HFhat(Y).
\]
Usually, L-spaces are defined in terms of the following two conditions:
\begin{enumerate}[label=(\alph*)]
	\item being a rational homology sphere, that is $b_1(Y)=0$; and
	\item satisfying \(\dim\HFhat(Y) = |H_1(Y;\Z)|\).
\end{enumerate}
Coefficients are often chosen to be in \(\fieldTwoElements\); we do the same and suppress \(\fieldTwoElements\) in our notation.
We observe that these two definitions are equivalent. For rational homology spheres  we have the equality \(\chi\HFhat(Y)=|H_1(Y;\Z)|\) and if \(b_1(Y)>0\) then \(\chi\HFhat(Y)=0\) \cite{OzsvathSzabo2004}. So it suffices to show:  

\begin{proposition}\label{prop:HF_does_not_vanish}
	\(\HFhat(Y)\) does not vanish for any three-manifold \(Y\).
\end{proposition}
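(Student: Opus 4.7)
The plan is to show that for every closed oriented three-manifold $Y$ there exists a \(\Spinc\) structure \(\mathfrak{s}\in\Spinc(Y)\) with \(\HFhat(Y,\mathfrak{s})\neq 0\), since the direct sum decomposition \(\HFhat(Y)=\bigoplus_\mathfrak{s}\HFhat(Y,\mathfrak{s})\) reduces the statement to a single summand. First I would single out a \emph{torsion} \(\Spinc\) structure on \(Y\), i.e. one whose first Chern class is a torsion element of \(H^2(Y;\Z)\). Such a structure always exists: every orientable three-manifold is parallelizable, hence admits a Spin structure, and any Spin structure has \(c_1=0\) and in particular is torsion.

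For such a torsion \(\mathfrak{s}\), the next step is to invoke the computation of \(\HF^\infty\) due to Ozsv\'ath and Szab\'o~\cite{OzsvathSzabo2004}, which yields an isomorphism
\[
\HF^\infty(Y,\mathfrak{s})\;\cong\;\Lambda^\ast\bigl(H^1(Y;\Z)/\mathrm{torsion}\bigr)\otimes_{\Z}\F[U,U^{-1}]
\]
as modules over \(\F[U,U^{-1}]\). In particular \(\HF^\infty(Y,\mathfrak{s})\) is nonzero regardless of whether \(b_1(Y)\) is zero or positive, since even when \(H^1(Y;\Z)/\mathrm{torsion}=0\) the right-hand side contains \(\F[U,U^{-1}]\) as a summand. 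Feeding this into the long exact sequence
\[
\cdots\to\HF^-(Y,\mathfrak{s})\to\HF^\infty(Y,\mathfrak{s})\to\HF^+(Y,\mathfrak{s})\to\HF^-(Y,\mathfrak{s})[1]\to\cdots
\]
and using that \(U\) acts as an isomorphism on \(\HF^\infty\) while \(\HF^-\) is \(U\)-complete, one deduces that \(\HF^+(Y,\mathfrak{s})\) contains at least one full ``tail of a tower'' \(\F[U^{-1}]\) on which \(U\) is not surjective: the bottom-most generator fails to be in the image of \(U\).

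Finally I would appeal to the other tautological long exact sequence
\[
\cdots\to\HFhat(Y,\mathfrak{s})\to\HF^+(Y,\mathfrak{s})\xrightarrow{\;U\;}\HF^+(Y,\mathfrak{s})\to\HFhat(Y,\mathfrak{s})[1]\to\cdots
\]
coming from the short exact sequence \(0\to\CFhat\to\CF^+\xrightarrow{U}\CF^+\to 0\). Since the cokernel of the \(U\)-action on \(\HF^+(Y,\mathfrak{s})\) is nonzero by the previous step, the long exact sequence forces \(\HFhat(Y,\mathfrak{s})\neq 0\), completing the proof.

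The main obstacle is really the input from \(\HF^\infty\): everything rests on the nontriviality of \(\HF^\infty(Y,\mathfrak{s})\) for torsion \(\mathfrak{s}\), which is the content of the structure theorem of Ozsv\'ath and Szab\'o. One could try a more hands-on inductive argument using surgery exact triangles and the fact that \(\HFhat(S^3)=\F\), but controlling non-vanishing through the triangle requires keeping track of enough extra structure (gradings or Euler characteristics) to prevent cancellation, so the route via \(\HF^\infty\) is by far the most economical.
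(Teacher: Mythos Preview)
Your overall strategy matches the paper's: reduce to showing \(\HF^\infty(Y,\mathfrak{s})\neq 0\) for a torsion \(\Spinc\) structure, then pass back to \(\HFhat\). The passage from \(\HF^\infty\neq 0\) to \(\HFhat\neq 0\) is fine in spirit (the paper does it via the spectral sequence from \(\HFhat(Y)\otimes\F[U,U^{-1}]\) to \(\HF^\infty(Y)\), which is perhaps cleaner than your long exact sequence argument, but they are morally the same).

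The genuine gap is your formula for \(\HF^\infty\). The isomorphism
\[
\HF^\infty(Y,\mathfrak{s})\;\cong\;\Lambda^\ast H^1(Y;\Z)\otimes\F[U,U^{-1}]
\]
is \emph{not} a theorem of Ozsv\'ath and Szab\'o in general; they proved it only for \(b_1(Y)\leq 2\) and \emph{conjectured} the general answer \cite[Conjecture~4.10]{OSHFinfty}. The correct statement, proved by Lidman \cite{LidmanHFinfty}, is that \(\HF^\infty(Y,\mathfrak{s})\) is the homology of a complex with underlying module \(\Lambda^\ast H^1(Y;\Z)\otimes\F[U,U^{-1}]\) and a nontrivial differential \(\Lambda^i\otimes U^j\to\Lambda^{i-3}\otimes U^{j-1}\) built from the triple cup product. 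For instance, for \(Y=T^3\) this differential is nonzero and the homology has rank \(6\) over \(\F[U,U^{-1}]\), not \(8\). So non-vanishing of \(\HF^\infty\) is not immediate once \(b_1\geq 3\): one must argue that this differential cannot kill everything. The paper does this by computing the Euler characteristic of the complex (after setting \(U=1\)) in the ring \(\Z[x]/(x^3=-1)\), obtaining \((1+x)^{b_1(Y)}\), and observing that this is nonzero by embedding the ring in \(\mathbb{C}\). You need to supply this step (or an equivalent one) to close the argument.
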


We are not aware of a reference for this fact in the literature; Jake Rasmussen suggested the following argument. 

\begin{proof}[Proof of Proposition~\ref{prop:HF_does_not_vanish}]
	By the definition of \(\HF^\infty(Y)\) as the homology of \(\CFhat(Y)\otimes\F[U,U^{-1}]\) with higher
	differentials, there exists a spectral sequence from \(\HFhat(Y)\otimes\F[U,U^{-1}]\) to \(\HF^\infty(Y)\). 
	Therefore, it suffices to show that \(\HF^\infty(Y)\) does not vanish. 
	Lidman computed these groups for all closed orientable three-manifolds \cite[Theorem~1.1]{LidmanHFinfty}; compare \cite[Conjecture~4.10]{OSHFinfty}. He showed that for any torsion \(\Spinc\)-structure \(\mathfrak{s}\), 
	one can write \(\HF^\infty(Y,\mathfrak{s})\) as the homology of a chain
	complex whose underlying chain module is equal to 
	\[
	\Lambda^*(H^1(Y;\Z))\otimes\fieldTwoElements[U,U^{-1}]
	\]	
	and with differential of the form
	\[
	\Lambda^i(H^1(Y;\Z))\otimes U^j 
	\rightarrow 
	\Lambda^{i-3}(H^1(Y;\Z))\otimes U^{j-1}.
	\]
	(Torsion \(\Spinc\)-structures always exist: it suffices to recall that isomorphism classes of oriented plane fields on a closed and oriented three-manifold are determined by elements of $H^2(Y;\Z)$, and choose a plane-field on $Y$ with vanishing Euler class.) 
	In particular, the quotient \(Q\) obtained from \(\HF^\infty(Y,\mathfrak{s})\) by
	setting \(U=1\) is the homology of a chain complex whose underlying chain
	module is
	\[
	\Lambda^*(H^1(Y;\Z))\otimes\fieldTwoElements
	\]	
	and whose differential lowers the grading of the exterior product by 3. The
	Euler characteristic of this complex is
	\[
	\sum_i x^i \dim
	\left(
		\Lambda^i(H^1(Y;\Z))\otimes\fieldTwoElements
	\right)\in R\coloneqq\Z[x]/(x^3=-1).
	\]
	Note that this value remains invariant under taking
	homology. So the Euler characteristic of the quotient \(Q\) is equal to the
	Euler characteristic of 
	\(\Lambda^*(H^1(Y;\Z))\otimes\fieldTwoElements\), which is equal to
	\[
	(1+x)^a	\in R \]
	where $a = \dim(H^1(Y;\Z)\otimes\fieldTwoElements)$. This element is non-zero, which can be seen by embedding \(R\) into the complex plane. So \(Q\) is non-zero, and so is \(\HF^\infty(Y,\mathfrak{s})\).
\end{proof}

In the introduction, we pointed out a close relationship between Khovanov A-links and L-spaces:

\begin{theorem}\label{thm:ALink_implies_Lspace}
	If \(L\) is a Khovanov A-link then the two-fold branched cover \(\boldsymbol{\Sigma}_L\) is an L-space.
\end{theorem}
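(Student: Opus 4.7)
The proof plan is to combine the Ozsv\'ath--Szab\'o spectral sequence with the standard inequalities between $\HFhat$ and $H_1$.

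First, I would dispose of the case $\det(L)=0$. If $L$ is a Khovanov A-link then $\dim\Khr(L;\F)=|V_L(-1)|=\det(L)$, but reduced Khovanov homology of any link is non-zero (it contains a copy of $\F$ coming from any generator that survives the unknot detection argument, or simply because $\Khr\otimes V$ is non-zero). Hence $\det(L)\neq 0$, which in turn guarantees $|H_1(\boldsymbol{\Sigma}_L;\Z)|=\det(L)<\infty$, so that $\boldsymbol{\Sigma}_L$ is a rational homology sphere. In particular $\chi\HFhat(\boldsymbol{\Sigma}_L)=|H_1(\boldsymbol{\Sigma}_L;\Z)|$.

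Next I would recall the upper bound provided by the Ozsv\'ath--Szab\'o spectral sequence \cite{OzsvathSzabo2005}: there is a spectral sequence with $E_2$-page isomorphic to $\Khr(L^*;\F)$ and converging to $\HFhat(\boldsymbol{\Sigma}_L;\F)$ (up to a well-understood grading convention), which yields
\[
\dim\HFhat(\boldsymbol{\Sigma}_L)\;\leq\;\dim\Khr(L^*)\;=\;\dim\Khr(L)\;=\;\det(L),
\]
the last equality being the A-link hypothesis.

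Finally, for any rational homology sphere $Y$ one has the lower bound $\dim\HFhat(Y)\geq|H_1(Y;\Z)|$, which follows from the Euler characteristic computation $\chi\HFhat(Y)=|H_1(Y;\Z)|$ together with the fact that $\HFhat$ splits over $\Spinc$-structures and is non-zero in each such summand (Proposition~\ref{prop:HF_does_not_vanish} applied summand-wise). Chaining the inequalities
\[
|H_1(\boldsymbol{\Sigma}_L;\Z)|\;\leq\;\dim\HFhat(\boldsymbol{\Sigma}_L)\;\leq\;\det(L)\;=\;|H_1(\boldsymbol{\Sigma}_L;\Z)|
\]
forces equality throughout, so $\boldsymbol{\Sigma}_L$ is an L-space. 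The only point requiring any care is justifying the $\Spinc$-refined non-vanishing used for the lower bound; this can be extracted from the argument of Proposition~\ref{prop:HF_does_not_vanish} applied to each $\Spinc$-summand, or cited directly from the Ozsv\'ath--Szab\'o construction of $\HFhat$ together with the fact that $\HF^\infty(Y,\mathfrak{s})\neq 0$ for every $\mathfrak{s}$ on a rational homology sphere.
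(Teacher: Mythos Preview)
Your proof is correct and follows essentially the same route as the paper: chain the Ozsv\'ath--Szab\'o spectral sequence inequality $\dim\HFhat(\boldsymbol{\Sigma}_L)\leq\dim\Khr(L^*)=\dim\Khr(L)$ with the lower bound $\dim\HFhat(\boldsymbol{\Sigma}_L)\geq|H_1(\boldsymbol{\Sigma}_L;\Z)|=\det(L)$, and use the A-link hypothesis to force equality. You are slightly more explicit than the paper in first ruling out $\det(L)=0$ (via non-vanishing of $\Khr$) and in justifying the lower bound via $\Spinc$-wise non-vanishing, but these are elaborations of the same argument rather than a different approach.
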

\begin{proof}
	The Ozsváth--Szabó spectral sequence \cite{OzsvathSzabo2005} from \(\Khr(L^*)\) to \(\HFhat(\boldsymbol{\Sigma}_L)\) implies that 
	\[
	\dim\Khr(L)=\dim\Khr(L^*) \geq \dim\HFhat(\boldsymbol{\Sigma}_L)\geq |H_1(\boldsymbol{\Sigma}_L;\Z)|=\det(L)
	\] 
	so the claim follows from the fact that A-links satisfy \(\dim\Khr(L)=\det(L)\).  
\end{proof}

\begin{corollary}\label{cor:ALink_implies_Lspace}
	For any Conway tangle \(T\), \(\ALinkKh(T)\subseteq\Lspace(\boldsymbol{\Sigma}_T)\). \qed
\end{corollary}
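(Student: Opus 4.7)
The plan is to deduce this directly from Theorem~\ref{thm:ALink_implies_Lspace} by unpacking the definitions of both \(\ALinkKh(T)\) and \(\Lspace(\boldsymbol{\Sigma}_T)\) in terms of rational fillings, and then identifying branched covers of rational fillings of \(T\) with Dehn fillings of \(\boldsymbol{\Sigma}_T\).

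Concretely, I would pick an arbitrary slope \(s=\nicefrac{p}{q}\in\ALinkKh(T)\), so that by definition the link \(T(s)\) is a Khovanov A-link. Applying Theorem~\ref{thm:ALink_implies_Lspace} immediately gives that \(\boldsymbol{\Sigma}_{T(s)}\) is an L-space. It remains to show that \(\boldsymbol{\Sigma}_{T(s)}\) is identified with a Dehn filling of \(\boldsymbol{\Sigma}_T\) along the slope \(s\). This is the standard Montesinos trick: the rational tangle \(Q_{-s}\) used to close \(T\) has two-fold branched cover homeomorphic to a solid torus, and the gluing of \(Q_{-s}\) to \(T\) along the Conway sphere lifts to the gluing of that solid torus to \(\boldsymbol{\Sigma}_T\) along \(\partial\boldsymbol{\Sigma}_T=T^2\). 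Under the natural identification of slopes on \(\partial\boldsymbol{\Sigma}_T\) with elements of \(\QPI\) (i.e.\ preimages of tangle closure slopes), this gluing corresponds to Dehn filling \(\boldsymbol{\Sigma}_T\) along \(s\). Hence \(\boldsymbol{\Sigma}_{T(s)}=\boldsymbol{\Sigma}_T(s)\), so \(s\in\Lspace(\boldsymbol{\Sigma}_T)\).

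There is no real obstacle here; the only point one should be slightly careful about is the conventions for the slope identification---namely, that the slope \(s\) that parametrizes the rational closure \(T(s)\) agrees (up to the convention for meridian/longitude of \(\partial\boldsymbol{\Sigma}_T\)) with the Dehn filling slope on \(\boldsymbol{\Sigma}_T\). Once conventions are aligned (as they have been throughout the paper via the identification of rational tangles with \(\QPI\)), the corollary follows in one line.
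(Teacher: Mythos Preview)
Your proposal is correct and is exactly the intended argument: the paper marks this corollary with \qed immediately after Theorem~\ref{thm:ALink_implies_Lspace}, indicating it follows at once by applying that theorem to each rational filling and using the Montesinos identification \(\boldsymbol{\Sigma}_{T(s)}\cong\boldsymbol{\Sigma}_T(s)\). You have simply spelled out the one-line deduction in full.
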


\labellist \tiny
	\pinlabel $\lambda$ at 55 15 \pinlabel $\mu$ at 102 53
	 \pinlabel $(4,\infty]$ at 267 250 \pinlabel $[1,\infty]$ at 320 215
	 \pinlabel $\lambda$ at 347 116 \pinlabel $\mu$ at 318 150
		\endlabellist
\begin{figure}[b]
\includegraphics[scale=0.75]{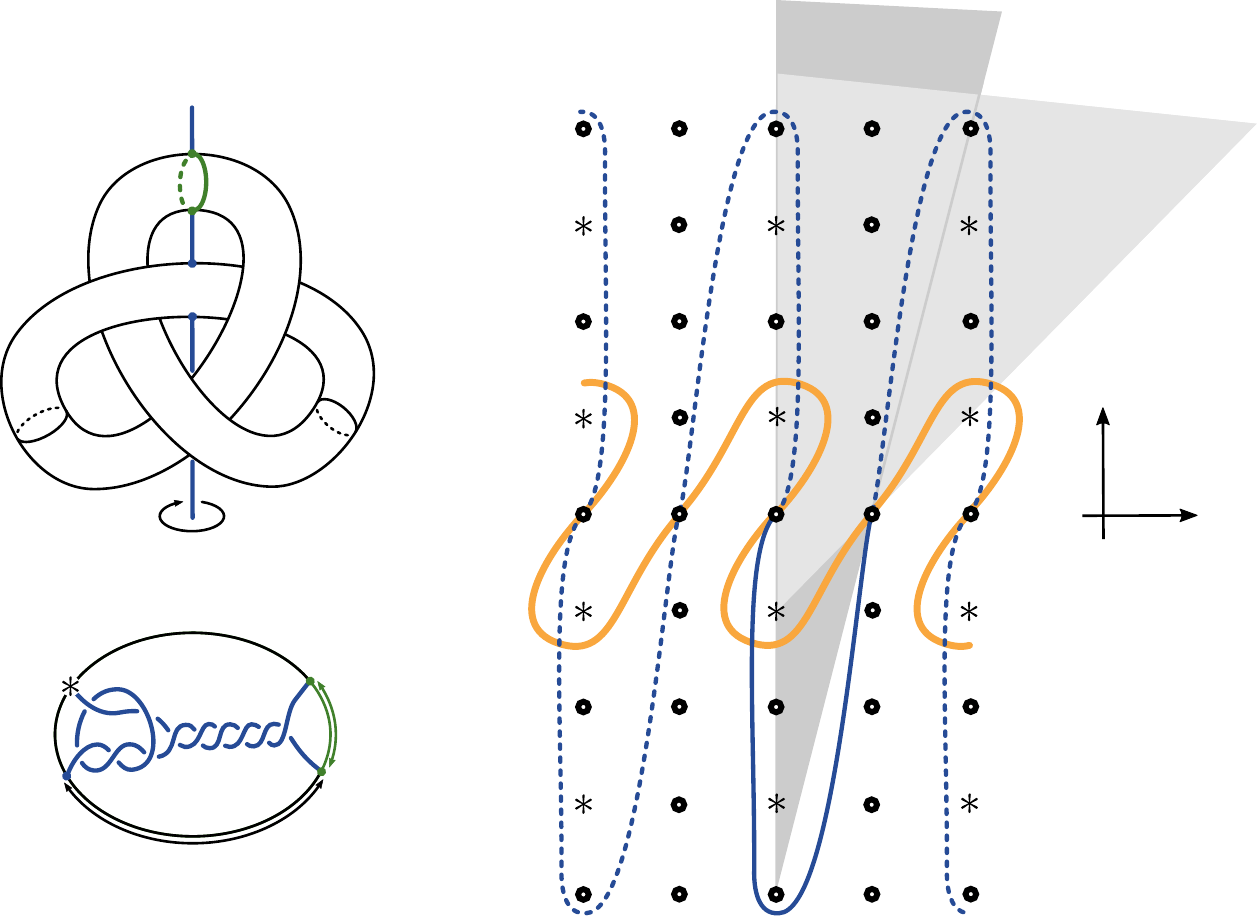}
\caption{Comparing the invariant $\BNr(P^\lambda_{2,-3})$ with the invariant $\HFhat(M)$, where $M$ is the complement of the right-hand trefoil. Note that $M$ is homeomorphic to the two-fold branched cover of $P^\lambda_{2,-3}$; the framing is such that the Seifert longitude descends to the arc labeled $\lambda$ and the meridian descends to the arc labeled $\mu$. }
\label{fig:L-space-int}
\end{figure}

When $\Lspace(\boldsymbol{\Sigma}_L)$ is a closed interval, this inclusion appears to be strict, in general. For instance, consider the now-familiar example of the pretzel tangle $P_{2,-3}$. The two-fold branched cover of this tangle is homeomorphic to the complement of the right-hand trefoil; the Seifert structure on this knot complement (two Seifert fibred solid tori glued along an essential annulus) is encoded by the sum of rational tangles generating this pretzel. (This is described in more detail and exploited in \cite{Watson2012}, for example.) These observations are collected in Figure~\ref{fig:L-space-int}, together with the Bar-Natan curve invariant and the curve corresponding to $\HFhat(M)$, where $M=\boldsymbol{\Sigma}_{P_{2,-3}}$ is the complement of the right-hand trefoil. The important thing to check, which accounts for our change of framing on the tangle, is that the $0$-filling of $P_{2,-3}$ coincides with the $+6$-surgery on the right-hand trefoil. 
Let $P^{\lambda}_{2,-3}$ denote the reframed tangle, that is, the tangle $P_{2,-3}$ with the six additional half-twists, so that the $0$-closure of the tangle is the branch set for $0$-surgery on the trefoil. 
We have shown: \[\ALinkKh(P^{\lambda}_{2,-3})=(4,\infty]\subset [1,\infty] = \Lspace(\boldsymbol{\Sigma}_{P^{\lambda}_{2,-3}})\]

This example fits into a simple infinite family, observing that the $(2,2n+1)$ torus knots (for integers $n>0$) have complements that branch double cover an infinite family of tangles. Denote the former by $T_{(2,2n+1)}$ and the latter by $T_n$, so that $T_1$ agrees with $P_{2,-3}$ (appropriately reframed); see Figure \ref{fig:Infinite-family}. Since the Seifert  genus of $T_{(2,2n+1)}$ is $n$, we compute: 
 \[\ALinkKh(T_n)=(4n,\infty]\subset [2n-1,\infty] = \Lspace(\boldsymbol{\Sigma}_{T_n})= \Lspace(S^3\smallsetminus\nu(T_{(2,2n+1)}))\]

\labellist 
	\pinlabel \rotatebox{90}{$\underbrace{\phantom{AAAAAA}}$} at 75 102
\pinlabel \rotatebox{270}{$\underbrace{\phantom{AAAAAAAAAAAAAAAAAAAA}}$} at 247 110
\small 
\pinlabel $\vdots$ at 72 89
\pinlabel $\BNr(T_n)\!=\!$ at 205 112  
\pinlabel {$\oplus\, \frac{n-1}{2}$ copies of $\sKh_{4}(\infty)$ for $n$ odd} at 390 163
\pinlabel {$\oplus\, \frac{n}{2}$ copies of $\sKh_{4}(\infty)$ for $n$ even} at 385 30
\tiny
\pinlabel $n$ at 89 102
	\pinlabel $\lambda+4n\mu$ at 82 35 
		\endlabellist
\begin{figure}[t]
\hspace*{-5cm}
\includegraphics[scale=0.75]{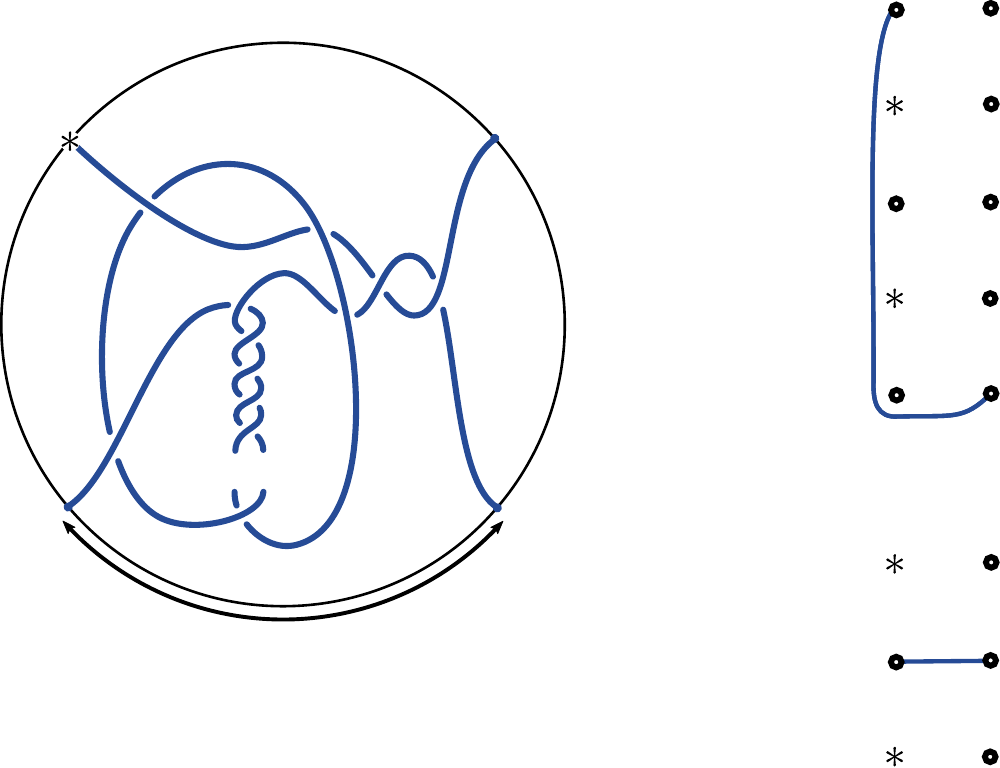}
\caption{A tangle $T_n$ whose two-fold branched cover is the exterior of the torus knot $T_{(2,2n+1)}$. Note that in the case $n=1$, we recover the right-hand trefoil. The image of the slope $\lambda+4n\mu$ descends to the arc indicated on the tangle boundary. 
}
\label{fig:Infinite-family}
\end{figure}

In particular, the interval of L-space fillings on a given knot (with fixed strong inversion) that do not branch over a thin link can be made arbitrarily large---it is $[2n-1,4n]$, for these examples. 
A compelling pattern emerges, and one might reasonably ask about the relationship between the curves $\BNr(T)$ and $\HFhat(\boldsymbol{\Sigma}_T)$ in general; see also \cite[Section 7]{HRW2} for another point of view. 

\subsection{Other manifolds admitting a strong inversion.} The principle exploited above can be thought of as the Montesinos trick: Given a strongly invertible knot $K$, there is an involution on the complement $M=S^3\setminus K$ with quotient a tangle $T$. (This is the idea behind the enumeration of tangles given in \cite{KWZ-strong}.) This tangle will always have the property of being {\it cap trivial}, that is, the $\infty$-filling of $T$ is unknotted.  For example, we saw in Subsection \ref{sub:rat} that $\boldsymbol{\Sigma}_{Q_0}$ is the complement of the trivial knot, and each non-zero filling gives a lens space, which branch double-covers the given two-bridge knot. So as a result, \(\Lspace(\boldsymbol{\Sigma}_{Q_0})=\QPI\smallsetminus\{0\}\).

We have also seen that the exterior of the right-hand trefoil is the two-fold branched cover of $P_{2,-3}$. This same trick applies to any knot admitting a strong inversion. Here is another example: The exterior of the figure-eight knot is the two-fold branched cover of a tangle which we denote by $T_{4_1}$. (In fact, there are two strong inversions on this knot, but in this case changing the choice of one for the other results in the mirror image of the tangle.) The rational filling along slope \(\infty\) results in an unknot, by construction, so this is a thin filling. However, in both the Heegaard Floer and the Khovanov setting, this is the only A-link filling. This is because both \(\Khr(T_{4_1})\) and \(\HFT(T_{4_1})\) contain special components of slope \(\infty\) in adjacent \(\delta\)-gradings. It is also remarkable that the two special components of \(\Khr(T_{4_1})\) correspond to the two conjugate pairs of special curves in \(\HFT(T_{4_1})\) and the rational components of the invariants have the same slope. On the Khovanov side, the lack of thin-fillings is consistent:  The cover $\boldsymbol{\Sigma}_{T_{4_1}}$ is homeomorphic to the exterior of the figure-eight knot, which has no L-space fillings other than the trivial filling. 
	
Now consider the pretzel tangle \(P_{2,-2}\); it is an instructive exercise to check that $\boldsymbol{\Sigma}_{P_{2,-2}}$ is not the exterior of a knot in $S^3$. Indeed, this example is not cap trivial; the cover $\boldsymbol{\Sigma}_{P_{2,-2}}$ is a Seifert fibred space known as the twisted $I$-bundle over the Klein bottle. It can be realized as the complement of a knot in $S^2\times S^1$. This manifold belongs to a class of manifolds known as Heegaard Floer homology solid tori, which enjoy the property that all fillings, other than the rational longitude filling, are L-spaces. That is, $\Lspace(\boldsymbol{\Sigma}_{P_{2,-2}}) =\QPI\setminus\{\infty\}$. From the perspective of tangle invariants this example is quite interesting, because the Heegaard Floer invariant contains a pair of special components that do not correspond to a special component in \(\Khr(P_{2,-2})\). For the space of A-link and thin rational fillings, this additional pair of special curves has no consequence; all spaces are equal to \((-2,2)\) and can be computed from the tangle invariants \(\Khr(P_{2,-2})\) and \(\HFT(P_{2,-2})\) (following the same strategy as for $P_{2,-3}$). In particular:  
	\[\ALinkKh(P_{2,-2})=(-2,2)\subset\QPI\setminus\{\infty\} = \Lspace(\boldsymbol{\Sigma}_{P_{2,-2}})\]

\begin{table}[p]
	\centering
	\begin{tabular}{rcccc}
		\toprule
		& 
		\includegraphics[height=3.3cm]{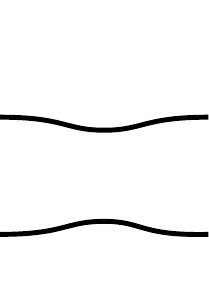}
		&
		\includegraphics[height=3.3cm]{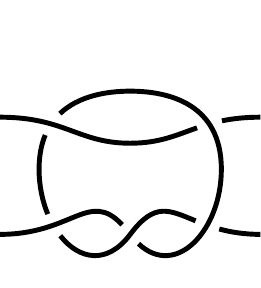}
		&
		\includegraphics[height=3.3cm]{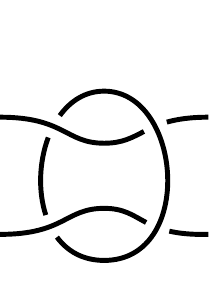}
		&
		\includegraphics[height=3.3cm]{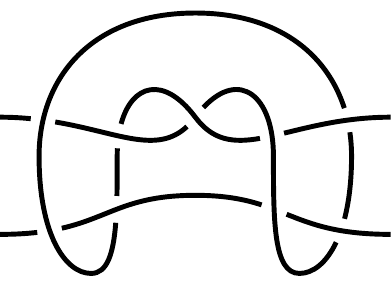}
		\\
		\(T\)
		&
		\(Q_0\)
		&
		\(P_{2,-3}\)
		&
		\(P_{2,-2}\)
		&
		\(T_{4_1}\)
		\\
		\midrule
		\(\Khr(T)\)
		&
		$
		\begin{aligned}
		\Rational_1(0)&:\delH^{0}
		\end{aligned}
		$
		&
		$
		\begin{aligned}
		\Special_4(\infty)&:\delH^{-1}\\
		\Rational_1(-2)&:\delH^{-1}
		\end{aligned}
		$
		&
		$
		\begin{aligned}
		\Rational_1(2)&:\delH^{-1/2}\\
		\Rational_1(-2)&:\delH^{-1/2}
		\end{aligned}
		$
		&
		$
		\begin{aligned}
		\Special_4(\infty)&:\delH^{3}+\delH^{4}\\
		\Rational_1(4)&:\delH^{3}
		\end{aligned}
		$
		\\
		\midrule
		\(\HFT(T)\)
		&
		$
		\begin{aligned}
		\Rational(0)&:\delH^{0}
		\end{aligned}
		$
		&
		$
		\begin{aligned}
		\Special_4(\infty)&:\delH^{-1}\\
		\Rational(-2)&:\delH^{-1}
		\end{aligned}
		$
		&
		$
		\begin{aligned}
		\Special_4(\infty)&:\delH^{-1/2}\\
		\Rational(2)&:\delH^{-1/2}\\
		\Rational(-2)&:\delH^{-1/2}
		\end{aligned}
		$
		&
		$
		\begin{aligned}
		\Special_4(\infty)&:\delH^{3}+\delH^{4}\\
		\Rational(4)&:\delH^{3}
		\end{aligned}
		$
		\\
		\midrule
		\(\Theta(T)\)
		&
		\(\QPI\smallsetminus\{0\}\)
		&
		\((-2,\infty]\)
		&
		\((-2,2)\)
		&
		\(\{\infty\}\)
		\\
		\bottomrule
	\end{tabular}
	\medskip
	\caption{Some prime Conway tangles \(T\), their invariants \(\Khr(T)\) and \(\HFT(T)\), and their spaces of thin rational fillings. 
	The polynomial expressions in \(\delH\) are the Poincaré polynomials that indicate how often the respective curves appear in which gradings in the invariants. 
	In all examples \(\ThinHF(T)=\ThinKh(T)\).
	For \(\HFT\), an entry \(\Special_4(\infty)\) represents a conjugate pair of special curves \(\Special_1(\infty;\TEI,\TEII)\) and \(\Special_1(\infty;\TEIII,\TEIV)\) in identical \(\delta\)-gradings. 
	The computations for \(\Khr\) were made using the program \cite{khtpp}; for the raw data and the tangle orientations used to fix the absolute \(\delta\)-grading, see \cite{tangle-atlas}. 
	The computations of \(\HFT(Q_0)\) and \(\HFT(P_{2,-3})\) can be found in \cite{pqMod}. \(\HFT(P_{2,-2})\) and \(\HFT(T_{4_1})\) were computed using the Mathematica packages \cite{PQM.m} and \cite{APT.m}, respectively. 
	In all cases, the absolute \(\delta\)-grading on \(\HFT\) was chosen such that it matches the one on \(\Khr\). %
}\label{tab:prime_tangles}
\end{table}

\begin{table}[p]
	\centering
	\begin{tabular}{rcc}
		\toprule
		&
		\includegraphics[height=4.25cm]{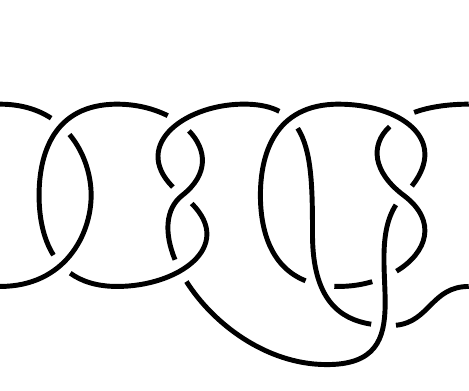}
		&
		\includegraphics[height=4.25cm]{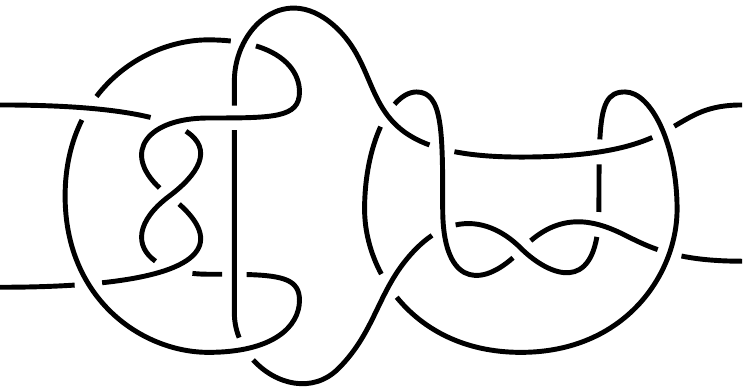}
		\\
		\(T\)
		&
		\(T_a\)
		&
		\(T_b\)
		\\
		\midrule
		\(\Khr(T)\)
		&
		$
		\begin{aligned}
		\Special_4(2)&:2\cdot\delV^{1}\\
		\Special_4(1)&:6\cdot\delV^{1}\\
		\Rational_1(\tfrac{1}{2})&:\delV^{1}\\
		\Special_4(0)&:2\cdot\delV^{1}
		\end{aligned}
		$
		&
		$
		\begin{aligned}
		\Special_4(\infty)&:4\cdot\delH^{11/2}+12\cdot\delH^{9/2}+8\cdot\delH^{7/2}\\
		\Special_4(4)&:\delV^{3}+\delV^{4}\\
		\Rational_1(\tfrac{15}{4})&:\delV^{3}
		\end{aligned}
		$
		\\
		\midrule
		\(\ThinKh(T)\)
		&
		\([2,0]\)
		&
		\(\varnothing\)
		\\
		\bottomrule
	\end{tabular}
	\medskip
	\caption{Two more prime Conway tangles \(\protect T\), their invariant \(\protect \Khr(T)\), and the corresponding space of thin rational fillings.The polynomial expressions in \(\protect\delH\) and \(\protect\delV\) are the Poincar\'e polynomials that indicate how often the respective curves appear in which gradings in \(\protect\Khr(T)\). The computations were made using the program \protect\cite{khtpp}; for the raw data and the tangle orientations used to fix the absolute \(\delta\)-grading, see \protect\cite{tangle-atlas}.}\label{tab:prime_tangles:closed+empty}
\end{table}

This discussion is summarized in Table~\ref{tab:prime_tangles}. In each case, the space of A-link fillings agrees with the space of thin fillings and, perhaps more surprisingly, the spaces agree in both the Heegaard Floer and the Khovanov setting. 

\subsection{Amalgamation: thin knots containing essential Conway spheres.} We can now illustrate what is perhaps the main observation of this paper, that is, the fact that understanding the thin filling slopes for tangles $T_1$ and $T_2$ allows us to determine when the link $T_1\cup T_2$ will be thin. This is shown for our main running example in Figure~\ref{fig:amalg}.

\labellist 
\small 
\pinlabel $\BNr(K)\cong \F[H]\oplus \F^8$ at 275 120
  \pinlabel $\Khr(K)\cong\F^{17}$ at 255 100
		\endlabellist
\begin{figure}[t]
\includegraphics[scale=0.75]{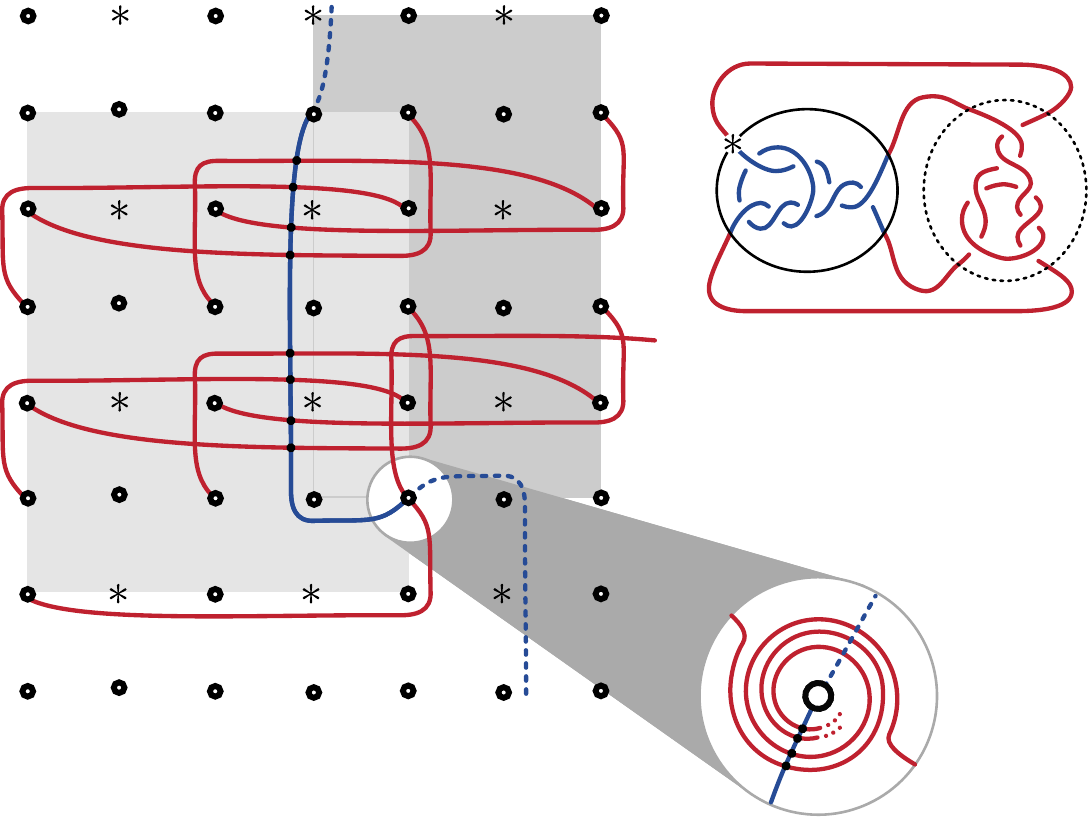}
\caption{A tangle decomposition of a thin knot $K$ along an essential Conway sphere and its reduced Bar-Natan homology computed from the two tangle invariants. Observe that the spaces of thin fillings of the two tangles are $(-\infty,0]$ and $(0,\infty]$, so their union is indeed $\QPI$, in accordance with Theorem~\ref{thm:gluing:Thin:intro}.
We expect that the dimensions of knot Floer homology and reduced Khovanov homology in this example are minimal among all knots containing essential Conway spheres. This will be explored in~\cite{KLMWZ}. 
}
\label{fig:amalg}
\end{figure}

\begin{proposition}
Unions of $(2,-3)$-pretzel tangles give links that are thin in Khovanov homology if and only if they are thin in Heegaard Floer homology. 
\end{proposition}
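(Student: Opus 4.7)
The plan is to reduce both notions of thinness to the same combinatorial statement about line sets via the reduction maps \(\Phi\) of Theorems~\ref{thm:reduction:HF} and~\ref{thm:reduction:Kh}. First I would read off from Table~\ref{tab:prime_tangles} the structural identity \(\Phi(\Khr(P_{2,-3})) = \Phi(\HFT(P_{2,-3}))\) in \(\mathcal{P}_\mathrm{finite}(\Curves)\): both invariants consist of a rational line of slope~\(-2\) carrying the trivial one-dimensional local system together with a special line of slope~\(\infty\), all placed in the same relative \(\delta\)-grading (horizontal \(\delta\)-grading \(-1\), in the conventions of the table). This identity is preserved by the action of the mapping class group of \(\FourPuncturedSphere\) on the line sets, since twisting commutes with both \(\Khr\) and \(\HFT\) by Theorems~\ref{thm:Kh:Twisting} and~\ref{thm:HFT:Twisting}; it is likewise preserved by mirroring, by Lemmas~\ref{lem:mirroring:HFT} and~\ref{lem:mirroring:Khr}. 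Consequently \(\Phi(\Khr(T)) = \Phi(\HFT(T))\) for every (possibly twisted, possibly mirrored) \((2,-3)\)-pretzel tangle~\(T\).

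Next, given a link \(L=T_1\cup T_2\) expressed as a union of two such tangles, I would observe that the rational components of \(\HFT(T_i)\) carry the trivial local system, which is not inhibited in the sense of Definition~\ref{def:inhibited}; in particular \(\HFT(T_i)\in\CurvesHFwb\), so the well-behavedness hypothesis of Theorem~\ref{thm:reduction:HF}~\ref{enu:reduction:HF:pairs:Thin} is satisfied. Applying that result on the Heegaard Floer side and Theorem~\ref{thm:reduction:Kh}~\ref{enu:reduction:Kh:pairs:Thin} on the Khovanov side, the Lagrangian Floer homologies \(\HF(\mirror\HFT(T_1),\HFT(T_2))\) and \(\HF(\mirror\Khr(T_1),\Khr(T_2))\) are each \(\Z\)-thin if and only if the corresponding pair of line sets is \(\Z\)-thin. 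By the first paragraph these two pairs of line sets agree, so the two pair-thinness conditions coincide. Theorems~\ref{thm:GlueingTheorem:HFT} and~\ref{thm:GlueingTheorem:Kh} translate these statements back into thinness of \(\HFK(L)\) and \(\Khr(L)\), respectively---noting that tensoring with the two-dimensional vector space \(V\) (supported in a single \(\delta\)-grading) preserves \(\Z\)-thinness---yielding the desired equivalence.

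The only step requiring genuine care is the verification of the grading identity in the first paragraph. The Heegaard Floer and Khovanov settings use opposite sign conventions for the Euler measure (compare Lemma~\ref{lem:delta:HFT:Euler:one_curve} with Lemma~\ref{lem:delta:Kh:Euler:one_curve}), and these feed into the definitions of \(\Phi\) in different ways (compare the constructions in the proofs of Theorems~\ref{thm:reduction:HF} and~\ref{thm:reduction:Kh}). However, \(\Phi\) records only the rational/special pattern of slopes together with relative \(\delta\)-gradings between components, and these are the same columns in Table~\ref{tab:prime_tangles}, so the sign conventions cancel out and the identity follows. No further case analysis of the gluing theorems is required: since the full line-set data agrees, any instance of the subtler local conditions (such as \ref{local:HF:complementary} in the Heegaard Floer setting) is automatically handled in parallel with its Khovanov counterpart.
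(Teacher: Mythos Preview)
Your proposal is correct. The paper's own proof is a single line---``Immediate: the set of thin filling slopes agrees in both cases''---which implicitly appeals to the Thin Gluing Theorems (Theorems~\ref{thm:glueing:Thin:HF} and~\ref{thm:glueing:Thin:Kh}): since \(\ThinHF(P_{2,-3})=\ThinKh(P_{2,-3})=(-2,\infty]\) and the rational/special structure of the two invariants matches, conditions (1) and (2) of those theorems coincide in the two theories.

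Your route is slightly different and in some ways cleaner: rather than passing through the filling spaces and the Thin Gluing Theorems, you observe that the full line sets \(\Phi(\HFT(P_{2,-3}))\) and \(\Phi(\Khr(P_{2,-3}))\) agree (same slopes, same rational/special flags, same relative \(\delta\)-gradings, as recorded in Table~\ref{tab:prime_tangles}), and then invoke part~\ref{enu:reduction:HF:pairs:Thin}/\ref{enu:reduction:Kh:pairs:Thin} of the reduction theorems directly. This sidesteps the exceptionality hypothesis and the local condition bookkeeping entirely, at the cost of needing to say a word about why the two a priori different maps \(\Phi\) and grading functions \(\gr\) are compatible (which you address, and which is ultimately Remark~\ref{rem:CurveSetAsCoveringSpace}). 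Both arguments rest on the same computation; yours packages it more explicitly.
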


\begin{proof}Immediate: the set of thin filling slopes agrees in both cases.\end{proof}

While this amounts, essentially, to a single example, we remark that Example~\ref{exa:thinness_depends_on_coefficients:tangles} is the only example we have seen so far in which the spaces of thin fillings do not agree for the two theories, and even in this example, they may actually agree over \(\Q\). 
Note that one can also check that the thin links obtained in this way are a strict subset of the L-spaces one obtains by gluing a pair of trefoil exteriors together.

\subsection{Exotica} The examples collected in Table \ref{tab:prime_tangles} show that the spaces of A-link and thin rational fillings can be open and half-open intervals. But they may also be closed intervals, as the tangle \(T_a\) from Table~\ref{tab:prime_tangles:closed+empty} illustrates.
The tangle \(T_b\) from the same table is obtained by taking a tangle sum of two copies of the tangle \(T_{4_1}\) from Table~\ref{tab:prime_tangles} after rotating one of them by \(\tfrac{\pi}{2}\). This tangle does not admit any A-link filling, since its invariants contain special components in adjacent \(\delta\)-gradings in two distinct slopes. So, the space of A-link fillings of a tangle (as well as, consequently, the space of thing fillings) can be either empty, a singleton, an open interval, a half-open interval, or a close interval. This is in contrast with the space of L-space fillings of a three-manifold with torus boundary, which can only be  empty, a singleton, a closed interval, or $\QPI$ minus a point \cite{Rasmussenx2}. In summary, all types of A-link rational filling spaces from Theorem~\ref{thm:charactisation:ALink:intro} arise in actual examples. We do not know if the same is true for the additional case of precisely two distinct thin rational fillings in Theorem~\ref{thm:charactisation:Thin:intro}. 

\begin{conjecture}
	There is no tangle \(T\) such that \(\ThinHF(T)\) or \(\ThinKh(T)\) consists of two points.
\end{conjecture}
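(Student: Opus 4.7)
The plan is to reduce, in both link homology theories, to the case where the underlying multicurve (not just its image in the line set abstraction) is exceptional in the sense of Definition~\ref{def:HF:exceptional} or \ref{def:Kh:exceptional}. Suppose for contradiction that $\ThinHF(T)=\{s,s'\}$ or $\ThinKh(T)=\{s,s'\}$ for some Conway tangle $T$ and two distinct slopes $s,s'$. Write $\Gamma$ for the corresponding multicurve invariant and $C=\Phi(\Gamma)$ for its associated line set via Theorem~\ref{thm:reduction:HF} or~\ref{thm:reduction:Kh}. Invoking the classification Theorem~\ref{thm:charactisation:ThinG:main} together with Lemmas~\ref{lem:Thin:consistency} and~\ref{lem:Thin:discreteSlopes}, one obtains $\Slopes_C=\{s,s'\}$, the line set $C$ is both $s$- and $s'$-consistent, and $C$ is exceptional as a line set.

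The first key step is to upgrade this to the statement that $\Gamma$ is itself exceptional as a multicurve. The idea is that the transitivity of $\delta$ (Theorems~\ref{thm:deltaDifference:HFT:transitivity} and~\ref{thm:deltaDifference:Kh:transitivity}), applied to weakly monotone triples involving two components of the same slope, combined with $s$- and $s'$-consistency, implies that $\delta(\gamma_s,\gamma_{s'})$ is independent of the choice of components $\gamma_s$ and $\gamma_{s'}$ of slopes $s$ and $s'$. This common value must then be non-trivial (neither $0$ nor the value forced by the symmetry identity) by exceptionality of $C$, which is precisely the condition for $\Gamma$ to be exceptional as a multicurve. A second constraint is that $s,s'\in\Thin(T)$ forces, via Lemma~\ref{lem:Thin:endpoints_are_not_contained}, every component of $C$ of slope $s$ or $s'$ to be special; in the Heegaard Floer setting this further requires some control over inhibited local systems via Lemma~\ref{lem:SomeControlOverLocalSystems}.

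With this structural input, the Khovanov half follows immediately: $\Khr(T)$ being Khovanov exceptional triggers Proposition~\ref{prop:exceptional:Kh}, producing a link whose reduced Khovanov homology is supported in precisely two non-adjacent $\delta$-gradings and contradicting the full-support Conjecture~\ref{conj:non-adjacent:Kh}. Thus the Khovanov half of the conjecture is equivalent to that folklore statement.

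The Heegaard Floer half is the principal obstacle: Heegaard Floer exceptional tangles do exist (Example~\ref{exp:whitehead}), so exceptionality alone cannot yield a contradiction. The plan here is to combine the conjugation symmetry (Theorem~\ref{thm:Conjugation}), which forces the special components of $\Gamma$ to decompose into conjugate pairs around complementary punctures, with the bigraded refinement of $\HFT$ incorporating the Alexander grading. One would use an Alexander-graded version of the gluing Theorem~\ref{thm:GlueingTheorem:HFT} to translate the two-point thinness hypothesis into explicit constraints on the $\HFK$ of the two distinguished fillings $T(s)$ and $T(s')$, whose total dimensions must realize the corresponding determinants as A-links. Making this implication rigorous is the hard part: the purely $\delta$-graded combinatorics are consistent with the two-point scenario, so some genuinely new input---most plausibly from the Alexander grading or higher algebraic structure on $\HFT$ beyond what is developed in this paper---will be required.
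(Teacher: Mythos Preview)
The statement you are addressing is labeled a \emph{conjecture} in the paper, not a theorem; the paper does not prove it and explicitly leaves it open. There is thus no proof in the paper to compare your submission against, and your proposal does not constitute a proof either---it is a strategy sketch that, as you yourself acknowledge, requires ``genuinely new input'' on the Heegaard Floer side and reduces the Khovanov side to another open conjecture (Conjecture~\ref{conj:non-adjacent:Kh}).

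Your structural reduction is essentially correct and is implicit in the paper's framework. If $\ThinZ(\Phi(\Gamma))=\{s,s'\}$ with $s\neq s'$, then the arguments around Lemmas~\ref{lem:Thin:endpoints_are_not_contained}--\ref{lem:Thin:discreteSlopes} force $\Slopes_{\Phi(\Gamma)}=\{s,s'\}$, every line of $\Phi(\Gamma)$ to be special, and $\Phi(\Gamma)$ to be exceptional. One point worth sharpening: the condition that every line of $\Phi(\Gamma)$ be special is strictly stronger than mere exceptionality of $\Gamma$. In the Heegaard Floer setting it means that every component of $\HFT(T)$ is either a special curve or a rational curve with inhibited local system (cf.\ the construction of $\Phi$ in the proof of Theorem~\ref{thm:reduction:HF}). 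The known Heegaard Floer exceptional tangle of Example~\ref{exp:whitehead} fails this---it has a non-inhibited rational component $\Rational(-\nicefrac{5}{2})$---which is precisely why its $\ThinHF$ is a single point rather than two. So the genuine content of the Heegaard Floer half of the conjecture is ruling out multicurves in $\CurvesHF$ consisting entirely of special curves and inhibited rationals supported on two slopes in the requisite grading pattern; your gesture toward conjugation symmetry and the Alexander grading does not obviously accomplish this.
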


\begin{figure}[b]
	\includegraphics[scale=0.75]{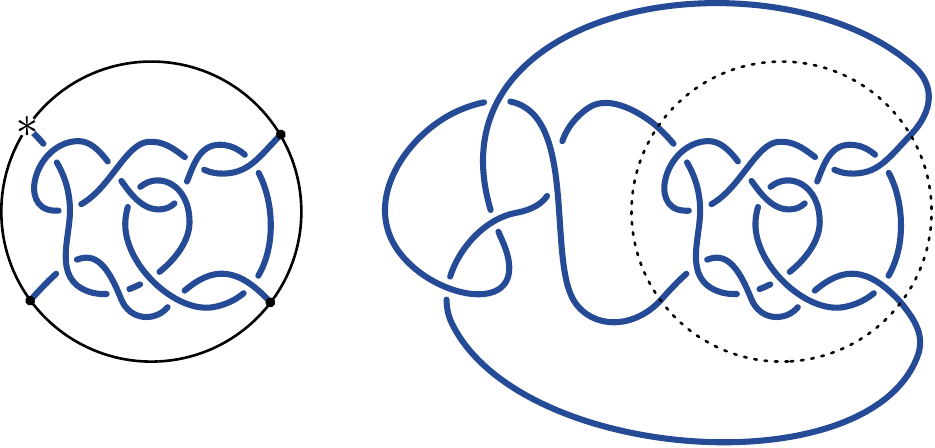}
	\caption{The tangle from Example~\ref{exa:thinness_depends_on_coefficients:tangles} and its \(\nicefrac{5}{3}\)-rational filling, which is the knot from Example~\ref{exa:thinness_depends_on_coefficients:knots}.}
	\label{fig:16n197566}
\end{figure}

\begin{example}\label{exa:thinness_depends_on_coefficients:tangles} 
	From the knot \(16^n_{197566}\) in Example~\ref{exa:thinness_depends_on_coefficients:knots}, one can easily obtain tangles for which the spaces of thin fillings depend on the field of coefficients. 
	For instance, Figure~\ref{fig:16n197566} shows a tangle whose rational filling along the slope \(\nicefrac{5}{3}\) is equal to \(16^n_{197566}\). 
	The Khovanov invariant \(\Khr\) of this tangle can be summarized as follows, using the same notation as in Tables~\ref{tab:prime_tangles} and~\ref{tab:prime_tangles:closed+empty}; see example \texttt{T\_16n197566} in \cite{tangle-atlas}:
	\begin{align*}
	\text{over }\F_2:
	&&
	\Special_4(\infty)
	&:16\cdot\delH^{2}
	&
	\Special_8(\infty)
	&:\delH^{2}
	&
	\Rational_1(\nicefrac{4}{3})
	&:\delV^{3/2}
	\\
	\text{over }\F_3:
	&&
	\Special_4(\infty)
	&:16\cdot\delH^{2}
	&
	\Special_6(\infty)
	&:\delH^{2}
	&
	\Rational_1(2)
	&:\delV^{3/2}
	&
	\Rational_2(2)
	&:\delV^{3/2}
	&
	\end{align*} 
	Thus, the space of thin fillings of this tangle is equal to \([\infty,\nicefrac{4}{3})\) for Khovanov homology over \(\F_2\), but \([\infty,2)\) for Khovanov homology over \(\F_3\).
	As a result, Shumakovitch's example is part of an infinite family of links: Pick any closure of this tangle along a slope \(s\in[\nicefrac{4}{3},2)\). 
	
	The Heegaard Floer invariant of this tangle is equal to 
	\(
	\{2\cdot\Rational(2),\Rational(4),17\cdot\Special_4(\infty)\}
	\), where \(2\cdot \Special_4(\infty)\) represents two conjugate pairs of special curves \(\Special_1(\infty;\TEI,\TEII)\) and \(\Special_1(\infty;\TEIII,\TEIV)\). All special curves and the two rational curves all live in the same \(\delta\)-grading. Moreover, \(\delta(17\cdot\Special_4(\infty),\Rational(4))=\delta(\Rational(4),2\cdot\Rational(2))=0\). 
	(These computations were done indirectly using \cite{HFKcalc}.)
	Thus the space of thin fillings in Heegaard Floer theory is equal to \([\infty,2)\).
\end{example}

\begin{figure}[t]
	\includegraphics[scale=0.75]{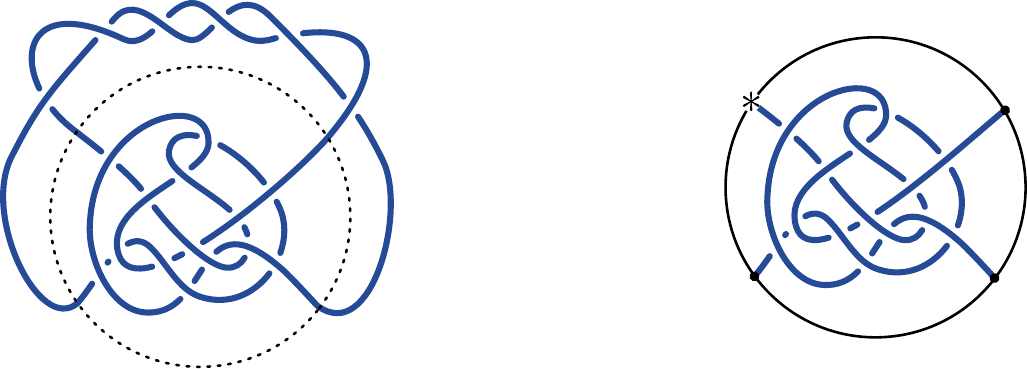}
	\caption{The knot Floer homology of the 6-twisted Whitehead double of the right-handed trefoil knot (shown on the left) does not have full support, and the subtangle (shown on the right) is Heegaard Floer exceptional.}
	\label{fig:whitehead}
\end{figure}

We are grateful to Matt Hedden for pointing out that knot Floer homology does not have full support; see Figure~\ref{fig:whitehead}. 
This example was used by Hedden and Ording to establish that the Ozsváth-Szabó concordance invariant $\tau$ and the Rasmussen invariant $s$ do not agree \cite{HO2008}.
It is interesting to study this example in the context of tangles:

\begin{example}\label{exp:whitehead}
	Let \(T\) be the subtangle of the thick A-knot illustrated in Figure~\ref{fig:whitehead}.
	We computed the Heegaard Floer invariants of \(T\) as
	\[
	\HFT(T)=\{\Rational(-\nicefrac{5}{2}),2\cdot\Special_4(\infty)\}
	\qquad
	\ALinkHF(T)=(-\nicefrac{5}{2},\infty]
	\qquad
	\ThinHF(T)=\{\infty\}
	\]
	Here, \(2\cdot \Special_4(\infty)\) represents two conjugate pairs of special curves \(\Special_1(\infty;\TEI,\TEII)\) and \(\Special_1(\infty;\TEIII,\TEIV)\), which all live in the same \(\delta\)-grading. Moreover, \(\delta(\Rational(-\nicefrac{5}{2}),2\cdot\Special_4(\infty))=2\). 
	These computations were done indirectly using \cite{HFKcalc}. 
	Note in particular that \(T\) is an example of a Heegaard Floer exceptional tangle (Definition~\ref{def:HF:exceptional}). Together with Example~\ref{exa:ExceptionalExample}, it illustrates that Theorem~\ref{thm:glueing:Thin:HF} is indeed wrong if we drop the assumption that  both tangles not be Heegaard Floer exceptional. Thus, it validates the shift in perspective from thin to A-links. 	
	
	In contrast, the Khovanov homology of the knot in Figure~\ref{fig:whitehead} is not thin and it has full support. Moreover, we compute 
	\begin{align*}
	\Khr(T;\F_2)
	&=\{\Rational_1(\nicefrac{3}{2}),8\cdot\Special_4(\infty)\}
	&
	\ALinkHF(T;\F_2)=\ThinHF(T;\F_2)&=\{\infty\}
	\\
	\Khr(T;\field)
	&=\{\Rational_1(-\nicefrac{1}{2}),2\cdot\Special_4(\infty),6\cdot\Special_2(\infty)\}
	&
	\ALinkHF(T;\field)=\ThinHF(T;\field)&=\{\infty\}
	\end{align*}
	where \(\field=\Q, \F_3,\F_5, \F_7, \F_{11}\); see example \texttt{3\_1} in \cite{tangle-atlas}.
	Here, the special components sit in three consecutive \(\delta\)-gradings. 
	
	It is interesting that we see three distinct slopes of rational components in this example. This is related to the fact that rational closures of \(T\) not only provide examples for which \(\tau\) and Rasmussen's original invariant \(s=s^\Q\) are distinct, but they also include examples for which \(s^\Q\neq s^{\F_2}\). This phenomenon will be explored in upcoming work of Lukas Lewark and the third author; see also~\cite{MRP2}. 
\end{example}

\begin{remark}\label{rem:boundary_compressible}
	As an observation to summarize: In Theorem~\ref{thm:L_space_gluing}, one might ask why the assumption that the two manifolds are boundary incompressible is needed. The answer is that when one of the $M_i$ is boundary compressible then the condition for L-spaces is $\mathcal{L}(M_0) \cup h(\mathcal{L}(M_1)) = \QPI$; consider for instance Dehn surgery along the figure-eight knot; for more discussion, see \cite{HRW} and compare \cite{L-space_graph_mnflds}.  In contrast, because the intervals have a wider range of endpoint behaviour in the tangle case, our A-link and Thin Gluing Theorems do not admit cleaner statements if we assume that the tangles are boundary incompressible (ie non-split). 
\end{remark}


\begin{small}
	\pdfbookmark[section]{Acknowledgements}{Acknowledgements}
	\noindent\textbf{Acknowledgements.}
	The authors thank Antonio Alfieri, Jonathan Hanselman, Matt Hedden, Lukas Lewark, Tye Lidman, Allison Moore, and Jake Rasmussen for helpful conversations. 
\end{small}

\nocite{khtpp}
\newcommand*{\arxiv}[1]{\href{http://arxiv.org/abs/#1}{ArXiv:\ #1}}
\newcommand*{\arxivPreprint}[1]{\href{http://arxiv.org/abs/#1}{ArXiv preprint #1}}
\bibliographystyle{alpha}
\bibliography{ThinFillings}

\newcommand{\etalchar}[1]{$^{#1}$}
\begin{thebibliography}{HRRW20}

\bibitem[BC17]{BoyerClay}
Steven Boyer and Adam Clay.
\newblock Foliations, orders, representations, {L}-spaces and graph manifolds.
\newblock {\em Adv. Math.}, 310:159--234, 2017.
\newblock \arxiv{1401.7726v4}.

\bibitem[BGW13]{L-space_conjecture}
Steven Boyer, Cameron~McA. Gordon, and Liam Watson.
\newblock On {L}-spaces and left-orderable fundamental groups.
\newblock {\em Math. Ann.}, 356(4):1213--1245, 2013.
\newblock \arxiv{1107.5016v1}.

\bibitem[BN02]{Bar-Natan2002}
Dror Bar-Natan.
\newblock On {K}hovanov's categorification of the {J}ones polynomial.
\newblock {\em Algebr. Geom. Topol.}, 2:337--370, 2002.
\newblock \arxiv{math/0201043v3}.

\bibitem[BN05]{BarNatanKhT}
Dror Bar-Natan.
\newblock Khovanov's homology for tangles and cobordisms.
\newblock {\em Geom. Topol.}, 9:1443--1499, 2005.
\newblock \arxiv{math/0410495v2}.

\bibitem[BNBS14]{B-N-B-S2014}
Dror Bar-Natan and Hernando Burgos-Soto.
\newblock Khovanov homology for alternating tangles.
\newblock {\em J. Knot Theory Ramifications}, 23(2):1450013, 22, 2014.
\newblock \arxiv{1305.1695v3}.

\bibitem[Bow16]{Bowden2016}
Jonathan Bowden.
\newblock Approximating {$C^0$}-foliations by contact structures.
\newblock {\em Geom. Funct. Anal.}, 26(5):1255--1296, 2016.
\newblock \arxiv{1509.07709v2}.

\bibitem[Con70]{Conway}
John~H. Conway.
\newblock An enumeration of knots and links, and some of their algebraic
  properties.
\newblock In {\em Computational Problems in Abstract Algebra}, pages 329 --
  358. Pergamon, 1970.

\bibitem[Dow18]{Dowlin2018}
Nathan Dowlin.
\newblock A spectral sequence from {K}hovanov homology to knot {F}loer
  homology, 2018.
\newblock \arxivPreprint{1811.07848v1}.

\bibitem[Dun18]{Dunfield18}
Nathan Dunfield.
\newblock {F}loer homology, group orderability, and taut foliations of
  hyperbolic 3-manifolds, 2018.
\newblock \arxivPreprint{1904.04628v2}.

\bibitem[GM]{JavaKh}
Jeremy Greene and Scott Morrison.
\newblock Java{K}h.
\newblock Available at
  \url{http://katlas.math.toronto.edu/wiki/Khovanov_Homology} (07/07/2020).

\bibitem[Gre10]{Greene2010}
Joshua~E. Greene.
\newblock Homologically thin, non-quasi-alternating links.
\newblock {\em Math. Res. Lett.}, 17(1):39--49, 2010.
\newblock \arxiv{0906.2222v1}.

\bibitem[Gre17]{Greene2017}
Joshua~E. Greene.
\newblock Alternating links and definite surfaces.
\newblock {\em Duke Math. J.}, 166(11):2133--2151, 2017.
\newblock With an appendix by Andr\'{a}s Juh\'{a}sz and Marc Lackenby,
  \arxiv{1511.06329v1}.

\bibitem[GW13]{GreeneWatson2013}
Joshua~E. Greene and Liam Watson.
\newblock Turaev torsion, definite 4-manifolds, and quasi-alternating knots.
\newblock {\em Bull. Lond. Math. Soc.}, 45(5):962--972, 2013.
\newblock \arxiv{1106.5559v1}.

\bibitem[HN10]{KhDetectsTwoComponentUnlink}
Matthew Hedden and Yi~Ni.
\newblock Manifolds with small {H}eegaard {F}loer ranks.
\newblock {\em Geom. Topol.}, 14(3):1479--1501, 2010.
\newblock \arxiv{0906.4771v1}.

\bibitem[HO08]{HO2008}
Matthew Hedden and Philip Ording.
\newblock The {O}zsv\'{a}th-{S}zab\'{o} and {R}asmussen concordance invariants
  are not equal.
\newblock {\em Amer. J. Math.}, 130(2):441--453, 2008.
\newblock \arxiv{math/0512348v1}.

\bibitem[How17]{Howie2017}
Joshua~A. Howie.
\newblock A characterisation of alternating knot exteriors.
\newblock {\em Geom. Topol.}, 21(4):2353--2371, 2017.
\newblock \arxiv{1511.04945v1}.

\bibitem[HRRW20]{L-space_graph_mnflds}
Jonathan Hanselman, Jacob~A. Rasmussen, Sarah~D. Rasmussen, and Liam Watson.
\newblock L-spaces, taut foliations, and graph manifolds.
\newblock {\em Compos. Math.}, 156(3):604--612, 2020.
\newblock \arxiv{1508.05911v3}.

\bibitem[HRW16]{HRW}
Jonathan Hanselman, Jacob~A. Rasmussen, and Liam Watson.
\newblock Bordered {F}loer homology for manifolds with torus boundary via
  immersed curves, 2016.
\newblock \arxivPreprint{1604.03466v2}.

\bibitem[HRW18]{HRW2}
Jonathan Hanselman, Jacob~A. Rasmussen, and Liam Watson.
\newblock {H}eegaard {F}loer homology for manifolds with torus boundary:
  properties and examples, 2018.
\newblock \arxivPreprint{1810.10355v1}.

\bibitem[HT21]{knotscape}
Jim Hoste and Morwen~B. Thistlethwaite.
\newblock \texttt{knotscape}, a program for studying knot theory and providing
  convenient access to tables of knots.
\newblock URL: \url{http://www.math.utk.edu/~morwen/knotscape.html}, 2021.

\bibitem[Kir95]{Kirby}
Rob Kirby.
\newblock Problems in low-dimensional topology.
\newblock \url{https://math.berkeley.edu/~kirby/}, 1995.

\bibitem[KLM{\etalchar{+}}21]{KLMWZ}
Artem Kotelskiy, Tye Lidman, Allison Moore, Liam Watson, and Claudius
  Zibrowius.
\newblock {Conway spheres in Heegaard Floer and Khovanov homology}, 2021.
\newblock In preparation.

\bibitem[KM10]{KhDetectsUnknot}
Peter Kronheimer and Tomasz Mrowka.
\newblock Khovanov homology is an unknot-detector.
\newblock {\em Publications mathématiques de l'IHÉS}, 113, 05 2010.
\newblock \arxiv{1005.4346v1}.

\bibitem[KR17]{KR2017}
William~H. Kazez and Rachel Roberts.
\newblock {$C^0$} approximations of foliations.
\newblock {\em Geom. Topol.}, 21(6):3601--3657, 2017.
\newblock \arxiv{1509.08382v2}.

\bibitem[KWZ19]{KWZ}
Artem Kotelskiy, Liam Watson, and Claudius Zibrowius.
\newblock {I}mmersed curves in {K}hovanov homology, 2019.
\newblock \arxivPreprint{1910.14584v2}.

\bibitem[KWZ21a]{tangle-atlas}
Artem Kotelskiy, Liam Watson, and Claudius Zibrowius.
\newblock Database of {K}hovanov tangle invariants that are used in this paper
  and that were computed with {\texttt{kht++}} \cite{khtpp}.
\newblock URL:
  \url{https://cbz20.raspberryip.com/code/khtpp/examples/ThinLinksAndConwaySpheres.html},
  2021.

\bibitem[KWZ21b]{KWZ-strong}
Artem Kotelskiy, Liam Watson, and Claudius Zibrowius.
\newblock {K}hovanov homology and strong inversions, 2021.
\newblock \arxivPreprint{2104.13592v1}.

\bibitem[KWZ21c]{KWZ-KhMutation}
Artem Kotelskiy, Liam Watson, and Claudius Zibrowius.
\newblock {K}hovanov multicurves are linear, 2021.
\newblock \arxivPreprint{2202.01460}.

\bibitem[Lee05]{Lee2005}
Eun~Soo Lee.
\newblock An endomorphism of the {K}hovanov invariant.
\newblock {\em Adv. Math.}, 197(2):554--586, 2005.
\newblock \arxiv{math/0210213v3}.

\bibitem[Lic97]{Lickorish-IntroToKnotTheory}
W.~B.~Raymond Lickorish.
\newblock {\em An introduction to knot theory}, volume 175 of {\em Graduate
  Texts in Mathematics}.
\newblock Springer-Verlag, New York, 1997.

\bibitem[Lid10]{LidmanHFinfty}
Tye Lidman.
\newblock Heegaard {F}loer homology and triple cup products, 2010.
\newblock \arxiv{1011.4277v1}.

\bibitem[LMZ20]{LMZ}
Tye Lidman, Allison~H. Moore, and Claudius Zibrowius.
\newblock {L}-space knots have no essential {C}onway spheres, 2020.
\newblock \arxivPreprint{2006.03521v2}, accepted for publication in Geom.
  Topol.

\bibitem[LOT15]{LOTBimodules}
Robert Lipshitz, Peter~S. Ozsv\'{a}th, and Dylan~P. Thurston.
\newblock Bimodules in bordered {H}eegaard {F}loer homology.
\newblock {\em Geom. Topol.}, 19(2):525--724, 2015.
\newblock \arxiv{1003.0598v4}.

\bibitem[LZ21]{MRP2}
Lukas Lewark and Claudius Zibrowius.
\newblock Rasmussen invariants.
\newblock {\em Math. Res. Postc.}, 1(2), 2021.

\bibitem[MO08]{ManolescuOzsvath2008}
Ciprian Manolescu and Peter~S. Ozsv\'{a}th.
\newblock On the {K}hovanov and knot {F}loer homologies of quasi-alternating
  links.
\newblock In {\em Proceedings of {G}\"{o}kova {G}eometry-{T}opology
  {C}onference 2007}, pages 60--81. G\"{o}kova Geometry/Topology Conference
  (GGT), G\"{o}kova, 2008.
\newblock \arxiv{0708.3249v2}.

\bibitem[OS03a]{OS_alternating}
Peter~S. Ozsv\'{a}th and Zolt\'{a}n Szab\'{o}.
\newblock Heegaard {F}loer homology and alternating knots.
\newblock {\em Geom. Topol.}, 7:225--254, 2003.
\newblock \arxiv{math/0209149v3}.

\bibitem[OS03b]{OSHFinfty}
Peter~S. Ozsváth and Zoltán Szabó.
\newblock On the {F}loer homology of plumbed three-manifolds.
\newblock {\em Geom. Topol.}, 7(1):185--224, 2003.
\newblock \arxiv{math/0203265v2}.

\bibitem[OS04a]{OS-taut}
Peter~S. Ozsv\'{a}th and Zolt\'{a}n Szab\'{o}.
\newblock Holomorphic disks and genus bounds.
\newblock {\em Geom. Topol.}, 8:311--334, 2004.
\newblock \arxiv{math/0311496v3}.

\bibitem[OS04b]{OSHFK}
Peter~S. Ozsv\'{a}th and Zolt\'{a}n Szab\'{o}.
\newblock Holomorphic disks and knot invariants.
\newblock {\em Adv. Math.}, 186(1):58--116, 2004.
\newblock \arxiv{math/0209056v4}.

\bibitem[OS04c]{OzsvathSzabo2004}
Peter~S. Ozsv\'{a}th and Zolt\'{a}n Szab\'{o}.
\newblock Holomorphic disks and three-manifold invariants: properties and
  applications.
\newblock {\em Ann. of Math. (2)}, 159(3):1159--1245, 2004.
\newblock \arxiv{math/0105202v4}.

\bibitem[OS05a]{OSz-survey}
Peter~S. Ozsv\'{a}th and Zolt\'{a}n Szab\'{o}.
\newblock On {H}eegaard diagrams and holomorphic disks.
\newblock In {\em European {C}ongress of {M}athematics}, pages 769--781. Eur.
  Math. Soc., Z\"{u}rich, 2005.
\newblock \arxiv{math/0403029v1}.

\bibitem[OS05b]{OzsvathSzabo2005}
Peter~S. Ozsv\'{a}th and Zolt\'{a}n Szab\'{o}.
\newblock On the {H}eegaard {F}loer homology of branched double-covers.
\newblock {\em Adv. Math.}, 194(1):1--33, 2005.
\newblock \arxiv{math/0309170v1}.

\bibitem[OS08]{OSHFL}
Peter~S. Ozsv\'{a}th and Zolt\'{a}n Szab\'{o}.
\newblock Holomorphic disks, link invariants and the multi-variable {A}lexander
  polynomial.
\newblock {\em Algebr. Geom. Topol.}, 8(2):615--692, 2008.
\newblock \arxiv{math/0512286v2}.

\bibitem[Ras17]{Rasmussen2017}
Sarah~D. Rasmussen.
\newblock L-space intervals for graph manifolds and cables.
\newblock {\em Compos. Math.}, 153(5):1008--1049, 2017.
\newblock \arxiv{1511.04413v3}.

\bibitem[RR17]{Rasmussenx2}
Jacob~A. Rasmussen and Sarah~D. Rasmussen.
\newblock Floer simple manifolds and {L}-space intervals.
\newblock {\em Adv. Math.}, 322:738--805, 2017.
\newblock \arxiv{1508.05900v2}.

\bibitem[Shu14]{Shum_torsion}
Alexander~N. Shumakovitch.
\newblock Torsion of {K}hovanov homology.
\newblock {\em Fund. Math.}, 225(1):343--364, 2014.
\newblock \arxiv{math/0405474v2}.

\bibitem[Shu18]{shumakovitch2018torsion}
Alexander~N. Shumakovitch.
\newblock Torsion in {K}hovanov homology of homologically thin knots, 2018.
\newblock \arxivPreprint{1806.05168v1}.

\bibitem[SS18]{ScadutoStoffregen2018}
Christopher Scaduto and Matthew Stoffregen.
\newblock Two-fold quasi-alternating links, {K}hovanov homology and instanton
  homology.
\newblock {\em Quantum Topol.}, 9(1):167--205, 2018.
\newblock \arxiv{1605.05394v1}.

\bibitem[Sza]{HFKcalc}
Zolt\'{a}n Szab\'{o}.
\newblock Knot {F}loer homology calculator.
\newblock Available at \url{https://web.math.princeton.edu/~szabo/HFKcalc.html}
  (07/07/2020).

\bibitem[Wat11]{Watson2011}
Liam Watson.
\newblock A surgical perspective on quasi-alternating links.
\newblock In {\em Low-dimensional and symplectic topology}, volume~82 of {\em
  Proc. Sympos. Pure Math.}, pages 39--51. Amer. Math. Soc., Providence, RI,
  2011.
\newblock \arxiv{0910.0449v1}.

\bibitem[Wat12]{Watson2012}
Liam Watson.
\newblock Surgery obstructions from {K}hovanov homology.
\newblock {\em Selecta Math. (N.S.)}, 18(2):417--472, 2012.
\newblock \arxiv{0807.1341v4}.

\bibitem[Wat17]{Watson2017}
Liam Watson.
\newblock Khovanov homology and the symmetry group of a knot.
\newblock {\em Adv. Math.}, 313:915--946, 2017.
\newblock \arxiv{1311.1085v4}.

\bibitem[XZ18]{XieZhang}
Yi~Xie and Boyu Zhang.
\newblock {C}lassification of links with {K}hovanov homology of minimal rank,
  2018.
\newblock \arxivPreprint{1909.10032v3}.

\bibitem[Zar09]{Zarev}
Rumen Zarev.
\newblock Bordered {F}loer homology for sutured manifolds, 2009.
\newblock \arxivPreprint{0908.1106v2}.

\bibitem[Zib15]{PrizeEssay}
Claudius Zibrowius.
\newblock On a polynomial {A}lexander invariant for tangles and its
  categorification.
\newblock {\em Essay for the {S}mith-{K}night \& {R}ayleigh-{K}night {P}rize
  {C}ompetition 2015, {C}ambridge}, 2015.
\newblock \arxiv{1601.04915v1}.

\bibitem[Zib18a]{APT.m}
Claudius Zibrowius.
\newblock {\texttt{APT.m}}, a {M}athematica package for computing the
  decategorified polynomial tangle invariants {\(\nabla^s_t\)} from
  \cite{HDsForTangles}.
\newblock URL: \url{https://cbz20.raspberryip.com/documents/research/APT.zip},
  2018.

\bibitem[Zib18b]{PQM.m}
Claudius Zibrowius.
\newblock {\texttt{PQM.m}}, a {M}athematica package for computing peculiar
  modules of 4-ended tangles.
\newblock URL:
  \url{https://cbz20.raspberryip.com/documents/research/PQMv1.1.zip}, 2018.

\bibitem[Zib19a]{HDsForTangles}
Claudius Zibrowius.
\newblock Kauffman states and {H}eegaard diagrams for tangles.
\newblock {\em Algebr. Geom. Topol.}, 19(5):2233--2282, 2019.
\newblock \arxiv{1601.04915v3}.

\bibitem[Zib19b]{pqSym}
Claudius Zibrowius.
\newblock {O}n symmetries of peculiar modules; or, {\(\delta\)}-graded link
  {F}loer homology is mutation invariant, 2019.
\newblock \arxivPreprint{1909.04267v2}, accepted for publication in J. Eur.
  Math. Soc.

\bibitem[Zib20]{pqMod}
Claudius Zibrowius.
\newblock Peculiar modules for 4-ended tangles.
\newblock {\em J. Topol.}, 13(1):77--158, 2020.
\newblock \arxiv{1712.05050v3}.

\bibitem[Zib21]{khtpp}
Claudius Zibrowius.
\newblock \texttt{kht++}, a program for computing {K}hovanov invariants for
  links and tangles.
\newblock URL: \url{https://cbz20.raspberryip.com/code/khtpp/docs/}, 2021.

\end{thebibliography}
\end{document}